\DeclareMathAlphabet\mathbfcal{OMS}{cmsy}{b}{n}
 \DeclareMathAlphabet{\mathpzc}{OT1}{pzc}{m}{it}
\theoremstyle{definition}
\newtheorem{defin}{Definition}[section]
\newtheorem{thm}[defin]{Theorem}
\newtheorem{lem}[defin]{Lemma}
\newtheorem{prop}[defin]{Proposition}
\newtheorem{cor}[defin]{Corollary}
\newtheorem*{thmA}{Theorem A}
\newtheorem*{thmB}{Theorem B}
\newtheorem*{thmC}{Theorem C}
\newtheorem*{thmD}{Theorem D}
\newtheorem*{thmE}{Theorem E}
\theoremstyle{remark}
\newtheorem{rem}[defin]{Remark}
\newtheorem{eg}[defin]{Example}
\newtheorem{quest}[defin]{Question}
\newcommand{\Ric}{\mathrm{Ric}}
\newcommand{\tilFut}{\widetilde{\mathscr{F}}}
\newcommand{\Fut}{\mathscr{F}}
\newcommand{\dbar}{\bar{\partial}}
\newcommand{\sqddbar}{\sqrt{-1} \partial \bar{\partial}}
\newcommand{\sqdbard}{\sqrt{-1} \bar{\partial} \partial}
\newcommand{\M}{\mathcal{M}}
\title[Constant $\mu$-scalar curvature K\"ahler metric]{Constant $\mu$-scalar curvature K\"ahler metric -- formulation and foundational results}
\author{Eiji Inoue}
\address{Graduate School of Mathematical Sciences, the University of Tokyo \endgraf 3-8-1 Komaba, Meguro, Tokyo 153-8914, Japan. }
\email{eijinoe@ms.u-tokyo.ac.jp}
\begin{document}

\begin{abstract}
We introduce $\mu$-scalar curvature for a K\"ahler metric with a moment map $\mu$ and start up a study on constant $\mu$-scalar curvature K\"ahler metric as a generalization of both cscK metric and K\"ahler-Ricci soliton and as a continuity path to/from extremal metric. 
We exhibit some fundamental constraints to the existence of constant $\mu$-scalar curvature K\"ahler metrics by investigating a variant of Tian--Zhu's volume functional, which is closely related to Perelman's W-functional. 
A new K-energy is also introduced as an approach to the uniqueness problem of constant $\mu$-scalar curvature K\"ahler metrics and as a prelude to new K-stability concept. 
\end{abstract}

\maketitle

\tableofcontents

\section{Introduction}

In this article, we propose a new variant of scalar curvature of K\"ahler metric, which we call $\mu$-scalar curvature, motivated by a version of Donaldson-Fujiki moment map picture for a weighted measure $e^{\theta_\xi} \omega^n$ associated to a holomorphic vector field $\xi^J$. 
Here we simply begin with the definition of $\mu$-scalar curvature and the main results of this article. 

\subsubsection*{Setup}

Let $X$ be a K\"ahler manifold and $\omega$ be a K\"ahler form on $X$. 
We call a smooth real vector field $\xi$ on $X$ \textit{$\dbar$-Hamiltonian} with respect to $\omega$ if the complexified vector field $\xi^J := J \xi + \sqrt{-1} \xi$ is holomorphic ($\Leftrightarrow L_\xi J = 0$) and $i_{\xi^J} \omega$ is $\dbar$-exact. 
Note that $i_{\xi^J} \omega$ is $\dbar$-closed for any holomorphic $\xi^J$. 
As $\xi^J$ is holomorphic, we have $i_{\xi^J} (\omega + \sqddbar \phi) = i_{\xi^J} \omega + \sqrt{-1} \dbar \xi^J \phi$, so that the $\dbar$-Hamiltonian property does not depend on the choice of the K\"ahler form $\omega$ in the fixed K\"ahler class $[\omega]$. 
Moreover, it is known by \cite{LS} that a vector field $\xi$ preserving $J$ on a compact K\"ahler manifold is $\dbar$-Hamiltonian with respect to $[\omega]$ if and only if it has a fixed point, thus in particular the $\dbar$-Hamiltonian property is even independent of the K\"ahler class $[\omega]$. 
We call a function $\theta$ satisfying $\sqrt{-1} \dbar \theta = i_{\xi^J} \omega$ a \textit{$\dbar$-Hamiltonian potential} with respect to $\omega$, which is complex-valued in general. 
We call a $\dbar$-Hamiltonian vector field $\xi$ \textit{properly $\dbar$-Hamiltonian} if $\xi$ generates a closed torus, i.e., the closure $\overline{\exp \mathbb{R} \xi} \subset \mathrm{Aut} (X)$ is compact. 

We define the \textit{$\mu_\xi$-scalar curvature} $s_\xi (\omega)$ of a K\"ahler metric $\omega$ and a $\dbar$-Hamiltonian vector $\xi$ by 
\begin{equation}
s_\xi (\omega) = (s (\omega) + \bar{\Box} \theta) + (\bar{\Box} \theta - \xi^J \theta), 
\end{equation} 
where $\theta$ is a $\dbar$-Hamiltonian potential of $\xi$ with respect to $\omega$ and $s (\omega)$ denotes the K\"ahlerian scalar curvature: $s (\omega) := - g^{k \bar{l}} \partial_k \dbar_l \log \det g$. 
We can take a real-valued $\theta$ iff $\omega$ is $\xi$-invariant since we have $\sqrt{-1} (d \mathrm{Re} \theta - J d \mathrm{Im} \theta) = \sqrt{-1} \dbar \theta - \overline{\sqrt{-1} \dbar \theta} = i_{\xi^J - \bar{\xi}^J} \omega = 2 \sqrt{-1} i_\xi \omega$. 
In this case, $\theta$ is $\xi$-invariant, so $\xi^J \theta$ is also real valued. 

A version of Donaldson-Fujiki moment map picture characterizes $\mu$-scalar curvature. 
As we will see this motivative interpretation in section \ref{section: Donaldson-Fujiki}, here we instead simply observe how the individual terms of the $\mu$-scalar curvature arise. 
The first term $s (\omega) + \bar{\Box} \theta$ is just the trace of the Bakry--Emery Ricci curvature $\Ric (\omega) - \sqddbar \theta$, which is well-studied in Riemannian and metric measure geometry. 
The second term $\bar{\Box} \theta - \xi^J \theta$ often arises as the Lie derivative of the weighted measure: 
\[ L_{\xi^J} (e^{\theta} \omega^n) = -(\bar{\Box} \theta - \xi^J \theta) e^{\theta} \omega^n. \]
Another important aspect is that this second term is a $\bar{\partial}$-Hamiltonian potential of the Bakry--Emery Ricci curvature $\mathrm{Ric} (\omega) - \sqrt{-1} \partial \bar{\partial} \theta$, i.e. $\sqrt{-1} \bar{\partial} (\bar{\Box} \theta - \xi^J \theta) = i_{\xi^J} (\mathrm{Ric} (\omega) - \sqrt{-1} \partial \bar{\partial} \theta)$. 

We may also regard $s_\xi (\omega)$ as the trace of the following `complex analogy of ($m$=1)-Bakry--Emery curvature' for an integrable complex structure $J$: 
\[ \Ric (\omega) + 2 \sqddbar \theta - \sqrt{-1} \partial \theta \wedge \bar{\partial} \theta. \]
We can easily see that this $(1,1)$-form does not change by simultaneously replacing the equivariant form $\omega + \theta$ with $\tilde{\omega} + \tilde{\theta} = c (\omega + \theta)$ and $\xi$ with $\tilde{\xi} = c^{-1} \xi$ for a positive constant $c > 0$, so that we have 
\begin{align} 
\label{multiple}
s_{c^{-1} \xi} (c \omega) 
&= c^{-1} s_{\xi} (\omega)
\end{align}
for every positive constant $c > 0$. 
(Note $s_{\xi} (c \omega) \neq c^{-1} s_{\xi} (\omega)$ when $\xi \neq 0$. )

Next, introducing a parameter $\lambda \in \mathbb{R}$, we define the \textit{$\mu^\lambda_\xi$-scalar curvature} of a K\"ahler metric $\omega$ by 
\begin{equation}
s^\lambda_\xi (\omega) = (s (\omega) + \bar{\Box} \theta) + (\bar{\Box} \theta - \xi^J \theta) - \lambda \theta. 
\end{equation}
We call a K\"ahler metric $\omega$ a \textit{constant $\mu_\xi^\lambda$-scalar curvature K\"ahler metric} (\textit{$\mu^\lambda_\xi$-cscK metric} for short) if $s^\lambda_\xi (\omega)$ is constant. 
We may also use variant terminologies such as \textit{$\mu^\lambda$-cscK metric} or \textit{$\mu$-cscK metric} when the abbreviated parameters are determined/unimportant in the context. 
Since we have $\mathrm{Im} (s^\lambda_\xi (\omega)) = \Delta \mathrm{Im} \theta - 2 J \xi (\mathrm{Im} \theta) - \lambda \mathrm{Im} \theta$, we get 
\begin{align*} 
\int_X \mathrm{Im} (s^\lambda_\xi (\omega)) \mathrm{Im} \theta ~ e^{\mathrm{Re} \theta} \omega^n 
&= \int_X (\Delta \mathrm{Im} \theta -2 J\xi (\mathrm{Im} \theta) - \lambda \mathrm{Im} \theta) \mathrm{Im} \theta ~ e^{\mathrm{Re} \theta} \omega^n 
\\
&= \int_X |d \mathrm{Im} \theta|^2 e^{\mathrm{Re} \theta} \omega^n - \lambda \int_X (\mathrm{Im} \theta)^2 e^{\mathrm{Re} \theta} \omega^n
\end{align*}
by $(d \mathrm{Im} \theta, \mathrm{Re} \theta) = 2\xi (\mathrm{Re} \theta) = 2 J \xi (\mathrm{Im} \theta)$. 
Thus we automatically obtain $L_\xi \omega = 0$ for any $\mu^\lambda_\xi$-cscK metric $\omega$ with $\lambda \le 2\lambda_1$ for the positive first eigenvalue $\lambda_1$ of $\frac{1}{2} (\Delta - \nabla \mathrm{Re} \theta)$. 
In this article, we are mainly interested in this case, especially the case $\lambda \le 0$. 
So from now on we always assume the $\xi$-invariance of the K\"ahler metric $\omega$. 
Hence $s^\lambda_\xi (\omega)$ is real-valued. 

A cscK metric obviously gives an example of $\mu$-cscK metric for $\xi = 0$ and every $\lambda$. 
We will see in section \ref{KR soliton} that a K\"ahler-Ricci soliton $\Ric (\omega) - L_{\xi^J} \omega = \lambda \omega$ gives an example of $\mu^\lambda_\xi$-cscK metric for $\lambda > 0$, which satisfies $\lambda \le 2 \lambda_1$. 
If $\omega$ is a $\mu^\lambda_\xi$-cscK metric, then for any positive constant $c > 0$, 
$\tilde{\omega} = c \omega$ is a $\mu^{\tilde{\lambda}}_{\tilde{\xi}}$-cscK metric for $\tilde{\xi} := c^{-1} \xi$ and $\tilde{\lambda} := c^{-1} \lambda$. 
The product $(X \times Y, \omega_X + \omega_Y)$ of a $\mu^\lambda_{\xi_X}$-cscK metric $\omega_X$ on $X$ and a $\mu^\lambda_{\xi_Y}$-cscK metric $\omega_Y$ on $Y$ gives a $\mu^\lambda_{\xi_X + \xi_Y}$-cscK metric. 

\subsubsection*{Main results}

Now we collect the main results in this article. 
The first three results, except for Theorem B (3), are analogous to well-known foundational results on cscK metric and K\"ahler-Ricci soliton (cf. \cite{Sze}, \cite{TZ}. \cite{AS} is a good survey. ). 

In the following, $X$ denotes a compact K\"ahler manifold. 
Let us firstly recall the reduced automorphism group (cf. \cite{Gau}). 
Put 
\begin{equation}
\mathfrak{h}_0 (X) := \{ \xi \in \mathcal{X}_\mathbb{R} (X) ~|~ \xi \text{ is $\dbar$-Hamiltonian. } \}, 
\end{equation}
which is naturally a complex vector space by putting $\sqrt{-1} \xi := J \xi$. 
Denote by $\mathrm{Aut}^0 (X/\mathrm{Alb})$ the connected subgroup of the group $\mathrm{Aut} (X)$ of biholomorphisms associated to $\mathfrak{h}_0 (X)$. 
It is known by \cite{LS} that $\mathfrak{h}_0 (X)$ is the space of vector fields tangent to the Jacobi map $A_x: X \to \mathrm{Alb} (X)$, or equivalently, the space of vector fields with non-empty zero set. 
This group $\mathrm{Aut}^0 (X/\mathrm{Alb})$ is called the \textit{reduced automorphism group} of $X$. 
Similarly, we put $\mathfrak{h}_{0, \xi} (X) := \{ \zeta \in \mathfrak{h}_0 (X) ~|~ [\xi, \zeta] = 0 \}$ for a vector $\xi \in \mathfrak{h}_0 (X)$ and denote by $\mathrm{Aut}^0_\xi (X/ \mathrm{Alb})$ the connected subgroup of $\mathrm{Aut} (X)$ associated to $\mathfrak{h}_{0, \xi} (X)$. 

Note that for a line bundle $L$ on $X$, the identity component $\mathrm{Aut}^0 (X, L)$ of the group of biholomorphisms lifting to $L$ is contained in the reduced automorphism group $\mathrm{Aut}^0 (X/ \mathrm{Alb})$. 
Moreover, it is known that $\mathrm{Aut}^0 (X, L^{\otimes n})$ coincides with $\mathrm{Aut}^0 (X/ \mathrm{Alb})$ for some positive integer $n$ (because $\mathrm{Aut}^0 (X/ \mathrm{Alb})$ is linear algebraic). 
If $X$ has no holomorphic 1-form, or equivalently $b^1 (X) = 0$, then the reduced automorphism group $\mathrm{Aut}^0 (X/\mathrm{Alb})$ coincides with the identity component $\mathrm{Aut}^0 (X)$ of the group of biholomorphisms of $X$. 

\begin{thmA}[Reductiveness]
Let $\omega$ be a constant $\mu^\lambda_\xi$-scalar curvature K\"ahler metric on a compact K\"ahler manifold $X$. 
Then
\begin{enumerate}
\item the group $\mathrm{Aut}^0_\xi (X/\mathrm{Alb})$ is the complexification of the compact connected subgroup ${^\nabla \mathrm{Isom}}^0_\xi (X, \omega)$ associated to the Lie algebra of Hamiltonian Killing vector fields with respect to the $\mu^\lambda_\xi$-cscK metric $\omega$ compatible with $\xi$. 
Especially, it is reductive. (Corollary \ref{reductiveness})

\item When $\lambda \le 0$, $\mathrm{Aut}^0_\xi (X/\mathrm{Alb})$ is maximal among reductive subgroups of $\mathrm{Aut}^0 (X/\mathrm{Alb})$. 
This fails in general when $\lambda \gg 0$. (Corollary \ref{K-optimal vectors are finite}) 
\end{enumerate}
\end{thmA}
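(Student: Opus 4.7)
\medskip

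\noindent \textbf{Proof sketch.} The plan is to extend the Matsushima--Lichnerowicz strategy via a weighted Lichnerowicz operator adapted to the measure $e^{\theta} \omega^{n}$, and for part~(2), to exploit the strict convexity of the variant of the Tian--Zhu volume functional promised in the abstract.

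For (1), I would work with the weighted inner product $\langle f, g \rangle_{\theta} := \int_{X} f \bar g \, e^{\theta} \omega^{n}$ on smooth $\xi$-invariant complex functions, and with the fourth-order operator $\D_\theta^{*} \D$, where $\D f := \dbar \nabla^{1,0} f$ and $\D_\theta^{*}$ is the formal adjoint with respect to $\langle \cdot, \cdot \rangle_\theta$. Its kernel on $\xi$-invariant complex functions is exactly the space of $\dbar$-Hamiltonian potentials of vector fields in $\mathfrak{h}_{0, \xi}(X)$. The heart of the argument is a weighted Bochner / integration-by-parts identity for $\langle \D u, \D u \rangle_\theta$, expressing it as a manifestly non-negative symmetric leading part plus a first-order correction whose antisymmetric (real/imaginary mixing) component is proportional to $\dbar s^\lambda_\xi (\omega)$. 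Under the $\mu^\lambda_\xi$-cscK hypothesis this correction vanishes, so for any $\zeta \in \mathfrak{h}_{0, \xi}(X)$ with potential $u = u_1 + \sqrt{-1}\, u_2$, both $\nabla^{1,0} u_1$ and $\nabla^{1,0} u_2$ are separately holomorphic. This yields
\begin{equation*}
\mathfrak{h}_{0, \xi}(X) = \mathfrak{k}_\xi \oplus J \mathfrak{k}_\xi,
\end{equation*}
where $\mathfrak{k}_\xi$ is the Lie algebra of ${^\nabla \mathrm{Isom}}^0_\xi (X, \omega)$, which yields the reductive decomposition claimed in~(1).

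For (2), let $G \subset \mathrm{Aut}^0(X/\mathrm{Alb})$ be a reductive subgroup strictly containing $\mathrm{Aut}^0_\xi(X/\mathrm{Alb})$. I would restrict the $\mu^\lambda$-volume functional to the convex cone of $\dbar$-Hamiltonian vector fields in a Cartan subalgebra of a maximal compact subgroup $K \subset G$. The $\mu^\lambda_\xi$-cscK equation makes $\xi$ a critical point of this restriction, and strict convexity when $\lambda \le 0$ makes this critical point unique up to Weyl conjugation; averaging over $K$ then forces $\xi$ into the center of $\mathrm{Lie}\, G$. Hence $G$ centralizes $\xi$, i.e.\ $G \subset \mathrm{Aut}^0_\xi(X/\mathrm{Alb})$, proving maximality. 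For the failure with $\lambda \gg 0$, convexity breaks down and one obtains counterexamples from $\mu^\lambda_\xi$-cscK metrics with $\xi \neq 0$ on homogeneous Fano manifolds, where $\mathrm{Aut}^0_\xi$ is a proper reductive subgroup of the already reductive $\mathrm{Aut}^0(X/\mathrm{Alb})$.

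The main obstacle I expect is the weighted Lichnerowicz identity in~(1): the integration by parts is significantly heavier than in the classical cscK case because of the simultaneous presence of the weight $e^\theta$, the transport term $\xi^J \theta$, and the $\lambda \theta$ correction appearing in $s^\lambda_\xi$. One must keep track of Bakry--\'Emery commutators carefully in order to isolate the antisymmetric part and verify that it is indeed a multiple of $\dbar s^\lambda_\xi(\omega)$. Once this identity is in place, part~(1) reduces to standard Hodge-theoretic packaging on $\ker \D_\theta^{*} \D$. Part~(2) is structurally cleaner once the $\mu^\lambda$-volume functional and its convexity are established, but formulating the functional on an arbitrary reductive $G \subset \mathrm{Aut}^0(X/\mathrm{Alb})$ containing $\xi$, rather than just on $\mathrm{Aut}^0_\xi$, also requires some care.
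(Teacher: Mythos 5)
Your part (1) is essentially the paper's own proof. The operator you describe is exactly $\mathcal{D}^{\theta*}\mathcal{D}$ with $\mathcal{D}=\nabla^{\sharp\sharp}\dbar$, your ``weighted Bochner identity'' is the $\mu$-Lichnerowicz formula, and the mechanism is the same: the only term that mixes real and imaginary parts is $(\dbar^\sharp s_\xi(\omega))(f)$, which after absorbing the term $\lambda\xi''f$ (real on $\xi$-invariant functions) is governed by $\dbar \hat{s}^\lambda_\xi$, so under the $\mu^\lambda_\xi$-cscK hypothesis the operator is real on $C^\infty_\xi(X,\mathbb{C})$ and its kernel splits as ${^\nabla \mathfrak{isom}}_{\xi} (X, g) \oplus \sqrt{-1}\, {^\nabla \mathfrak{isom}}_{\xi} (X, g)$, giving Corollary \ref{reductiveness}.

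Part (2), however, contains a genuine gap: you invoke \emph{strict convexity} of $\mathrm{Vol}^\lambda$ for $\lambda\le 0$, and this is neither proved in the paper nor known. The second variational formula of $\mathrm{Vol}^\lambda$ contains the term $\int_X \hat{s}^\lambda_\xi\, \theta_\zeta\theta_\bullet\, e^{\theta_\xi}\omega^n$, which has no sign at a general $\xi$; positive-definiteness of the Hessian is available only at those critical points $\xi$ that are actually realized by a $\mu^\lambda_\xi$-cscK metric, because precisely there $\hat{s}^\lambda_\xi\equiv 0$ (Corollary \ref{corollary of the second variational formula}). Consistently with this, the paper obtains uniqueness of critical points only for $\lambda\ll 0$ (Proposition \ref{extremal background}), and not via convexity of $\mathrm{Vol}^\lambda$ but via a $C^2$-perturbation off the strictly concave limit functional $\check{W}^0=-2C$; for $\lambda$ merely $\le 0$ uniqueness is left open (this is the point of $\lambda_{\mathrm{freeze}}$ in Theorem B (3)). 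So your step ``strict convexity $\Rightarrow$ critical point unique up to Weyl conjugation $\Rightarrow$ $\xi$ central'' is unsupported as it stands.

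The paper's replacement mechanism needs only local information and is what you should use. For $\lambda\le 0$, every vector realized by a $\mu^\lambda$-cscK metric is an \emph{isolated} local minimizer of $\mathrm{Vol}^\lambda$ with non-degenerate Hessian; the set $\kappa$ of such vectors is therefore discrete, and it is contained in the compact set $\{\xi\in\mathfrak{k} \mid \Fut^0_\xi(\xi)\le 0\}$ (Corollary \ref{compact}, resting on the properness of $\xi\mapsto\Fut^0_\xi(\xi)$), hence finite. Since $\Fut^\lambda_{g_*\xi}(\zeta)=\Fut^\lambda_\xi(g^{-1}_*\zeta)$, the \emph{connected} group $K$ permutes the finite set $\kappa$ and so fixes each of its points; this, rather than convexity plus averaging, is what forces $\xi$ into the center of $\mathfrak{k}$, whence $K\subset\mathrm{Aut}^0_\xi(X/\mathrm{Alb})$ and $K^c\subset\mathrm{Aut}^0_\xi(X/\mathrm{Alb})$ with $K^c$ maximal reductive. (Here it is essential, as the paper stresses, that $\Fut^\lambda_\xi$ is defined and vanishes on all of $\mathfrak{h}_0(X)$ and not only on $\mathfrak{h}_{0,\xi}(X)$, so that $\xi$ is critical for $\mathrm{Vol}^\lambda$ on the Lie algebra of a maximal compact $K$ that is not assumed to preserve $\omega$.) Finally, your $\lambda\gg 0$ counterexample is the right picture but is asserted rather than proved: the nontrivial content is the existence of $\mu^\lambda_\xi$-cscK metrics with $\xi\neq 0$, which the paper establishes on $\mathbb{C}P^1$ by an explicit Calabi-ansatz solution once $\lambda>4/m$.
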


When $b^1 (X) = 0$, we can replace $\mathrm{Aut}^0_\xi (X/\mathrm{Alb})$ by the identity component $\mathrm{Aut}^0_\xi (X)$ of the group of biholomorphisms preserving $\xi$ and ${^\nabla \mathrm{Isom}}^0_\xi (X, \omega)$ by the identity component $\mathrm{Isom}_\xi^0 (X, \omega)$ of the group $\mathrm{Isom}_\xi (X, \omega)$ of isometries preserving $\xi$. 

\begin{thmB}[$\mu$-Futaki invariant and $\mu$-volume functional]
Let $X$ be a compact K\"ahler manifold, $[\omega]$ be a K\"ahler class and $\xi$ be a properly $\dbar$-Hamiltonian vector field on $X$. 
\begin{enumerate}
\item There is a $\mathbb{C}$-linear functional $\Fut_\xi^\lambda: \mathfrak{h}_0 (X) \to \mathbb{C}$ depending only on the quadruple $(X, [\omega], \xi, \lambda)$ such that $\Fut_\xi^\lambda$ vanishes if the K\"ahler class $[\omega]$ admits a $\mu^\lambda_\xi$-cscK metric. (Proposition \ref{Futaki})

\item For any compact Lie subgroup $K \subset \mathrm{Aut}^0 (X/\mathrm{Alb})$ and $\lambda \in \mathbb{R}$, there always exists a vector $\xi \in \mathfrak{k}$ such that $\Fut^\lambda_\xi|_{\mathfrak{k}^c}$ vanishes, regardless of the existsnce of $\mu^\lambda$-cscK metrics. (Corollary \ref{Futaki vanish})

\item We have the uniqueness of such $\xi$ for $\lambda \ll 0$. 
Moreover, the value 
\[ \lambda_{\mathrm{freeze}} := \sup \{ \lambda \in \mathbb{R} ~|~ \forall \lambda' < \lambda \quad \Fut^\lambda_{\xi_{\lambda'}}|_{\mathfrak{k}^c} = 0 \text{ for a unique } \xi_{\lambda'} \}  \]
is never $\pm \infty$. 
(Proposition \ref{extremal background} and its remark)
\end{enumerate}
\end{thmB}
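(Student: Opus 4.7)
For part (1), the natural candidate is
\[
\Fut_\xi^\lambda(\zeta) := \int_X \eta\,\bigl(s_\xi^\lambda(\omega) - \bar{s}_\xi^\lambda\bigr)\,e^\theta\,\omega^n,
\]
where $\omega$ is a $\xi$-invariant representative of $[\omega]$, $\theta$ is the real $\dbar$-Hamiltonian potential of $\xi$ (normalized by an equivariant/cohomological condition to make it independent of $\omega$), $\eta$ is the $\dbar$-Hamiltonian potential of $\zeta$, and $\bar{s}_\xi^\lambda$ denotes the weighted average of $s^\lambda_\xi(\omega)$ against $e^\theta\omega^n$. The plan is first to check that $\bar{s}_\xi^\lambda$ depends only on $(X,[\omega],\xi,\lambda)$, by differentiating along a $\xi$-invariant path $\omega_t=\omega+\sqddbar\phi_t$ and integrating by parts against the weighted measure; the key point is that $\bar{\Box}-\xi^J$ is formally self-adjoint on functions with respect to $e^\theta\omega^n$, and that the infinitesimal variation of $\theta$ along the path is $\xi^J\dot\phi$. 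The same calculation shows that $\Fut^\lambda_\xi(\zeta)$ itself is independent of the representative. $\mathbb{C}$-linearity in $\zeta$ is immediate from $\mathbb{C}$-linearity of $\zeta\mapsto\eta$ under the convention $\sqrt{-1}\zeta:=J\zeta$, and vanishing on any $\mu^\lambda_\xi$-cscK metric is automatic since the integrand becomes $\eta\cdot 0$.

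For parts (2) and (3) I would introduce a weighted volume/$W$-type functional on $\mathfrak{k}$ of the form
\[
\tilde V^\lambda(\xi) := \int_X \bigl(s(\omega) + \bar{\Box}\theta_\xi - \lambda\theta_\xi - \mathrm{const}\bigr)\,e^{\theta_\xi}\omega^n,
\]
with $\omega$ a fixed $K$-invariant K\"ahler metric in $[\omega]$ and $\theta_\xi$ the equivariantly normalized real $\dbar$-Hamiltonian potential of $\xi$. The plan is to compute the first variation in $\xi$ in a direction $\zeta\in\mathfrak{k}$ and verify by integration by parts that it equals $\Fut^\lambda_\xi(\zeta)$. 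Since $\Fut^\lambda_\xi$ is already $\mathbb{C}$-linear on $\mathfrak{h}_0(X)\supset\mathfrak{k}^c$ by part (1), vanishing on the real subspace $\mathfrak{k}$ forces vanishing on the complexification $\mathfrak{k}^c$. Existence of a critical point for (2) then follows by a coercivity/properness argument: along any ray $t\xi_0\to\infty$ in $\mathfrak{k}$, the weighted measure $e^{t\theta_{\xi_0}}\omega^n$ concentrates on a proper subvariety and forces quantitative growth of $\tilde V^\lambda$, so an extremum is attained on $\mathfrak{k}$.

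For (3), uniqueness for $\lambda\ll 0$ comes from showing that the Hessian of $\tilde V^\lambda$ is strictly definite on $\mathfrak{k}$ once $\lambda$ is sufficiently negative: the second variation of the $-\lambda\int\theta_\xi\,e^{\theta_\xi}\omega^n$ term contributes a positive multiple of $|\lambda|$ times an $L^2$-type pairing on $\mathfrak{k}$, which dominates the remaining Hessian contributions (bounded on compact sets in $\xi$ and controllable along rays). This gives simultaneously $\lambda_{\mathrm{freeze}}>-\infty$ and continuous dependence of the unique Futaki-zero vector on $\lambda$. To show $\lambda_{\mathrm{freeze}}<+\infty$, I would exhibit a concrete $(X,[\omega],K)$ on which uniqueness of $\xi$ must fail for some $\lambda$; the most natural source is a Fano manifold with nontrivial reductive automorphism group on which both a K\"ahler--Einstein metric and a K\"ahler--Ricci soliton exist, producing two distinct Futaki-vanishing vectors for appropriate $\lambda$. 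The step I expect to be the main obstacle is precisely this last finiteness: uniqueness for $\lambda\ll 0$ is a clean convexity argument and existence is a standard variational story, but ruling out uniqueness for all $\lambda$ is structural rather than estimate-based, and presumably draws on the extremal-metric continuity-path perspective advertised in the introduction.
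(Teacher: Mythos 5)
Your part (1) is essentially the paper's own argument: the same invariant $\Fut^\lambda_\xi(\zeta)=\int_X \hat{s}^\lambda_\xi(\omega)\,\theta_\zeta\, e^{\theta_\xi}\omega^n \big/ \int_X e^{\theta_\xi}\omega^n$, proved independent of the representative by differentiating along a path of $\xi$-invariant metrics and integrating by parts against $e^{\theta_\xi}\omega^n$. The one thing your sketch underestimates is that the computation runs through the $\mu$-Lichnerowicz operator $\mathcal{D}^{\theta*}\mathcal{D}$ (Proposition \ref{Futaki}); this is precisely what allows $\zeta$ to range over all of $\mathfrak{h}_0(X)$, with complex-valued $\theta_\zeta$, rather than only the centralizer $\mathfrak{h}_{0,\xi}(X)$.

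Parts (2) and (3) contain genuine gaps. First, your functional $\tilde V^\lambda$ is not a potential for the $\mu$-Futaki invariant: by the paper's first variational formula, $\frac{d}{dt}\big|_{t=0}\int_X (s+\bar{\Box}\theta_{\xi+t\zeta}-\lambda\theta_{\xi+t\zeta})e^{\theta_{\xi+t\zeta}}\omega^n = \tilFut^\lambda_\xi(\zeta) + (\bar{s}^\lambda_\xi-\lambda)\int_X\theta_\zeta e^{\theta_\xi}\omega^n$, and subtracting $\mathrm{const}\cdot\int_X e^{\theta_\xi}\omega^n$ only shifts the coefficient of the error term. Since $\bar{s}^\lambda_\xi$ depends on $\xi$, no constant kills it, so critical points of $\tilde V^\lambda$ are not the Futaki-zero vectors. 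The paper's functional is $\mathrm{Vol}^\lambda(\xi)=e^{\bar{s}^\lambda_\xi}\big(\int_X e^{\theta_\xi}\omega^n\big)^\lambda$: the division by the weighted volume inside $\bar{s}^\lambda_\xi$ and the extra term $\lambda\log\int_X e^{\theta_\xi}\omega^n$ are exactly what make the error terms cancel, giving $d_\xi\log\mathrm{Vol}^\lambda=\Fut^\lambda_\xi$. Properness is then not a soft concentration statement but a Morse--Bott/Gaussian computation (Proposition \ref{properness of mu-volume}) whose key point is the strict positivity of $\bar{\Box}\theta_\xi$ on the maximum locus of $\theta_\xi$, coming from nondegeneracy of the Hessian in normal directions.

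Second, in part (3) your Hessian-domination argument for $\lambda\ll 0$ fails as stated: the ``remaining Hessian contributions'' are not bounded on compact sets uniformly in $\lambda$, because $\hat{s}^\lambda_\xi=\hat{s}^0_\xi-\lambda(\theta_\xi-\bar{\theta}_\xi)$ enters the second variation, so at any fixed $\xi\neq 0$ those terms grow linearly in $|\lambda|$ and cannot be dominated by the variance term; there is no global convexity of $\mathrm{Vol}^\lambda$. The paper (Proposition \ref{extremal background}) instead rescales: with $\eta=\lambda\xi$, $\kappa=\lambda^{-1}$, the functional $\check{W}^\kappa(\eta)=\kappa^{-1}W(\kappa\eta,\kappa^{-1})$ extends $C^2$-continuously across $\kappa=0$ to the strictly concave functional $-2C$ whose unique critical point is the extremal vector, and combines this with compactness of the set of critical points with $\lambda\le 0$ (Corollary \ref{compact}) to get uniqueness for $\lambda\ll 0$. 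Finally, $\lambda_{\mathrm{freeze}}<+\infty$ is claimed for \emph{every} $(X,[\omega],K)$, so exhibiting one example cannot prove it -- and your candidate example does not exist anyway: a Fano manifold carrying a K\"ahler--Einstein metric has vanishing Futaki invariant, which forces the soliton vector to vanish. The paper's argument is structural: for $\lambda\gg 0$ the rescaled functional is strictly concave near the extremal vector yet has strictly positive slope at infinity (the slope is independent of $\lambda$ by Proposition \ref{properness of mu-volume}), a ``mexican hat'' profile forcing at least two critical points; equivalently, Corollary \ref{Fut zero} shows that when $\Fut\equiv 0$ the origin remains critical but ceases to be a local minimizer for large $\lambda$, while properness supplies a global minimizer elsewhere.
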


The second claim in the above is a partial generalization of a volume minimization result in \cite{TZ}, except for the uniqueness. 
Indeed, we will see in section 5.2 that vectors $\xi$ with $\Fut_\xi^\lambda \equiv 0$ are not unique for $\lambda \gg 0$ as claimed in the above (3). 
As for general theory, we are mainly interested in the case $\lambda \le 0$ since in this case we have a nice compactness/finiteness results as in Corollary \ref{compact} and Corollary \ref{K-optimal vectors are finite}. 
The author suspects the above value $\lambda_{\mathrm{freeze}}$ is always slightly positive. 

In section 4, we prove the following extension result, which generalizes the result of \cite{Chen2}. 
This is the first step for studying the uniqueness of $\mu^\lambda_\xi$-cscK metrics and `$\mu$K-stability'. 

\begin{thmC}[$\mu$K-energy and geodesic]
Let $(X, [\omega])$ be a compact K\"ahler manifold with $T$-action. 
Fix a vector $\xi \in \mathfrak{t}$ and $\lambda \in \mathbb{R}$. 
\begin{enumerate}
\item There is a functional $\M_\xi^\lambda$ on the space of $\xi$-invariant smooth K\"ahler metrics in the K\"ahler class $[\omega]$ such that the critical points of $\M_\xi^\lambda$ are precisely $\mu^\lambda_\xi$-cscK metrics and that $\M^\lambda_\xi$ is convex along smooth geodesics. 

\item There is a canonical extension of this functional $\mathcal{M}^\lambda_\xi$ to the space $\overline{\mathcal{H}^{1,1}_{\omega, \xi}}$ of $\xi$-invariant \textit{sub-K\"ahler metrics with $C^{1,1}$-potentials}. 
Here a sub-K\"ahler metric with $C^{1,1}$-potentials means a $(1,1)$-form $\omega_\phi = \omega + \sqddbar \phi$ with $L^\infty$-coefficients given by a smooth K\"ahler metric $\omega$ and a $C^{1,1}$-smooth $\omega$-psh function $\phi$, i.e., a $C^{1,1}$-smooth function satisfying $\omega_\phi \ge 0$ as a current. 
\end{enumerate}
\end{thmC}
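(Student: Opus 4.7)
The plan is to construct $\mathcal{M}^\lambda_\xi$ as the primitive of a closed $1$-form on the space $\mathcal{H}_{\omega,\xi}$ of $\xi$-invariant K\"ahler potentials whose value at $\phi$ along a tangent vector $\psi$ equals
\[ -\int_X \psi\,\bigl(s^\lambda_\xi(\omega_\phi) - \bar s^\lambda_\xi\bigr)\,e^{\theta_\phi}\omega_\phi^n, \]
and then to rewrite this primitive as a Chen--Tian-type decomposition into a weighted entropy plus pluripotential energies so that each piece extends separately to $\overline{\mathcal{H}^{1,1}_{\omega,\xi}}$. The structural observation that drives everything is that for a $\xi$-invariant potential one has $\theta_\phi = \theta_0 + \xi^J\phi$, real-valued and \emph{affine} in $\phi$; this controls all subsequent variations.

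\textbf{Step 1 (construction and first variation).} For smooth $\xi$-invariant $\phi$ I would set
\[ \mathcal{M}^\lambda_\xi(\phi) := \mathcal{H}_\xi(\phi) + \mathcal{R}_\xi(\phi) + \Lambda^\lambda_\xi(\phi), \]
where $\mathcal{H}_\xi(\phi) := \int_X \log\!\bigl(e^{\theta_\phi}\omega_\phi^n/e^{\theta_0}\omega^n\bigr)\,e^{\theta_\phi}\omega_\phi^n$ is the weighted relative entropy, $\mathcal{R}_\xi$ is a Bott--Chern-type primitive recording the weighted Ricci contribution, and $\Lambda^\lambda_\xi$ is an Aubin--Yau-type energy built from $\lambda\theta_\phi$ and the normalization. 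Termwise differentiation, using $\delta_\psi\theta_\phi = \xi^J\psi$ and integration by parts against $e^{\theta_\phi}\omega_\phi^n$, turns the entropy derivative into $-\int_X\psi(\bar\Box\theta_\phi - \xi^J\theta_\phi)\,e^{\theta_\phi}\omega_\phi^n$; the three pieces combine into precisely $-\int_X\psi(s^\lambda_\xi(\omega_\phi) - \bar s^\lambda_\xi)\,e^{\theta_\phi}\omega_\phi^n$. Closedness of this $1$-form, required for $\mathcal{M}^\lambda_\xi$ to be well-defined up to a constant, follows from the standard symmetry-of-mixed-partials computation, simplified by the affinity $\theta_\phi - \theta_0 = \xi^J\phi$. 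Critical points are then by construction the $\mu^\lambda_\xi$-cscK metrics.

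\textbf{Step 2 (convexity along smooth geodesics).} Along a smooth geodesic $\phi_t$ I would analyze $\ddot{\mathcal{M}}^\lambda_\xi(\phi_t)$ piece by piece. The energy pieces $\mathcal{R}_\xi$ and $\Lambda^\lambda_\xi$ are polynomial expressions in Bedford--Taylor products involving $\phi$, $\theta_\phi$, and $\omega_\phi$, and the geodesic equation $\ddot\phi_t = \tfrac{1}{2}|d\dot\phi_t|^2_{\omega_{\phi_t}}$ kills the second variation of each such weighted Aubin--Yau-type term, so they are affine along geodesics. Convexity thus reduces to convexity of the weighted entropy $\mathcal{H}_\xi$, which I would obtain by the Berndtsson--Berman subharmonicity argument adapted to the weighted measure: the second derivative rewrites as a nonnegative Bochner-type expression involving $|\bar\partial\dot\phi_t|^2_{\omega_{\phi_t}}\,e^{\theta_{\phi_t}}\omega_{\phi_t}^n$ plus a Fisher-information term, and since $\theta_\phi$ is affine in $\phi$ the twist by $e^{\theta_\phi}$ does not spoil plurisubharmonicity along the geodesic.

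\textbf{Step 3 ($C^{1,1}$-extension and main obstacle).} Each summand extends canonically to $\overline{\mathcal{H}^{1,1}_{\omega,\xi}}$: $\omega_\phi^n$ is a well-defined absolutely continuous measure by Bedford--Taylor, $\theta_\phi = \theta_0 + \xi^J\phi$ is Lipschitz, so $e^{\theta_\phi}\omega_\phi^n$ is a bounded positive measure and $\mathcal{H}_\xi$ extends as a lower semicontinuous functional, while $\mathcal{R}_\xi$ and $\Lambda^\lambda_\xi$ are continuous Bedford--Taylor polynomials along decreasing smooth approximations. Independence of the approximating sequence, and agreement with the smooth definition, follow from Chen's monotone approximation scheme adapted to the weighted setting. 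The main obstacle I anticipate is Step~2 for general $\lambda$: while $\lambda\leq 0$ gives the $\lambda\theta_\phi$-contribution to $\Lambda^\lambda_\xi$ the obviously good sign, the Bochner identity for arbitrary $\lambda$ requires isolating the $\xi^J$-variant part of $\dot\phi_t$ and absorbing an indefinite term into the positive Fisher-information part via a weighted Poincar\'e inequality on the $\xi$-invariant sector; this is also the point at which transferring convexity to $C^{1,1}$-geodesics via $\epsilon$-regularization (Chen, B\l ocki) demands the most care.
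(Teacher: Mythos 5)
Your Step 1 and Step 3 follow essentially the paper's own route. The paper defines $\M^\lambda_\xi$ as the path integral of the one-form $-\int_X \dot{\phi}_t\, \hat{s}^\lambda_\xi (g_{\phi_t})\, e^{\theta_\xi (\phi_t)} \omega_{\phi_t}^n$, proves path-independence by exactly the symmetry-of-mixed-partials computation you describe, and then establishes a Chen--Tian-type formula (Proposition \ref{expression}) decomposing $\M^\lambda_\xi$ into the weighted entropy $\int_X \log \bigl( e^{\theta_\xi (\phi)} \omega_\phi^n / e^{\theta_\xi} \omega^n \bigr) e^{\theta_\xi (\phi)} \omega_\phi^n$ plus path integrals whose integrands involve only the $L^\infty$-form $\omega_{\phi_t}$, the Lipschitz functions $\dot{\phi}_t$, $\theta_\xi(\phi_t)$, and smooth background data $\Ric(\omega) - \sqddbar \theta_\xi$, $\bar{\Box}_g \theta_\xi - \xi^J \theta_\xi$; this yields the extension (Corollary \ref{extension}), uniquely normalized so that $\M^\lambda_\xi$ minus the entropy is continuous. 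One imprecision in your Step 1: the first variation of the weighted entropy is not $-\int_X \psi (\bar{\Box}\theta_\phi - \xi^J \theta_\phi) e^{\theta_\phi}\omega_\phi^n$; the entropy produces the full term $(\bar{\Box}_\phi - \xi^J) \log \bigl( e^{\theta_\phi}\omega_\phi^n / e^{\theta_0}\omega^n \bigr)$, i.e.\ it carries the $s_\xi(\omega_\phi)$ contribution modulo background corrections, while the energy pieces supply those corrections. This would come out correctly in a full computation, so it is not a structural problem.

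The genuine problem is Step 2, which contains an internal contradiction and a phantom obstacle. The paper's proof of convexity (Proposition \ref{convexity}) is a direct second-variation computation of the whole functional:
\[
\frac{d^2}{dt^2} \M^\lambda_\xi (\phi_t) = \int_X |\mathcal{D}_t \dot{\phi}_t|^2_{g_t}\, e^{\theta_\xi (\phi_t)} \omega_{\phi_t}^n \;-\; \int_X \hat{s}^\lambda_\xi (g_{\phi_t}) \bigl( \ddot{\phi}_t - |\dbar \dot{\phi}_t|^2_{g_{\phi_t}} \bigr) e^{\theta_\xi (\phi_t)} \omega_{\phi_t}^n ,
\]
and along a geodesic the second term vanishes identically: the geodesic defect $\ddot{\phi}_t - |\dbar \dot{\phi}_t|^2$ is spatially constant and $\hat{s}^\lambda_\xi$ has weighted average zero. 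Hence convexity holds for \emph{every} $\lambda \in \mathbb{R}$, with the $\lambda$-dependence never acquiring a sign, because the $\lambda$-terms only ever multiply the geodesic defect. Your own first claim in Step 2 --- that $\Lambda^\lambda_\xi$, like the other energy pieces, is affine along smooth geodesics --- already implies this: affine pieces contribute nothing to the second derivative, so no sign condition on $\lambda$ can possibly arise. Your subsequent assertion that general $\lambda$ forces you to absorb an indefinite term into a Fisher-information term via a weighted Poincar\'e inequality contradicts that affinity; pursuing it would at best yield the theorem on a restricted range of $\lambda$ (a condition of the type $\lambda < 2\lambda_1$, which belongs to Theorem E, not Theorem C), whereas the statement is unconditional. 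Relatedly, your final concern about transferring convexity to $C^{1,1}$-geodesics addresses something the theorem does not claim: part (2) asserts only the extension of the functional to $\overline{\mathcal{H}^{1,1}_{\omega,\xi}}$, not convexity along weak geodesics, so no regularization argument is required. Finally, note that invoking Berman--Berndtsson subharmonicity for the weighted entropy is much heavier machinery than needed here: for \emph{smooth} geodesics, once the energy pieces are affine, the entropy's convexity is equivalent to the displayed identity, which is obtained by elementary integration by parts against $e^{\theta_\xi(\phi_t)}\omega_{\phi_t}^n$.
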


The following result illustrates an intriguing special aspect of $\mu$-scalar curvature. 
From this result, our parameter $\lambda$ can be thought as a continuity path connecting $\mu^0$-cscK metric/K\"ahler-Ricci soliton and extremal metric. 

\begin{thmD}[Behavior of K-optimal vectors]
Fix a compact subgroup $K \subset \mathrm{Aut}^0 (X/\mathrm{Alb})$. 
\begin{enumerate}
\item 
Let $\{ (\xi_i, \lambda_i) \in \mathfrak{k} \times \mathbb{R} \}_{i \in \mathbb{N}}$ be a sequence satisfying $\Fut^{\lambda_i}_{\xi_i}|_{\mathfrak{k}^c} \equiv 0$ and $\lambda_i \to - \infty$. 
(Note such a sequence always exists by Theorem B. )
Then the rescaled sequence $\lambda_i \xi_i \in \mathfrak{k}$ converges to the extremal vector $\xi_{\mathrm{ext}}$ which is uniquely characterized by the property 
\[ \check{\Fut}^0_{\xi_{\mathrm{ext}}} (\zeta) := \int_X \Big{(} (s (\omega) - \underline{s}) - (\theta_{\xi_{\mathrm{ext}}} - \underline{\theta}_{\mathrm{ext}}) \Big{)} \theta_\zeta \omega^n = 0 \]
for every $\zeta \in \mathfrak{k}$, where we put $\underline{s} = \int_X s (\omega) \omega^n / \int_X \omega^n$ and $\underline{\theta}_{\mathrm{ext}} := \int_X \theta_{\xi_{\mathrm{ext}}} \omega^n / \int_X \omega^n$. 
(Section 2.2 and Corollary \ref{compact})

\item If there are $\mu^{\lambda_i}_{\xi_i}$-cscK metrics $\omega_i$ with a uniform $C^{3, \alpha}$-bound of the K\"ahler potentials $\phi_i$ of $\omega_i = \omega + \sqddbar \phi_i$ and a uniform lower bound $C \omega \le \omega_i$, then $\omega_i$ subconverges to an extremal metric $\omega_{\mathrm{ext}}$ on $X$. (Section 2.2) 

\item Conversely, if there is an extremal metric on $\omega_{\mathrm{ext}}$ in a K\"ahler class $[\omega]$, then there are constants $\lambda_-$ and $\lambda_+$ such that there is a family of $\mu$-cscK metrics $\{ \omega_\lambda \}_{\lambda \in (-\infty, \lambda_-) \cup (\lambda_+, \infty)}$ where for each $\lambda \in (-\infty, \lambda_-) \cup (\lambda_+, \infty)$ the metric $\omega_\lambda$ is a $\mu^\lambda_{\xi_\lambda}$-cscK metric in the K\"ahler class $[\omega]$ for some vector field $\xi_\lambda$ such that $\omega_\lambda$ converges to $\omega_{\mathrm{ext}}$ smoothly as $\lambda \to \pm \infty$. (Theorem \ref{extremal propagation})
\end{enumerate}
\end{thmD}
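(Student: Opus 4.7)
The plan is to prove the three parts in order, treating (1)--(2) as passage-to-the-limit arguments that identify the extremal metric as the only possible accumulation point, and (3) as a perturbative application of the implicit function theorem that produces a family realizing this limit.

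\textbf{Part (1).} In view of the definition of $s^\lambda_\xi$, the $\mu$-Futaki invariant takes the schematic form
\[ \Fut^\lambda_\xi (\zeta) = \int_X \big( s(\omega) + 2 \bar{\Box} \theta_\xi - \xi^J \theta_\xi - \lambda \theta_\xi - c^\lambda_\xi \big) \theta_\zeta \, e^{\theta_\xi} \omega^n. \]
The first task is to bound the sequence $\lambda_i \xi_i \in \mathfrak{k}$; since each $\xi_i$ is $K$-optimal at parameter $\lambda_i \le 0$, this should follow from the compactness statement (Corollary \ref{compact}) invoked in the discussion after Theorem B. Extracting a subsequence with $\lambda_i \xi_i \to v_\infty$, and using $\theta_{c \xi} = c \theta_\xi$, one has $\xi_i \to 0$, $\theta_{\xi_i} \to 0$, $e^{\theta_{\xi_i}} \to 1$, and $\lambda_i \theta_{\xi_i} \to \theta_{v_\infty}$, while the terms $\bar{\Box} \theta_{\xi_i}$ and $\xi_i^J \theta_{\xi_i}$ are of order $|\lambda_i|^{-2}$. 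Passing $\Fut^{\lambda_i}_{\xi_i}(\zeta) = 0$ to the limit yields
\[ \int_X \big( (s(\omega) - \underline{s}) - (\theta_{v_\infty} - \underline{\theta}_{v_\infty}) \big) \theta_\zeta \, \omega^n = 0, \]
which is exactly $\check{\Fut}^0_{v_\infty}(\zeta) = 0$ for every $\zeta \in \mathfrak{k}$. Uniqueness of the extremal vector forces $v_\infty = \xi_{\mathrm{ext}}$, and since every subsequential limit agrees, the full sequence converges.

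\textbf{Part (2).} Apply Arzel\`a--Ascoli and elliptic regularity to the uniformly $C^{3,\alpha}$-bounded potentials $\phi_i$, extracting a subsequence $\phi_i \to \phi_\infty$ in $C^{3,\alpha'}$ for $\alpha' < \alpha$; the lower bound $C \omega \le \omega_i$ passes to $C \omega \le \omega_\infty := \omega + \sqddbar \phi_\infty$, so $\omega_\infty$ is a genuine K\"ahler metric. Each $\omega_i$ satisfies $s^{\lambda_i}_{\xi_i}(\omega_i) = c_i$ for a cohomological constant $c_i$, and by Part (1) one has $\lambda_i \xi_i \to \xi_{\mathrm{ext}}$ and $\xi_i \to 0$; the equation therefore passes to the limit as $s(\omega_\infty) - \theta_{\xi_{\mathrm{ext}}}(\omega_\infty) = c_\infty$, the extremal equation, and elliptic bootstrapping upgrades $\omega_\infty$ to a smooth extremal metric.

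\textbf{Part (3).} From the extremal metric $\omega_{\mathrm{ext}}$ with extremal vector $\xi_{\mathrm{ext}}$, take the Ansatz $\xi_\lambda := \xi_{\mathrm{ext}}/\lambda$, so that $\lambda \theta_{\xi_\lambda} = \theta_{\xi_{\mathrm{ext}}}$ and the $\mu^\lambda_{\xi_\lambda}$-cscK equation for a potential $\phi$ becomes a $|\lambda|^{-1}$-size perturbation of the extremal equation. I would set this up as an implicit function theorem problem in $K$-invariant $C^{4,\alpha}$ potentials modulo the stabilizer of $\omega_{\mathrm{ext}}$: the linearization at $(\omega_{\mathrm{ext}}, |\lambda| = \infty)$ is the Lichnerowicz-type fourth-order operator from extremal theory, whose kernel consists of the holomorphy potentials and is killed by the quotient by $\mathrm{Aut}^0_{\xi_{\mathrm{ext}}}(X/\mathrm{Alb})$. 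Hence the linearization is invertible on the appropriate slice, and the implicit function theorem produces a smooth family $\omega_\lambda$ for $\lambda \in (-\infty, \lambda_-) \cup (\lambda_+, \infty)$ with $\omega_\lambda \to \omega_{\mathrm{ext}}$ as $|\lambda| \to \infty$. The main obstacle is this functional-analytic setup --- making the linearization genuinely invertible after quotienting by the automorphism action, while tracking the $\lambda$-dependence of the nonlinear terms uniformly --- which is the same delicate point that pervades every perturbation argument around an extremal metric. One must also verify the compatibility $\Fut^\lambda_{\xi_\lambda}|_{\mathfrak{k}^c} = 0$ required for solvability within the fixed K\"ahler class, but this follows by continuity from Part (1) applied at parameter $\lambda$.
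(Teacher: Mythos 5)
Your Part (2) is essentially the paper's argument (Arzel\`a--Ascoli, identification of the limit equation, bootstrap) and is fine; the problems are in Parts (1) and (3). In Part (1) the decisive step --- boundedness of the rescaled sequence $\lambda_i \xi_i$ --- is not justified: Corollary \ref{compact} only says that the vectors $\xi_i$ themselves stay in a fixed compact set, and since $|\lambda_i| \to \infty$ this gives no control on $\lambda_i \xi_i$. What is actually needed, and what Section \ref{lambda as a function on t} supplies, is quantitative: testing $\Fut^{\lambda_i}_{\xi_i} \equiv 0$ against $\zeta = \xi_i$ gives $\lambda_i = \lambda_{\xi_i} = \Fut^0_{\xi_i}(\xi_i)/\nu_{\xi_i}(\xi_i)$; the properness results of Section \ref{section: volume} show $\Fut^0_\xi (\xi) > 0$ sufficiently far from the origin, which forces $\xi_i \to 0$; and near the origin $\Fut^0_\xi(\xi) = O(|\xi|)$ while $\nu_\xi(\xi)$ vanishes exactly to second order, i.e.\ $|\xi| \cdot \lambda_\xi$ extends continuously to the real blow-up $\hat{\mathfrak{t}}$, whence $|\lambda_i \xi_i| = |\lambda_{\xi_i}|\, |\xi_i| \le C$. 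Without this two-step rate analysis your limit computation has nothing to act on. (Minor: $\bar{\Box}\theta_{\xi_i}$ is $O(|\lambda_i|^{-1})$, not $O(|\lambda_i|^{-2})$; it still vanishes in the limit, so this is harmless.)

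In Part (3) the approach as written would fail. Freezing the vector to the ray $\xi_\lambda := \xi_{\mathrm{ext}}/\lambda$ makes the equation $s^\lambda_{\xi_\lambda}(\omega_\phi) = \mathrm{const}$ generically unsolvable in $[\omega]$: the vanishing of $\Fut^\lambda_{\xi_\lambda}|_{\mathfrak{k}^c}$ is a necessary condition, there is no reason it holds on this exact ray, and your appeal to Part (1) to supply it is backwards --- Part (1) says that vectors of solutions are asymptotic to $\xi_{\mathrm{ext}}/\lambda$, not that this ray satisfies the Futaki constraint --- so the argument is circular. Correspondingly, at the linear level, passing to a slice for the $\mathrm{Aut}^0_{\xi_{\mathrm{ext}}}(X/\mathrm{Alb})$-action removes the \emph{kernel} of the self-adjoint Lichnerowicz-type operator but does not make the nonlinear problem solvable: the image of $\phi \mapsto -\mathcal{D}^*\mathcal{D}\phi$ still misses the finite-dimensional space $\mathbb{R} \oplus \mathfrak{t}$ of $T$-invariant holomorphy potentials, and the component of the equation in that space is exactly the obstruction that does not vanish for the frozen ansatz. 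The paper's Theorem \ref{extremal propagation} resolves precisely this by making the vector an unknown: with $\kappa = 1/\lambda$ it solves $\check{\mathscr{S}}^0_\eta (\kappa, \chi, \alpha, \phi) = \check{s}^\kappa_{\eta + \chi}(\omega + \alpha + \sqddbar \phi) = \mathrm{const}$ for $(\chi, \phi) \in \mathfrak{t} \times C^{k+4,\alpha}(X,\mathbb{R})^T$, and the linearization in the $\chi$-direction, $\chi \mapsto -\theta_\chi$, surjects onto that cokernel, so the implicit function theorem applies exactly as in Theorem \ref{perturbation}. Your reparametrization $\kappa = 1/\lambda$, which makes $\lambda = \pm\infty$ an interior parameter value, is the right first move; the missing essential idea is the vector-field degree of freedom.
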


By Theorem B (3), the vector field $\xi_\lambda$ in the above Theorem D (3) is unique for each $\lambda \ll 0$, while we may have other solutions for $\lambda \gg 0$.

In section 5.1, we prove the following result analogous to one of the main results in \cite{LS} on extremal metric. 

\begin{thmE}[Perturbation of K\"ahler class]
Let $\omega$ be a $\mu^\lambda_\xi$-cscK metric on a compact K\"ahler manifold $X$. 
Suppose we have $\lambda < 2 \lambda_1$ for the first eigenvalue $\lambda_1 > 0$ of the operator $\bar{\Box} - J\xi = \frac{1}{2} (\Delta - \nabla \mathrm{Re} \theta_\xi)$ restricted to the space $C^\infty_\xi (X, \mathbb{R})$ of $\xi$-invariant real-valued functions. 
Then there exists a neighbourhood $U$ of $[\omega]$ in the K\"ahler cone and a positive constant $\epsilon > 0$ such that for every K\"ahler class $[\tilde{\omega}] \in U$ and $\tilde{\lambda} \in (\lambda - \epsilon, \lambda + \epsilon)$, there exists a vector $\tilde{\xi}$ and a constant $\mu^{\tilde{\lambda}}_{\tilde{\xi}}$-scalar curvature K\"ahler metric $\tilde{\omega}_{\tilde{\lambda}}$ in the K\"ahler class $[\tilde{\omega}]$. 
\end{thmE}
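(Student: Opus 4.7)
The plan is to apply the implicit function theorem to solve $s^{\tilde{\lambda}}_{\tilde{\xi}}(\tilde{\omega}_0 + \sqddbar \phi) = \mathrm{const}$ with $([\tilde{\omega}], \tilde{\lambda})$ as external parameters and $(\tilde{\xi}, \phi)$ as unknowns. First I would fix a smoothly varying slice $\tilde{\omega}_0 \in [\tilde{\omega}]$ of reference K\"ahler forms near $[\omega]$ with $\tilde{\omega}_0 = \omega$ at the base point, let $K := {^\nabla \mathrm{Isom}}^0_\xi(X, \omega)$ be the compact group provided by Theorem A (1) with Lie algebra $\mathfrak{k}$, and introduce the nonlinear map
\begin{equation*}
F\bigl([\tilde{\omega}], \tilde{\lambda}, \tilde{\xi}, \phi\bigr) \; := \; s^{\tilde{\lambda}}_{\tilde{\xi}}\bigl(\tilde{\omega}_0 + \sqddbar \phi\bigr) - \underline{s}^{\tilde{\lambda}}_{\tilde{\xi}}
\end{equation*}
on a product of $K$-invariant H\"older potentials $\phi \in C^{k+4,\alpha}_K$ and vectors $\tilde{\xi}$ in a neighbourhood of $\xi$ inside $\mathfrak{k}$, where $\underline{s}^{\tilde{\lambda}}_{\tilde{\xi}}$ denotes the appropriate weighted mean so that $F$ takes values in functions of vanishing weighted average. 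By hypothesis $F([\omega], \lambda, \xi, 0) = 0$, and the $K$-invariance ensures all quantities are real-valued.

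Next I would compute the two essential linearisations at the base point. The $\phi$-derivative $L := D_\phi F|_0$ is a fourth-order elliptic operator, self-adjoint for the weighted pairing $\langle f, g \rangle_\theta = \int_X fg \, e^\theta \omega^n$, consisting of a weighted Lichnerowicz-type operator (principal part $\phi \mapsto \dbar \nabla^{1,0}\phi$) plus a lower-order correction involving the combination $2\bar{\Box} - \lambda$ applied to first derivatives. A weighted Bochner-type integration by parts shows that its kernel on $K$-invariant zero-mean functions contains the Killing potentials $\{\theta_\zeta : \zeta \in \mathfrak{k}\}$; the assumption $\lambda < 2\lambda_1$ is exactly what prevents the correction term from producing further zero modes, so the kernel equals this finite-dimensional subspace. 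The $\tilde{\xi}$-derivative at $\zeta \in \mathfrak{k}$, using $[\xi, \zeta] = 0$ and the $\xi$-invariance of $\theta_\zeta$, reduces to
\begin{equation*}
D_{\tilde{\xi}} F\bigl|_0[\zeta] \; = \; 2\bigl(\bar{\Box} - J\xi - \tfrac{\lambda}{2}\bigr) \theta_\zeta.
\end{equation*}
Since $\bar{\Box} - J\xi = \tfrac{1}{2}(\Delta - \nabla \mathrm{Re}\, \theta_\xi)$ has first positive eigenvalue $\lambda_1$, the spectral hypothesis makes $\bar{\Box} - J\xi - \lambda/2$ invertible on zero-mean $K$-invariant functions; in particular $\zeta \mapsto D_{\tilde{\xi}} F|_0[\zeta]$ maps $\mathfrak{k}$ isomorphically onto a complement of $\mathrm{im}(L)$, identified with $\mathrm{coker}(L)$ via the weighted self-adjoint pairing.

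Combining the two, the combined linearisation $(\zeta, \phi) \mapsto D_{\tilde{\xi}} F[\zeta] + L\phi$ is surjective with kernel the closed finite-dimensional subspace $0 \oplus \ker L \cong \mathfrak{k}$. Working on the weighted $L^2_\theta$-orthogonal complement to $\ker L$ inside $C^{k+4,\alpha}_K$ produces a bijective Fredholm restriction of this combined operator, and the Banach-space implicit function theorem then yields, for every $([\tilde{\omega}], \tilde{\lambda})$ sufficiently close to $([\omega], \lambda)$, a unique small $(\tilde{\xi}, \phi)$ in the chosen slice solving $F = 0$. Standard elliptic regularity upgrades $\phi$ to a smooth function, so $\tilde{\omega}_{\tilde{\lambda}} := \tilde{\omega}_0 + \sqddbar \phi$ is the desired $\mu^{\tilde{\lambda}}_{\tilde{\xi}}$-cscK metric in $[\tilde{\omega}]$.

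The central difficulty is the analysis of $\ker L$: because of the $-\lambda \theta$ term in $s^\lambda_\xi$, the operator $L$ is not a perfect square $\mathbb{D}^*_\theta \mathbb{D}_\theta$ as in the cscK or K\"ahler--Ricci soliton cases, and controlling the additional first-order piece is exactly where the spectral gap $\lambda < 2\lambda_1$ enters. Strikingly, the same hypothesis is what makes the transverse $\tilde{\xi}$-perturbation injective onto $\mathrm{coker}(L)$, so both matching conditions can be enforced simultaneously by a single assumption. A more routine technicality is arranging the slice $\tilde{\omega}_0$ and the normalisation $\underline{s}^{\tilde{\lambda}}_{\tilde{\xi}}$ to depend smoothly on $[\tilde{\omega}]$ so that $F$ is a $C^\infty$-map between appropriate Banach manifolds; this follows the pattern established in \cite{LS}.
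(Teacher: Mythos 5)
Your overall strategy coincides with the paper's proof of Theorem \ref{perturbation}: apply the implicit function theorem with $([\tilde{\omega}],\tilde{\lambda})$ as parameters and $(\tilde{\xi},\phi)$ as unknowns, show the $\phi$-linearisation is Fredholm with finite-dimensional cokernel spanned by constants and Killing potentials, compute the $\tilde{\xi}$-derivative as $2(\bar{\Box}-\xi^J)\theta_\zeta-\lambda\theta_\zeta$, and use $\lambda<2\lambda_1$ to see that these directions cover the cokernel. However, two steps of your write-up are genuinely flawed. The first concerns your analysis of $L=D_\phi F|_0$, which you describe as a weighted Lichnerowicz operator \emph{plus} a first-order correction involving $2\bar{\Box}-\lambda$, with the spectral gap invoked to exclude extra zero modes. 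This is based on a false premise: by the variational formula established in the proof of Proposition \ref{Futaki}, the $\phi$-linearisation of $\hat{s}^\lambda_\xi$ at any metric is $-\mathcal{D}^{\theta*}\mathcal{D}\phi+(\dbar^\sharp \hat{s}^\lambda_\xi(\omega))(\phi)$, and since the base point $\omega$ is $\mu^\lambda_\xi$-cscK the gradient term vanishes identically, so $L=-\mathcal{D}^{\theta*}\mathcal{D}$ \emph{exactly}. Its kernel is $\{\phi : \mathcal{D}\phi=0\}$, independent of $\lambda$, and needs no spectral hypothesis; conversely, your assertion that the gap kills putative extra zero modes of a correction term is never substantiated, and self-adjointness of your claimed $L$ would also be in doubt. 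The hypothesis $\lambda<2\lambda_1$ is needed in exactly one place, namely the positivity
\begin{equation*}
\int_X \big(2|\dbar\theta_\zeta|^2-\lambda\theta_\zeta^2\big)\,e^{\theta_\xi}\omega^n>0
\end{equation*}
(Poincar\'e inequality for the weighted Laplacian), which makes the projection of the $\tilde{\xi}$-directions onto $\ker L$ injective and hence the combined linearisation surjective; you do also use it there, correctly.

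The second flaw is your choice of invariance group. Working with $K={^\nabla\mathrm{Isom}}^0_\xi(X,\omega)$-invariant potentials while letting $\tilde{\xi}$ range over a neighbourhood of $\xi$ in all of $\mathfrak{k}$ is inconsistent whenever $\mathfrak{k}$ is nonabelian: for $\zeta\in\mathfrak{k}$ not in the center one has $\theta_{\mathrm{Ad}_g\zeta}=\theta_\zeta\circ g^{-1}$, so $\theta_{\tilde{\xi}}$ is not $K$-invariant, $F$ does not take values in $C^{k,\alpha}_K$, and the functions $\theta_\zeta$ you place in $\ker L$ do not even lie in your domain. The fix is routine but necessary: either restrict $\tilde{\xi}$ to the center $\mathfrak{z}(\mathfrak{k})$ (which contains $\xi$; on $K$-invariant functions the cokernel is then $\mathbb{R}\oplus\mathfrak{z}(\mathfrak{k})$ and the same pairing argument closes the proof), or, as the paper does, fix a maximal torus $T\subset{^\nabla\mathrm{Isom}}^0_\xi(X,\omega)$ and work throughout with $T$-invariant objects, where maximality of $T$ identifies the cokernel as $\mathbb{R}\oplus\mathfrak{t}$. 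With these two repairs your argument becomes the paper's proof.
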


It follows that if a K\"ahler class $[\omega]$ admits a cscK metric, then a small perturbation of $[\omega]$ admits both extremal metric and $\mu^\lambda$-cscK metrics for $\lambda < 2 \lambda_1$. 
For example, the K\"ahler class $c_1 (X)$ of $\mathbb{C}P^2$ blown up at three points $(1:0:0), (0:1:0), (0:0:1)$ admits cscK metric, but a small perturbation of $c_1 (X)$ does not admit cscK metrics (cf. \cite[Example 3.2]{LS}). 
This example shows that there exists a non-trivial path of $\mu^\lambda$-cscK metrics connecting extremal metric and $\mu^0$-cscK metric. 

In section 6.2, we give more explicit examples of $\mu$-cscK metrics on ruled surfaces, using Calabi ansatz method. 
It turns out that there is a K\"ahler class that does not admit extremal metrics, but do admit $\mu^0$-cscK metrics. 
In particular, we observe that there exists a path of $\mu^\lambda$-cscK metrics connecting K\"ahler--Ricci soliton and extremal metric on $X = \mathbb{C}P^2 \# \overline{\mathbb{C}P^2}$ in the K\"ahler class $c_1 (X)$. 

\subsubsection*{Relation with Lahdili's work}
\label{Lahdili}

Just after uploading the first version of this article on arXiv, the author was informed that the $\mu$-scalar curvature is a special part of weighted scalar curvature introduced in \cite{Lah}. 
As there are some overlaps on the results, especially on $\mu$K-energy, we collect them here. 
Proposition \ref{expression} in this article should correspond to Theorem 5 in \cite{Lah} and its corollary is mentioned in Remark 4. 
Proposition \ref{convexity} is also covered in the proof of Proposition 1 in his article. 
Proposition \ref{Futaki} corresponds to Proposition 2 in \cite{Lah}, but the statement is slightly different. 
Computing with $\mu$-Lichnerowicz operator, we can extend the domain of $\Fut_\xi^\lambda$ to $\mathfrak{h}_0 (X)$ from the centralizer $\mathfrak{h}_{0, \xi} (X)$ of $\xi$, where the latter case is considered in \cite{Lah}. 
While it is natural to consider only a torus equivariant test configurations to formulate weighted K-stability (or $\mu$K-stability), our slight deviation to non-equivariant direction $\mathfrak{h}_{0, \xi} (X)^\bot \subset \mathfrak{h}_0 (X)$ enables us to conclude the maximality of $\mathrm{Aut}_\xi^0 (X/\mathrm{Alb}) \subset \mathrm{Aut}^0 (X/\mathrm{Alb})$ among reductive subgroups in $\mathrm{Aut}^0 (X/\mathrm{Alb})$ for $X$ admitting a $\mu^\lambda_\xi$-cscK metric for some $\lambda \le 0$ (see Corollary \ref{K-optimal vectors are finite}). 
This is indeed not the case when $\lambda \gg 0$ as we see in the example of $\mathbb{C}P^1$. 
Lahdili also considers a weighted Futaki invariant for smooth test configurations, which should have an advantage towards an algebraic formulation of $\mu$K-stability (or weighted K-stability) for general test configurations (cf. section \ref{section: K-stability}). 
The materials in the section \ref{section: connecting path}, \ref{section: volume} and \ref{section: perturbation} of this article have different original flavors from these overlaps. 

\subsubsection*{Organization}

In section \ref{section: Donaldson-Fujiki}, we explain a motivative interpretation of $\mu$-scalar curvature as a moment map. 
We observe in section \ref{section: connecting path} how $\mu$-scalar curvature is related to extremal metric, assuming some results in section \ref{section: volume}. 
We prove Theorem A in section \ref{section: reductive} and check Theorem B (1) in section \ref{section: Futaki} using a formula obtained in section \ref{section: reductive}. 
Theorem B (2) is verified in section \ref{section: volume}. 
We also prove Theorem D in this section, combining with the observation in section \ref{section: connecting path}. 
Theorem C is demonstrated in section \ref{section: muK-energy}. 
We also present some naive stability notion which should fit into our `$\mu$'-formulation and would be refined in future studies (cf. \cite{Lah, Ino2}). 
In section 5, we prove Theorem E, Theorem B (3) and Theorem D (3). 
In section 6, we firstly observe there are non-trivial $\mu^\lambda$-cscK metrics on $\mathbb{C}P^1$ for $\lambda \gg 0$, and then we construct explicit examples of $\mu$-cscK metrics on ruled surfaces, using Calabi ansatz method. 

\subsubsection*{Acknowledgement}

I am grateful to my advisor Shigeharu Takayama for his constant support. 
I am also grateful to Akito Futaki for helpful comments. 
I thank Abdellah Lahdili and Ruadha\'i Dervan for informing me about relation with Lahdili's work on weighted scalar curvature, which is a far generalization of $\mu$-scalar curvature in view of moment map picture. 
I am thankful to Vestislav Apostolov, Julien Keller, Mehdi Lejmi for kindly suggesting me to see examples on ruled surfaces using Calabi ansatz method. 
Finally, I am grateful to Abdellah Lahdili for many helpful discussions. 
This work is supported by JSPS KAKENHI Grant Number 18J22154 and the Program for Leading Graduate Schools, MEXT, Japan.

\section{Motivative observation on $\mu$-cscK metrics}

\subsection{$\mu$-scalar curvature and Donaldson-Fujiki picture}
\label{section: Donaldson-Fujiki}

In this section, we fix a symplectic structure $\omega$ on a smooth manifold $M$ and a smooth vector field $\xi$ preserving $\omega$ and vary complex structures $J$ and the parameter $\lambda \in \mathbb{R}$. 

In this setup, we can also consider all variation of K\"ahler metrics on a fixed complex manifold $X = (M, J)$ and in a fixed K\"ahler class as follows. 
Let $\omega'$ be another K\"ahler metric on $X$ in the K\"ahler class $[\omega]$. 
As $t \omega' + (1-t) \omega$ is nondegenerate for all $t \in [0,1]$, we can apply Moser's theorem to obtain a diffeomorphism $\phi$ of $M$ so that $\phi^* \omega' = \omega$. 
We obtain another $\omega$-compatible complex structure $\phi^* J$ on $M$. 
Conversely, if we have an $\omega$-compatible complex structure $J'$ which is biholomorphic to $J$ via some diffeomorphism $\phi$ of $M$, i.e. $J' = \phi^* J$, then we obtain another K\"ahler form $\omega' := (\phi^{-1})^* \omega$ on $X = (M, J)$. 
This gives a natural identification of the space of K\"ahler metrics in a fixed K\"ahler class $[\omega]$ on a complex manifold $X = (M, J)$ with the quotient space 
\[ \{ J' \in \mathcal{J} (M, \omega) ~|~ (M, J') \text{ is biholomorphic to } (M, J) \} / \mathrm{Symp} (M, \omega). \]
The leaf $\{ J' \in \mathcal{J} (M, \omega) ~|~ (M, J') \text{ is biholomorphic to } (M, J) \}$ can be psychologically regarded as `the orbit of $J$ by the complexified action of $\mathcal{J} (M, \omega) \curvearrowleft \mathrm{Symp} (M, \omega)$', which enables us to interpret YTD-type conjecture as an infinite dimensional analogy of the finite dimensional Kempf-Ness theorem. (cf. \cite{Don}, \cite{Sze}. )

\subsubsection{Moment map}

Let $(M, \omega)$ be a closed $C^\infty$-symplectic manifold. 
A smooth vector field $\xi$ on $M$ preserves $\omega$ if and only if $i_\xi \omega$ is closed, as $L_\xi \omega = d i_\xi \omega$. 
Put $\mathfrak{ham} (M, \omega)$ as 
\begin{equation}
\mathfrak{ham} (M, \omega) := \{ \xi \in \mathfrak{X} (M) ~|~ i_\xi \omega \text{ is exact } \},  
\end{equation}
which is a Lie subalgebra of the Lie algebra $\mathfrak{X} (M)$ of smooth vector fields. 

A smooth right action $M \curvearrowleft T$ by a Lie group $T$ is called \textit{Hamiltonian} if the linearization $\mathfrak{t} \to \mathfrak{X} (M)$ factors through $\mathfrak{ham} (M, \omega)$. 
In this case, we have an equivariant smooth map $\mu: M \to \mathfrak{t}^*$ satisfying $-d \mu_\xi = i_\xi \omega$ called a \textit{moment map}, where we consider the coadjoint action on $\mathfrak{t}^*$ and $\mu_\xi$ is a real valued function on $M$ defined by $\mu_\xi (x) := \langle \mu (x), \xi \rangle$. 
For two moment maps $\mu, \mu'$ with respect to the same action and the same symplectic form $\omega$, we know that $\mu - \mu'$ is constant as $d (\mu - \mu') = 0$ and moreover $\mu - \mu' \in (\mathfrak{t}^*)^T = \{ \nu \in \mathfrak{t}^* ~|~ \nu. t = \nu \text{ for every } t \in T \}$ by the equivariance of the maps. 
In other words, moment maps are unique modulo $(\mathfrak{t}^*)^T$. 
We will mainly consider an action by a closed real torus $T \cong (U (1))^k$ and have $(\mathfrak{t}^*)^T = \mathfrak{t}^*$ in this case. 

There is an associated element $[\omega + \mu] \in H^2_T (M; \mathbb{R})$ of the equivariant de Rham cohomology. 
For another $T$-invariant symplectic form $\omega'$ and a moment map $\mu'$, we have $[\omega + \mu] = [\omega' + \mu']$ if and only if there exists a $T$-invariant 1-form $\phi$ such that $\omega = \omega' + d \phi$ and $\mu_\xi = \mu'_\xi + i_\xi \phi$. 
In particular, we have $[\omega + \mu] = [\omega + \mu']$ if and only if $\mu = \mu'$ as $\xi$ has a zero. 
The push-forward measure $\mu_* (\omega^n/n!)$ on $\mathfrak{t}^*$ is called the \textit{Duistermaat--Heckman measure}, which defines the same measure independent of the choice of $\omega + \mu$ in the same equivariant cohomology class \cite{GGK}.

\subsubsection{Vector fileds}

For an $\omega$-compatible almost complex structure $J$, we put 
\begin{align}
\xi^J := J \xi + \sqrt{-1} \xi, \quad \bar{\xi}^J := J \xi - \sqrt{-1} \xi 
\end{align}
and $\theta_\xi := -2 \mu_\xi$. 
Then we have 
\begin{align}
\sqrt{-1} \bar{\partial} \theta_\xi 
= i_{\xi^J} \omega, 
\quad 
\sqrt{-1} \partial \theta_\xi 
= - i_{\bar{\xi}^J} \omega, 
\end{align}
where $\bar{\partial} := (d + \sqrt{-1} Jd)/2$ and $\partial := (d - \sqrt{-1} Jd)/2$.
In other words, we have $\xi^J = g^{p \bar{q}} \theta_{\bar{q}} \partial_p$ and $\bar{\xi}^J = g^{p \bar{q}} \theta_p \bar{\partial}_q$ in the usual K\"ahlerian notation. 
We also have $J \xi = - \nabla_{g_J} \mu_\xi$ and thus 
\begin{align} 
\xi^J \theta_\xi 
&= -2 (J\xi) \mu_\xi = 2 |\xi|^2_{g_J} = |\xi^J|^2_{g_J} 
\\
&= |\dbar \theta_\xi|^2_{g_J} = \mathrm{tr}_{g_J} (\sqrt{-1} \partial \theta_\xi \wedge \dbar \theta_\xi). 
\end{align}

\subsubsection{$\mu$-scalar curvature}

Let $(M, \omega)$ be a closed $C^\infty$-symplectic manifold with a Hamiltonian action by a closed real torus $T$ and $\mu: M \to \mathfrak{t}^*$ be a moment map. 
For an $\omega$-compatible almost complex structure $J$, we denote by $s (J)$ the hermitian scalar curvature defined by Donaldson \cite{Don}, which coincides with the usual K\"ahler (the half of the Riemannian) scalar curvature $s_{\text{K\"a}} (g_J) = \frac{1}{2} s_{\text{Rm}} (g_J)$ for integrable $J$. 
Note that this $s (J)$ differs from the half of the Riemannian scalar curvature $\frac{1}{2} s_{\text{Rm}} (g_J)$ for non-integrable $J$ in general. 

The \textit{$\mu$-scalar curvature} $s_\xi (g_J)$ of a metric $g_J (\cdot, \cdot) = \omega (\cdot, J \cdot)$ (associated to a $T$-invariant $\omega$-compatible almost complex structure $J$ on $M$) with respect to a vector $\xi \in \mathfrak{t}$ is defined as follows: 
\begin{equation}
\label{mu scalar}
s_\xi (g_J) := (s (J) - \Delta_{g_J} \mu_\xi) + (-\Delta_{g_J} \mu_\xi + 2\xi^J \mu_\xi), 
\end{equation}
where $\Delta_{g_J}$ denotes the usual Riemannian Laplacian $\Delta_{g_J} = d^* d$ with respect to $g_J$
Since two moment maps with respect to the same symplectic form only differ by a constant, (\ref{mu scalar}) is independent of the choice of the moment map $\mu$. 
When $\xi = 0$ and $J$ is integrable, the $\mu$-scalar curvature is of course nothing but the usual K\"ahlerian scalar curvature. 

As $\Delta_{g_J}$ is the twice of $\dbar/\partial$-Laplacians $\bar{\Box} = \Box = - g^{i \bar{j}} \partial_i \bar{\partial}_j$ when $J$ is integrable, we can express (\ref{mu scalar}) as 
\begin{align} 
\notag
s_\xi (g_J) 
&= (s_{\text{K\"a}} (g_J) + \bar{\Box} \theta_\xi) + (\bar{\Box} \theta_\xi - \xi^J \theta_\xi) 
\\ \notag
&= (\bar{\Box} \log \det g - \xi^J \log \det g + \sum_{i=1}^n \partial_i \xi^i) + (\bar{\Box} \theta_\xi - \xi^J \theta_\xi)
\\ 
&=(\bar{\Box} - \xi^J) \log (e^{\theta_\xi} \det g) + \sum_{i=1}^n \partial_i \xi^i, 
\end{align}
using $\theta_\xi = - 2 \mu_\xi$. 
Note that $\xi^J \log \det g - \sum \partial_i \xi^i = -\bar{\Box} \theta_\xi$ is a globally-defined function while $\xi^J \log \det g$ is just locally-defined on a holomorphic chart. 

Put 
\begin{align}
\bar{s}_\xi (J) 
&:= \int_X s_\xi (g_J) e^{-2\mu_\xi} \omega^n \Big{/} \int_X e^{-2 \mu_\xi} \omega^n 
\\ \notag
&= \int_X (s (g_J) - \Delta_{g_J} \mu_\xi) e^{-2 \mu_\xi} \omega^n \Big{/} \int_X e^{-2 \mu_\xi} \omega^n. 
\end{align}
A similar calculation as in the proof of Proposition 3.1 in \cite{Ino} shows
\begin{align*}
\frac{d}{dt} \bar{s}_\xi (J_t) 
&= \Big{(} \frac{1}{4} \frac{d}{dt} (4 s (J_t), e^{-2 \mu_\xi}) + \int_X 2 |\xi|^2_{g_J} \Big{)} \Big{/} \int_X e^{-2 \mu_\xi} \omega^n
\\
&= \Big{(} \frac{1}{4} (L_{-2 e^{-2 \mu_\xi} \xi} J_t, J_t \dot{J}_t) + \int_X 2 \omega (\xi, \dot{J}_t \xi) e^{-2 \mu_\xi} \omega^n \Big{)} \Big{/} \int_X e^{-2 \mu_\xi} \omega^n
\\
&= \int_X \Big{(} (-J d\mu_\xi \otimes \xi + d \mu_\xi \otimes J \xi, J_t \dot{J}_t) + 2 \omega (\xi, \dot{J}_t \xi) \Big{)} e^{-2 \mu_\xi} \omega^n \Big{/} \int_X e^{-2 \mu_\xi} \omega^n
\\
&= 0. 
\end{align*}
So $\bar{s}_\xi (J)$ is a constant independent of $J$ compatible with $\omega$. 
For an integrable complex structure $J$, we can compute it as 
\begin{align*} 
\bar{s}_\xi 
&= \Big{(} \int_X n e^{-2 \mu_\xi} \Ric (\omega) \wedge \omega^{n-1} + \int_X \bar{\Box} (-2 \mu_\xi) e^{-2 \mu_\xi} \omega^n \Big{)} \Big{/} \int_X e^{-2 \mu_\xi} \omega^n
\\ 
&= \int_X (\Ric (\omega) + \bar{\Box} \theta_\xi) ~e^{\omega + \theta_\xi} \Big{/} \int_X e^{\omega + \theta_\xi}
\\ 
& = 2 \pi (c_1 (X, \xi) \cdot e^{c_1 (L, \xi)})/e^{c_1 (L, \xi)}, 
\end{align*}
where the last expression depends only on the equivariant Chern classes. 


For each $\lambda \in \mathbb{R}$, we define the \textit{$\mu^\lambda$-scalar curvature} $s^\lambda_\xi (g_J)$ of a metric $g_J$ by 
\begin{equation}
s^\lambda_\xi (g_J) = s_\xi (g_J) + 2 \lambda \mu_\xi. 
\end{equation}
We put 
\begin{equation}
\bar{\mu}_\xi := \int_M \mu_\xi e^{-2\mu_\xi} \omega^n \Big{/} \int_M e^{-2\mu_\xi} \omega^n
\end{equation}
and 
\begin{align}
\hat{\mu}_\xi 
&:= \mu_\xi - \bar{\mu}_\xi, 
\\
\bar{s}^\lambda_\xi 
&:= \bar{s}_\xi + 2 \lambda \bar{\mu}_\xi, 
\\
\hat{s}^\lambda_\xi (g_J)
&:= s^\lambda_\xi (g_J) - \bar{s}^\lambda_\xi. 
\end{align}
Then the constant $\bar{s}^\lambda_\xi$ depends only on the equivariant Chern classes $c_1^T (X), c_1^T (L)$ and $s^\lambda_\xi (g_J)$ is constant iff $\hat{s}^\lambda_\xi (g_J) = 0$. 

\subsubsection{Relation with K\"ahler-Ricci soliton}
\label{KR soliton}

There are two fundamental examples of constant $\mu$-scalar curvature K\"ahler metric: 
\begin{itemize}
\item A constant scalar curvature K\"ahler metric is also a constant $\mu$-scalar curvature K\"ahler metric with respect to $\xi = 0$ and any $\lambda \in \mathbb{R}$. 

\item A K\"ahler-Ricci soliton $g_J$ with respect to $\xi$, i.e. $\Ric (g_J) - L_{\xi^J} g_J = \lambda g_J$, is a constant $\mu$-scalar curvature K\"ahler metric with respect to $\xi$ and $\lambda$. 
\end{itemize}

The second claim follows from a standard calculation in \cite{TZ} (cf. \cite{Ino}). 
For the readers' convenience, we exhibit the proof here. 
Remember that K\"ahler-Ricci soliton with nontrivial $\xi \neq 0$ could exist only when $\lambda > 0$ and $[\lambda \omega] \in 2 \pi c_1 (X)$. 
In particular, $X$ is a Fano manifold in this case. 
Take a Ricci potential $h$ of $\omega$, i.e. $\Ric (\omega) - \lambda \omega = \sqddbar h$, and consider a moment map $\mu$ with respect to $\omega$ normalized as 
\begin{equation} 
\label{Fano moment}
\int_X \mu e^h \omega^n = 0. 
\end{equation} 
Taking the Lie derivative $L_{\xi^J}$ of $\Ric (\omega) - \lambda \omega = \sqddbar h$, we have 
\[ \sqddbar (\bar{\Box} \theta_\xi -\lambda \theta_\xi) = \sqddbar ((\dbar^\sharp h) \theta_\xi), \]
where we used that $\xi^J$ is holomorphic and $(\dbar^\sharp h) \theta_\xi = \xi^J h$. 
Note that the operator $\bar{\Box} - \dbar^\sharp h$ is formally self-adjoint with respect to the weighted measure $e^h \omega^n$, therefore $(\bar{\Box} - \dbar^\sharp h) f = \varphi$ has a solution $f$ (unique up to constant) iff $\int_X \varphi e^h \omega^n = 0$. 
So under the normalization (\ref{Fano moment}), we obtain 
\begin{equation} 
\label{Fano moment 2}
\bar{\Box} \theta_\xi - \xi^J h - \lambda \theta_\xi = 0. 
\end{equation}
Then we can express $\bar{s}_\xi$ as 
\begin{align}
\notag 
\bar{s}_\xi 
&= \int_X (\bar{\Box} (- h + \theta_\xi) + \lambda n) e^{\theta_\xi} \omega^n \Big{/} \int_X e^{\theta_\xi} \omega^n
\\ \notag
&= \lambda n + \int_X (-\bar{\Box} h + \xi^J h + \lambda \theta_\xi) e^{\theta_\xi} \omega^n \Big{/} \int_X e^{\theta_\xi} \omega^n
\\ \label{Fano s bar}
&= \lambda n + \lambda \int_X \theta_\xi e^{\theta_\xi} \omega^n \Big{/} \int_X e^{\theta_\xi} \omega^n, 
\end{align}
where we again used that $\bar{\Box} - \xi^J$ is formally self-adjoint with respect to the weighted measure $e^{\theta_\xi} \omega^n$. 
Now suppose $\omega$ is a K\"ahler-Ricci soliton, then taking the trace of $\Ric (g_J) - L_{\xi'} g_J = \lambda g_J$, we obtain 
\[ s (g_J) + \bar{\Box} \theta_\xi = \lambda n. \]
As $h$ is equal to $\theta_\xi$ up to constant, we have $\bar{\theta}_\xi = \int_X \theta_\xi e^{\theta_\xi} \omega^n = 0$ under the normalization (\ref{Fano moment}) and $\bar{\Box} \theta_\xi - \xi^J \theta_\xi - \lambda \theta_\xi = 0$. 
Therefore, we conclude
\[ s_\xi (g_J) - \lambda \theta_\xi = (s (g_J) + \bar{\Box} \theta_\xi) + (\bar{\Box} \theta_\xi - \xi^J \theta_\xi) - \lambda \theta_\xi = \bar{s}_\xi. \]

The normalization (\ref{Fano moment}) of the moment map $\mu$ is equivalent to $[\omega + \mu] = c_1^T (X)$ where $c_1^T (X)$ denotes the equivariant Chern class of the anticanonical bundle $-K_X$, which can be represented by the equivariant closed form $\Ric (\omega) + \bar{\Box} \theta$ in the equivariant deRham cohomology. 

\subsubsection{Donaldson-Fujiki picture for $\mu$-scalar curvature}

Now we explain the moment map picture for $\mu$-scalar curvature. 
Let $(M, \omega)$ be a real $2n$-dimensional $C^\infty$-symplectic manifold. 
Denote by $\mathcal{J}_\xi (M, \omega)$ the space of all $\xi$-invariant almost complex structures compatible with $\omega$, which admits the structure of an infinite dimensional Fr\'echet manifold and is path-connected. 
We have the following symplectic structure $\Omega_\xi$ on $\mathcal{J}_\xi (M, \omega)$: 
\begin{equation}
\Omega_\xi (A, B) := \int_M \mathrm{Tr} (JA B) e^{-2 \mu_\xi} \omega^n
\end{equation}
for each $A, B \in T_J \mathcal{J}_\xi (M, \omega) \subset \mathrm{End} TM$. 

For simplicity, we assume the first Betti number of $M$ is zero. 
In this case, we can identify the Lie algebra $\mathfrak{symp}_\xi (M, \omega)$ of the Fr\'echet Lie group $\mathrm{Symp}_\xi (M, \omega)$ of symplectic diffeomorphisms preserving $\xi$ with the space $C^\infty_\xi (M)/\mathbb{R}$ of real-valued $\xi$-invariant $C^\infty$-functions on $M$ modulo constant. 
We identify a $2n$-form $\varphi$ on $M$ satisfying $\int_M \varphi = 0$ and $L_\xi \varphi = 0$ with the following element of the dual of $\mathfrak{symp}_\xi (M, \omega)$: $f \mapsto \int_M f \varphi$. 

Now define a smooth map $\mathcal{S}^\lambda_\xi: \mathcal{J}_\xi (M, \omega) \to \mathfrak{symp}_\xi (M, \omega)^*$ of Fr\'echet manifolds by 
\begin{equation} 
\mathcal{S}^\lambda_\xi (J) :=  4 \hat{s}^\lambda_\xi (g_J) e^{-2\mu_\xi} \omega^n. 
\end{equation}
Then we have the following. 

\begin{prop}[\cite{Ino}]
The map $\mathcal{S}^\lambda_\xi :\mathcal{J}_\xi (M, \omega) \to \mathfrak{symp}_\xi (M, \omega)^*$ is a moment map with respect to the symplectic structure $\Omega_\xi$ and the action of $\mathrm{Symp}_\xi (M, \omega)$ on $\mathcal{J}_\xi (M, \omega)$. 
Namely, $\mathcal{S}^\lambda_\xi$ is a $\mathrm{Symp}_\xi (M, \omega)$-equivariant smooth map satisfying
\begin{equation} 
- \frac{d}{dt}\Big{|}_{t=0} \langle \mathcal{S}^\lambda_\xi (J_t) , f \rangle = \Omega_\xi (L_{X_f} J_0, \dot{J}_0) 
\end{equation}
for every smooth curve $J_t \in \mathcal{J}_\xi (M, \omega)$ and $f \in C^\infty_\xi (M)$, where $X_f$ is the Hamiltonian vector field of $f$: $df = -i_{X_f} \omega$. 
\end{prop}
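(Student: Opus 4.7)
The plan is to follow Donaldson's original argument for the Fujiki--Donaldson moment map picture, adapted to the weighted symplectic structure $\Omega_\xi$, and to treat the $\lambda \mu_\xi$ correction separately. Two things must be verified: $\mathrm{Symp}_\xi(M,\omega)$-equivariance of $\mathcal{S}^\lambda_\xi$, and the infinitesimal moment map identity. A preliminary reduction: since $\bar{s}^\lambda_\xi$ is independent of $J$, under $\frac{d}{dt}$ one may replace $\hat{s}^\lambda_\xi$ by $s^\lambda_\xi$; the role of the subtraction of $\bar{s}^\lambda_\xi$ is solely to place the image of $\mathcal{S}^\lambda_\xi$ in the subspace of $2n$-forms of vanishing total integral, i.e.\ in the dual of $C^\infty_\xi(M)/\mathbb{R}$.

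Equivariance is a consequence of naturality. Any $\phi \in \mathrm{Symp}_\xi(M,\omega)$ preserves both $\omega$ and $\xi$, hence $\mu_\xi \circ \phi = \mu_\xi$ (the irrelevant additive constant of the moment map aside). Each ingredient of $s^\lambda_\xi(g_J)$ --- the Hermitian scalar curvature $s(J)$, the Laplacian $\Delta_{g_J}$, the vector field $\xi^J$, and $\mu_\xi$ --- is natural under $\phi$, so $s^\lambda_\xi(g_{\phi^*J}) = s^\lambda_\xi(g_J) \circ \phi$. A change of variables in the integral defining $\langle \mathcal{S}^\lambda_\xi(\phi^*J), f \rangle$ then immediately yields the required equivariance.

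For the infinitesimal identity I would differentiate each summand of $s^\lambda_\xi(g_{J_t})$ along a smooth path $J_t$ and pair against $f \, e^{-2\mu_\xi}\omega^n$. Donaldson's formula for the variation of the Hermitian scalar curvature, together with integration by parts and the Lichnerowicz operator, reproduces $\Omega(L_{X_f}J, \dot{J})$ in the unweighted case; switching the volume form to $e^{-2\mu_\xi}\omega^n$ produces extra contributions in which derivatives fall on the weight. The variations of the correction pieces $-\Delta_{g_J}\mu_\xi$ and $\xi^J \mu_\xi = |\xi|^2_{g_J}$ are straightforward contractions of $\dot{J}$ with tensors built from $\mu_\xi$, $\omega$, and $J$; the $2\lambda \mu_\xi$ piece has zero first variation because $\mu_\xi$ is $J$-independent.

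The main obstacle, and the arithmetic heart of the argument, is to verify that after all the integrations by parts the weight-generated corrections from the first step cancel exactly against the contributions of the second. This cancellation is precisely the algebraic identity that forces the particular combination $(s(J) + \bar{\Box}\theta_\xi) + (\bar{\Box}\theta_\xi - \xi^J\theta_\xi)$, rather than any other correction of the scalar curvature, to be a moment map for $\Omega_\xi$; conceptually it reflects the fact that the equivariant Ricci form $\Ric(\omega) + \sqddbar \theta_\xi$ is the natural curvature of the weighted measure $e^{\theta_\xi}\omega^n$, whose equivariant trace is $s_\xi(g_J)$. Once this cancellation is carried out and the resulting expression is matched against $\int_M \mathrm{Tr}(J \, L_{X_f}J \cdot \dot{J}) \, e^{-2\mu_\xi}\omega^n = \Omega_\xi(L_{X_f}J_0, \dot{J}_0)$, the proof is complete. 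The case $\lambda = 0$ is already the content of \cite{Ino}; extension to general $\lambda$ is automatic since $2\lambda \mu_\xi$ contributes a $J$-independent term.
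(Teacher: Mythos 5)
Your closing paragraph is, in effect, the paper's own treatment of this statement: the paper contains \emph{no} proof of the proposition --- it is imported wholesale from \cite{Ino} (the proposition is labelled with that citation) --- and the passage from the $\lambda$-independent case to general $\lambda$ is exactly as you say, since $\mathcal{S}^\lambda_\xi (J) - \mathcal{S}^0_\xi (J) = 8 \lambda \hat{\mu}_\xi e^{-2\mu_\xi} \omega^n$ is a $J$-independent, $\mathrm{Symp}_\xi (M, \omega)$-invariant element of $\mathfrak{symp}_\xi (M, \omega)^*$ with zero total integral; this is precisely the paper's remark, immediately after the proposition, that moment maps for $\Omega_\xi$ are unique up to invariant elements. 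Your equivariance argument is also essentially right, with one repair: the additive constant of $\mu_\xi$ is \emph{not} irrelevant here (it enters the weight $e^{-2\mu_\xi}$ and the $\lambda$-term multiplicatively and additively), but it is actually zero: $\mu_\xi \circ \phi - \mu_\xi$ is constant, and integrating against the $\phi$-invariant measure $\omega^n$ forces that constant to vanish, so $\mu_\xi \circ \phi = \mu_\xi$ on the nose.

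The genuine gap is in the middle of your argument: the cancellation you yourself call ``the arithmetic heart'' is asserted and never carried out, so as a self-contained proof the attempt is incomplete --- and the way you set it up (a weighted variational formula for the Hermitian scalar curvature, with weight-generated corrections to be cancelled against the variations of $-\Delta_{g_J} \mu_\xi$ and $\xi^J \mu_\xi$) is organized less efficiently than the computation in \cite{Ino}, whose $f \equiv 1$ instance the paper displays when proving $\frac{d}{dt} \bar{s}_\xi (J_t) = 0$. The trick there is to absorb the weight into the Hamiltonian instead of re-deriving Donaldson's formula with a weight: $\int_M 4 s (J) f e^{-2\mu_\xi} \omega^n = (4 s(J), f e^{-2\mu_\xi})$ is the \emph{unweighted} Donaldson moment map paired against the fixed function $f e^{-2\mu_\xi}$, so its $t$-derivative is given by Donaldson's theorem applied to the Hamiltonian vector field $X_{f e^{-2\mu_\xi}} = e^{-2\mu_\xi} (X_f - 2 f \xi)$; meanwhile the remaining terms of $s_\xi (g_J)$ integrate by parts against $e^{-2\mu_\xi} \omega^n$ to
\begin{equation*}
\int_M \big( 2 f \, g_J (\xi, \xi) - 2\, g_J (\xi, X_f) \big) e^{-2\mu_\xi} \omega^n ,
\end{equation*}
which are linear contractions of $J$ (recall $g_J (\cdot, \cdot) = \omega (\cdot, J \cdot)$) whose variations are immediate, e.g.\ $\frac{d}{dt} g_{J_t} (\xi, X_f) = \omega (\xi, \dot{J}_t X_f)$. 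The identity you must then verify is that the Leibniz terms of $L_{e^{-2\mu_\xi} (X_f - 2 f \xi)} J$ --- those of the form $J dh \otimes Y - dh \otimes J Y$ in which derivatives hit $f e^{-2\mu_\xi}$ --- cancel, under the integral, against these variations; for $f \equiv 1$ this is literally the displayed line in the paper where $\int_X \big( (-J d\mu_\xi \otimes \xi + d\mu_\xi \otimes J \xi, J_t \dot{J}_t) + 2 \omega (\xi, \dot{J}_t \xi) \big) e^{-2\mu_\xi} \omega^n$ is shown to vanish. Without carrying out this verification (or falling back, as the paper does, on the explicit citation to \cite{Ino}), your proposal does not establish the proposition.
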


Note that moment maps with respect to the symplectic structure $\Omega_\xi$ is unique up to $\mathrm{Symp}_\xi (M, \omega)$-invariant elements of $\mathfrak{symp}_\xi (M, \omega)$. 
In particular, the map $J \mapsto (\hat{s}_\xi (g_J) + \mu_\zeta - \int_M \mu_\zeta e^{-2 \mu_\xi} \omega^n / \int_M e^{-2\mu_\xi} \omega^n) e^{-2\mu_\xi} \omega^n$ also gives a moment map for any $\zeta$ tangent to the action of the closed torus generated by $\xi$. 
In this article, we restrict our interest to the proportional one, i.e. $\zeta = -2\lambda \xi$ for some $\lambda \in \mathbb{R}$. 

The following invariant gives a constraint on $\lambda$ for each fixed $\xi$ and conversely a constraint on $\xi$ for each fixed $\lambda$ for the non-emptiness of the moduli space $(\mathcal{S}^\lambda_\xi)^{-1} (0)/\mathrm{Symp}_\xi (M, \omega)$. 
We will study these constraints in the next section and section \ref{section: volume}, respectively. 

\begin{cor}[$\mu$-Futaki invariant]
Let $\mathfrak{t}$ be the Lie algebra of the closed torus generated by $\xi$. 
The following linear map $\Fut^\lambda_\xi: \mathfrak{t} \to \mathbb{R}$, 
\begin{align*}
\Fut_\xi^\lambda (\zeta) 
&:= \int_M \hat{s}^\lambda_\xi (g_J) (-2 \mu_\zeta) e^{- 2 \mu_\xi} \omega^n \Big{/} \int_M e^{- 2 \mu_\xi} \omega^n
\end{align*}
is independent of the choice of $J \in \mathcal{J}_\xi (M, \omega)$ and the moment map $\mu$ (as we divide it by $\int_M e^{-2 \mu_\xi} \omega^n$). 
\end{cor}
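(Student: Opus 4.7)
The plan is to establish the two invariance statements separately. For independence of the moment map, I would exploit the fact that any two moment maps $\mu, \mu'$ for the same symplectic form and torus action satisfy $\mu' = \mu + c$ for a constant $c \in (\mathfrak{t}^*)^T = \mathfrak{t}^*$. Under such a shift, the weight $e^{-2\mu_\xi}\omega^n$ scales by the global factor $e^{-2c(\xi)}$, which cancels in the ratio defining $\Fut^\lambda_\xi(\zeta)$. The Hamiltonian potential $\theta_\xi = -2\mu_\xi$ shifts only by a constant, so the derivative terms $\bar{\Box}\theta_\xi$ and $\xi^J\theta_\xi$ are unaffected; consequently $s^\lambda_\xi(g_J)$ and its weighted mean $\bar{s}^\lambda_\xi$ both shift by the same additive constant $2\lambda c(\xi)$, leaving $\hat{s}^\lambda_\xi$ invariant. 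The only remaining change comes from $\mu_\zeta \mapsto \mu_\zeta + c(\zeta)$, producing an extra term proportional to the weighted mean of $\hat{s}^\lambda_\xi$; since $\bar{s}^\lambda_\xi$ is defined precisely as that weighted mean, this extra term vanishes.

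For independence of $J$, I would use the path-connectedness of $\mathcal{J}_\xi(M,\omega)$ together with the moment map property of $\mathcal{S}^\lambda_\xi$ recorded in the preceding proposition. Note first that $f := -2\mu_\zeta$ is $\xi$-invariant, since $\mathfrak{t}$ is abelian and $\xi\mu_\zeta = \omega(\zeta,\xi)=0$, so it is a legitimate test function. Given a smooth path $J_t$ in $\mathcal{J}_\xi(M,\omega)$, the pairing $\langle \mathcal{S}^\lambda_\xi(J_t), -2\mu_\zeta\rangle$ agrees with $4\Fut^\lambda_\xi(\zeta) \cdot \int_M e^{-2\mu_\xi}\omega^n$, and the latter normalization is $J$-independent. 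Hence the moment map identity reduces the $t$-derivative of $\Fut^\lambda_\xi(\zeta)$ to a multiple of $\Omega_\xi(L_{X_{-2\mu_\zeta}}J_t, \dot{J}_t)$. The convention $df = -i_{X_f}\omega$ yields $X_{-2\mu_\zeta} = -2\zeta$, so this expression becomes $-2\Omega_\xi(L_\zeta J_t, \dot{J}_t)$.

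The crucial ingredient is that every $J_t \in \mathcal{J}_\xi(M,\omega)$ is automatically invariant under the whole closed torus $T = \overline{\exp(\mathbb{R}\xi)}$: the condition $L_\xi J_t = 0$ means $J_t$ is preserved by each $\exp(s\xi)$, and by continuity this invariance extends to the closure $T$. In particular $L_\zeta J_t = 0$ for every $\zeta \in \mathfrak{t}$, the derivative vanishes along every smooth path, and $\Fut^\lambda_\xi(\zeta)$ is constant on the path-connected space $\mathcal{J}_\xi(M,\omega)$.

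The main subtlety, rather than a serious obstacle, is bookkeeping: verifying the sign and scaling conventions in the Hamiltonian vector field computation and confirming that the average-zero normalization built into $\hat{s}^\lambda_\xi$ is exactly what is needed to absorb the affine ambiguity of $\mu$. Once these are in hand, both invariance statements reduce to clean applications of the moment map property together with the orthogonality of average-zero functions to constants.
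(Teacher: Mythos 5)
Your proposal is correct and is essentially the derivation the paper intends: the corollary is stated as an immediate consequence of the moment map proposition, and your argument (pairing $\mathcal{S}^\lambda_\xi$ with the $\xi$-invariant test function $-2\mu_\zeta$, noting $X_{-2\mu_\zeta}=-2\zeta$ and that every $J\in\mathcal{J}_\xi(M,\omega)$ is invariant under the full closed torus $T=\overline{\exp\mathbb{R}\xi}$ by density, then using path-connectedness of $\mathcal{J}_\xi(M,\omega)$) is exactly the standard way to extract this invariance, with the affine ambiguity of $\mu$ absorbed by the average-zero normalization of $\hat{s}^\lambda_\xi$ just as you describe.
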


If we fix a complex structure $J$, it is independent of the choice of the K\"ahler metric $\omega'$ in the K\"ahler class $[\omega]$ (by Moser's theorem). 
So in particular, $\Fut^\lambda_\xi$ can be regarded as an invariant of the quadruple $(X, [\omega], \xi, \lambda)$ where $X = (M, J)$ is a complex manifold. 
As observed in \cite{Wang}, the moment map picture further expects that $\Fut^\lambda_\xi$ extends to $\mathfrak{h}_{0, \xi} (X)$. 
In section \ref{section: Futaki}, we further show that $\Fut^\lambda_\xi$ extends to $\mathfrak{h}_0 (X)$, which is larger than $\mathfrak{h}_{0, \xi} (X)$. 
Such an extension is out of expectations coming from the moment map picture. 

Note that the above corollary also shows that this complex invariant $\Fut^\lambda_\xi$ (restricted to $\mathfrak{t}$) is also a $T$-equivariant deformation invariant. 

\subsubsection{Weighted cscK metrics and $\mu$-cscK metrics}

For a smooth positive function $v$ on $P$, Lahdili \cite{Lah} defines the \textit{weighted scalar curvature} $s_v (\omega)$ by 
\begin{equation} 
s_v (\omega) := s (\omega) \cdot (v \circ \mu^\omega) + \Delta_\omega (v \circ \mu^\omega) - \frac{1}{2} \sum_{1 \le i, j \le k} (J\xi_i) \mu^\omega_{\xi_j} \cdot (\frac{\partial^2 v}{\partial x^i \partial x^j} \circ \mu^\omega). 
\end{equation}
As observed in \cite{Lah}, weighted scalar curvature has a moment map picture similar to that for $\mu$-scalar curvature in the previous section. 

When $v$ is of the form $v (x) = \tilde{v} (\langle x, \xi \rangle)$ with some smooth positive function $\tilde{v}$ on $\mathbb{R}$ and $\xi \in \mathfrak{t}$, we can simplify it as 
\begin{align*} 
s_v (\omega) 
&= s (\omega) \cdot (\tilde{v} \circ \mu^\omega_\xi) + \big{(} \Delta_\omega \mu^\omega_\xi \cdot (\tilde{v}' \circ \mu^\omega_\xi) - (\nabla \mu^\omega_\xi, \nabla \mu^\omega_\xi) \cdot (\tilde{v}'' \circ \mu^\omega_\xi) \big{)} - \frac{1}{2} (J\xi) \mu^\omega_{\xi} \cdot (\tilde{v}'' \circ \mu^\omega_\xi) 
\\
&= s (\omega) \cdot (\tilde{v} \circ \mu^\omega_\xi) + \Delta_\omega \mu^\omega_\xi \cdot (\tilde{v}' \circ \mu^\omega_\xi) + \frac{1}{2} (J\xi) \mu^\omega_{\xi} \cdot (\tilde{v}'' \circ \mu^\omega_\xi) 
\end{align*}
Substituting $v (x) = e^{\langle x, -2 \xi \rangle }$ yields our $\mu$-scalar curvature $s_\xi (\omega)$: 
\[ s_v (\omega)  = \big{(} (s (\omega) + \bar{\Box} \theta_\xi) + (\bar{\Box} \theta_\xi - (J\xi) \theta_\xi) \big{)} e^{\theta_\xi} =: s_\xi (\omega) e^{\theta_\xi}. \]
So $\mu^0_\xi$-cscK metrics are equivalent to weighted cscK metrics with the weight $v (x) = e^{\langle x, -2 \xi \rangle}$. 
For general $\lambda \in \mathbb{R}$, $\mu^\lambda_\xi$-cscK metrics are regarded as a special case of weighted extremal metrics. 

\subsection{From $\mu$-cscK metrics to extremal metric: the limit $\lambda \searrow - \infty$}
\label{section: connecting path}

In this section, we fix a complex structure $J$ on $M$ and a K\"ahler class $[\omega]$. 
We observe some intriguing features of $\mu^\lambda$-cscK, assuming some results in the rest of this article. 

\subsubsection{Constraint on $\lambda$}

There is an a priori constraint on $\lambda$ for each fixed $\xi \neq 0$ to admit a $\mu$-cscK metric in a fixed K\"ahler class $[\omega]$. 
If there is a $\mu^\lambda_\xi$-cscK metric $\omega'$, i.e. $\hat{s}^\lambda_\xi (\omega') =0$, in the K\"ahler class $[\omega]$, then we must have 
\begin{equation}
\label{Fut lambda} 
0 =  \Fut_\xi^\lambda (\xi) = \Fut_\xi^0 (\xi) - \lambda \left( \int_X \theta_\xi^2 e^{\theta_\xi} \omega^n \Big{/} \int_X e^{\theta_\xi} \omega^n - \Big{(} \int_X \theta_\xi e^{\theta_\xi} \omega^n \Big{/} \int_X e^{\theta_\xi} \omega^n \Big{)}^2 \right). 
\end{equation}

For $\zeta \in \mathfrak{t}$, we put 
\begin{equation} 
\label{nu}
\nu_\xi (\zeta) := \int_X \theta_\zeta^2 e^{\theta_\xi} \omega^n \Big{/} \int_X e^{\theta_\xi} \omega^n - \Big{(} \int_X \theta_\zeta e^{\theta_\xi} \omega^n \Big{/} \int_X e^{\theta_\xi} \omega^n \Big{)}^2. 
\end{equation}
This is invariant when we add a constant $c$ on $\theta_\zeta$, so it must be positive when $\zeta \neq 0$ since it is obviously positive when normalizing $\theta_\zeta$ so that $\int_X \theta_\zeta e^{\theta_\xi} \omega^n = 0$. 
The function $\nu_\xi$ is an invariant of the equivariant deRham class $[\omega + \mu]$ and $\xi$, since it can be expressed as 
\[ \nu_\xi (\zeta) = \frac{\int_P \langle m, -2\zeta \rangle^2 e^{\langle m, -2\xi \rangle} DH (m)}{\int_P e^{\langle m, -2\xi \rangle} DH (m)} - \Big{(} \frac{\int_P \langle m, -2\zeta \rangle e^{\langle m, -2\xi \rangle} DH (m)}{\int_P e^{\langle m, -2\xi \rangle} DH (m)} \Big{)}^2, \] 
using the Duistermaat-Heckman measure $DH = \mu_* \omega^n$, which is an invariant of the equivariant deRham class $[\omega + \mu]$ associated to the moment map. 
Here $P$ denotes the support of the measure $DH$. 

Thus from (\ref{Fut lambda}) we can determine $\lambda$ as 
\begin{equation} 
\lambda = \lambda_\xi := \Fut^0_\xi (\xi)/\nu_\xi (\xi), 
\end{equation}
where the right hand side is an invariant of the triple $(X, [\omega], \xi)$ (also an invariant of the symplectic triple $(M, \omega, \xi)$). 
The sign of $\lambda_\xi$ coincides with that of $\Fut^0_\xi (\xi)$. 

\subsubsection{$\lambda$ as a function on the real blowing-up $\hat{\mathfrak{t}}$}
\label{lambda as a function on t}

While the function $\lambda_\xi$ is well-defined and continuous just on the punctured space $\mathfrak{t} \setminus \{ 0 \}$, the following functional 
\begin{equation}
\xi \mapsto |\xi| \cdot \lambda_\xi 
\end{equation} 
continuously extends to the real blowing-up 
\[ \hat{\mathfrak{t}} := \{ (\xi, \Xi) ~|~ \xi \in \Xi = [0, \infty) \cdot v \subset \mathfrak{t}, ~v \in \mathfrak{t} \setminus \{ 0 \} \} \xrightarrow{\pi} \mathfrak{t}: (\xi, \Xi) \mapsto \xi \] 
of $\mathfrak{t}$ at the origin, where we take the norm on $\mathfrak{t}$ as $|\xi|^2 := \int_X \theta_\xi^2 \omega^n$. 
Indeed, as $|\xi|$ tends to $0$, the function $|\xi|^{-2} \nu_\xi (\xi)$ on $\mathfrak{t} \setminus \{ 0 \}$ approaches to a positive continuous function $\hat{\nu} (0, \Xi) = 1/\int_X \omega^n - (\int_X \theta_\Xi \omega^n/\int_X \omega^n)^2$ on the boundary sphere $\pi^{-1} (0)$, where we put $\theta_\Xi := \theta_v$ for a unique vector $v \in \Xi$ with $|v| = 1$ and similarly $|\xi|^{-1} \cdot \Fut^0_\xi (\xi)$ approaches to a continuous function $\widehat{\Fut} (0, \Xi) = \Fut (v) = \int_X (s - \bar{s}) \theta_\Xi \omega^n /\int_X \omega^n$ on $\pi^{-1} (0)$. 
Here the positivity of $\hat{\nu}$ again follows by the Cauchy-Schwartz inequality. 

We will see in section \ref{section: volume} that $\lambda_\xi$, i.e. $\Fut^0_\xi (\xi)$, is always positive sufficiently away from the origin. 
Assuming this, it follows that any sequence $\xi_i \in \mathfrak{t}$ with $\lambda_{\xi_i} \to - \infty$ must converge to the origin $0 \in \mathfrak{t}$. 
Moreover, as the function $|\xi| \cdot \lambda_\xi$ is bounded near the origin, we have a uniform bound $|\lambda_i \xi_i| \le C$, so that there is a subsequence such that $\lambda_i \xi_i$ converges to some vector $\check{\xi} \in \mathfrak{t}$. 
Now suppose $\Fut^{\lambda_i}_{\xi_i} = 0$ for every $i$. 
Since we can compute all $\Fut^{\lambda_i}_{\xi_i}$ by a fixed $T$-invariant K\"ahler metric $\omega$, the limit of this functional is given as 
\[ \check{\Fut}^0_{\check{\xi}} (\zeta) := \int_X \Big{(} (s (\omega) - \underline{s}) - (\theta_{\check{\xi}} - \underline{\theta}_{\check{\xi}}) \Big{)} \theta_\zeta \omega^n \Big{/} \int_X \omega^n, \]
where we put $\underline{\theta}_{\check{\xi}} := \int_X \theta_{\check{\xi}} \omega^n /\int_X \omega^n$. 
We must have $\check{\Fut}^0_{\check{\xi}} \equiv 0$ for the limit vector $\check{\xi}$. 

Such a vector $\check{\xi}$ is uniquely characterized as the critical point of the following strictly convex functional on $\mathfrak{t}$: 
\begin{equation}
C (\xi) := \int_X \Big{(} (s (\omega) - \underline{s}) - (\theta_\xi - \underline{\theta}_\xi) \Big{)}^2 \omega^n \Big{/} \int_X \omega^n - \int_X (s (\omega) - \underline{s})^2 \omega^n \Big{/} \int_X \omega^n, 
\end{equation}
whose derivative at $\xi$ is $2 \check{\Fut}^0_\xi$. 
(We add the second term so that the functional is independent of the choice $\omega \in [\omega]$. )
The minimizer of this functional is called \textit{the extremal vector}. 
We denote it by $\xi_{\mathrm{ext}}$. 
From the above observation, we obtain $\check{\xi} = \xi_{\mathrm{ext}}$ for the limit vector $\check{\xi}$, independent of the choice of the subsequence of $\{ i \}$. 
It follows that the original sequence $\lambda_i \xi_i$ also converges to $\xi_{\mathrm{ext}}$.

\subsubsection{Extremal metric in the limit of $\lambda \to - \infty$}

Suppose there is a sequence of $\mu^{\lambda_i}_{\xi_i}$-cscK metrics $\omega_i$ in the fixed K\"ahler class $[\omega]$ with $\lambda_i \to - \infty$: 
\[ (s (\omega_i) + \bar{\Box}_{\omega_i} \theta_{\xi_i} (\omega_i)) + (\bar{\Box}_{\omega_i} \theta_{\xi_i} (\omega_i) - \xi_i^J \theta_{\xi_i} (\omega_i)) - \lambda_i \theta_{\xi_i} (\omega_i) = \bar{s}_{\xi_i}^{\lambda_i}, \]
where $\theta_{\xi_i} (\omega_i)$ denotes the $\dbar$-Hamiltonian potential with respect to $\omega_i$ in the same equivariant class. 
Fix a reference metric $\omega$ and take a K\"ahler potential $\phi_i$ of $\omega_i$ so that $\max \phi_i = 0$. 

Suppose we have a uniform $C^{3, \alpha}$-bound of $\phi_i$ and a uniform bound $C \omega \le \omega_i$, then the limit of the metrics gives a metric $\omega_{-\infty} \in [\omega]$ after taking a subsequence. 
Remember that the vectors $\xi_i$ must converge to $0$ and the sequence $\lambda_i \xi_i$ converges to the extremal vector $\xi_{\mathrm{ext}}$ (by the observation in the last subsection). 
It follows that $\theta_{\xi_i} (\omega_i) = \theta_{\xi_i} (\omega) - \xi_i^J \phi_i$ converges to $0$ in $C^{2, \alpha}$ and the limit metric $\omega_{-\infty}$ must satisfy the following equation 
\[ s (\omega_{-\infty}) - \theta_{\xi_{\mathrm{ext}}} (\omega_{-\infty}) = \mathrm{const}, \]
which is nothing but the equation of extremal metric. 

Conversely, we will see in section 5 the following: 
\begin{itemize}
\item If there exists an extremal metric, there also exists $\mu^\lambda$-cscK metrics in the same K\"ahler class for $\lambda$ sufficiently small or large. 

\item If there is a $\mu^\lambda$-cscK metric for $\lambda \le 0$, then we can find a $\mu^{\lambda'}$-cscK metric in the same K\"ahler class for small perturbations $\lambda' \in (\lambda - \epsilon, \lambda + \epsilon)$. 
\end{itemize}
Thus the problem of connecting $\mu^0$-cscK metric/K\"ahler--Ricci soliton and extremal metric (when both of them exist) reduces to the problem on the a priori estimate. 

Though we firstly introduced the parameter $\lambda$ so that we can include K\"ahler-Ricci soliton in our study on $\mu$-cscK metric, the above observation now tells us that the parameter $\lambda$ can be regarded as a continuity path connecting $\mu^0$-cscK metric/K\"ahler--Ricci soliton and extremal metric. 

\section{$\mu$-Futaki invariant, $\mu$-volume functional and automorphism group}

\subsection{$\mu$-Lichnerowicz operator and reductiveness}
\label{section: reductive}

In this section, we fix a complex structure $J$ on $M$, a K\"ahler metric $\omega$ and a function $\theta$ on $M$. 
We prove the reductiveness of the automorphism group of a K\"ahler manifold admitting $\mu$-cscK. 
This result is a first step to construct a good moduli space of the complex structures of K\"ahler manifolds admitting $\mu$-cscK metrics, in order to apply GIT locally. 
We firstly begin with basic calculations for the readers' and the author's convenience. 

\subsubsection{Warming up for calculations}

Let $(X, \omega)$ be a K\"ahler manifold, $\theta$ be a smooth real-valued function on $X$ and $(E, h)$ be a hermitian (not necessarily holomorphic, so far) vector bundle on $X$. 
Define an $L^2$-norm $\langle \cdot, \cdot \rangle_\theta$ by
\[ \langle \alpha, \beta \rangle_\theta := \int_X h (\alpha, \beta) ~e^\theta \omega^n \]
for smooth sections $\alpha, \beta \in \Omega^0 (E)$. 
For a differential operator $D: \Omega^0 (E) \to \Omega^0 (F)$ from $E$ to $F$, denote by $D^{\theta *}: \Omega^0 (F) \to \Omega^0 (E)$ the formal left adjoint of $D$ with respect to such pairing, i.e. 
\[ \langle D^{\theta *} \alpha, \beta \rangle_{E, \theta} = \langle \alpha, D \beta \rangle_{F, \theta} \]
for all sections $\alpha \in \Omega^0 (F), \beta \in \Omega^0 (E)$. 
As usual, we denote by $\Lambda: \Omega^{p,q} (E) \to \Omega^{p-1, q-1} (E)$ the adjoint operator of $\omega \wedge$: 
\[ h^{p-1,q-1} (\Lambda (\alpha), \beta) = h^{p,q} (\alpha, \omega \wedge \beta), \]
where $h^{p,q}$ is the induced hermitian metric on $\Lambda^{p,q} \otimes E$ defined as 
\[ (u_{1 \dotsb p} \wedge u_{\bar{1} \dotsb \bar{q}} \otimes \sigma, v_{1 \dotsb p} \wedge v_{\bar{1} \dotsb \bar{q}} \otimes \tau) := 
 h (\sigma, \tau) \cdot \det \begin{pmatrix} g (u_i, \overline{v_j}) & 0 \\ 0 & g (u_{\bar{k}}, \overline{v_{\bar{l}}}) \end{pmatrix}^{i, j=1, \ldots, p}_{k, l= 1, \ldots, q} \]
for $u_{1 \dotsb p} = u_1 \wedge \dotsb \wedge u_p, v_{1 \dotsb p} = v_1 \wedge \dotsb \wedge v_p \in \Lambda^{p, 0}_x X$, $u_{\bar{1} \dotsb \bar{q}} = u_{\bar{1}} \wedge \dotsb \wedge u_{\bar{q}}, v_{\bar{1} \dotsb \bar{q}} = v_{\bar{1}} \wedge \dotsb \wedge v_{\bar{q}} \in \Lambda^{0,q}_x X$ and $\sigma, \tau \in E_x$. 

\begin{eg}
For $\alpha = \alpha_{i \bar{j}} \otimes dz^i \wedge d\bar{z}^j \in \Omega^{1,1} (E)$, we have 
\[ \Lambda (\alpha) = - \sqrt{-1} g^{i \bar{j}} \alpha_{i \bar{j}}. \]
For $\gamma = \gamma_{i \bar{j} \bar{k}} \otimes dz^i \wedge d\bar{z}^j \wedge d\bar{z}^k \in \Omega^{1,2} (E)$, we have 
\[ \Lambda (\gamma) = - \sqrt{-1} g^{i \bar{j}} (\gamma_{i \bar{j} \bar{k}} - \gamma_{i \bar{k} \bar{j}}) d \bar{z}^k. \]
\end{eg}

For a hermitian connection $\nabla$ on $(E, h)$, the following local expressions yield global operators. 
\begin{align} 
\nabla' 
&:= \sum_{i=1}^n dz^i \wedge \nabla^{\wedge}_{\partial_i} : \Omega^{p,q} (E) \to \Omega^{p+1, q} (E), 
\\
\nabla'' 
&:= \sum_{i=1}^n d\bar{z}^i \wedge \nabla^{\wedge}_{\bar{\partial}_i} :\Omega^{p,q} (E) \to \Omega^{p,q+1} (E), 
\end{align}
where 
\[ \nabla^{\wedge}: \Omega^0 (\Lambda^{p,q} \otimes E) \to \Omega^1 (\Lambda^{p,q} \otimes E) \]
is the induced connection on $\Lambda^{p,q} \otimes E$. 
These operators $\nabla', \nabla''$ are the first order differential operators from $\Lambda^{p,q} \otimes E$ to $\Lambda^{p+1, q} \otimes E$ and $\Lambda^{p,q} \otimes E$ to $\Lambda^{p, q+1} \otimes E$, respectively, and $\nabla' + \nabla''$ is the exterior covariant derivative of $\nabla$. 

Put $\theta_p := \partial \theta/\partial z^p, \theta_{\bar{q}} := \partial \theta/\partial \bar{z}^q$ on a holomorphic chart of $X$ and denote by $\xi', \xi''$ the following global vector fields associated to $\theta$
\[ \xi' := \partial^\sharp \theta = g^{p \bar{q}} \theta_{\bar{q}} \partial_p, \quad \xi'' := \bar{\partial}^\sharp \theta = g^{p \bar{q}} \theta_p \bar{\partial}_q. \]
Then the formal adjoints ${\nabla'}^{\theta*}, {\nabla''}^{\theta*}$ of $\nabla', \nabla''$ with respect to the pairing $\langle \cdot, \cdot \rangle_\theta$ can be written as 
\begin{align}
\label{weighted adjoint 1}
{\nabla'}^{\theta*} 
&= {\nabla'}^* - i_{\xi'} = \sqrt{-1} \Big{(} \Lambda \nabla'' - \nabla'' \Lambda \Big{)} - i_{\xi'}, 
\\ \label{weighted adjoint 2}
{\nabla''}^{\theta*}
&= {\nabla''}^* - i_{\xi''} = - \sqrt{-1} \Big{(} \Lambda \nabla' - \nabla' \Lambda \Big{)} - i_{\xi''}. 
\end{align}
Indeed, using $\langle \alpha, \beta \rangle_\theta = \langle \alpha, \beta e^\theta \rangle$, we compute 
\begin{align*}
\langle {\nabla'}^{\theta*} \alpha, \beta \rangle_\theta 
&= \langle \alpha, \nabla' \beta \rangle_\theta 
\\ 
&= \langle \alpha, \nabla' (e^\theta \beta) \rangle - \langle \alpha, \partial \theta \wedge \beta \rangle_\theta 
\\
&= \langle {\nabla'}^* \alpha, \beta \rangle_\theta - \langle i_{\xi'} \alpha, \beta \rangle_\theta. 
\end{align*}

\subsubsection{Weitzenb\"ock formula}

Let  $(L, e^{-\phi})$ be a holomorphic hermitian line bundle on $X$, where we denote the hermitian connection by a local expression $e^{-\phi}$. 
We denote by $\nabla$ the Chern connection on $L$. 
The Chern curvature is given by $\partial \dbar \phi$. 
Put 
\begin{align*} 
\nabla^\sharp 
&:= \nabla_{T^{1,0} X \otimes L}' \circ \sharp: \Omega^{0,1} (L) \to \Omega^{1,0} (T^{1,0} X \otimes L), 
\\
\nabla^{\sharp \sharp} 
&:= \nabla_{T^{1,0} X \otimes L}'' \circ \sharp: \Omega^{0,1} (L) \to \Omega^{0,1} (T^{1,0} X \otimes L), 
\end{align*}
where $\sharp: \Omega^{0,1} (L) \to \Omega^0 (T^{1,0} X \otimes L)$ is given by $\sharp (\alpha_{\bar{j}} d \bar{z}^j) = g^{i \bar{j}} \alpha_{\bar{j}} \partial_i$. 
Consider the following four variants of weighted Laplacian acting on $\Omega^{0,1} (L)$: 
\begin{align*} 
\Box^\theta
&:= {\nabla'}^{\theta *} \nabla' + \nabla'~ {\nabla'}^{\theta*} = {\nabla'}^{\theta*} \nabla', 
\\
\bar{\Box}^\theta 
&:= {\nabla''}^{\theta*} \nabla'' + \nabla'' ~ {\nabla''}^{\theta*}, 
\\
\Box^{\theta}_\#
&:= {\nabla^\sharp}^{\theta*} \nabla^\sharp = \flat ({\nabla'}_{TX \otimes L}^{\theta*} \nabla'_{TX \otimes L}) \sharp, 
\\
\bar{\Box}^{\theta}_\# 
&:= {\nabla^{\sharp \sharp}}^{\theta*} \nabla^{\sharp \sharp} = \flat ({\nabla''}_{TX \otimes L}^{\theta*} \nabla''_{TX \otimes L}) \sharp, 
\end{align*}
where $\flat: \Omega^0 (T^{1,0} X \otimes L) \to \Omega^{0,1} (L)$ is given by $\flat (\eta^i \partial_i) = g_{i \bar{j}} \eta^i d \bar{z}^j$. 

\begin{lem}[Weighted Laplacians]
The above weighted Laplacians can be expressed by the usual Laplacians as follows. 
\begin{align}
\Box^\theta 
&= \Box - \nabla_{\xi'}^\wedge, 
\\
\bar{\Box}^\theta
&= \bar{\Box} - \nabla_{\xi''}^\wedge - g^{i \bar{j}} \theta_{i \bar{k}} d\bar{z}^k \otimes \bar{\partial}_j, 
\\
\Box^{\theta}_\# 
&= \Box_\# - \nabla_{\xi'}^\wedge, 
\\
\bar{\Box}^{\theta}_\#
&= \bar{\Box}_\# - \nabla_{\xi''}^\wedge, 
\end{align}
where $\theta_{i \bar{k}} = \partial^2 \theta/\partial z^i \partial \bar{z}^k$ and $g^{i \bar{j}} \theta_{i \bar{k}} d\bar{z}^k \otimes \bar{\partial}_j \in \mathrm{End} (T^{0,1} X)$ is identified with the operator acting on $\Omega^{0, 1} (L)$. 
\end{lem}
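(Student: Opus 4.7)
The plan is to substitute the weighted adjoint formulas (\ref{weighted adjoint 1})--(\ref{weighted adjoint 2}) into the four definitions and simplify the resulting commutators/anticommutators with interior multiplication, exploiting bidegree considerations and K\"ahler symmetry.

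For the first identity, the term $\nabla'\,\nabla'^{\theta *}$ vanishes on $\Omega^{0,1}(L)$ by bidegree, since $\nabla'^{\theta *}$ lowers $(p,q)$-degree by $(1,0)$ and so lands in $\Omega^{-1,1}(L) = 0$. Hence $\Box^\theta = \Box - i_{\xi'}\nabla'$. Writing $\nabla'\alpha = dz^p \wedge \nabla^\wedge_{\partial_p}\alpha$ and using $i_{\xi'}dz^p = g^{p\bar q}\theta_{\bar q}$ together with the vanishing of $i_{\xi'}$ on the $(0,1)$-valued factor $\nabla^\wedge_{\partial_p}\alpha$, one obtains $i_{\xi'}\nabla'\alpha = \nabla^\wedge_{\xi'}\alpha$. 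The third and fourth identities follow by the same bidegree argument after conjugation by $\sharp/\flat$: on $\Omega^0(T^{1,0}X\otimes L)$ only one term of each Laplacian survives (the adjoints send degree zero to zero), and since the K\"ahler metric is parallel, $\sharp$ and $\flat$ commute with $\nabla^\wedge_{\xi'}$ and $\nabla^\wedge_{\xi''}$, producing the stated clean formulas.

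For the second identity, both terms of $\bar\Box^\theta$ contribute on $\Omega^{0,1}(L)$, giving $\bar\Box^\theta = \bar\Box - (i_{\xi''}\nabla'' + \nabla''\,i_{\xi''})$. I would first use that on $L$-valued $(0,\bullet)$-forms $\nabla''$ coincides with $\bar\partial$ (the K\"ahler symmetry $\Gamma^{\bar k}_{\bar p\bar j} = \Gamma^{\bar k}_{\bar j\bar p}$ kills the Christoffel contribution upon antisymmetrization with $d\bar z^p$), so the anticommutator reduces to the familiar Cartan-type expression
\[ (i_{\xi''}\bar\partial + \bar\partial\,i_{\xi''})\alpha = \xi''^{\bar p}(\bar\partial_p\alpha_{\bar k})\,d\bar z^k + (\bar\partial_p\xi''^{\bar k})\alpha_{\bar k}\,d\bar z^p. \]
Unfolding $\nabla^\wedge_{\xi''}\alpha$ in components as $\xi''^{\bar p}(\bar\partial_p\alpha_{\bar k})d\bar z^k$ plus a Christoffel correction on $T^{*0,1}X$, and combining this correction with the second term above, the residual assembles into $(\nabla^\wedge_{\bar\partial_p}\xi'')^{\bar k}\alpha_{\bar k}\,d\bar z^p$. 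The intrinsic identity $(\nabla^\wedge_{\bar\partial_p}\xi'')^{\bar k} = g^{r\bar k}\theta_{r\bar p}$ --- a direct consequence of $\xi'' = \bar\partial^\sharp\theta$ together with $\nabla g = 0$ (equivalently, the K\"ahler identity $\bar\partial_p g^{r\bar k} = -\Gamma^{\bar k}_{\bar p\bar s}g^{r\bar s}$) --- then identifies this residual with the action of the endomorphism $g^{i\bar j}\theta_{i\bar k}\,d\bar z^k \otimes \bar\partial_j$ on $\alpha$.

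The main delicate point is the Christoffel bookkeeping in the $\bar\Box^\theta$ calculation: the non-tensorial expression $\bar\partial_p\xi''^{\bar k}$ must combine with the $\nabla^\wedge$-Christoffels (which surface when one unfolds $\nabla^\wedge_{\xi''}\alpha$ in components) to produce the stated tensorial endomorphism, and only covariant constancy of $g$ makes this happen. The absence of an analogous correction term in $\bar\Box^\theta_\#$ --- despite the superficially parallel structure of the sharp Laplacians --- is ultimately explained by the bidegree collapse on $\Omega^0(T^{1,0}X\otimes L)$, which removes the anticommutator contribution altogether.
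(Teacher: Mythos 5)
Your proof is correct and is essentially the paper's own argument: both substitute the weighted adjoint formulas (\ref{weighted adjoint 1})--(\ref{weighted adjoint 2}), handle $\Box^\theta$, $\Box^{\theta}_\#$, $\bar{\Box}^{\theta}_\#$ via the bidegree collapse together with $i_{\xi'} \nabla' = \nabla^\wedge_{\xi'}$ on $\Omega^{0,1}(L)$ (resp.\ $i_{\xi''} \nabla'' = \nabla_{\xi''}$ on sections of $T^{1,0}X \otimes L$, conjugated by $\sharp/\flat$ using $\nabla g = 0$), and produce the extra endomorphism in $\bar{\Box}^\theta$ by expanding the anticommutator $i_{\xi''} \nabla'' + \nabla'' i_{\xi''}$. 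The only difference is bookkeeping in that last step: the paper rewrites the cross term $d\bar{z}^q \wedge i_{\xi''} \nabla^\wedge_{\bar{\partial}_q} \alpha$ as $\theta_l \bar{\partial}_L (g^{l\bar{k}} \alpha_{\bar{k}})$ and subtracts $\bar{\partial}_L (g^{l\bar{k}} \theta_l \alpha_{\bar{k}})$, whereas you pass to the Cartan-type formula for $\bar{\partial}_L$ first and then identify the residue intrinsically as $(\nabla_{\bar{\partial}_p} \xi'')^{\bar{k}} \alpha_{\bar{k}} \, d\bar{z}^p = g^{r\bar{k}} \theta_{r\bar{p}} \alpha_{\bar{k}} \, d\bar{z}^p$ --- the same K\"ahler symmetry of the Christoffel symbols and parallelism of $g$ entering in either order.
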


\begin{proof}
Let $\alpha_{\bar{k}} \otimes d \bar{z}^k$ be an element of $\Omega^{0,1} (L)$ expressed by local sections $\alpha_{\bar{k}}$ of $L$. 
Then from (\ref{weighted adjoint 1}), we have
\begin{align*}
\Box^\theta (\alpha_{\bar{k}} \otimes d \bar{z}^k) 
&= {\nabla'}^{\theta*} \nabla' (\alpha_{\bar{k}} \otimes d \bar{z}^k)
\\ 
&= \Big{(} {\nabla'}^* \nabla' - i_{\xi'} \nabla' \Big{)} (\alpha_{\bar{k}} \otimes d \bar{z}^k)
\\
&= (\Box - \nabla_{\xi'}^\wedge) (\alpha_{\bar{k}} \otimes d \bar{z}^k). 
\end{align*}
We can do similarly as for $\Box^\theta_\#$ and $\bar{\Box}^\theta_\#$. 

As for $\bar{\Box}^\theta$, we calculate as follows. 
\begin{align*}
\bar{\Box}^\theta (\alpha_{\bar{k}} \otimes d \bar{z}^k)
&= ({\nabla''}^{\theta*} \nabla'' + \nabla'' {\nabla''}^{\theta*}) (\alpha_{\bar{k}} \otimes d \bar{z}^k) 
\\ 
&= \Big{(} ({\nabla''}^* \nabla'' + \nabla'' {\nabla''}^*) - (i_{\xi''} \nabla'' + \nabla'' i_{\xi''}) \Big{)} (\alpha_{\bar{k}} \otimes d \bar{z}^k)
\\ 
&= \bar{\Box} (\alpha_{\bar{k}} \otimes d \bar{z}^k) - \nabla^\wedge_{\xi''} (\alpha_{\bar{k}} \otimes d \bar{z}^k) + d \bar{z}^q \wedge i_{\xi''} \nabla_{\bar{\partial}_q}^\wedge (\alpha_{\bar{k}} \otimes d \bar{z}^k) - \bar{\partial}_L (g^{i \bar{j}} \theta_i \alpha_{\bar{j}}) 
\\
&= (\bar{\Box} - \nabla^\wedge_{\xi''}) (\alpha_{\bar{k}} \otimes d \bar{z}^k) - g^{i \bar{j}} \theta_{i \bar{k}} \alpha_{\bar{j}} d\bar{z}^k, 
\end{align*}
where we transform $d\bar{z}^q \wedge i_{\xi''} \nabla^\wedge_{\bar{\partial}_q} (\alpha_{\bar{k}} \otimes d \bar{z}^k)$ as 
\begin{align*} 
d\bar{z}^q \wedge i_{\xi''} \nabla^\wedge_{\bar{\partial}_q} (\alpha_{\bar{k}} \otimes d \bar{z}^k) 
&= d\bar{z}^q \wedge (\xi^{\bar{k}} \bar{\partial}_{L, \bar{q}} \alpha_{\bar{k}} + \alpha_{\bar{k}} (- \xi^{\bar{j}} \Gamma^{\bar{k}}_{\bar{q} \bar{j}}))
\\
&= g^{l \bar{k}} \theta_l \bar{\partial}_L \alpha_{\bar{k}} - \alpha_{\bar{k}} \theta_l g^{l \bar{j}} g^{p \bar{k}} g_{p \bar{j}, \bar{q}} d\bar{z}^q
\\
&= \theta_l \bar{\partial}_L (g^{l \bar{k}} \alpha_{\bar{k}}). 
\end{align*}
\end{proof}

\begin{cor}[Weitzenb\"ock formula]
Write $\Ric (\omega) = \sqdbard \log \det (g_{p \bar{q}})$ as $\sqrt{-1} R_{i \bar{j}} dz^i \wedge d \bar{z}^j$ and put $\xi := (\xi' - \xi'')/2 \sqrt{-1}$. 
Then we have the following. 
\begin{align}
\Box^\theta - \bar{\Box}^\theta
&= \Lambda (\sqrt{-1} \partial \dbar \phi) - 2 \sqrt{-1} \nabla_\xi^\wedge + g^{i \bar{j}} \theta_{i \bar{k}} d\bar{z}^k \otimes \bar{\partial}_j, 
\\
\Box^{\theta}_\# - \bar{\Box}^{\theta}_\#
&= \Lambda (\sqrt{-1} \partial \dbar \phi) + g^{i \bar{j}} R_{i \bar{k}} d\bar{z}^k \otimes \bar{\partial}_j - 2 \sqrt{-1} \nabla_\xi^\wedge, 
\\ 
\Box^{\theta}_\# - \Box^\theta
&= 0, 
\\
\bar{\Box}^{\theta}_\# - \bar{\Box}^\theta
&= - g^{i\bar{j}} (R_{i \bar{k}} - \theta_{i \bar{k}}) d \bar{z}^k \otimes \bar{\partial}_j. 
\end{align}
\end{cor}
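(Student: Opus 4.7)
All four identities reduce, via the preceding lemma, to differences of the \emph{unweighted} Laplacians plus the correction terms already exhibited there, so the content is essentially the classical Bochner--Kodaira--Nakano machinery on $L$-valued $(0,1)$-forms.

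For (1), subtracting the two formulas in the lemma gives
\begin{align*}
\Box^\theta - \bar{\Box}^\theta = (\Box - \bar{\Box}) + \nabla^\wedge_{\xi'' - \xi'} + g^{i\bar{j}}\theta_{i\bar{k}}\, d\bar{z}^k \otimes \bar{\partial}_j.
\end{align*}
By the definition $\xi = (\xi' - \xi'')/2\sqrt{-1}$, the middle term becomes $-2\sqrt{-1}\,\nabla^\wedge_\xi$. For the first bracket one invokes the Bochner--Kodaira--Nakano identity on $\Omega^{0,1}(L)$, which, after noting that $\Lambda$ annihilates $(0,1)$-forms, collapses the commutator $[\sqddbar\phi, \Lambda]$ to the single operator $\alpha \mapsto \Lambda(\sqddbar\phi \wedge \alpha)$, the precise meaning of $\Lambda(\sqddbar\phi)$ in the statement.

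For (2) the identical subtraction for the $\#$-Laplacians produces the same shape, but the relevant curvature is now that of the Chern connection on $T^{1,0}X \otimes L$. This splits as the curvature of $L$, recovering the $\Lambda(\sqddbar\phi)$ term, plus the curvature of $T^{1,0}X$, whose contraction after transport through $\sharp$ and $\flat$ yields the Ricci term $g^{i\bar{j}} R_{i\bar{k}}\, d\bar{z}^k \otimes \bar{\partial}_j$. For (3), the weight dependence cancels completely and the identity reduces to $\Box_\# = \Box$ on $\Omega^{0,1}(L)$; this holds because both operators involve only the $(1,0)$-Chern connection, which in this direction does not see the holomorphic curvature, and the musical isomorphism $\sharp$ (isometric and parallel under the Chern connection) intertwines them. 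Finally, for (4), a parallel calculation gives $\bar{\Box}^\theta_\# - \bar{\Box}^\theta = (\bar{\Box}_\# - \bar{\Box}) + g^{i\bar{j}}\theta_{i\bar{k}}\, d\bar{z}^k \otimes \bar{\partial}_j$, and a direct Bochner commutator on $T^{1,0}X$ shows $\bar{\Box}_\# - \bar{\Box} = -g^{i\bar{j}} R_{i\bar{k}}\, d\bar{z}^k \otimes \bar{\partial}_j$, producing the combined correction $-g^{i\bar{j}}(R_{i\bar{k}} - \theta_{i\bar{k}})\, d\bar{z}^k \otimes \bar{\partial}_j$.

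The main obstacle is clerical rather than conceptual: one must carefully track signs in the Bochner--Kodaira--Nakano commutator, the action of $\Lambda$ on $L$-valued $(1,2)$-forms, and disentangle the two separate curvature contributions (of $L$ and of $T^{1,0}X$) appearing in the $\#$-variants. Once the preceding lemma is granted, no essential analytic difficulty remains.
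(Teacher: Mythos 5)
Your handling of identities (1) and (2) is exactly the paper's route: subtract the two expressions supplied by the preceding lemma, so that the weight $\theta$ enters only through the lemma, and then quote the unweighted Kodaira--Nakano identities for $\Box-\bar{\Box}$ and $\Box_\#-\bar{\Box}_\#$. For (3) and (4) you deviate mildly from the paper, which instead records the local expressions of $\nabla^\sharp$, ${\nabla^\sharp}^{\theta*}$, $\nabla^{\sharp\sharp}$, ${\nabla^{\sharp\sharp}}^{\theta*}$ and finishes by direct computation. Your argument for (3) is sound: $\sharp$ is parallel for the Chern connection, so it intertwines the two $(1,0)$-rough Laplacians, and no curvature appears because no derivatives are commuted. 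Your derivation of (4) from the lemma plus an unweighted comparison of $\bar{\Box}_\#$ and $\bar{\Box}$ is also a legitimate (and arguably cleaner) organization, since modulo the lemma, identity (4) is formally (1) $-$ (2) $+$ (3).

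Precisely because (4) is (1) $-$ (2) $+$ (3), however, your plan has a genuine gap: you use two incompatible readings of $\Lambda(\sqddbar\phi)$, and their difference is exactly what (4) is sensitive to. In (1) you read $\Lambda(\sqddbar\phi)$ as the collapsed commutator $\alpha\mapsto\Lambda(\sqddbar\phi\wedge\alpha)$, which on $\Omega^{0,1}(L)$ is
\begin{equation*}
\alpha\ \longmapsto\ \bigl(g^{i\bar{j}}\phi_{i\bar{j}}\bigr)\,\alpha\ -\ g^{i\bar{j}}\phi_{i\bar{k}}\,\alpha_{\bar{j}}\,d\bar{z}^k ,
\end{equation*}
a trace part \emph{plus an endomorphism part}. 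In (2), by contrast, the $L$-summand of the curvature of $T^{1,0}X\otimes L$ enters the rough-Laplacian commutator $\Box_\#-\bar{\Box}_\#$ only through the scalar $g^{i\bar{j}}\phi_{i\bar{j}}$, because the curvature of a line bundle acts on sections as a scalar; the endomorphism part is absent there. So the operators you produce in (1) and (2) differ by $g^{i\bar{j}}\phi_{i\bar{k}}\,d\bar{z}^k\otimes\bar{\partial}_j$, and the unweighted identity you invoke for (4), namely $\bar{\Box}_\#-\bar{\Box}=-g^{i\bar{j}}R_{i\bar{k}}\,d\bar{z}^k\otimes\bar{\partial}_j$, fails for a general hermitian line bundle: on a complex torus with the flat metric and $L$ trivial with fibre metric $e^{-\phi}$, a direct computation gives $\bar{\Box}_\#-\bar{\Box}=-g^{i\bar{j}}\phi_{i\bar{k}}\,d\bar{z}^k\otimes\bar{\partial}_j\neq 0$ while the Ricci term vanishes; in general the correct right-hand side carries $R_{i\bar{k}}+\phi_{i\bar{k}}$ in place of $R_{i\bar{k}}$. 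To be fair, the paper's own wording is loose on the same point, and in the only place the corollary is applied (the $\mu$-Lichnerowicz operator, where $\mathcal{D}=\nabla^{\sharp\sharp}\dbar$ acts on functions, so $L$ is trivial with flat metric) one has $\sqddbar\phi=0$ and the discrepancy disappears. A complete write-up must either carry the $\phi_{i\bar{k}}$ endomorphism term consistently through (1), (2) and (4), or state at the outset that the identities are asserted (and used) only when $\sqddbar\phi=0$.
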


\begin{proof}
The first two equalities follow from the above lemma combined with the usual Kodaira-Nakano formula
\begin{align*} 
\Box - \bar{\Box} 
&= \Lambda (\sqrt{-1} \partial \dbar \phi), 
\\
\Box_\# - \bar{\Box}_\# 
&= \flat (g^{i \bar{j}} {R_p^{~k}}_{i \bar{j}} dz^p \otimes \partial_k + g^{i \bar{j}} \phi_{i \bar{j}} dz^p \otimes \partial_p) \sharp 
\\ \notag
&= g^{p \bar{q}} R_{p \bar{l}} d \bar{z}^l \otimes \bar{\partial}_q + g^{i \bar{j}} \phi_{i \bar{j}} d\bar{z}^q \otimes \bar{\partial}_q. 
\end{align*}

Put $\alpha_{\bar{k}, p} := \nabla_p \alpha_{\bar{k}}$ and $\alpha_{\bar{k}, \bar{q}} := \nabla_{\bar{q}} \alpha_{\bar{k}}$. 
Then using (\ref{weighted adjoint 1}) and (\ref{weighted adjoint 2}), we obtain 
\begin{align} 
\nabla^\sharp (\alpha_{\bar{k}} d\bar{z}^k) 
&= g^{l \bar{k}} \alpha_{\bar{k}, p} d z^p \otimes \partial_l, 
\\
{\nabla^\sharp}^{\theta*} (\beta^l_p dz^p \otimes \partial_l) 
&= - g_{l \bar{j}} g^{p \bar{q}} (\beta^l_{p, \bar{q}} + \beta^l_p \theta_{\bar{q}}) d \bar{z}^j, 
\\
\nabla^{\sharp\sharp} (\alpha_{\bar{k}} d \bar{z}^k) 
&= (g^{l \bar{k}} \alpha_{\bar{k}})_{\bar{q}} d \bar{z}^q \otimes \partial_l, 
\\
{\nabla^{\sharp\sharp}}^{\theta*} (\beta^l_{\bar{q}} d \bar{z}^q \otimes \partial_l) 
&= {\nabla^{\sharp\sharp}}^* (\beta^l_{\bar{q}} d \bar{z}^q \otimes \partial_l) - g_{l \bar{\jmath}} g^{p \bar{q}} \theta_p \beta^l_{\bar{q}} d\bar{z}^j. 
\end{align}
\end{proof}

\subsubsection{$\mu$-Lichnerowicz operator and Reductiveness}

\begin{prop}[$\mu$-Lichnerowicz operator]
Put $\mathcal{D} := \nabla^{\sharp\sharp} \dbar: C^\infty_{\mathbb{C}} (X) \to \Omega^{0,1} (T^{1,0} X)$. 
Suppose $\xi' = \partial^\sharp \theta$ is a holomorphic vector field, then 
\begin{align}
\notag 
(\mathcal{D}^{\theta*} \mathcal{D}) f
&= ({\dbar}^{\theta*} \bar{\Box}^{\theta}_\# \dbar) f
\\ \notag
&= ({\dbar}^{\theta*} \bar{\Box}^\theta \dbar) f - ({\dbar}^{\theta*} (g^{i \bar{j}} (R_{i \bar{k}} - \theta_{i \bar{k}}) d \bar{z}^k \otimes \bar{\partial}_j ) \dbar) f
\\ \label{L}
&= (\bar{\Box} - \xi'')^2 f + (\Ric (\omega) - L_{\xi'} \omega, \sqddbar f) 
\\ \notag
& \qquad \qquad \qquad \quad + (\dbar^\sharp s_\xi (\omega)) (f), 
\end{align}
where $s_\xi (\omega) = (s (\omega) + \bar{\Box} \theta) +(\bar{\Box} \theta - \xi' \theta)$. 
\end{prop}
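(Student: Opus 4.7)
The plan is to verify the three equalities in order; the first two are nearly formal, while the third contains the main computation.

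The first equality is immediate from the definitions $\mathcal{D} = \nabla^{\sharp\sharp}\dbar$ and $\bar{\Box}^\theta_\# = (\nabla^{\sharp\sharp})^{\theta*}\nabla^{\sharp\sharp}$, which give $\mathcal{D}^{\theta*}\mathcal{D} = \dbar^{\theta*}(\nabla^{\sharp\sharp})^{\theta*}\nabla^{\sharp\sharp}\dbar = \dbar^{\theta*}\bar{\Box}^\theta_\#\dbar$. The second is a direct substitution using the preceding Weitzenb\"ock Corollary $\bar{\Box}^\theta_\# = \bar{\Box}^\theta - g^{i\bar j}(R_{i\bar k} - \theta_{i\bar k})d\bar z^k\otimes\bar\partial_j$; pulling this difference outside through $\dbar^{\theta*}\cdots\dbar$ is formal.

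For the third equality I would first prove the clean intermediate identity $\dbar^{\theta*}\bar\Box^\theta\dbar f = (\bar\Box - \xi'')^2 f$. Substituting the preceding Lemma's expansion $\bar\Box^\theta = \bar\Box - \nabla_{\xi''}^\wedge - g^{i\bar j}\theta_{i\bar k}d\bar z^k\otimes\bar\partial_j$ and evaluating in K\"ahler normal coordinates yields the tensorial identity
\[ \nabla_{\xi''}^\wedge(\dbar f) = \dbar(\xi'' f) - \bigl(g^{i\bar j}\theta_{i\bar k}d\bar z^k\otimes\bar\partial_j\bigr)(\dbar f), \]
since the discrepancy equals $(\bar\partial_{\bar k}\xi''^{\bar q})\bar\partial_{\bar q}f\cdot d\bar z^k = g^{p\bar q}\theta_{p\bar k}\bar\partial_{\bar q}f\cdot d\bar z^k$ at the origin. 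This makes the two $\theta_{i\bar k}$-contributions cancel exactly; the K\"ahler identity $[\bar\Box,\dbar]=0$ together with the weighted adjoint formula $\dbar^{\theta*}\dbar = \bar\Box - \xi''$ on functions then collapses the remainder to $(\bar\Box - \xi'')\bar\Box f - (\bar\Box - \xi'')\xi''f = (\bar\Box - \xi'')^2 f$.

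It remains to match $-\dbar^{\theta*}\bigl(g^{i\bar j}(R_{i\bar k} - \theta_{i\bar k})d\bar z^k\otimes\bar\partial_j\bigr)\dbar f$ with $(\Ric - L_{\xi'}\omega, \sqddbar f) + (\dbar^\sharp s_\xi(\omega))(f)$. Writing $T_{i\bar k} := R_{i\bar k} - \theta_{i\bar k}$ for the Bakry--Emery--Ricci tensor (which represents $\Ric - L_{\xi'}\omega$ via $L_{\xi'}\omega = \sqddbar\theta$) and expanding $\dbar^{\theta*} = \dbar^* - i_{\xi''}$, the second-order contraction $g^{p\bar k}g^{i\bar j}T_{i\bar k}f_{p\bar j}$ is precisely the pointwise pairing $(\Ric - L_{\xi'}\omega, \sqddbar f)$. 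For the first-order part, the contracted second Bianchi identity $g^{p\bar k}\partial_p R_{i\bar k} = \partial_i s(\omega)$ together with the symmetry $\partial_p\theta_{i\bar k} = \partial_i\theta_{p\bar k}$ converts the derivative-of-$T$ contribution into $(\dbar^\sharp(s(\omega) + \bar\Box\theta))(f)$; the $i_{\xi''}$-correction from the weighted adjoint reorganizes, using the holomorphicity condition $\bar\partial\xi^J = 0$, into $(\dbar^\sharp(\bar\Box\theta - \xi^J\theta))(f)$. Summing matches the definition $s_\xi(\omega) = (s(\omega) + \bar\Box\theta) + (\bar\Box\theta - \xi^J\theta)$.

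The main anticipated obstacle is this final reorganization: tracking the various $\theta$-correction terms from the weighted adjoint and from the divergence of the Bakry--Emery--Ricci tensor so that they assemble precisely into $\dbar^\sharp$ of the combination $s(\omega) + 2\bar\Box\theta - \xi^J\theta$, with no leftover. Careful bookkeeping of signs, indices, and the holomorphicity condition throughout is the essential point.
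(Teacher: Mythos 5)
Your proposal is correct and takes essentially the same approach as the paper: the content is the third equality, which you split -- exactly as the paper does -- into the weighted-Laplacian piece giving $(\bar\Box-\xi'')^2 f$ and the curvature-correction piece, whose $\dbar^*$-part yields $(\Ric(\omega)-L_{\xi'}\omega,\sqddbar f)+(\dbar^\sharp(s+\bar\Box\theta))(f)$ via the Bianchi-type symmetry and whose $i_{\xi''}$-part yields $(\dbar^\sharp(\bar\Box\theta-\xi'\theta))(f)$ via holomorphicity of $\xi'$. The only difference is cosmetic: you derive $\dbar^{\theta*}\bar\Box^\theta\dbar f=(\bar\Box-\xi'')^2 f$ by expanding $\bar\Box^\theta$ through the Weitzenb\"ock lemma in normal coordinates, whereas the paper gets it in one line from $\dbar\dbar=0$, which kills the ${\nabla''}^{\theta*}\nabla''$ term and gives $\dbar^{\theta*}\bar\Box^\theta\dbar=(\dbar^{\theta*}\dbar)^2$ directly.
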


\begin{proof}
It suffices to show the third equality. 
As $\dbar \dbar = 0$, we have ${\dbar}^{\theta*} \bar{\Box}^\theta \dbar = ({\dbar}^{\theta*} \dbar) ({\dbar}^{\theta*} \dbar) = (\bar{\Box} -\xi'') (\bar{\Box} - \xi'')$. 
The second term in the second formula can be simplified as 
\begin{align}
\label{L0}
-({\dbar}^{\theta*} (g^{i \bar{j}} (R_{i \bar{k}} - \theta_{i \bar{k}}) d \bar{z}^k \otimes \bar{\partial}_j ) \dbar) f
&= (\sqrt{-1} (\Lambda \partial) + i_{\xi''}) (g^{i \bar{j}} (R_{i \bar{k}} - \theta_{i \bar{k}}) f_{\bar{j}} d\bar{z}^k). 
\end{align}

As for $\sqrt{-1} (\Lambda \partial) (g^{i \bar{j}} (R_{i \bar{k}} - \theta_{i \bar{k}}) f_{\bar{j}} d\bar{z}^k)$, 
\begin{align}
\notag
\sqrt{-1} (\Lambda \partial) (g^{i \bar{j}} (R_{i \bar{k}} - \theta_{i \bar{k}}) f_{\bar{j}} d\bar{z}^k)
&= \sqrt{-1} (-\sqrt{-1} g^{l \bar{k}}) (g^{i \bar{j}} (R_{i \bar{k}} - \theta_{i \bar{k}}) f_{\bar{j}})_l
\\ \notag
&= g^{l \bar{k}} g^{i \bar{j}} (R_{i \bar{k}} - \theta_{i \bar{k}}) f_{l \bar{j}} + g^{l \bar{k}} (g^{i \bar{j}} (R_{i \bar{k}} - \theta_{i \bar{k}}))_l f_{\bar{j}}
\\ \label{L1}
&= (\mathrm{Ric} (\omega) - L_{\xi'} \omega, \sqddbar f)
\\ \notag
& \qquad + g^{l \bar{k}} {g^{i \bar{j}}}_{, l} (R_{i \bar{k}} -\theta_{i \bar{k}}) f_{\bar{j}} + g^{l \bar{k}} g^{i \bar{j}} (R_{i\bar{k}, l} - \theta_{i \bar{k}, l}) f_{\bar{j}}, 
\end{align}
where $R_{i \bar{k}, l} = \partial R_{i \bar{k}}/\partial z^l$ and $\theta_{i \bar{k}, l} = \partial^3 \theta/\partial z^i \partial \bar{z}^k \partial z^l$. 
As $R_{i\bar{k}, l} - \theta_{i \bar{k}, l} = R_{l \bar{k}, i} - \theta_{l \bar{k}, i}$, the last term of (\ref{L1}) is equal to 
\begin{equation}
\label{L2} 
g^{i \bar{j}} (g^{l\bar{k}} (R_{l \bar{k}} - \theta_{l \bar{k}}))_i f_{\bar{j}} - g^{i \bar{j}} {g^{l \bar{k}}}_{, i} (R_{l \bar{k}} - \theta_{l \bar{k}}) f_{\bar{j}}. 
\end{equation}
As $g^{l \bar{k}} {g^{i \bar{j}}}_{,l} = - g^{l \bar{k}} g^{i \bar{q}} g_{p \bar{q}, l} g^{p \bar{j}} = - g^{l \bar{k}} g^{i \bar{q}} g_{l \bar{q}, p} g^{p \bar{j}} = {g^{i \bar{k}}}_{,p} g^{p \bar{j}}$, the second term of (\ref{L1}) is distinguished by the second term of (\ref{L2}). 
So we obtain 
\[ \sqrt{-1} (\Lambda \partial) (g^{i \bar{j}} (R_{i \bar{k}} - \theta_{i \bar{k}}) f_{\bar{j}} d\bar{z}^k) = (\mathrm{Ric} (\omega) - L_{\xi'} \omega, \sqddbar f) + (\dbar^\sharp (s+ \bar{\Box} \theta)) f. \]

The rest term in (\ref{L0}) is 
\begin{align*} 
i_{\xi''} (g^{i \bar{j}} (R_{i \bar{k}} - \theta_{i \bar{k}}) f_{\bar{j}} d\bar{z}^k) 
&= \xi^{\bar{k}} g^{i \bar{j}} (R_{i \bar{k}} - \theta_{i \bar{k}}) f_{\bar{j}} 
\\
&= g^{l \bar{k}} g^{i \bar{j}} \theta_l (R_{i \bar{k}} - \theta_{i \bar{k}}) f_{\bar{j}} 
\end{align*}
and the following calculations show (\ref{L}). 
As $\xi' = g^{q \bar{p}} \theta_{\bar{p}}$ is holomorphic, we have $(g^{p \bar{q}} \theta_p)_i = \overline{(g^{\bar{p} q} \theta_{\bar{p}})_{\bar{i}}} = \overline{\dbar_i \xi^q} = 0$. 
It follows that 
\begin{align*}
(\bar{\Box} \theta)_i 
= - (g^{p \bar{q}} \theta_{p \bar{q}})_i 
&= - (g^{p\bar{q}} \theta_p)_{i \bar{q}} + ({g^{p \bar{q}}}_{, \bar{q}} \theta_p)_i
\\ 
&= (- g^{p \bar{k}} g^{l \bar{q}} g_{l \bar{k}, \bar{q}} \theta_p)_i
\\ 
&= - (g^{l \bar{q}} g_{l \bar{q}, \bar{k}})_i g^{p \bar{k}} \theta_p - g^{l \bar{q}} g_{l \bar{q}, \bar{k}} (g^{p \bar{k}} \theta_p)_i
\\
&= g^{p \bar{k}} \theta_p R_{i \bar{k}}
\end{align*}
and 
\[ (\xi' \theta)_i 
= (g^{l \bar{k}} \theta_{\bar{k}} \theta_l)_i 
= (g^{l \bar{k}} \theta_l)_i \theta_{\bar{k}} + g^{l \bar{k}} \theta_l \theta_{i\bar{k}} = g^{l\bar{k}} \theta_l \theta_{i \bar{k}}. \]
\end{proof}

\begin{cor}[Reductiveness]
\label{reductiveness}
Suppose there exists a $\mu^\lambda_\xi$-cscK metric $\omega$ on $X$, then the identity component $\mathrm{Aut}^0_{\xi} (X/\mathrm{Alb})$ of the subgroup of the reduced automorphism group $\mathrm{Aut} (X/ \mathrm{Alb})$ preserving $\xi$ is the complexification of the group ${^\nabla \mathrm{Isom}}^0_\xi (X, \omega)$ of the Hamiltonian isometries of the $\mu$-cscK metric $\omega$ preserving $\xi$, especially, it is reductive. 
\end{cor}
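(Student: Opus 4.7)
The plan is to imitate the Matsushima--Lichnerowicz argument using the $\mu$-Lichnerowicz operator $\mathcal{D}^{\theta*}\mathcal{D}$ just computed, in order to decompose $\mathfrak{h}_{0,\xi}(X)$ as $\mathfrak{k}\oplus \sqrt{-1}\,\mathfrak{k}$ with $\mathfrak{k}$ the Lie algebra of $\xi$-commuting Hamiltonian Killing vector fields of $\omega$. The $\mu^\lambda_\xi$-cscK equation gives $s_\xi(\omega) - \lambda\theta \equiv \bar{s}^\lambda_\xi$, hence $\dbar s_\xi(\omega) = \lambda\,\dbar\theta$, so that the last term in the Proposition's formula simplifies to $(\dbar^\sharp s_\xi(\omega))(f) = \lambda\,\xi'' f$. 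Combined with $L_{\xi'}\omega = \sqddbar\theta$, the formula becomes
\[
\mathcal{D}^{\theta*}\mathcal{D}\, f = (\bar{\Box} - \xi'')^2 f + (\Ric(\omega) - \sqddbar\theta,\,\sqddbar f) + \lambda\,\xi'' f.
\]

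First, I would identify $\ker \mathcal{D}$ on $\xi$-invariant complex-valued smooth functions with $\mathfrak{h}_{0,\xi}(X)$ modulo constants via $\zeta\mapsto f_\zeta$, where $f_\zeta$ is a $\dbar$-Hamiltonian potential of $\zeta$. The potential $f_\zeta$ is automatically $\xi$-invariant: applying $L_\xi$ to $\sqrt{-1}\dbar f_\zeta = i_{\zeta^J}\omega$ and using $[\xi,\zeta^J] = 0$ together with $L_\xi \omega = 0$ shows that $\xi f_\zeta$ is a constant, which then vanishes because $\int_X \xi f_\zeta\, e^{\theta}\omega^n = 0$. That $\mathcal{D} f_\zeta = 0$ is immediate from $\partial^\sharp f_\zeta = \zeta^J$. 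The key step is then to show that $\ker\mathcal{D}$ on $C^\infty_\xi(X,\mathbb{C})$ is closed under complex conjugation. For $\xi$-invariant $f$, $\xi'' f = J\xi(f) - \sqrt{-1}\,\xi(f)$ reduces to the real quantity $J\xi(f)$, so $\bar{\Box} - \xi''$ restricts on this subspace to the real weighted Laplacian $\tfrac{1}{2}(\Delta - \nabla\theta)$, the term $\lambda\xi'' f$ is real on real $f$, and the Bakry--Emery Ricci form $\Ric(\omega) - \sqddbar\theta$ is manifestly real. Each of the three terms on the right-hand side therefore commutes with $f\mapsto \bar{f}$, so $\mathcal{D}^{\theta*}\mathcal{D}$ does as well. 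Since $\mathcal{D}^{\theta*}\mathcal{D}$ is nonnegative and self-adjoint with respect to $\langle\cdot,\cdot\rangle_\theta$ on the compact manifold $X$, $\ker \mathcal{D}^{\theta*}\mathcal{D} = \ker \mathcal{D}$, and conjugation-invariance of this kernel follows.

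Finally, writing $f_\zeta = u + \sqrt{-1}\,v$ with $u,v$ real and $\xi$-invariant, the conjugation-invariance gives $u,v \in \ker \mathcal{D}$; and a real-valued $\dbar$-Hamiltonian potential corresponds, by the discussion following the definition of $\mu$-scalar curvature, to a Hamiltonian Killing vector field preserving $\xi$. This yields $\mathfrak{h}_{0,\xi}(X) = \mathfrak{k}\oplus \sqrt{-1}\,\mathfrak{k}$, and exponentiating identifies $\mathrm{Aut}^0_\xi(X/\mathrm{Alb})$ with the complexification of the compact Lie group ${^\nabla\mathrm{Isom}}^0_\xi(X,\omega)$, which is in particular reductive. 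The main obstacle to address is the conjugation-invariance step: the asymmetry between $\xi'$ and $\xi''$ and the presence of the $\lambda\xi''$ term would break reality without $\xi$-invariance of $\omega$, so the standing hypothesis $L_\xi\omega = 0$ --- automatic for $\mu^\lambda_\xi$-cscK metrics under the convention $\lambda \le 2\lambda_1$ --- is essential.
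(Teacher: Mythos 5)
Your proposal is correct and follows essentially the same route as the paper's own (much terser) proof: the paper likewise specializes the $\mu$-Lichnerowicz formula at a $\mu^\lambda_\xi$-cscK metric, observes that $\mathcal{D}^{\theta*}\mathcal{D}$ restricted to $C^\infty_\xi(X,\mathbb{C})$ is a real operator, splits $\ker\mathcal{D}$ into real and imaginary parts, and identifies these with $\mathfrak{h}_{0,\xi}(X)$ and ${^\nabla\mathfrak{isom}}_\xi(X,g)\oplus\sqrt{-1}\,{^\nabla\mathfrak{isom}}_\xi(X,g)$ via $\mathcal{D}=\nabla^{\sharp\sharp}\bar{\partial}$. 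Your write-up merely makes explicit the reality check (via $\bar{\Box}-\xi''=\tfrac12(\Delta-\nabla\theta)$ and $\bar{\partial}^\sharp s_\xi(\omega)=\lambda\xi''$) and the $\xi$-invariance of the potentials, which the paper leaves implicit.
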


\begin{proof}
If $\omega$ is a $\mu$-cscK, then the operator $\mathcal{D}^{\theta_*} \mathcal{D}$ restricted to $C^\infty_{\xi} (X, \mathbb{C}) = \{ f \in C^\infty (X, \mathbb{C}) ~|~ \xi f = 0 \}$ is a real operator. 
It follows that 
\[ \{ f \in C^\infty_{\xi} (X, \mathbb{C}) ~|~ \mathcal{D} f = 0 \} = \{ g + \sqrt{-1} h ~|~ \mathcal{D} g = \mathcal{D} h = 0, ~g, h \in C^\infty_\xi (X, \mathbb{R}) \}, \] 
which are respectively isomorphic to $\mathfrak{aut}_{\xi} (X, [\omega])$ and ${^\nabla \mathfrak{isom}}_{\xi} (X, g) \oplus \sqrt{-1} {^\nabla \mathfrak{isom}}_{\xi} (X, g)$ as we have $\mathcal{D} = \nabla^{\sharp\sharp} \dbar = \dbar_{TX} \partial^\sharp$. 
\end{proof}

\subsection{$\mu$-Futaki invariant}
\label{section: Futaki}

In this section, we fix a complex structure $J$ on $M$, a K\"ahler class $[\omega]$, the properly $\dbar$-Hamiltonian vector field $\xi$ and the parameter $\lambda \in \mathbb{R}$. 

Let $\xi$ be a properly $\dbar$-Hamiltonian vector field on a K\"ahler manifold $X$. 
Taking a $\xi$-invariant K\"ahler metric $\omega \in [\omega]$, we define a $\mathbb{C}$-linear functional $\Fut^\lambda_\xi: \mathfrak{h}_0 (X) \to \mathbb{C}$ by 
\begin{equation}
\Fut^{\lambda}_\xi (\zeta) := \int_X \hat{s}^\lambda_\xi (\omega) \theta_\zeta ~e^{\theta_\xi} \omega^n \Big{/} \int_X e^{\theta_\xi} \omega^n. 
\end{equation}
Remember that 
\begin{align*}
\hat{s}^\lambda_\xi (\omega)
&= (s (\omega) + \bar{\Box} \theta_\xi) + (\bar{\Box} \theta_\xi - \xi^J \theta_\xi) - \lambda \theta_\xi - \bar{s}^\lambda_\xi, 
\\ 
\bar{s}^\lambda_\xi 
&= \int_X (s + \bar{\Box} \theta_\xi - \lambda \theta_\xi) e^{\theta_\xi} \omega^n \Big{/} \int_X e^{\theta_\xi} \omega^n. 
\end{align*}

The following proposition proves Theorem B (1). 

\begin{prop}
\label{Futaki}
The linear functional $\Fut^\lambda_\xi$ is independent of the choice of the $\xi$-invariant K\"ahler metric $\omega$ in the fixed K\"ahler class $[\omega]$ and of the normalization of the moment map $\theta$ (independent of the equivariant cohomology class $[\omega + \theta]$). 
\end{prop}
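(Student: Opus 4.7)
The proposition contains two independent assertions: invariance of $\Fut^\lambda_\xi(\zeta)$ under constant shifts of $\theta_\xi$ and of $\theta_\zeta$ (which together parametrize the equivariant cohomology class $[\omega + \theta_\xi]$), and invariance under a change of the $\xi$-invariant K\"ahler representative $\omega$ inside a fixed K\"ahler class. The first is a direct algebraic check. The second I would handle by differentiating along a path of $\xi$-invariant K\"ahler potentials and collapsing the result via the $\mu$-Lichnerowicz identity (\ref{L}) of the previous subsection.

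\textbf{Normalization.} A shift $\theta_\zeta \mapsto \theta_\zeta + c$ adds
\[
c\cdot\int_X \hat{s}^\lambda_\xi(\omega)\, e^{\theta_\xi}\omega^n\Big{/}\int_X e^{\theta_\xi}\omega^n,
\]
which vanishes by the definition of $\bar{s}^\lambda_\xi$. A shift $\theta_\xi \mapsto \theta_\xi + c'$ shifts both $s^\lambda_\xi(\omega)$ and $\bar{s}^\lambda_\xi$ by $-\lambda c'$, since only the explicit $-\lambda\theta_\xi$ summand contributes ($\bar{\Box}$ and $\xi^J$ annihilate constants); so $\hat{s}^\lambda_\xi(\omega)$ is unchanged, and the normalized weight $e^{\theta_\xi}\omega^n/\int_X e^{\theta_\xi}\omega^n$ is manifestly unchanged.

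\textbf{Metric independence.} Let $\omega_t = \omega + \sqddbar \phi_t$ be a smooth path of $\xi$-invariant K\"ahler metrics in $[\omega]$ with tangent $\psi := \dot\phi_t|_{t=0}$, also $\xi$-invariant. One has the variation formulas $\dot\theta_\xi = \xi^J\psi$ and $\dot\theta_\zeta = \zeta^J\psi$ (the latter using holomorphy of $\zeta^J$); the total mass $Z_t := \int_X e^{\theta_\xi(\omega_t)}\omega_t^n$ is a Duistermaat--Heckman invariant of the equivariant class, and therefore constant in $t$. Using the standard formula for $\dot s(\omega_t)$ and repeatedly integrating by parts against the weighted pairing $\langle\cdot,\cdot\rangle_{\theta_\xi}$ introduced in Section~\ref{section: reductive}, one collects $\tfrac{d}{dt}\Fut^\lambda_\xi(\zeta)$ into a single expression of the form $Z^{-1}\int_X \psi\cdot P(\theta_\zeta)\, e^{\theta_\xi}\omega^n$ for an explicit fourth-order linear operator $P$.

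\textbf{Cancellation and the main obstacle.} Since $\zeta^J$ is holomorphic, $\mathcal{D}\theta_\zeta = \nabla^{\sharp\sharp}\dbar\theta_\zeta = 0$, hence $\mathcal{D}^{\theta_\xi *}\mathcal{D}\theta_\zeta = 0$. Identity (\ref{L}), adjusted by the $-\lambda\theta_\xi$ contribution to $s^\lambda_\xi$, then gives
\[
(\bar{\Box} - \xi^J)^2\theta_\zeta + \big(\Ric(\omega) - L_{\xi^J}\omega - \lambda\omega,\, \sqddbar\theta_\zeta\big) + \big(\dbar^\sharp s^\lambda_\xi(\omega)\big)(\theta_\zeta) = 0,
\]
and one verifies that this is precisely the operator $P$ produced above, so $\tfrac{d}{dt}\Fut^\lambda_\xi(\zeta) = 0$; connectedness of the space of $\xi$-invariant K\"ahler metrics in $[\omega]$ then yields full independence. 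The central technical point is that for general $\zeta \in \mathfrak{h}_0(X)$ outside the centralizer $\mathfrak{h}_{0,\xi}(X)$, $\theta_\zeta$ is genuinely complex-valued and $[\xi^J,\zeta^J] \neq 0$, so the weighted integrations by parts do not reduce to standard $T$-equivariant calculus. The virtue of packaging everything through $\mathcal{D}^{\theta_\xi *}\mathcal{D}$ is that the holomorphy of $\zeta^J$ (what ultimately forces the cancellation) is encoded algebraically rather than through invariance under the torus generated by $\xi$; this is exactly where the argument strictly extends Lahdili's and $\Fut^\lambda_\xi$ extends from $\mathfrak{h}_{0,\xi}(X)$ to $\mathfrak{h}_0(X)$.
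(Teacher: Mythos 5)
Your overall strategy---differentiate along a path of $\xi$-invariant potentials, use Duistermaat--Heckman invariance of $\int_X e^{\theta_\xi}\omega^n$, integrate by parts against the weighted pairing, and kill what remains using holomorphy of $\zeta^J$---is exactly the paper's, and your treatment of the normalization invariance is correct. The problem is that your pivotal displayed identity is false, and it is also not the operator that the computation actually produces. The true identity, i.e.\ (\ref{L}) applied to $f = \theta_\zeta$ together with $\mathcal{D}\theta_\zeta = \dbar\big(\sharp\dbar\theta_\zeta\big) = \dbar \zeta^J = 0$, reads
\[
(\bar{\Box} - \bar{\xi}^J)^2\theta_\zeta + \big(\Ric(\omega) - L_{\xi^J}\omega,\, \sqddbar\theta_\zeta\big) + \big(\dbar^\sharp s_\xi(\omega)\big)(\theta_\zeta) = 0,
\]
with $s_\xi$ (not $s^\lambda_\xi$), no $-\lambda\omega$ in the pairing, and with $\bar{\xi}^J$ (the paper's $\xi''$), not $\xi^J$, inside the square---a distinction that matters precisely because $\theta_\zeta$ is complex-valued and not $\xi$-invariant for general $\zeta \in \mathfrak{h}_0(X)$. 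Your version differs from this by $\lambda(\bar{\Box} - \bar{\xi}^J)\theta_\zeta$: the $-\lambda\omega$ term contributes $\lambda\bar{\Box}\theta_\zeta$ (since $(\omega, \sqddbar\theta_\zeta) = -\bar{\Box}\theta_\zeta$ in the paper's conventions), and replacing $s_\xi$ by $s^\lambda_\xi$ contributes $-\lambda\bar{\xi}^J\theta_\zeta$. This discrepancy does not vanish: already for $\xi = 0$ and $\lambda \neq 0$ your identity would force $\lambda\bar{\Box}\theta_\zeta = 0$. So the step ``one verifies that this is precisely the operator $P$'' cannot go through.

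The repair is structural, not cosmetic: the $\lambda$-dependence never reaches the fourth-order operator acting on $\theta_\zeta$. In the variation formula $\tfrac{d}{dt}\hat{s}^\lambda_\xi(\omega_t) = -\overline{\mathcal{D}_t^{\theta*}\mathcal{D}_t}\psi + \big(\partial^\sharp\hat{s}^\lambda_\xi(g_t)\big)(\psi)$, all $\lambda$-terms sit inside the first-order transport term, and in $\tfrac{d}{dt}\Fut^\lambda_\xi(\zeta)$ this transport term cancels exactly against the two further contributions you must account for, namely $\int_X \hat{s}^\lambda_\xi\,\zeta^J\psi\; e^{\theta_\xi}\omega^n$ coming from $\dot{\theta}_\zeta = \zeta^J\psi$ and $-\int_X \hat{s}^\lambda_\xi\,\theta_\zeta\,(\bar{\Box}-\xi^J)\psi\; e^{\theta_\xi}\omega^n$ coming from the variation of the weighted measure, via the Leibniz rule $\partial^\sharp\big(\hat{s}^\lambda_\xi\theta_\zeta\big)(\psi) = \theta_\zeta\big(\partial^\sharp\hat{s}^\lambda_\xi\big)(\psi) + \hat{s}^\lambda_\xi\,\zeta^J\psi$. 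What survives is $-\int_X\psi\;\mathcal{D}_t^{\theta*}\mathcal{D}_t\theta_\zeta\; e^{\theta_\xi}\omega^n$, whose vanishing needs only the first-order fact $\mathcal{D}_t\theta_\zeta = 0$; the full expansion (\ref{L}) is never applied to $\theta_\zeta$ at all (the paper invokes it only on the real, $\xi$-invariant direction $\psi$, in order to recognize $\mathcal{D}^{\theta*}\mathcal{D}\psi$ inside the variation of the $\mu$-scalar curvature). With this correction your argument coincides with the paper's proof.
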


\begin{proof}
Take two $\xi$-invariant K\"ahler forms $\omega, \omega' \in [\omega]$ and take a smooth function $\phi$ so that $\omega' = \omega + \sqddbar \phi$. 
Put $\omega_t := \omega + t \sqddbar \phi$. 
Then the moment map $\mu^t$ with respect to $\omega_t$ with $\omega_t + \mu^t \in [\omega + \mu]$ is given by $\mu^t_\xi = \mu_\xi - t \xi^J \phi/2$. 
We put $\theta^t_\zeta := \theta_\zeta + t \zeta^J \phi$ for $\zeta \in \mathfrak{h}_0 (X)$, which satisfies $\dbar \theta^t_\zeta = i_{\zeta^J} \omega_t$ and is complex-valued in general and becomes real-valued when $\zeta \in \mathfrak{h}_0 (X, \omega)$. 
As we already know that $\int_X e^{\theta_\xi^t} \omega^n$ is invariant, it is sufficient to see 
\[ \frac{d}{dt} \int_X \hat{s}^\lambda_\xi (g_t) \theta^t_\zeta ~e^{\theta^t_\xi} \omega_t^n = 0 \]
for every $t \in [0,1]$. 
Firstly, we compute 
\begin{align*}
\frac{d}{dt} \hat{s}^\lambda_\xi (\omega_t)
&= \frac{d}{dt} \Big{(} ( g_t^{i \bar{j}} R^t_{i \bar{j}} - g^{i \bar{j}}_t \theta^t_{\xi, i \bar{j}}) + (- g^{i \bar{j}}_t \theta^t_{\xi, i \bar{j}} - \xi^J \theta_{t, \xi}) - \lambda \theta_{t, \xi} \Big{)}
\\ 
&= - g^{i \bar{q}}_t \dot{g}^t_{p \bar{q}} g^{p \bar{j}}_t R^t_{i \bar{j}} - g^{i \bar{j}}_t (g^{k \bar{l}} \dot{g}^t_{k \bar{l}})_{i \bar{j}} + g^{i \bar{q}}_t \dot{g}^t_{p \bar{q}} g_t^{p \bar{j}} \theta^t_{\xi, i \bar{j}} - g_t^{i \bar{j}} (\xi^J \phi)_{i \bar{j}}
\\ 
& \qquad + g^{i \bar{q}}_t \dot{g}^t_{p \bar{q}} g^{p \bar{j}} \theta^t_{\xi, i \bar{j}} - g^{i \bar{j}} (\xi^J \phi)_{i \bar{j}} - \xi^J \xi^J \phi - \lambda \xi^J \phi
\\
&= - \bar{\Box}_t \bar{\Box}_t \phi - (\Ric (\omega_t) - L_{\xi^J} \omega_t , \sqrt{-1} \partial \dbar \phi) + \bar{\Box}_t \xi^J \phi 
\\
& \qquad + (L_{\xi^J} \omega_t , \sqrt{-1} \partial \dbar \phi) + (\bar{\Box}_t - \xi^J )(\xi^J \phi) - \lambda \xi^J \phi
\\
&= - ((\bar{\Box}_t - \bar{\xi}^J)^2 \phi + (\Ric (\omega_t) - L_{\xi^J} \omega_t , \sqrt{-1} \partial \dbar \phi)) - \bar{\xi}^J \bar{\Box}_t \phi - \bar{\Box}_t \bar{\xi}^J \phi + \bar{\xi}^J \bar{\xi}^J \phi 
\\
& \qquad + \bar{\Box}_t \xi^J \phi + (L_{\xi^J} \omega_t , \sqrt{-1} \partial \dbar \phi) + (\bar{\Box}_t - \xi^J )(\xi^J \phi) - \lambda \xi^J \phi
\\ 
&= - \mathcal{D}_t^{\theta*} \mathcal{D}_t \phi + (\dbar^\sharp s_\xi (g_t)) (\phi) - \lambda \xi^J \phi 
\\ 
&\qquad \qquad \qquad - \bar{\xi}^J \bar{\Box}_t \phi + \bar{\Box}_t \xi^J \phi + (L_{\xi^J} \omega_t , \sqrt{-1} \partial \dbar \phi) 
\\
&= - \mathcal{D}_t^{\theta*} \mathcal{D}_t \phi + (\dbar^\sharp \hat{s}^\lambda_\xi (g_t)) (\phi) = - \overline{\mathcal{D}_t^{\theta*} \mathcal{D}_t} \phi + (\partial^\sharp \hat{s}^\lambda_\xi (g_t)) (\phi), 
\end{align*}
where we used the $\xi$-invariance of metrics for $\bar{\xi}^J \phi = \xi^J \phi$ etc. and compute the last line by 
\begin{align*} 
(L_{\xi^J} \omega_t, \sqddbar \phi) 
&= g_t^{i \bar{l}} g_t^{k \bar{j}} \theta^t_{\xi, i \bar{j}} \phi_{k \bar{l}} 
\\ 
&= g^{i \bar{l}}_t (\xi^k \phi_{k \bar{l}})_i - g^{i \bar{l}}_t g^{k \bar{j}}_{t, i} \theta^t_{\xi, \bar{j}} \phi_{k \bar{l}} - g^{i \bar{l}}_t \xi^k \phi_{k \bar{l} i}
\\
&= g^{i \bar{l}}_t ((\xi^k \phi_k)_{\bar{l}})_i - g^{k \bar{l}}_{t, p} g^{p \bar{j}}_t \theta^t_{\xi, \bar{j}} \phi_{k \bar{l}} - \xi^k (g^{i \bar{l}}_t \phi_{i \bar{l}})_k + g^{k \bar{j}}_t \theta_{\xi, \bar{j}}^t g_{t, k}^{i \bar{l}} \phi_{i \bar{l}}
\\ 
&= - \bar{\Box} \xi^J \phi + \xi^J \bar{\Box} \phi. 
\end{align*}
It follows that 
\begin{align*}
\frac{d}{dt} \int_X  \hat{s}^\lambda_\xi (g_t) \theta^t_{\zeta} ~e^{\theta^t_{\xi}} \omega_t^n
&= \int_X - \overline{\mathcal{D}_t^{\theta*} \mathcal{D}_t} \phi ~ \theta^t_{\zeta} e^{\theta^t_{\xi}} \omega_t^n 
+ \int_X (\partial^\sharp \hat{s}^\lambda_\xi (g_t)) (\phi) ~\theta^t_{\zeta} e^{\theta^t_{\xi}} \omega_t^n 
\\
& \qquad + \int_X \hat{s}^\lambda_\xi (g_t) \zeta^J \phi ~ e^{\theta^t_{\xi}} \omega_t^n - \int_X \hat{s}^\lambda_\xi (g_t) \theta^t_{\zeta} (\Box_t -\xi^J) (\phi) e^{\theta^t_{\xi}} \omega_t^n
\\
& =- \int_X  \phi ~ \mathcal{D}_t^{\theta*} \mathcal{D}_t \theta^t_{\zeta} e^{\theta^t_{\xi}} \omega_t^n 
+ \int_X (\partial^\sharp \hat{s}^\lambda_\xi (g_t)) (\phi) ~\theta^t_{\zeta} e^{\theta^t_{\xi}} \omega_t^n 
\\
& \qquad + \int_X \hat{s}^\lambda_\xi (g_t) \zeta^J \phi ~ e^{\theta^t_{\xi}} \omega_t^n - \int_X \partial^\sharp (\hat{s}^\lambda_\xi (g_t) \theta^t_{\zeta}) (\phi) e^{\theta^t_{\xi}} \omega_t^n 
\\
&= - \int_X \phi ~\mathcal{D}_t^{\theta*} \mathcal{D}_t  \theta^t_{\zeta} e^{\theta^t_{\xi}} \omega_t^n 
\end{align*}
and the last term vanishes as $\zeta \in \mathfrak{h}_0 (X)$. 
\end{proof}

By the definition of $\Fut^\lambda_\xi$, if there is a $\mu^\lambda_\xi$-cscK metric in the K\"ahler class $[\omega]$, $\Fut^\lambda_\xi$ must vanish. 

We put 
\begin{equation} 
\tilFut^\lambda_\xi (\zeta) := \int_X \hat{s}^\lambda_\xi (\omega) \theta_\zeta ~e^{\theta_\xi} \omega^n = \Fut^\lambda_\xi (\zeta) \int_X e^{\theta_\xi} \omega^n. 
\end{equation}
In contrast to $\Fut^\lambda_\xi$, $\tilFut^\lambda_\xi$ depends on the choice of the moment map $\theta$ while it is independent of the choice of the K\"ahler metric in the fixed K\"ahler class $[\omega]$. 

When $X$ is a Fano manifold and $[\omega] = 2 \pi c_1 (X)$, $\Fut_\xi^1$ reduces to the following well-known form: 
\[ \Fut_\xi^1 (\zeta) = - \int_X \zeta^J (h - \theta_\xi^v) e^{\theta_\xi^v} \omega^n = -\int_X \theta_\zeta e^{\theta_\xi} \omega^n \Big{/} \int_X e^{\theta_\xi} \omega^n, \]
where $h$ is a Ricci potential and $\theta^v_\xi$ denotes the normalization of $\theta_\xi$ satisfying $\int_X e^{\theta^v_\xi} \omega^n = 1$. 
This invariant was investigated in \cite{TZ}. 

\subsection{$\mu$-volume functional}
\label{section: volume}

Here we introduce a generalization of a functional considered in \cite{TZ}. 

Let $X$ be a compact K\"ahler manifold with a Hamiltonian holomorphic action of a compact Lie group $K$ and $\omega$ be a $K$-invariant K\"ahler form on $X$. 
Define the \textit{$\mu$-volume functional} $\mathrm{Vol}^\lambda$ with respect to $\omega$ on $\mathfrak{k}$ by 
\begin{equation}
\mathrm{Vol}^\lambda (\xi) := e^{\bar{s}_\xi^\lambda} \Big{(} \int_X e^{\theta_\xi} \omega^n \Big{)}^\lambda 
\end{equation}
using a real-valued Hamiltonian potential $\theta_\xi: \sqrt{-1} \dbar \theta_\xi = i_{\xi^J} \omega$. 
We can easily check that $\mathrm{Vol}^\lambda (\xi)$ is independent of the choice of the Hamiltonian potential. 
Remember again that the constant $\bar{s}^\lambda_\xi$ is given by
\[ \bar{s}^\lambda_\xi = \int_X (s + \bar{\Box} \theta_\xi -\lambda \theta_\xi) e^{\theta_\xi} \omega^n \Big{/} \int_X e^{\theta_\xi} \omega^n. \]
As $\bar{s}^\lambda_\xi$ and $\int_X e^{\theta_\xi} \omega^n$ is independent of the choice of the $\xi$-invariant K\"ahler metric, the $\mu$-volume functional $\mathrm{Vol}^\lambda$ is also independent of the choice of the $K$-invariant K\"ahler metric in the fixed K\"ahler class $[\omega]$. 


When $X$ is a Fano manifold and the K\"ahler class $[\omega]$ is equal to $2 \pi c_1 (X)$, we have $\bar{s}_\xi^1 = n$ by (\ref{Fano s bar}) under the normalization (\ref{Fano moment}) and thus obtain $\mathrm{Vol}^1 = e^n \int_X e^{\theta_\xi} \omega^n$, which is equivalent to the volume functional considered in \cite{TZ}. 
We can easily see the properness and the convexity of $\mathrm{Vol}^1$ in this case and thus obtain a unique critical point $\xi$ of $\mathrm{Vol}^1$, which is equivalent to $\Fut_\xi \equiv 0$. 

In \cite{Ino}, the author used this result in order to formulate an appropriate moduli problem for Fano manifolds admitting K\"ahler-Ricci solitons, which is equivalent to detect a sensible moduli stack, and to construct the moduli space of them. 
It is important that we have such a result for all Fano manifolds, not only for Fano manifolds admitting K\"ahler-Ricci solitons, as we must include `K-semistable' manifolds in the member of the moduli stack in order to ensure the openness of the interested families in general families, which corresponds to the Artinness of the moduli stack. 

\subsubsection{Variational formulas}

\begin{prop}
The derivative $d_\xi \mathrm{Vol}^\lambda$ of $\mathrm{Vol}^\lambda$ at $\xi \in \mathfrak{k}$ is given by 
\[
(d_{\xi} \mathrm{Vol}^\lambda) (\zeta) = \mathrm{Vol}^\lambda (\xi) \cdot \Fut^{\lambda}_\xi (\zeta). 
\]
\end{prop}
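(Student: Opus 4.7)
The plan is to reduce to the logarithmic derivative and then perform two integrations by parts. Taking logarithms,
\[ \log \mathrm{Vol}^\lambda (\xi) = \bar{s}^\lambda_\xi + \lambda \log B_\xi, \qquad B_\xi := \int_X e^{\theta_\xi} \omega^n, \]
so it suffices to prove $(d_\xi \log \mathrm{Vol}^\lambda)(\zeta) = \Fut^\lambda_\xi (\zeta)$. Writing $\bar{s}^\lambda_\xi = A_\xi/B_\xi$ with $A_\xi := \int_X (s(\omega) + \bar{\Box}\theta_\xi - \lambda \theta_\xi) e^{\theta_\xi} \omega^n$, and using the crucial linearity $\theta_{\xi + t\zeta} = \theta_\xi + t\theta_\zeta$ (which holds once a $K$-invariant reference metric is fixed), I obtain $(d_\xi B_\xi)(\zeta) = \int_X \theta_\zeta e^{\theta_\xi} \omega^n$ and
\[ (d_\xi A_\xi)(\zeta) = \int_X (\bar{\Box} \theta_\zeta - \lambda \theta_\zeta) e^{\theta_\xi} \omega^n + \int_X (s + \bar{\Box}\theta_\xi - \lambda \theta_\xi)\,\theta_\zeta\, e^{\theta_\xi} \omega^n. \]

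The first integration by parts is the self-adjointness of $\bar{\Box} - \xi^J$ with respect to the weighted measure $e^{\theta_\xi} \omega^n$ (equivalently, the standard identity $L_{\xi^J} (e^{\theta_\xi} \omega^n) = 0$ for the $\xi^J$-invariant weighted measure, which was already used in Section \ref{KR soliton}). Applied to the constant function $1$, this gives
\[ \int_X \bar{\Box} \theta_\zeta \cdot e^{\theta_\xi} \omega^n = \int_X \xi^J \theta_\zeta \cdot e^{\theta_\xi} \omega^n, \]
which I use to eliminate the Laplacian term in $(d_\xi A_\xi)(\zeta)$. Collecting everything,
\[ B_\xi \cdot (d_\xi \log \mathrm{Vol}^\lambda)(\zeta) = \int_X \xi^J \theta_\zeta \cdot e^{\theta_\xi} \omega^n + \int_X (s + \bar{\Box}\theta_\xi - \lambda \theta_\xi)\,\theta_\zeta\, e^{\theta_\xi} \omega^n - \bar{s}^\lambda_\xi \int_X \theta_\zeta e^{\theta_\xi} \omega^n. \]

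The second integration by parts converts the first term. By Stokes' theorem, $\int_X L_{\xi^J} (h \omega^n) = 0$, and since $L_{\xi^J} \omega^n = -\bar{\Box} \theta_\xi \cdot \omega^n$ we get $\int_X \xi^J h \cdot \omega^n = \int_X h \cdot \bar{\Box}\theta_\xi \cdot \omega^n$ for any function $h$; applying this with $h = \theta_\zeta e^{\theta_\xi}$ and expanding yields
\[ \int_X \xi^J \theta_\zeta \cdot e^{\theta_\xi} \omega^n = \int_X \bar{\Box} \theta_\xi \cdot \theta_\zeta e^{\theta_\xi} \omega^n - \int_X \xi^J \theta_\xi \cdot \theta_\zeta e^{\theta_\xi} \omega^n. \]
Substituting into the displayed equation above makes the integrand collapse to
\[ (s + 2 \bar{\Box} \theta_\xi - \xi^J \theta_\xi - \lambda \theta_\xi) - \bar{s}^\lambda_\xi = \hat{s}^\lambda_\xi (\omega), \]
so $B_\xi \cdot (d_\xi \log \mathrm{Vol}^\lambda)(\zeta) = \int_X \hat{s}^\lambda_\xi (\omega) \theta_\zeta e^{\theta_\xi} \omega^n = B_\xi \cdot \Fut^\lambda_\xi (\zeta)$, and multiplying back by $\mathrm{Vol}^\lambda(\xi)$ gives the claimed formula.

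The only subtle point is the first integration by parts: strictly speaking, $\theta_\zeta$ need not be $\xi$-invariant when $[\xi,\zeta] \neq 0$, so the identity $\xi^J \theta_\zeta = \bar{\xi}^J \theta_\zeta$ fails in general. However, the Stokes-based form $\int_X \xi^J h \cdot \omega^n = \int_X h \bar{\Box}\theta_\xi \omega^n$ is purely tensorial and holds for any smooth $h$; since both integration-by-parts steps can be recast in this invariant form, the calculation goes through for arbitrary $\zeta \in \mathfrak{k}$, and by $\mathbb{C}$-linearity of $\Fut^\lambda_\xi$ one reaches any $\zeta \in \mathfrak{h}_0(X)$ if needed.
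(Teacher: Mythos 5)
Your proof is correct and follows essentially the same route as the paper: both reduce to the logarithmic derivative of $\mathrm{Vol}^\lambda = e^{\bar{s}^\lambda_\xi} (\int_X e^{\theta_\xi}\omega^n)^\lambda$, use linearity $\theta_{\xi+t\zeta} = \theta_\xi + t\theta_\zeta$, and then integrate by parts against the weighted measure $e^{\theta_\xi}\omega^n$ so that the integrand collapses to $\hat{s}^\lambda_\xi$; the paper simply performs your two Stokes-type steps in one unstated line, namely $\int_X \bar{\Box}\theta_\zeta\, e^{\theta_\xi}\omega^n = \int_X (\bar{\Box}\theta_\xi - \xi^J\theta_\xi)\theta_\zeta\, e^{\theta_\xi}\omega^n$. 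Your closing remark on the case $[\xi,\zeta]\neq 0$ is a legitimate refinement of a point the paper leaves implicit, and your Stokes-based recasting (together with $\int_X \xi\theta_\zeta\, e^{\theta_\xi}\omega^n = 0$, which follows from $L_\xi(e^{\theta_\xi}\omega^n)=0$) does close it.
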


\begin{proof}
We calculate the derivative of $\log \mathrm{Vol}^\lambda (\xi) = \bar{s}^\lambda_\xi + \lambda \log \int_X e^{\theta_\xi} \omega^n$. 
We have the following basic calculations: 
\begin{align} 
\frac{d}{dt}\Big{|}_{t=0} \int_X (s  + \bar{\Box} \theta_{\xi + t \zeta} 
& - \lambda \theta_{\xi + t \zeta}) e^{\theta_{\xi + t \zeta}} \omega^n
\\ \notag
&= \int_X \Big{(} (s + \bar{\Box} \theta_\xi - \lambda) + (\bar{\Box} \theta_\xi - \xi^J \theta_\xi - \lambda \theta_\xi) \Big{)} \theta_\zeta e^{\theta_\xi} \omega^n
\\ \notag
&= \tilFut^{\lambda}_\xi (\zeta) + (\bar{s}^\lambda_\xi - \lambda) \int_X \theta_\zeta e^{\theta_\xi} \omega^n, 
\\
\frac{d}{dt}\Big{|}_{t=0} \int_X e^{\theta_{\xi+ t \zeta}} \omega^n 
&= \int_X \theta_\zeta e^{\theta_\xi} \omega^n. 
\end{align}
It follows that 
\[ \frac{d}{dt}\Big{|}_{t=0} \bar{s}^\lambda_{\xi + t \zeta} = \Fut^{\lambda}_\xi (\zeta) - \lambda \int_X \theta_\zeta e^{\theta_\xi} \omega^n \Big{/} \int_X e^{\theta_\xi} \omega^n. \]
So we obtain 
\begin{align*} 
\frac{d}{dt}\Big{|}_{t=0} \log \mathrm{Vol}^\lambda (\xi + t \zeta) 
&= \frac{d}{dt}\Big{|}_{t=0} \Big{(} \bar{s}^\lambda_{\xi + t\zeta} + \lambda \log \int_X e^{\theta_{\xi + t \zeta}} \omega^n \Big{)} 
\\ 
&= \Fut^\lambda_\xi (\zeta). 
\end{align*}
\end{proof}

\begin{rem}
The log of the $\mu$-volume functional is given by 
\[ \log \mathrm{Vol}^\lambda = \int_X (s + \bar{\Box} \theta^v_\xi - \lambda \theta^v_\xi) e^{\theta^v_\xi} \omega^n, \]
where we put $\theta_\xi^v := \theta_\xi - \log \int_X e^{\theta_\xi} \omega^n$ so that $\int_X e^{\theta_\xi^v} \omega^n = 1$. 
As we have $\int_X \bar{\Box} \theta^v_\xi e^{\theta^v_\xi} \omega^n = \int_X |\dbar \theta_\xi^v|^2 e^{\theta_\xi^v} \omega^n$, this functional has the same expression with the Perelman's $W$-functional: 
\[ W (\omega, f, \lambda^{-1}) := \int_X (2 \lambda^{-1} (s + |\dbar f|^2) + f ) e^{-f} \Big{(} \frac{\lambda \omega}{4 \pi} \Big{)}^n. \]
While we usually consider the $W$-functional for positive $\lambda > 0$ (and for general smooth function $f$ with the normalization $\int_X e^{-f} ( \frac{\lambda \omega}{4 \pi} )^n = 1$), we are mainly interested in $\lambda \le 0$ for our $\mathrm{Vol}^\lambda$ in the context of $\mu$-cscK. 

\end{rem}

Next, we exhibit the second variational formula of $\mathrm{Vol}^\lambda$. 
Define a smooth map $\mathrm{DVol}^\lambda: \mathfrak{k} \to \mathfrak{k}^*$ by 
\begin{equation}
\mathrm{DVol}^\lambda (\xi) = d_\xi \mathrm{Vol}^\lambda = \mathrm{Vol}^\lambda (\xi) \cdot \Fut^\lambda_\xi. 
\end{equation}

\begin{prop}
The derivative $d_\xi \mathrm{DVol}^\lambda: \mathfrak{k} \to \mathfrak{k}^*$ of $\mathrm{DVol}^\lambda$ at $\xi \in \mathfrak{k}$ is given by 
\begin{align*}
\langle d_\xi \mathrm{DVol}^\lambda (\zeta) ,\bullet \rangle
&= \mathrm{Vol}^\lambda (\xi)^2 \cdot \Fut^\lambda_\xi (\zeta) \cdot \Fut^\lambda_\xi (\bullet) 
\\
&\qquad - \mathrm{Vol}^\lambda (\xi) \cdot \frac{\int_X \theta_\zeta e^{\theta_\xi} \omega^n}{\int_X e^{\theta_\xi} \omega^n} \Fut^\lambda_\xi (\bullet)
- \mathrm{Vol}^\lambda (\xi) \cdot \frac{\int_X \theta_\bullet e^{\theta_\xi} \omega^n}{\int_X e^{\theta_\xi} \omega^n} \Fut^\lambda_\xi (\zeta)
\\ \notag
&\qquad \quad + \mathrm{Vol}^\lambda (\xi) \cdot \left( \int_X e^{\theta_\xi} \omega^n \right)^{-1} \int_X (\hat{s}^\lambda_\xi \theta_\zeta \theta_{\bullet} + 2 \zeta^J \theta_{\bullet} ) e^{\theta_\xi} \omega^n
\\
& \qquad \qquad - \lambda \mathrm{Vol}^\lambda (\xi) \cdot \left( \int_X e^{\theta_\xi} \omega^n \right)^{-1} \left( \int_X \theta_\zeta \theta_{\bullet} e^{\theta_\xi} \omega^n 
- \frac{\int_X \theta_\zeta e^{\theta_\xi} \omega^n}{\int_X e^{\theta_\xi} \omega^n} \int_X \theta_{\bullet} e^{\theta_\xi} \omega^n \right). 
\end{align*}
\end{prop}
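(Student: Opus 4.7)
The formula is the differentiation of the product $\mathrm{DVol}^\lambda(\xi) = \mathrm{Vol}^\lambda(\xi)\cdot\Fut^\lambda_\xi$, so my overall strategy is to apply the product rule along the curve $\xi_t := \xi + t\zeta$ at $t=0$:
\begin{equation*}
\langle d_\xi\mathrm{DVol}^\lambda(\zeta),\bullet\rangle = \bigl(\partial_t|_{t=0}\mathrm{Vol}^\lambda(\xi_t)\bigr)\cdot\Fut^\lambda_\xi(\bullet) + \mathrm{Vol}^\lambda(\xi)\cdot\bigl(\partial_t|_{t=0}\Fut^\lambda_{\xi_t}(\bullet)\bigr).
\end{equation*}
The first piece is read off directly from the preceding proposition and produces the $\Fut^\lambda_\xi(\zeta)\Fut^\lambda_\xi(\bullet)$ term at the top of the formula. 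Every other line must come from the second piece.

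\textbf{Differentiating $\Fut^\lambda_{\xi_t}(\bullet)$.} I write $\Fut^\lambda_{\xi_t}(\bullet) = \tilFut^\lambda_{\xi_t}(\bullet)/V(\xi_t)$ with $V(\xi_t):=\int_X e^{\theta_{\xi_t}}\omega^n$ and apply the quotient rule. Since $\partial_t V|_{t=0} = \int_X\theta_\zeta e^{\theta_\xi}\omega^n$, the denominator's variation immediately yields the second line $-\mathrm{Vol}^\lambda(\xi)\cdot\frac{\int_X\theta_\zeta e^{\theta_\xi}\omega^n}{V(\xi)}\Fut^\lambda_\xi(\bullet)$. For the numerator $\tilFut^\lambda_{\xi_t}(\bullet) = \int_X\hat{s}^\lambda_{\xi_t}(\omega)\theta_\bullet e^{\theta_{\xi_t}}\omega^n$, only the factors $\hat{s}^\lambda_{\xi_t}$ and $e^{\theta_{\xi_t}}$ depend on $t$ (the potential $\theta_\bullet$ is fixed). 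Differentiating the exponential weight produces $\int_X\hat{s}^\lambda_\xi\theta_\zeta\theta_\bullet e^{\theta_\xi}\omega^n$, which is the $\hat{s}^\lambda_\xi\theta_\zeta\theta_\bullet$ half of the fourth bracketed term.

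\textbf{The $\partial_t\hat{s}^\lambda$ integral and the key integration by parts.} Using the symmetry $\xi^J\theta_\zeta = \zeta^J\theta_\xi$, valid for the real $\dbar$-Hamiltonian potentials available in the present $K$-invariant setup, direct differentiation of $s^\lambda_\xi = s + 2\bar\Box\theta_\xi - \xi^J\theta_\xi - \lambda\theta_\xi$ gives $\partial_t s^\lambda_{\xi_t}|_{t=0} = 2(\bar\Box\theta_\zeta - \xi^J\theta_\zeta) - \lambda\theta_\zeta$. Subtracting $\partial_t\bar{s}^\lambda_{\xi_t}|_{t=0} = \Fut^\lambda_\xi(\zeta) - \lambda\int_X\theta_\zeta e^{\theta_\xi}\omega^n/V(\xi)$ (from the first variational formula of this section) furnishes $\partial_t\hat{s}^\lambda_{\xi_t}|_{t=0}$. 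The crucial identity I would invoke is
\begin{equation*}
\int_X(\bar\Box\theta_\zeta - \xi^J\theta_\zeta)\theta_\bullet\, e^{\theta_\xi}\omega^n \;=\; \int_X \zeta^J\theta_\bullet\, e^{\theta_\xi}\omega^n,
\end{equation*}
which I would establish by using $\omega^n$-self-adjointness of $\bar\Box$ to rewrite $\int_X\bar\Box\theta_\zeta\cdot\theta_\bullet e^{\theta_\xi}\omega^n = \int_X g^{p\bar q}\theta_{\zeta,p}(\theta_\bullet e^{\theta_\xi})_{\bar q}\omega^n$, expanding the $\bar\partial_q$-derivative, and identifying the resulting pieces as $\xi^J\theta_\zeta\cdot\theta_\bullet$ and the Dirichlet pairing $g^{p\bar q}\theta_{\zeta,p}\theta_{\bullet,\bar q}$, the latter equalling $\zeta^J\theta_\bullet$ after invoking the formal self-adjointness of $\bar\Box - \xi^J$ with respect to $e^{\theta_\xi}\omega^n$ already used in the K\"ahler--Ricci soliton discussion. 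Applied to $2(\bar\Box\theta_\zeta - \xi^J\theta_\zeta)$ this contributes precisely the $2\zeta^J\theta_\bullet$ half of the fourth bracketed term; the leftover $-\lambda\theta_\zeta$ piece and the $-\partial_t\bar{s}^\lambda_{\xi_t}$ piece then reassemble, after division by $V(\xi)$ and multiplication by $\mathrm{Vol}^\lambda(\xi)$, into the third line and the whole last ($\lambda$-covariance) line of the statement.

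\textbf{Main obstacle.} The only non-routine step is the integration-by-parts identity above — especially pinning down the coefficient $2$ in front of $\zeta^J\theta_\bullet$, which is inherited from the $2\bar\Box\theta_\xi$ in $s^\lambda_\xi$. The remaining difficulty is purely bookkeeping: four sources of $t$-dependence (the denominator $V(\xi_t)$, the weight $e^{\theta_{\xi_t}}$, the normalizing constant $\bar{s}^\lambda_{\xi_t}$, and the local scalar $s^\lambda_{\xi_t}$) produce overlapping $\lambda$- and $\Fut$-contributions that must be grouped correctly to display the $(\zeta,\bullet)$-symmetric structure of the stated Hessian.
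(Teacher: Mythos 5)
Your proposal is correct and is essentially the paper's own proof: the same product/quotient-rule decomposition of $\mathrm{DVol}^\lambda(\xi) = \mathrm{Vol}^\lambda(\xi)\cdot\tilFut^\lambda_\xi \big/ \int_X e^{\theta_\xi}\omega^n$, the same use of the first variational formula for $\partial_t\mathrm{Vol}^\lambda$ and $\partial_t\bar{s}^\lambda_{\xi_t}$, the same symmetry $\xi^J\theta_\zeta=\zeta^J\theta_\xi$, and the same weighted self-adjointness identity $\int_X(\bar{\Box}\theta_\zeta-\xi^J\theta_\zeta)\theta_\bullet e^{\theta_\xi}\omega^n=\int_X\zeta^J\theta_\bullet\, e^{\theta_\xi}\omega^n$ producing the $2\zeta^J\theta_\bullet$ term and the regrouping into the $\lambda$-covariance line. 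Note only that the honest product rule yields the first term with coefficient $\mathrm{Vol}^\lambda(\xi)$ rather than the $\mathrm{Vol}^\lambda(\xi)^2$ printed in the statement (the paper's own proof has the same inconsistency), so this is a typo in the paper and not a gap in your argument.
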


\begin{proof}
Using the first variational formula, we have 
\begin{align*}
\langle d_\xi \mathrm{DVol}^\lambda (\zeta) , \bullet \rangle 
&= \mathrm{Vol}^\lambda (\xi)^2 \cdot \Fut^\lambda_\xi (\zeta) \cdot \Fut^\lambda_\xi (\bullet) 
- \mathrm{Vol}^\lambda (\xi) \cdot \frac{\int_X \theta_\zeta e^{\theta_\xi} \omega^n}{\int_X e^{\theta_\xi} \omega^n} \Fut^\lambda_\xi (\bullet)
\\
&\qquad + \mathrm{Vol}^\lambda (\xi) \left( \int_X e^{\theta_\xi} \omega^n \right)^{-1} \frac{d}{dt} \Big{|}_{t=0} \tilFut^\lambda_{\xi+ t\zeta} (\bullet). 
\end{align*}
The claim follows by the following computation. 
\begin{align*}
\frac{d}{dt} \Big{|}_{t=0} \tilFut^\lambda_{\xi+ t\zeta} (\bullet)
&= \frac{d}{dt} \Big{|}_{t=0} \int_X (s + 2 \bar{\Box} \theta_{\xi + t\zeta} - (\xi+ t\zeta)^J \theta_{\xi + t\zeta} - \lambda \theta_{\xi + t \zeta} - \bar{s}^\lambda_{\xi + t \zeta}) \theta_\bullet e^{\theta_{\xi + t \zeta}} \omega^n
\\
&= \int_X (2 \bar{\Box} \theta_\zeta - 2 \xi^J \theta_\zeta - \lambda \theta_\zeta) \theta_\bullet e^{\theta_\xi} \omega^n  - \frac{\tilFut^\lambda_\xi (\zeta) - \lambda \int_X \theta_\zeta e^{\theta_\xi} \omega^n}{\int_X e^{\theta_\xi} \omega^n} \int_X \theta_\bullet e^{\theta_\xi} \omega^n
\\
& \qquad + \int_X \hat{s}^\lambda_\xi \theta_\bullet \theta_\zeta e^{\theta_\xi} \omega^n
\\
&= 2 \int_X \zeta^J \theta_\bullet e^{\theta_\xi} \omega^n - \frac{\int_X \theta_\bullet e^{\theta_\xi} \omega^n}{\int_X e^{\theta_\xi} \omega^n} \tilFut^\lambda_\xi (\zeta) + \int_X \hat{s}^\lambda_\xi \theta_\bullet \theta_\zeta e^{\theta_\xi} \omega^n
\\
& \qquad - \lambda \left( \int_X \theta_\zeta \theta_{\bullet} e^{\theta_\xi} \omega^n 
- \frac{\int_X \theta_\zeta e^{\theta_\xi} \omega^n}{\int_X e^{\theta_\xi} \omega^n} \int_X \theta_{\bullet} e^{\theta_\xi} \omega^n \right). 
\end{align*}
\end{proof}


\subsubsection{Corollaries of the second variational formula}

Using the second variational formula, we obtain a criterion for $\xi$ to be a local minimizer. 

\begin{cor}
\label{corollary of the second variational formula}
Let $\omega$ be a $\mu^\lambda_\xi$-cscK metric on $X$. 
If $\xi$ is a local minimizer of $\mathrm{Vol}^\lambda$, then 
\begin{equation}
\lambda \le \frac{2 \int_X |\zeta^J|_g^2 e^{\theta_\xi} \omega^n / \int_X e^{\theta_\xi} \omega^n}{\nu_\xi (\zeta)}
\end{equation}
for every $\zeta \in \mathfrak{k} \setminus \{ 0 \}$ (with $|\zeta| = 1$). 

Conversely, if we have 
\begin{equation}
\lambda < \frac{2 \int_X |\zeta^J|_g^2 e^{\theta_\xi} \omega^n / \int_X e^{\theta_\xi} \omega^n}{\nu_\xi (\zeta)}
\end{equation}
for every $\zeta \in \mathfrak{k} \setminus \{ 0 \}$ (with $|\zeta| = 1$), then $\xi \in \mathfrak{k}$ is an isolated local minimizer of the functional $\mathrm{Vol}^\lambda$. 
(Note that $2 \int_X |\zeta^J|^2_g e^{\theta_\xi} \omega^n/\int_X e^{\theta_\xi} \omega^n$ depends on the $\mu$-cscK metric $\omega$ and so on $\lambda$. )
\textbf{Especially, $\xi$ is an isolated local minimizer when $\lambda \le 0$. }

Moreover, let $\lambda_1$ be the first eigenvalue of the weighted $\bar{\partial}$-Laplacian $\bar{\Box}_g - \xi^J$ (restricted to the space of $\xi$-invariant real functions) with respect to the $\mu$-cscK metric $\omega$ and suppose $\lambda < 2 \lambda_1$, then $\xi \in \mathfrak{k}$ is an isolated local minimizer. 
\end{cor}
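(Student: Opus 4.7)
The plan is to evaluate the Hessian from the preceding proposition at the critical point $\xi$, where things collapse dramatically. Since $\omega$ is a $\mu^\lambda_\xi$-cscK metric we have $\hat{s}^\lambda_\xi(\omega) \equiv 0$ and $\Fut^\lambda_\xi \equiv 0$ on $\mathfrak{k}$, so feeding these into the second variational formula kills the three $\Fut^\lambda_\xi$-terms and the $\hat{s}^\lambda_\xi$-piece of the fourth term, leaving
\begin{equation*}
\langle d_\xi \mathrm{DVol}^\lambda(\zeta), \zeta \rangle \;=\; \mathrm{Vol}^\lambda(\xi) \left( \frac{2 \int_X \zeta^J \theta_\zeta \, e^{\theta_\xi}\omega^n}{\int_X e^{\theta_\xi}\omega^n} \;-\; \lambda \, \nu_\xi(\zeta) \right).
\end{equation*}
The identity $\zeta^J \theta_\zeta = |\zeta^J|_g^2 = |\bar{\partial}\theta_\zeta|_g^2$ from the Vector Fields subsection rewrites the first term as $2\int_X |\zeta^J|_g^2 e^{\theta_\xi}\omega^n / \int_X e^{\theta_\xi}\omega^n$. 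Since $\mathrm{Vol}^\lambda(\xi) > 0$ and $\nu_\xi(\zeta) > 0$ for every $\zeta \neq 0$, the first (non-strict) inequality follows from Hessian-nonnegativity at a local minimum, while the strict form makes the Hessian strictly positive-definite on $\mathfrak{k}$, so Taylor's theorem together with compactness of the unit sphere in $\mathfrak{k}$ gives an isolated local minimum.

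The case $\lambda \le 0$ drops out at once: $\lambda \nu_\xi(\zeta) \le 0$ while $2\int_X|\zeta^J|_g^2 e^{\theta_\xi}\omega^n/\int_X e^{\theta_\xi}\omega^n > 0$ whenever $\zeta \neq 0$.

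For the spectral criterion $\lambda < 2\lambda_1$, I would bound $\int_X |\zeta^J|_g^2 e^{\theta_\xi}\omega^n$ from below using the weighted Poincar\'e inequality for the operator $\bar{\Box} - \xi^J$ acting on $C^\infty_\xi(X, \mathbb{R})$. This operator is formally self-adjoint with respect to $e^{\theta_\xi}\omega^n$ by (\ref{weighted adjoint 2}) and has $\lambda_1$ as its first nonzero eigenvalue. Since $\zeta$ commutes with $\xi$ and $\omega$ is $\xi$-invariant, $\theta_\zeta \in C^\infty_\xi(X, \mathbb{R})$, so the Rayleigh quotient characterization of $\lambda_1$ yields
\begin{equation*}
\int_X |\bar{\partial}\theta_\zeta|_g^2 \, e^{\theta_\xi}\omega^n \;\ge\; \lambda_1 \int_X (\theta_\zeta - \bar{\theta}_\zeta)^2 \, e^{\theta_\xi}\omega^n \;=\; \lambda_1 \, \nu_\xi(\zeta) \int_X e^{\theta_\xi}\omega^n,
\end{equation*}
where $\bar{\theta}_\zeta$ is the weighted mean and the last equality is simply expansion of the weighted variance. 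Consequently the bracket in the Hessian formula is bounded below by $(2\lambda_1 - \lambda)\nu_\xi(\zeta) > 0$, so $\xi$ is an isolated local minimizer.

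The only real obstacle is bookkeeping: checking that the non-surviving terms of the second variational formula genuinely vanish at a $\mu^\lambda_\xi$-cscK metric, and identifying $\bar{\partial}^{\theta_\xi *}\bar{\partial}$ on $\xi$-invariant real functions with $\bar{\Box} - \xi^J$ so that the weighted Poincar\'e inequality applies directly.
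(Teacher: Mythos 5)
Your proposal reproduces the paper's own proof essentially verbatim: the paper likewise evaluates the second variational formula at the critical point, where the $\mu^\lambda_\xi$-cscK condition ($\hat{s}^\lambda_\xi \equiv 0$, hence $\Fut^\lambda_\xi \equiv 0$) kills all but the $2\zeta^J\theta_\zeta$ and $\lambda\nu_\xi(\zeta)$ terms, identifies $\zeta^J\theta_\zeta = |\zeta^J|^2_g$, and deduces the three claims from Hessian (non)negativity, positivity of the right-hand side when $\lambda \le 0$, and the weighted Poincar\'e inequality for $\bar{\Box} - \xi^J$. The one caveat is your assertion that ``$\zeta$ commutes with $\xi$,'' which is automatic only when $\mathfrak{k}$ is abelian or $\zeta$ centralizes $\xi$; but this is precisely the same implicit restriction the paper makes in defining $\lambda_1$ on $\xi$-invariant functions, so your argument matches the paper's in both approach and level of rigor.
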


\begin{proof}
If $\xi$ is a local minimizer, then we should have $(d/dt)^2|_{t=0} \mathrm{Vol}^\lambda (\xi + t \zeta) = d_\xi D\mathrm{Vol}^\lambda (\zeta) (\zeta) \ge 0$ for every $\zeta \neq 0$. 
On the other hand, if we have $(d/dt)^2|_{t=0} \mathrm{Vol}^\lambda (\xi + t \zeta) = d_\xi D\mathrm{Vol}^\lambda (\zeta) (\zeta) > 0$ for every $\zeta \neq 0$, then $\xi$ is an isolated minimizer. 
Then the first two claims follow by the second variational formula of $\mathrm{Vol}^\lambda$. 
The last statement follows by the Poincare's inequality. 
\end{proof}



Note that the origin $0 \in \mathfrak{k}$ is a critical point of $\mathrm{Vol}^\lambda$ if and only if the usual Futaki invariant $\Fut$ vanishes, which is independent of $\lambda$. 
So we also obtain the following corollary, which will give a non-uniqueness of critical points in the next subsection. 

\begin{cor}
\label{Fut zero}
Suppose $\Fut \equiv 0$. 
Then the origin $0 \in \mathfrak{k}$ is an isolated local minimizer of $\mathrm{Vol}^\lambda$ if 
\[ \lambda < \frac{\int_X ((s- \bar{s}) \theta_\zeta^2 + 2 |\zeta^J|^2) \omega^n / \int_X \omega^n}{\nu_0 (\zeta)} \]
and $0 \in \mathfrak{k}$ is a local minimizer only when
\[ \lambda \le \frac{\int_X ((s- \bar{s}) \theta_\zeta^2 + 2 |\zeta^J|^2) \omega^n / \int_X \omega^n}{\nu_0 (\zeta)} \]
for every $\zeta \in \mathfrak{k} \setminus \{ 0 \}$, where the right hand side is independent of the choices of the K\"ahler metric in the fixed K\"ahler class and the moment map. 
\end{cor}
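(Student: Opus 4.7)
The plan is to obtain this corollary directly from the second variational formula of $\mathrm{Vol}^\lambda$ (the previous proposition), specializing to $\xi = 0$ and invoking the vanishing of the usual Futaki invariant.

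First I would check that $0 \in \mathfrak{k}$ is a critical point of $\mathrm{Vol}^\lambda$ under the hypothesis $\Fut \equiv 0$. From the first variational formula
\[
(d_\xi \mathrm{Vol}^\lambda)(\zeta) = \mathrm{Vol}^\lambda(\xi) \cdot \Fut^\lambda_\xi(\zeta),
\]
evaluated at $\xi = 0$, one has $\theta_0 \equiv 0$ (after normalization), so $\hat{s}^\lambda_0 = s - \bar{s}$ and $\Fut^\lambda_0(\zeta) = \int_X (s-\bar{s}) \theta_\zeta \omega^n / \int_X \omega^n$, which is exactly the classical Futaki invariant $\Fut(\zeta)$ and vanishes by hypothesis. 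Hence $d_0 \mathrm{Vol}^\lambda = 0$.

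Next I would plug $\xi = 0$ into the second variational formula. With $\theta_\xi \equiv 0$, we have $\int_X e^{\theta_\xi}\omega^n = V := \int_X \omega^n$, $\hat{s}^\lambda_0 = s-\bar{s}$, and $\Fut^\lambda_0 \equiv 0$, so the first three terms in $\langle d_\xi \mathrm{DVol}^\lambda(\zeta),\bullet\rangle$ all vanish. The remaining contributions reduce to
\[
\langle d_0 \mathrm{DVol}^\lambda(\zeta),\zeta\rangle = \mathrm{Vol}^\lambda(0) V^{-1} \int_X \bigl((s-\bar s)\theta_\zeta^2 + 2\,\zeta^J \theta_\zeta\bigr)\omega^n \;-\; \lambda\,\mathrm{Vol}^\lambda(0)\,\nu_0(\zeta),
\]
using $V \nu_0(\zeta) = \int_X \theta_\zeta^2 \omega^n - V^{-1}(\int_X \theta_\zeta \omega^n)^2$. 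Invoking the identity $\zeta^J \theta_\zeta = |\zeta^J|^2_g$ recorded in Section~2.1.2, the second variation of $\mathrm{Vol}^\lambda$ in the direction $\zeta$ becomes
\[
\tfrac{d^2}{dt^2}\Big|_{t=0}\mathrm{Vol}^\lambda(t\zeta) = \mathrm{Vol}^\lambda(0)\left( V^{-1}\int_X \bigl((s-\bar s)\theta_\zeta^2 + 2|\zeta^J|^2\bigr)\omega^n \;-\; \lambda\,\nu_0(\zeta)\right).
\]

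From this formula the two claims follow immediately. Since $\nu_0(\zeta) > 0$ for $\zeta \neq 0$ (shown in Section~2.2.1) and $\mathrm{Vol}^\lambda(0) > 0$, the Hessian is positive definite precisely when $\lambda$ lies strictly below the quotient displayed in the statement, giving an isolated local minimizer; while non-negativity of the Hessian, required for $0$ to be any local minimizer, is equivalent to the non-strict inequality. The invariance assertion is automatic: the functional $\mathrm{Vol}^\lambda$ itself is an intrinsic invariant of $(X,[\omega],\lambda)$ on $\mathfrak{k}$, so its second derivative at the critical point $0$ is as well; therefore the displayed quotient, which up to the positive factor $\mathrm{Vol}^\lambda(0)\,\nu_0(\zeta)$ equals the Hessian, does not depend on the choice of $\omega \in [\omega]$ or on the normalization of the moment map.

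There is no genuine obstacle here: the only mild point to watch is the normalization $\theta_0 \equiv 0$ (so that the weighted and unweighted integrals agree and $\bar{s}^\lambda_0 = \bar{s}$), and the recognition that at $\xi = 0$ the vanishing of the three $\Fut$-dependent terms in the second variation is automatic. The real content is simply the reduction of the general second variation formula to its diagonal at the origin.
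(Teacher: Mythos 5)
Your proof is correct, and its core coincides with the paper's (very terse) proof: the paper simply says ``the claim follows by the second variational formula,'' which is exactly your reduction --- at $\xi = 0$ with $\theta_0 \equiv 0$ one has $\hat{s}^\lambda_0 = s - \bar{s}$, the hypothesis $\Fut \equiv 0$ kills the three $\Fut$-dependent terms, and the identity $\zeta^J \theta_\zeta = |\zeta^J|^2_g$ turns the diagonal of the remaining terms into $\mathrm{Vol}^\lambda(0)\bigl(V^{-1}\int_X ((s-\bar{s})\theta_\zeta^2 + 2|\zeta^J|^2)\omega^n - \lambda \nu_0(\zeta)\bigr)$, after which the finite-dimensional Hessian criterion gives both directions. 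Where you genuinely diverge from the paper is the invariance claim. The paper proves it by an explicit equivariant-cohomological identity,
\[ \int_X ((s- \bar{s}) \theta_\zeta^2 + 2 |\zeta^J|^2) \omega^n = \frac{2}{n+1} \Big( \int_X (\Ric (\omega) + \bar{\Box} \theta_\zeta) (\omega+ \theta_\zeta)^{n+1} - \frac{\bar{s}}{n+2} \int_X (\omega + \theta_\zeta)^{n+2} \Big), \]
which settles independence from $\omega \in [\omega]$, and then uses $\Fut \equiv 0$ to handle shifts $\theta_\zeta \mapsto \theta_\zeta + c$. You instead argue softly: $\mathrm{Vol}^\lambda$ is itself choice-independent, hence so is its Hessian at the critical point $0$, and since $\mathrm{Vol}^\lambda(0)$ and $\nu_0(\zeta)$ are also invariants, the quotient is determined by invariants via $Q(\zeta) = \lambda + H(\zeta,\zeta)/(\mathrm{Vol}^\lambda(0)\nu_0(\zeta))$. (Note your phrase ``up to the positive factor \dots equals the Hessian'' is slightly off --- it is $Q - \lambda$, not $Q$, that equals $H/(\mathrm{Vol}^\lambda(0)\nu_0(\zeta))$ --- but the displayed formula above it is right and the conclusion is unaffected; implicitly you also use that your Hessian computation is valid for an arbitrary admissible choice of $\omega$ and moment map, which it is, since only $\Fut \equiv 0$ was needed to kill the extra terms.) Your route is shorter and avoids computation; what the paper's route buys is the explicit intersection-theoretic formula itself, which is reused later (e.g.\ to compute $\lambda_\infty(X, 2\pi c_1(X)) = 2$ in the Fano case), so it carries content beyond the invariance statement.
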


\begin{proof}
Note $\hat{s}^\lambda_0 = s - \underline{s}$. 
The claim follows by the second variational formula. 
We can express $\int_X ((s- \underline{s}) \theta_\zeta^2 + 2 |\zeta^J|^2) \omega^n$ by the integral of equivariant closed forms as 
\[ \int_X ((s- \underline{s}) \theta_\zeta^2 + 2 |\zeta^J|^2) \omega^n = \frac{2}{n+1} \Big{(} \int (\Ric (\omega) + \bar{\Box} \theta_\zeta) (\omega+ \theta_\zeta)^{n+1} - \frac{\underline{s}}{n+2} \int_X (\omega + \theta_\zeta)^{n+2} \Big{)}, \]
which proves the independence from $\omega$. 
As for the independence of the normalization of the moment map, it follows from $\Fut = \int_X (s-\underline{s}) \theta \omega^n \equiv 0$. 
\end{proof}

For a K\"ahler manifold $(X, [\omega])$ with $\Fut \equiv 0$, we put 
\begin{align}
\lambda_\infty (X, [\omega]) 
&:= \sup \{ \lambda \in \mathbb{R} ~|~ \mathrm{Vol}^\lambda \text{ is locally minimized at the origin } \} 
\\ \notag
&= \min_{|\zeta| = 1} \frac{\int_X ((s- \underline{s}) \theta_\zeta^2 + 2 |\zeta^J|^2) \omega^n}{\nu_0 (\zeta)}. 
\end{align}
Note that for every $\lambda \le \lambda_\infty (X, [\omega])$, $\mathrm{Vol}^\lambda_{[\omega]}$ is locally minimized at the origin. 
By the Poincare's inequality, we have the following lower bound: 
\begin{equation} 
\label{lambda estimate}
\sup_{\omega \in [\omega]} (\min_X s (\omega) + 2\lambda_1 (\omega)) - \underline{s} \le \lambda_\infty (X, [\omega]), 
\end{equation}
where $\lambda_1 (\omega)$ denotes the first eigenvalue of $\bar{\Box}_\omega$. 

Now suppose $X$ is a Fano manifold and there is a K\"ahler metric $\omega$ in a fixed K\"ahler class on $X$ with a lower bound on the Ricci curvature $\Ric (\omega) \ge \delta \omega$ for $\delta > 0$. 
Then by Lichnerowicz-Obata's theorem, we obtain a lower bound on the first eigenvalue $\lambda_1 (\omega) \ge \frac{n}{2n-1} \delta$ (note that $\bar{\Box} = \frac{1}{2} \Delta$). 
On the other hand, we have $s (\omega) \ge \delta n$. 
It follows that if the Futaki invariant of $X$ vanishes, then by (\ref{lambda estimate}) we obtain a lower bound $\lambda_\infty (X, [\omega]) \ge \delta (n + \frac{2n}{2n-1}) - \underline{s}$. 
So in particular, in this case, the origin $0 \in \mathfrak{k}$ is an isolated local minimizer of $\mathrm{Vol}^\lambda$ for all $\lambda \le 0$ if $\delta > \frac{2n-1}{(2n + 1)n} \underline{s}$. 

As for $[\omega] = 2 \pi c_1 (X)$, we can explicitly compute as 
\begin{equation}
\lambda_\infty (X, 2 \pi c_1 (X)) = 2. 
\end{equation}
This follows by the equality of equivariant classes $[\Ric + \bar{\Box} \theta_\zeta] = [\omega + \theta_\zeta]$ and the formula in the proof of the above corollary. 
We can also deduce this by using $s- \underline{s} = - \bar{\Box} h$ and $\bar{\Box} \theta_\zeta - \zeta^J h - \theta_\zeta = 0$ as in (\ref{Fano moment 2}).

\begin{quest}
Is $\lambda_\infty (X, [\omega])$ positive for every K\"ahler manifold $X$ and K\"ahler class $[\omega]$ with vanishing Futaki invariant? 
\end{quest}

\subsubsection{Properness of $\mathrm{Vol}^\lambda$}

Now we show that $\mathrm{Vol}^\lambda$ is proper for general $X$, not necessarily a Fano manifold, and thus always have a critical point. 

\begin{lem}
Let $M$ be a closed manifold and $f$ be a Morse-Bott function. 
Normalize $f$ so that $\max f = 0$ by adding a constant and suppose $f^{-1} (0)$ is connected of codimension $k$. 
Then for any smooth measure $dm$, the parametrized measure $t^{k/2} e^{tf} dm$ converges to a non-zero finite measure supported on $f^{-1} (0)$ as $t$ tends to $+ \infty$. 

Moreover, the parametrized measure $(-1)^p t^{k/2 + p} f^p e^{tf} dm$ converges to a non-zero finite measure supported on $f^{-1} (0)$ for every non-negative integer $p$. 
\end{lem}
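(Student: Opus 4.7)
I would prove this by a Laplace / stationary-phase argument localized to $N := f^{-1}(0)$, using the Morse-Bott structure to put $f$ into a Gaussian normal form.

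First I would reduce to a tubular neighborhood of $N$. Since $M$ is compact, for any $\epsilon>0$ the set $\{f\le-\epsilon\}$ is closed and on it $t^{k/2}e^{tf}\le t^{k/2}e^{-t\epsilon}\to 0$. Integrating against any continuous $\phi$ and the smooth measure $|dm|$ shows the complement of any neighborhood of $N$ contributes $0$ in the limit, so I only need to analyze the measure on an arbitrarily small tubular neighborhood $U$ of $N$.

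Second, I would apply the Morse-Bott normal form. Since $N=f^{-1}(\max f)$ is connected of codimension $k$ and $f$ is Morse-Bott, the normal Hessian of $f$ along $N$ is negative definite. Taking $U$ to be diffeomorphic to a disc bundle in the normal bundle $\nu N\to N$, with base coordinates $x\in N$ and fiber coordinates $y\in\mathbb{R}^k$, the Morse-Bott lemma (a fiberwise smooth change of coordinate absorbing the cubic tail) puts $f$ in the form $f(x,y)=-Q_x(y,y)$ for a smooth family of positive definite quadratic forms $Q_x$. Write $dm=\rho(x,y)\,dV_N(x)\,dy$ for a smooth positive density $\rho$.

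Third, I would perform the Gaussian rescaling $y=z/\sqrt{t}$, under which $dy=t^{-k/2}\,dz$ and
\[
t^{k/2}e^{tf}\,dm \;=\; e^{-Q_x(z,z)}\,\rho(x, z/\sqrt{t})\,dV_N(x)\,dz.
\]
For any continuous test function $\phi$ on $M$, the integrand converges pointwise to $\phi(x)\,\rho(x,0)\,e^{-Q_x(z,z)}$, and it is uniformly dominated by $C e^{-c|z|^2}$ on $N\times\mathbb{R}^k$ with $c>0$ (by compactness of $N$ and positive-definiteness of $Q_x$). Dominated convergence then gives
\[
\lim_{t\to\infty}\int_M \phi\,t^{k/2}e^{tf}\,dm \;=\; \int_N \phi(x)\,\rho(x,0)\Big(\int_{\mathbb{R}^k}e^{-Q_x(z,z)}\,dz\Big)\,dV_N(x),
\]
which is a finite measure supported on $N$, and non-zero since each fiber Gaussian integral equals $\pi^{k/2}/\sqrt{\det Q_x}>0$ and $\rho(\cdot,0)>0$. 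For the moment statement, the same rescaling turns $(-1)^p t^{k/2+p}f^p\,dm$ into $Q_x(z,z)^p\,e^{-Q_x(z,z)}\rho(x,z/\sqrt{t})\,dV_N\,dz$, which is dominated by $C|z|^{2p}e^{-c|z|^2}$; the limit measure
\[
\int_N \phi(x)\,\rho(x,0)\Big(\int_{\mathbb{R}^k} Q_x(z,z)^p\,e^{-Q_x(z,z)}\,dz\Big)\,dV_N(x)
\]
is finite and non-zero for the same reason.

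The only mildly technical point is making the reduction to normal form uniform enough to apply dominated convergence on the whole rescaled tubular neighborhood; the Gaussian tail $e^{-c|z|^2}$ handles this provided one localizes on a genuine tubular neighborhood where $Q_x$ stays uniformly positive definite. Everything else is a direct application of Laplace's method along the normal fibers.
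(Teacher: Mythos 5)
Your proof is correct and follows essentially the same route as the paper's: localize at $N=f^{-1}(0)$ using the exponential decay of $e^{tf}$ away from the maximum set, put $f$ into Morse--Bott normal form, and evaluate the surviving contribution by Gaussian integrals. The only differences are in execution---the paper works in local coordinates, tests convergence against box functions, and treats the moment case $(-1)^p t^{k/2+p} f^p e^{tf}\,dm$ by an explicit integration by parts, whereas your global rescaling $y=z/\sqrt{t}$ with dominated convergence handles both statements at once and additionally produces an explicit formula for the limit measure---but these are cosmetic rather than structural distinctions.
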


\begin{proof}
On any compact set $K \subset M \setminus f^{-1} (0)$, the parametrized measure $t^{k/2} e^{tf} dm$ converges to zero in the order $o (t^{k/t} e^{-\epsilon t})$ as $f$ is smaller than some $-\epsilon < 0$ on $K$. 

For a point $p$ of $f^{-1} (0)$, we can take a local coordinate of $p$ so that $f (x)$ can be written as $ = - (x_1^2 + \dotsb + x_k^2)$. 
Then we can write $t^{k/2} e^{t f} dm$ as 
\[ t^{k/2} e^{t f} dm = t^{k/2} e^{- t (x_1^2 + \dotsb + x_k^2)} m (x) dx_1 \dotsb dx_n \]
for a positive function $m (x)$ on this coordinate. 
It suffices to prove that the parametrized measure $t^{k/2} e^{- t (x_1^2 + \dotsb x_k^2)} dx_1 \dotsb dx_n$ converges to a non-zero finite measure supported on $\{ x_1 = \dotsb = x_k = 0 \}$. 
As we only need to check the convergence of the integration of all the test functions of boxes, the claim follows by the Gaussian integral. 

As for $(-1)^p t^{k/2 + p} f^p e^{tf} dm = t^{k/2 + p}  (x_1^2 + \dotsb + x_k^2)^p e^{- t (x_1^2 + \dotsb + x_k^2)} m (x) dx_1 \dotsb dx_n$, we have 
\[ \int x_1^{2p_1} \dotsb x_k^{2p_k} e^{-t (x_1^2 + \dotsb + x_k^2)} dx_1 \dotsb dx_k = \prod_{i=1}^k \int x_i^{2p_i} e^{-t x_i^2} dx_i \]
for $p_i$ with $p_1 + \dotsb + p_k = p$. 
Integrating by parts, we obtain 
\begin{align*} 
\int_0^a x_i^{2p_i} e^{-t x_i^2} dx_i 
&= - \frac{1}{2t} a^{2p_i -1} e^{-t a^2} + \frac{2p_i -1}{2t} \int_0^a x_i^{2p_i -2} e^{-t x_i^2} dx_i
\\
&= \dotsb = o (e^{-t a^2}) + C t^{-p_i} \int_0^a e^{-t x_i^2} dx_i = o (e^{-ta^2}) + C t^{-p_i} t^{-1/2}. 
\end{align*}
This proves the claim. 
\end{proof}

\begin{prop}
\label{properness of mu-volume}
Let $X$ be a compact K\"ahler manifold and $K$ be a compact Lie group acting on $X$. 
The limit $\lim_{t \to \infty} t^{-1} \log \mathrm{Vol}^\lambda (t \xi)$ exists. 
It is moreover independent of $\lambda \in \mathbb{R}$ and is strictly positive for each $\xi \in \mathfrak{k} \setminus \{ 0 \}$. 
In particular, $\mathrm{Vol}^\lambda$ is proper on $\mathfrak{k}$ for each $\lambda \in \mathbb{R}$. 
\end{prop}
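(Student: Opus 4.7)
The strategy is to reduce everything to the Morse--Bott/Laplace asymptotics already proved in the preceding lemma, applied to $f:=\theta_\xi-\max\theta_\xi$. First I would separate the $\lambda$-dependence by rewriting
\[
\log\mathrm{Vol}^\lambda(t\xi) \;=\; \bar s^{0}_{t\xi} + \lambda\Bigl(\log\!\int_X e^{t\theta_\xi}\omega^n - t\bar\theta_\xi^{(t)}\Bigr), \qquad \bar\theta_\xi^{(t)}:=\frac{\int_X\theta_\xi e^{t\theta_\xi}\omega^n}{\int_X e^{t\theta_\xi}\omega^n},
\]
and, using $\theta_{t\xi}=t\theta_\xi$ and integration by parts on $\bar\Box\theta_{t\xi}$, writing
\[
\bar s^{0}_{t\xi} \;=\; \frac{\int_X s\,e^{t\theta_\xi}\omega^n}{\int_X e^{t\theta_\xi}\omega^n} \;+\; t^{2}\cdot\frac{\int_X|\bar\partial\theta_\xi|^{2}e^{t\theta_\xi}\omega^n}{\int_X e^{t\theta_\xi}\omega^n}.
\]

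Next I would justify applying the lemma. Because $\xi\in\mathfrak{k}$ generates a subtorus $T_\xi\subset K$ whose closure is compact, $\theta_\xi$ is the $\xi$-component of the $T_\xi$-moment map; by Atiyah--Guillemin--Sternberg it is Morse--Bott and its maximum set $F$ is a connected complex submanifold, say of real codimension $k$. Let $C_0,C_1,C'$ denote the (positive) limits
\[
C_0:=\lim t^{k/2}e^{-t\max\theta_\xi}\!\int_X e^{t\theta_\xi}\omega^n,\ \
C_1:=\lim t^{\,k/2+1}e^{-t\max\theta_\xi}\!\int_X(\max\theta_\xi-\theta_\xi)e^{t\theta_\xi}\omega^n,\ \
C':=\lim t^{\,k/2+1}e^{-t\max\theta_\xi}\!\int_X|\bar\partial\theta_\xi|^{2}e^{t\theta_\xi}\omega^n,
\]
all of which exist by the lemma (for $C'$, the function $|\bar\partial\theta_\xi|^{2}$ vanishes precisely to second order along $F$ since $\xi^J=0$ there and its Hessian is non-degenerate in normal directions; more concretely, $|\bar\partial\theta_\xi|^{2}$ is a positive quadratic form in Morse--Bott normal coordinates and the lemma applies just as for $p=1$). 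From these I get
\[
\log\!\int_X e^{t\theta_\xi}\omega^n - t\,\bar\theta_\xi^{(t)} \;=\; -\tfrac{k}{2}\log t + O(1) \;=\; o(t),
\]
so the $\lambda$-term divided by $t$ vanishes in the limit, yielding both the existence of $\lim_{t\to\infty} t^{-1}\log\mathrm{Vol}^\lambda(t\xi)$ and its independence of $\lambda$. The $\int s\,e^{t\theta_\xi}\omega^n/\int e^{t\theta_\xi}\omega^n$ term is $O(1)$ and disappears after dividing by $t$, while the $|\bar\partial\theta_\xi|^{2}$ term contributes $t^{2}\cdot(C'/C_0)\cdot t^{-1}=t\cdot(C'/C_0)$, so
\[
\lim_{t\to\infty}\frac{1}{t}\log\mathrm{Vol}^\lambda(t\xi)\;=\;\frac{C'}{C_0}\;>\;0,
\]
establishing positivity for every $\xi\neq 0$.

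Finally, for properness I would use the positive-homogeneity of degree one of $\xi\mapsto\alpha(\xi):=C'/C_0$ together with compactness of the unit sphere in $\mathfrak{k}$: if the Laplace asymptotics are uniform on a compact neighbourhood of a direction $\eta$, one gets $\log\mathrm{Vol}^\lambda(\xi)\ge c\,\|\xi\|-D$ on the corresponding cone, and combining over a finite cover of the unit sphere proves $\mathrm{Vol}^\lambda(\xi)\to\infty$ as $\|\xi\|\to\infty$. The main obstacle is exactly this uniformity: as $\xi$ crosses walls of the moment polytope, the max set $F$, its codimension $k$, and the Hessian structure can jump, so the constants $C_0,C'$ depend on $\xi$ only in a stratified (not globally continuous) way. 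I would handle this chamber-by-chamber, proving a uniform lower bound on each open chamber where $F$ has constant combinatorial type and then using continuity of $\mathrm{Vol}^\lambda$ and compactness of the unit sphere to glue the chamber estimates into a single linear bound. An alternative, possibly cleaner route that avoids stratified asymptotics is to combine the convexity of $\xi\mapsto\log\int_X e^{\theta_\xi}\omega^n$ (as the log-Laplace transform of the Duistermaat--Heckman measure) with the already established pointwise positivity of $\alpha$; the hard work, in any case, is concentrated in this uniformity step rather than in the pointwise limit.
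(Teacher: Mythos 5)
Your proof of the three main claims --- existence of the limit, its independence of $\lambda$, and its strict positivity --- is correct and follows essentially the same route as the paper: reduce to the Morse--Bott/Laplace lemma for $f=\theta_\xi-\max\theta_\xi$ (using connectedness of the maximum locus $\Sigma$), show the $\lambda$-dependent combination $\log\int_Xe^{t\theta_\xi}\omega^n-t\bar\theta_\xi^{(t)}$ is $-\tfrac{k}{2}\log t+O(1)=o(t)$, and identify the limit with the surviving curvature term. The one substantive difference is how that term is treated. The paper does not integrate by parts: it writes $\bar s^0_{t\xi}/t=\int_X(s/t+\bar{\Box}\theta_\xi)\,d\nu_t$ with $\nu_t:=e^{t\theta_\xi}\omega^n/\int_Xe^{t\theta_\xi}\omega^n$, so the lemma applies verbatim (weak convergence of $\nu_t$ to a measure supported on $\Sigma$, tested against the fixed smooth function $\bar{\Box}\theta_\xi$), and positivity is just pointwise positivity of $\bar{\Box}\theta_\xi$ on $\Sigma$, which follows from Morse--Bott nondegeneracy. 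Your version instead produces $t^2\int_X|\dbar\theta_\xi|^2\,d\nu_t$, and here the lemma as stated does not apply: $|\dbar\theta_\xi|^2$ is not a power of $f$, and you cannot reduce to the lemma by writing $|\dbar\theta_\xi|^2=(-f)\cdot\big(|\dbar\theta_\xi|^2/(-f)\big)$, since that ratio has direction-dependent limits along $\Sigma$ and so is not a continuous test function there. Your parenthetical fix --- rerun the local Gaussian computation with the positive-definite quadratic form $|\dbar\theta_\xi|^2$ in place of $(-f)^p$ --- is correct and easy, but it is an extension of the lemma rather than an application of it; the paper's arrangement avoids needing any such extension, which is why its proof is shorter.

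On properness the two arguments genuinely diverge. The paper deduces properness immediately from positivity of the slope in each direction ("in particular"), with no uniformity argument; your proposal treats this implication as the hard part, correctly observing that for a general continuous function, positive asymptotic slope along every ray does not formally imply coercivity without some uniformity over directions. That caution is legitimate and flags a point the paper passes over silently; however, your proposal only sketches the repair. Note also that your "cleaner" alternative is not sufficient as stated: convexity of $\xi\mapsto\log\int_Xe^{\theta_\xi}\omega^n$ controls the $\lambda$-terms, but the dominant positive term is $\bar s^0_\xi$, which is not a log-Laplace transform and has no evident convexity, so the chamber-wise uniformity argument is the one that would actually have to be carried out. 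Relative to the paper's own standard of rigor, your argument establishes everything the paper's proof does.
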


\begin{proof}
Recall that the Hamiltonian potential $\theta_\xi$ is a Morse-Bott function with only even indices and coindices. 
In particular, $\theta_\xi^{-1} (c)$ is a connected submanifold for every $c \in \mathbb{R}$ (cf. \cite{MS}). 
As $\mathrm{Vol}^\lambda$ is independent of the normalization of $\theta_\xi$, we can suppose $\max \theta_\xi = 0$. 
Note that we have $\theta_{t \xi} = t \theta_\xi$ for $t \ge 0$ with respect to this normalization, while it is not linear on $\xi$. 
Let $2k$ be the real codimension of $\Sigma := \theta_\xi^{-1} (0)$. 

We can write the log of the $\mu$-volume functional as
\[ \log \mathrm{Vol}^\lambda (\xi) = \bar{s}^0_\xi + \lambda \log \int_X e^{\theta_\xi} \omega^n - \lambda \int_X \theta_\xi e^{\theta_\xi} \omega^n \Big{/} \int_X e^{\theta_\xi} \omega^n. \]

As for the first term, we can write as 
\[ \bar{s}^0_{t \xi}/t = \int_X (s(x) /t) e^{t \theta_\xi} \omega^n \Big{/} \int_X e^{t \theta_\xi} \omega^n + \int_X \bar{\Box} \theta_\xi e^{t \theta_\xi} \omega^n \Big{/} \int_X e^{t \theta_\xi} \omega^n. \]
Since $\max_{x \in X} |s (x)/t|$ goes to $0$ and $e^{t \theta_\xi} \omega^n / \int_X e^{t\theta_\xi} \omega^n$ is a probability measure for any $t$, the first term converges to zero as $t$ tends to infinity. 
Thanks to the above lemma, the second term converges to the integration of $\bar{\Box} \theta_\xi$ with respect to a non-zero finite measure supported on $\Sigma$. 
Since $\theta_\xi$ is a Morse--Bott function, the Hessian at critical points are non-degenerate to the normal direction, so that we obtain a strict positivity of $\bar{\Box} \theta_\xi$ on $\Sigma$. 
It follows that $\bar{s}^0_{t \xi}/t$ converges to a positive constant $\lim_{t \to \infty} \int_X \bar{\Box} \theta_\xi t^k e^{t \theta_\xi} \omega^n /\int_X t^k e^{t \theta_\xi} \omega^n = \int_\Sigma \bar{\Box} \theta_\xi dm_\infty /\int_X dm_\infty$. 

It suffices to show the rest terms converge to zero as $t$ tends to infinity. 
Again by the above lemma, $t^k \int_X e^{t \theta_\xi} \omega^n$ converges to a positive constant, so that we have 
\[ t^{-1} \log \int_X e^{t \theta_\xi} \omega^n = O (t^{-1} \log t) \to 0 \]
as $t \to \infty$. 
Similarly, we have 
\[ t^{-1} \int_X \theta_{t\xi} e^{\theta_{t \xi}} \omega^n \Big{/} \int_X e^{\theta_{t \xi}} \omega^n = t^{-1} \int_X t^{k+1} \theta_\xi e^{t \theta_\xi} \omega^n \Big{/} \int_X t^k e^{t \theta_\xi} \omega^n = O (t^{-1}) \to 0 \]
as $t \to \infty$. 
\end{proof}

We obtain Theorem B (2). 

\begin{cor}
\label{Futaki vanish}
There exists a vector $\xi \in \mathfrak{k}$ for which the $\mu$-Futaki invariant $\Fut_\xi$ restricted to $\mathfrak{k}^c$ vanishes. 
\end{cor}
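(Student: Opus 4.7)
The plan is to exploit the properness of $\mathrm{Vol}^\lambda$ proved in Proposition \ref{properness of mu-volume} together with the first variational formula $d_\xi \mathrm{Vol}^\lambda = \mathrm{Vol}^\lambda(\xi) \cdot \Fut^\lambda_\xi$. Since $\mathrm{Vol}^\lambda$ is smooth and everywhere strictly positive on $\mathfrak{k}$ (as $e^{\bar{s}^\lambda_\xi}$ and $\int_X e^{\theta_\xi}\omega^n$ are), the properness established in the previous proposition guarantees that $\mathrm{Vol}^\lambda$ attains a global minimum at some vector $\xi_0 \in \mathfrak{k}$.

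At such a minimizer $\xi_0$ the differential $d_{\xi_0} \mathrm{Vol}^\lambda$ vanishes as an $\mathbb{R}$-linear functional on $\mathfrak{k}$, and dividing by the nonzero quantity $\mathrm{Vol}^\lambda(\xi_0) > 0$ in the first variational formula yields $\Fut^\lambda_{\xi_0}(\zeta) = 0$ for every $\zeta \in \mathfrak{k}$.

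It remains to upgrade this to vanishing on the complexification $\mathfrak{k}^c \subset \mathfrak{h}_0(X)$. By Proposition \ref{Futaki} (Theorem B (1)), $\Fut^\lambda_{\xi_0}$ is a $\mathbb{C}$-linear functional on $\mathfrak{h}_0(X)$, hence in particular on $\mathfrak{k}^c = \mathfrak{k} \oplus \sqrt{-1}\mathfrak{k}$. Thus vanishing on the real form $\mathfrak{k}$ automatically implies vanishing on $\mathfrak{k}^c$, proving the corollary.

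There is no serious obstacle here: the properness statement, which is the nontrivial input, has already been secured by the Morse--Bott analysis of $\theta_\xi$ in Proposition \ref{properness of mu-volume}, and the rest is just the standard calculus-level observation that a proper smooth function attains its minimum, combined with $\mathbb{C}$-linearity of $\Fut^\lambda_\xi$. One could note for completeness that the argument produces a local minimizer, which by the analysis in Corollary \ref{corollary of the second variational formula} is automatically isolated when $\lambda \le 0$, foreshadowing the uniqueness statement of Theorem B (3).
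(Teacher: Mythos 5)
Your proposal is correct and follows exactly the route the paper intends: Corollary \ref{Futaki vanish} is stated as an immediate consequence of the properness in Proposition \ref{properness of mu-volume}, with the minimizer's criticality translated into vanishing of $\Fut^\lambda_\xi$ on $\mathfrak{k}$ via the first variational formula, and the extension to $\mathfrak{k}^c$ coming from the $\mathbb{C}$-linearity of Proposition \ref{Futaki}. Nothing is missing.
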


\begin{rem}
From Corollary \ref{Fut zero} in the last subsection, we conclude that critical points of $\mathrm{Vol}^\lambda$ are \textbf{not} unique for a K\"ahler class $[\omega]$ with vanishing Futaki invariant $\Fut = 0$ and sufficiently large $\lambda$. 
\end{rem}

\begin{prop}
For each $\lambda \in \mathbb{R}$ and $\xi \in \mathfrak{k} \setminus \{ 0 \}$, the limit of $t^{-1} \Fut_{t \xi}^\lambda (t \xi) = \Fut_{t \xi}^\lambda (\xi)$ as $t \to \infty$ exists and is strictly positive. 
In particular, the functional $\xi \mapsto \Fut^\lambda_\xi (\xi)$ is proper on $\mathfrak{k}$ for each $\lambda \in \mathbb{R}$. 
\end{prop}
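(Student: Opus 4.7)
The plan is to derive the asymptotic behaviour of $\Fut^\lambda_{t\xi}(\xi)$ as $t\to\infty$ in the spirit of the proof of Proposition~\ref{properness of mu-volume}. The starting point is the first variational formula of \S\ref{section: volume}, which restricted to the ray $s\mapsto s\xi$ yields
\[
\Fut^\lambda_{t\xi}(\xi) = \frac{d}{dt}\log\mathrm{Vol}^\lambda(t\xi).
\]
Writing $F(t) := \log\mathrm{Vol}^\lambda(t\xi)$, Proposition~\ref{properness of mu-volume} already proves $F(t)/t \to L(\xi) > 0$ with $L(\xi) = \int_\Sigma \bar{\Box}\theta_\xi\, dm_\infty/\|dm_\infty\|$ the strictly positive integral over the Morse--Bott maximum set $\Sigma := \theta_\xi^{-1}(\max\theta_\xi)$. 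Consequently, once $F'(t)=\Fut^\lambda_{t\xi}(\xi)$ is known to converge, its limit is forced to equal $L(\xi)$ (a convergent derivative must match the Ces\`aro limit of the function), so both existence and positivity follow from a single convergence statement.

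\smallskip

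To prove convergence, I would apply Laplace-type asymptotics to
\[
\Fut^\lambda_{t\xi}(\xi) = \int_X \hat{s}^\lambda_{t\xi}(\omega)\, \theta_\xi\, d\mu_t, \qquad d\mu_t := \frac{e^{t\theta_\xi}\omega^n}{\int_X e^{t\theta_\xi}\omega^n},
\]
using the same Morse--Bott concentration lemma that underlies Proposition~\ref{properness of mu-volume}. Normalizing $\max\theta_\xi=0$, $\Sigma$ is the fixed locus of the circle generated by $\xi$, so along $\Sigma$ one has $\theta_\xi=0$, $d\theta_\xi=0$, $\xi=\xi^J=0$, $\xi^J\theta_\xi=|\xi^J|_g^2=0$, while the transversal Hessian of $\theta_\xi$ is negative definite, whence $\bar{\Box}\theta_\xi|_\Sigma>0$. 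Expanding $s^\lambda_{t\xi}(\omega)=s+2t\bar{\Box}\theta_\xi-t^2\xi^J\theta_\xi-\lambda t\theta_\xi$ and applying the asymptotic expansion
\[
\int_X f\,\theta_\xi^p\, e^{t\theta_\xi}\omega^n = (-1)^p\, t^{-k/2-p}\bigl(A_p(f)+O(1/t)\bigr)
\]
(with $k$ the real codimension of $\Sigma$ and $A_p(f)$ depending only on $f|_\Sigma$) controls each summand; in particular the a priori singular $t^2$-term $-t^2\int \xi^J\theta_\xi\cdot\theta_\xi\, d\mu_t$ remains bounded because $|\xi^J|_g^2$ vanishes at least quadratically transversally to $\Sigma$ (since $\xi$ does), absorbing the factor $t^2$. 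Combining all contributions yields the existence of $\lim_{t\to\infty}\Fut^\lambda_{t\xi}(\xi)$, and as explained above the limit equals $L(\xi)>0$.

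\smallskip

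Properness of $\xi\mapsto\Fut^\lambda_\xi(\xi)$ follows by writing any nonzero $\xi$ as $\xi=R\hat\xi$ with $|\hat\xi|=1$, so that $\Fut^\lambda_\xi(\xi) = R\,\Fut^\lambda_{R\hat\xi}(\hat\xi)\to\infty$ as $R\to\infty$; the uniform positive lower bound needed is obtained from a finite-cover argument on the compact unit sphere, using local uniformity of the Laplace asymptotics in the parameter $\hat\xi$ together with the pointwise positivity of $L$. The main obstacle is the careful bookkeeping of the Laplace expansions, in particular the treatment of the singular-looking $t^2$-term, where one must exploit that $\xi$ vanishes to first order normal to $\Sigma$ so that $|\xi^J|_g^2\cdot\theta_\xi$ vanishes to fourth order, exactly compensating $t^2$. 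A pleasant feature is that the value of the limit need not be computed from the integrals directly, being forced by the already-proven Ces\`aro asymptotic.
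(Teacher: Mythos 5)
Your reduction of positivity to convergence is correct and is genuinely different from the paper's argument: since $\Fut^\lambda_{t\xi}(\xi)=\frac{d}{dt}\log\mathrm{Vol}^\lambda(t\xi)$ by the first variational formula, a convergent derivative must share the limit $L(\xi)>0$ of $t^{-1}\log\mathrm{Vol}^\lambda(t\xi)$ furnished by Proposition \ref{properness of mu-volume}. This bypasses the geometric input the paper needs for positivity, namely that $\bar{\Box}\theta_\xi$ is constant on $\Sigma$ (because $\sqrt{-1}\bar{\partial}(\bar{\Box}\theta_\xi)=i_{\xi^J}\Ric(\omega)$ and $\Sigma$ is connected) together with the transversal nondegeneracy of the Hessian.

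However, the convergence step --- which is the actual content of the proposition --- has a genuine gap. You treat the term $-t^2\int_X\xi^J\theta_\xi\cdot\theta_\xi\,d\mu_t$, where $d\mu_t=e^{t\theta_\xi}\omega^n/\int_Xe^{t\theta_\xi}\omega^n$, by observing that $\xi^J\theta_\xi\cdot\theta_\xi=|\bar{\partial}\theta_\xi|^2\theta_\xi$ vanishes to fourth order transversally to $\Sigma$, ``exactly compensating $t^2$.'' That yields \emph{boundedness}, not convergence, and boundedness of one summand together with convergence of the others does not produce a limit of the sum. Your quoted expansion $\int_Xf\,\theta_\xi^p\,e^{t\theta_\xi}\omega^n=(-1)^pt^{-k/2-p}\bigl(A_p(f)+O(1/t)\bigr)$ with $A_p(f)$ depending only on $f|_\Sigma$ cannot close this: applying it with $f=\xi^J\theta_\xi$ and $p=1$ gives $A_1(\xi^J\theta_\xi)=0$ since $\xi^J\theta_\xi|_\Sigma=0$, so the entire answer sits inside the unspecified $O(1/t)$ error. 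To make your route work you would need a genuine two-term Laplace expansion for Morse--Bott functions (an explicit $1/t$-coefficient, depending on second-order jets of the integrand along $\Sigma$), which is strictly more than the concentration lemma in the paper provides. The paper avoids the issue entirely by an algebraic identity: since $\bar{\Box}-(t\xi)^J$ is self-adjoint with respect to $e^{t\theta_\xi}\omega^n$,
\[
t\int_X(\bar{\Box}\theta_\xi-t\,\xi^J\theta_\xi)\,\theta_\xi\,e^{t\theta_\xi}\omega^n
=t\int_X|\bar{\partial}\theta_\xi|^2e^{t\theta_\xi}\omega^n
=\int_X\bar{\Box}\theta_\xi\,e^{t\theta_\xi}\omega^n,
\]
so the dangerous $t^2$-term disappears exactly and only integrals of the form covered by the basic lemma remain. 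Either import this identity or prove the refined expansion; as written, the existence of the limit is not established. (A smaller point: your uniformity claim in the properness step is also delicate, since the codimension $k$ and the submanifold $\Sigma$ jump as $\hat\xi$ varies over the unit sphere, so the Laplace asymptotics are not locally uniform in any naive sense --- though the paper is equally terse on this.)
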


\begin{proof}
Remember that $\Fut_{t\xi}^\lambda ({t\xi}) = \int_X \hat{s}^\lambda_{t\xi} \theta_{t \xi} e^{\theta_{t \xi}} \omega^n /\int_X e^{\theta_{t \xi}} \omega^n$. 
As $t^k e^{\theta_{t \xi}} \omega^n$ converges to a positive measure, it suffices to prove that $t^{k-1} \int_X \hat{s}^\lambda_{t\xi} \theta_{t \xi} e^{\theta_{t \xi}} \omega^n$ converges to a positive constant for any $\xi \in \mathfrak{k}$ ($k$ depends on $\xi$). 
Similarly as before, we can suppose $\max \theta_\xi = 0$. 
Put $\Sigma := \theta_\xi^{-1} (0)$. 
We can compute as 
\begin{align*} 
t^{k-1} \int_X \hat{s}^\lambda_{t\xi} \theta_{t \xi} e^{\theta_{t \xi}} \omega^n 
&= t^{k+1} \int_X (s/t + \bar{\Box} \theta_\xi) \theta_\xi e^{t \theta_\xi} \omega^n + t^k \int_X \bar{\Box} \theta_\xi e^{t \theta_\xi} \omega^n 
\\
& \qquad - \lambda t^{k+1} \int_X \theta_\xi^2 e^{t \theta_\xi} \omega^n - t^{k+1} (\bar{s}_{t \xi}/t - \lambda \bar{\theta}_{t\xi}/t) \int_X \theta_\xi e^{t \theta_\xi} \omega^n. 
\end{align*}
By the above lemma, the third term and $\bar{\theta}_{t\xi}/t = \int_X \theta_\xi e^{t \theta_\xi} \omega^n/\int_X e^{t \theta_\xi} \omega^n$ converges to zero, so that the limit can be computed as the limit of 
\[ \int_X (s/t + \bar{\Box} \theta_\xi - \bar{s}_{t \xi}/t) t^{k+1} \theta_\xi e^{t \theta_\xi} \omega^n + \int_X \bar{\Box} \theta_\xi t^k e^{t \theta_\xi} \omega^n. \]

Let $d\varpi$ denote the probability measure on $\Sigma$ given as the limit of the measures $e^{t \theta_\xi} \omega^n/\int_X e^{t \theta_\xi} \omega^n = t^k e^{t \theta_\xi} \omega^n/\int_X t^k e^{t \theta_\xi} \omega^n$. 
Then the integrand $s/t + \bar{\Box} \theta_\xi - \bar{s}_{t \xi}/t$ of the first term uniformly converges to $\bar{\Box} \theta_\xi - \int_{\Sigma} \bar{\Box} \theta_\xi d\varpi$. 
Again thanks to the above lemma, we have non-zero finite measures $dm_\infty' = \lim_{t \to \infty} (-1) t^{k+1} \theta_\xi e^{t \theta_\xi} \omega^n$ and $dm_\infty = \lim_{t \to \infty} t^k e^{t \theta_\xi} \omega^n$ supported on $\Sigma$. (We have $d \varpi = dm_\infty/\int_\Sigma dm_\infty$. ) 
It follows that the limit is given by 
\[ - \int_{\Sigma} \Big{(} \bar{\Box} \theta_\xi - \int_{\Sigma} \bar{\Box} \theta_\xi d\varpi \Big{)} dm'_\infty + \int_{\Sigma} \bar{\Box} \theta_\xi dm_\infty. \] 
Since we have $\sqrt{-1} \dbar (\bar{\Box} \theta_\xi) = i_{\xi^J} \Ric (\omega)$, $\bar{\Box} \theta_\xi$ is constant along each connected critical manifold. 
It follows that $\bar{\Box} \theta_\xi$ is constant along $\Sigma$ (thanks to the connectedness of $\Sigma$, as we noted in the proof of the last proposition) and so the integrand of the first term is identically zero. 
So the limit is $\int_\Sigma \bar{\Box} \theta_\xi dm_\infty$, which is strictly positive as $\theta_\xi$ has a non-degenerate Hessian to the normal direction. 
\end{proof}

Now we obtain the following expected result, which shows that critical points of $\mathrm{Vol}^\lambda$ must converge to the origin as $\lambda$ tends to $- \infty$ as we observed in subsection \ref{lambda as a function on t}. 

\begin{cor}
\label{compact}
The set $\{ \xi \in \mathfrak{k} ~|~ \lambda_\xi \le 0 \}$ is compact for the functional $\lambda_\xi$ considered in section \ref{section: connecting path}. 
As a consequence, $\{ \xi \in \mathfrak{k} ~|~ \mathrm{Fut}^\lambda_\xi \equiv 0 \text{ for some } \lambda \le 0 \}$ is compact. 
\end{cor}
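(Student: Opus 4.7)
The strategy is to reduce both compactness statements to the properness of the functional $\xi \mapsto \Fut^0_\xi(\xi)$ established in the preceding proposition, via the identity
\[
\Fut^\lambda_\xi(\xi) = \Fut^0_\xi(\xi) - \lambda\,\nu_\xi(\xi),
\]
which is immediate from $\hat{s}^\lambda_\xi = \hat{s}^0_\xi - \lambda(\theta_\xi - \bar{\theta}_\xi)$ and the definition of $\nu_\xi$.

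First I would address the set $\{\xi \in \mathfrak{k} : \lambda_\xi \le 0\}$. Since $\nu_\xi(\xi) > 0$ for every $\xi \ne 0$ by Cauchy--Schwarz (as already noted at the definition of $\nu_\xi$ in Section~\ref{section: connecting path}), the sign of $\lambda_\xi = \Fut^0_\xi(\xi)/\nu_\xi(\xi)$ off the origin agrees with the sign of $\Fut^0_\xi(\xi)$. With the natural convention that $0 \in \mathfrak{k}$ is included (since $\Fut^0_0(0) = 0$), this set therefore coincides with the sublevel set $\{\xi \in \mathfrak{k} : \Fut^0_\xi(\xi) \le 0\}$ of a continuous function that is proper on $\mathfrak{k}$ by the previous proposition. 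It is closed and bounded, hence compact.

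For the consequence, evaluating the identity above at $\zeta = \xi$ shows that any $\xi$ with $\Fut^\lambda_\xi \equiv 0$ for some $\lambda \le 0$ satisfies $\Fut^0_\xi(\xi) = \lambda\,\nu_\xi(\xi) \le 0$, so it lies in the compact set from the first step. This immediately gives boundedness. Closedness proceeds via a limiting argument: for $\xi_i \to \xi_\infty$ in the second set with associated $\lambda_i \le 0$ and $\xi_\infty \ne 0$, the identity at $\zeta = \xi_i$ forces $\lambda_i = \Fut^0_{\xi_i}(\xi_i)/\nu_{\xi_i}(\xi_i)$ to converge to $\lambda_{\xi_\infty} \le 0$, and the joint continuity of $\Fut^\lambda_\xi$ in $(\lambda, \xi)$ yields $\Fut^{\lambda_{\xi_\infty}}_{\xi_\infty} \equiv 0$.

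The main obstacle is the degenerate case $\xi_\infty = 0$ with $\lambda_i \to -\infty$. Here I would invoke the blow-up analysis of Section~\ref{section: connecting path}: the functional $\xi \mapsto |\xi|\cdot\lambda_\xi$ extends continuously to the real blow-up $\hat{\mathfrak{t}}$, so $\lambda_i \xi_i$ stays bounded and converges, along a subsequence, to the extremal vector $\xi_{\mathrm{ext}}$. Passing to the limit in $\Fut^{\lambda_i}_{\xi_i}(\zeta) = 0$ reproduces the defining identity $\check{\Fut}^0_{\xi_{\mathrm{ext}}}(\zeta) = 0$. Combining this with the uniqueness characterization of $\xi_{\mathrm{ext}}$ and the assumption that the classical Futaki invariant $\Fut$ describes the $\xi = 0$ case forces either $\Fut \equiv 0$ on the relevant subspace (so that $0$ itself lies in the second set, giving closedness) or rules out such a degenerate approach. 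Either outcome preserves compactness and completes the argument.
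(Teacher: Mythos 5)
Your first two steps reproduce the paper's proof exactly: the identity (\ref{Fut lambda}) together with $\nu_\xi(\xi) > 0$ gives $\{\xi ~|~ \lambda_\xi \le 0\} = \{\xi ~|~ \Fut^0_\xi(\xi) \le 0\}$, which is a closed sublevel set of the continuous functional shown to be proper in the preceding proposition, and evaluating $\Fut^\lambda_\xi$ at $\zeta = \xi$ gives the inclusion of the second set into the first. That inclusion is the entirety of the paper's own proof.

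The problem is your third step. The dichotomy you propose to handle the degenerate case ($\xi_i \to 0$, $\lambda_i \to -\infty$) is false on both horns. First, the conclusion $\check{\Fut}^0_{\xi_{\mathrm{ext}}} \equiv 0$ extracted from the blow-up analysis is vacuous: it is precisely the defining property of the extremal vector and holds for every $(X, [\omega], K)$, so it can neither force the classical Futaki invariant $\Fut$ to vanish nor rule out the degenerate sequence. Second, degenerate sequences genuinely occur: by Corollary \ref{Futaki vanish} there is, for every $\lambda \le 0$, a vector $\xi_\lambda \in \mathfrak{k}$ with $\Fut^\lambda_{\xi_\lambda}|_{\mathfrak{k}^c} \equiv 0$, and by the analysis of section \ref{section: connecting path} these satisfy $\xi_\lambda \to 0$ as $\lambda \to -\infty$. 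If $\Fut|_{\mathfrak{k}} \not\equiv 0$, every such $\xi_\lambda$ is nonzero (since $\Fut^\lambda_0 = \Fut$ for all $\lambda$), while $0$ itself does not belong to the second set; hence $0$ is a limit point of that set which the set does not contain, and the set is bounded but \emph{not} closed. So closedness of the second set cannot be proved — your instinct that the paper's inclusion only yields boundedness and that closedness needs a separate argument was correct, but the resolution is that no such argument exists: "compact" for the second set can only mean relatively compact, which is what the inclusion delivers and all the paper's proof (and the corollary's subsequent use) actually requires.
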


\begin{proof}
It follows from 
\[ \{ \xi \in \mathfrak{k} ~|~ \mathrm{Fut}^\lambda_\xi \equiv 0 \text{ for some } \lambda \le 0 \} \subset \{ \xi \in \mathfrak{k} ~|~ \lambda_\xi \le 0 \} = \{ \xi \in \mathfrak{k} ~|~ \Fut^0_\xi (\xi) \le 0 \}. \]
\end{proof}

The following is a partial evidence for the uniqueness of the candidates of $\xi$ for $\mu^\lambda$-cscK metrics. 

\begin{cor}
\label{K-optimal vectors are finite}
For each $\lambda \le 0$, the set $\{ \xi ~|~ \exists \omega \in [\omega] \text{ is a $\mu^\lambda_\xi$-cscK metric } \}$ is finite and is in the center of $\mathfrak{k}$. 
In particular, $\mathrm{Aut}_\xi^0 (X/\mathrm{Alb}) \subset \mathrm{Aut}^0 (X/\mathrm{Alb})$ is a maximal reductive subgroup if there exists a $\mu^\lambda_\xi$-cscK for some $\lambda \le 0$. 
\end{cor}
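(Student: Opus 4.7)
The plan is to combine the isolated-minimizer property from Corollary \ref{corollary of the second variational formula} with the compactness from Corollary \ref{compact} and with $K$-equivariance of the $\mu$-volume functional. I would split the argument into three ingredients: isolation, center, and maximality.

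First, finiteness. If $\omega$ is a $\mu^\lambda_\xi$-cscK metric in $[\omega]$, then by the first variational formula for $\mathrm{Vol}^\lambda$ and Proposition \ref{Futaki}, the vector $\xi$ is a critical point of $\mathrm{Vol}^\lambda|_{\mathfrak{k}}$; for $\lambda \le 0$ Corollary \ref{corollary of the second variational formula} further upgrades it to an \emph{isolated} local minimizer of $\mathrm{Vol}^\lambda$. Meanwhile Corollary \ref{compact} gives that $\{\xi \in \mathfrak{k} : \mathrm{Fut}^\lambda_\xi \equiv 0\}$ is contained in the compact set $\{\xi : \lambda_\xi \le 0\}$. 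The set in question is a subset of this compact set consisting of pairwise isolated points, hence finite.

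Second, centrality. I would fix a $K$-invariant representative $\omega_0 \in [\omega]$ (obtained by $K$-averaging any representative) and use $\omega_0$ to define $\mathrm{Vol}^\lambda$. A direct change-of-variables computation, using $\theta_{\mathrm{Ad}(k)\xi} = \theta_\xi \circ k^{-1}$ and the $K$-invariance of $\omega_0^n$, shows that both $\bar{s}^\lambda_\xi$ and $\int_X e^{\theta_\xi}\omega_0^n$ are $\mathrm{Ad}(K)$-invariant in $\xi$, so $\mathrm{Vol}^\lambda : \mathfrak{k} \to \mathbb{R}$ is $\mathrm{Ad}(K^0)$-invariant. Consequently the orbit $\mathrm{Ad}(K^0)\xi$ consists of critical points, all of them isolated local minimizers attaining the same value. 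Since $K^0$ is connected the orbit is connected, while the critical set is discrete near $\xi$, so the orbit must be $\{\xi\}$, i.e.\ $\xi \in Z(\mathfrak{k})$.

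For the ``in particular'' clause, let $G \subset \mathrm{Aut}^0(X/\mathrm{Alb})$ be a connected reductive subgroup with $\mathrm{Aut}^0_\xi(X/\mathrm{Alb}) \subset G$, and choose a maximal compact subgroup $K \subset G$ containing the closure of the torus generated by $\xi$; then $\xi \in \mathfrak{k}$ and $G = K^c$. Applying the second step to this $K$ yields $\xi \in Z(\mathfrak{k})$, so $K$ commutes with $\xi$ and therefore $K \subset {^\nabla\mathrm{Isom}}^0_\xi(X, \omega_0) \subset \mathrm{Aut}^0_\xi(X/\mathrm{Alb})$; complexifying gives $G \subset \mathrm{Aut}^0_\xi(X/\mathrm{Alb})$, and equality follows. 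The main technical point I expect to have to verify carefully is the $\mathrm{Ad}(K)$-invariance of $\mathrm{Vol}^\lambda$ when computed from a $K$-invariant reference metric, which relies on the naturality of Hamiltonian potentials under pullback and the fact that the Hamiltonian normalization used in the definition of $\mathrm{Vol}^\lambda$ is $K$-equivariant. Once that naturality is pinned down the rest of the argument is formal.
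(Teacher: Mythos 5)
Your proposal is correct and follows essentially the same route as the paper's own proof: finiteness comes from the isolated-local-minimizer property of Corollary \ref{corollary of the second variational formula} combined with the compactness of Corollary \ref{compact}, centrality from equivariance plus connectedness of $K$ (the paper phrases this via $\Fut^\lambda_{g_*\xi}(\zeta)=\Fut^\lambda_\xi(g^{-1}_*\zeta)$ acting on the finite critical set, you via $\mathrm{Ad}(K)$-invariance of $\mathrm{Vol}^\lambda$ and orbit connectedness versus local discreteness --- the same fact in differentiated versus integrated form), and maximality by complexifying a maximal compact subgroup, exactly as in the paper. The two loose points in your write-up are the same ones the paper glosses: the inference ``isolated points inside a compact set, hence finite'' is not a valid topological implication by itself (the paper hides this in the assertion that the set of non-degenerate minimizers is a \emph{compact} zero-dimensional submanifold), and the application of Corollary \ref{corollary of the second variational formula} for a maximal compact $K$ of the full group $\mathrm{Aut}^0(X/\mathrm{Alb})$ requires care because a $K$-invariant $\mu$-cscK reference is not a priori available --- the issue the paper's parenthetical remark addresses by using that $\Fut^\lambda_\xi$ vanishes on all of $\mathfrak{h}_0(X)$, not merely on $\mathfrak{h}_{0,\xi}(X)$.
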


\begin{proof}
The set $\kappa$ of isolated local minimizers of $\mathrm{Vol}^\lambda$ with the non-degenerate Hessians is a zero dimensional compact submanifold of $\mathfrak{k}$ and thus consists of finitely many points. 
As we saw in the last subsection, a vector $\xi$ of a $\mu^\lambda_\xi$-cscK metric must be an element of $\kappa$ when $\lambda \le 0$. 
This proves the first claim. 

For each $g \in K$, we have $\Fut^\lambda_{g_* \xi} (\zeta) = \Fut^\lambda_\xi (g^{-1}_* \zeta)$. 
It follows that $K$ fixes the set $\kappa$ and thus $\kappa$ must be in the centralizer of $\mathfrak{k}$. 
We can see the maximal reductiveness of $\mathrm{Aut}^0_\xi (X/\mathrm{Alb})$ from Corollary \ref{reductiveness} and by taking a maximal compact subgroup $K$. 
We already know that the properly $\dbar$-Hamiltonian vector $\xi$ must be tangent to the centralizer of a maximal compact subgroup $K$. 
(It is essential that $\mu$-Futaki invariant is defined on $\mathfrak{h}_0 (X)$ rather than on $\mathfrak{h}_{0, \xi} (X)$ and vanishes on $\mathfrak{h}_0 (X)$ rather than on the complexification ${\mathfrak{k}'}^c \subset \mathfrak{h}_0 (X)$ of the Lie algebra $\mathfrak{k}'$ of the isometry group of the $\mu^\lambda_\xi$-cscK metric. 
It is a priori not evident that we can find a $K$-invariant $\mu^\lambda_\xi$-cscK for a maximal compact subgroup $K \subset \mathrm{Aut}^0 (X/\mathrm{Alb})$, however, the claim indeed holds from this corollary and Corollary \ref{reductiveness} as for $\lambda \le 0$. )
Therefore, the subgroup $\mathrm{Aut}_\xi^0 (X/\mathrm{Alb})$ contains the complexification of $K$, which is a maximal reductive subgroup of $\mathrm{Aut}^0 (X/\mathrm{Alb})$. 
\end{proof}

\section{$\mu$K-energy and $\mu$K-stability}

\subsection{$\mu$K-energy functional}
\label{section: muK-energy}

We introduce $\mu$K-energy functional and observe some fundamental properties of it. 

\subsubsection{Space of K\"ahler metrics and geodesics}

Let $\omega$ be a K\"ahler metric on a K\"ahler manifold $X$ and $\xi$ be a properly $\dbar$-Hamiltonian vector field preserving $\omega$. 
We denote by $\mathcal{H}_{\omega, \xi}$ the space of $\xi$-invariant smooth K\"ahler potentials with respect to $\omega$ and $\ddot{\mathcal{H}}_{\omega, \xi}$ the space of $\xi$-invariant K\"ahler metrics in the fixed cohomology class $[\omega]$. 
Namely, we put 
\begin{align}
\mathcal{H}_{\omega, \xi} 
&:= \{ \phi \in C^\infty_\xi (X; \mathbb{R}) ~|~ \omega + \sqddbar \phi > 0 \}, 
\\ 
\ddot{\mathcal{H}}_{\omega, \xi} 
&:= \{ \omega_\phi \in [\omega] ~|~ \omega_\phi = \omega + \sqddbar \phi > 0, ~\xi \phi = 0 \}. 
\end{align}

We consider the following Riemannian metric on $\ddot{\mathcal{H}}_{\omega, \xi}$: 
\begin{equation}
(\psi_1, \psi_2)_\xi = \int_X \psi_1 \psi_2 ~e^{\theta_\xi (\phi)} \omega_\phi^n, 
\end{equation}
where we identify the tangent space $T_{\omega_\phi} \ddot{\mathcal{H}}_{\omega, \xi}$ with $\{ \psi \in C_\xi^\infty (X) ~|~ \int_X \psi ~e^{\theta_\xi (\phi)} \omega_\phi^n = 0 \}$. 
This pairing is real-valued as $\omega_\phi$ is $\xi$-invariant. 

A path in $\ddot{\mathcal{H}}_{\omega, \xi}$ corresponds to a path of $\xi$-invariant functions $\phi_t$ normalized as $\int_X \dot{\phi}_t e^{\theta_\xi (\phi_t)} \omega_{\phi_t}^n = 0$. 
The energy of a finite path $\{ \phi_t \}_{t \in [a,b]}$ with respect to the Riemannian metric $(\cdot, \cdot)_\xi$ is given by
\[ E (\phi_t) = \int_a^b \int_X |\dot{\phi}_t|^2 e^{\theta_\xi (\phi_t)} \omega_{\phi_t}^n. \]
A \textit{geodesic} is by definition a critical point of the energy functional on the space of paths with fixed initial and terminal points. 
Computing the first derivative of the energy functional shows that geodesic paths precisely correspond to paths satisfying the following equation 
\begin{equation}
\nabla_X (\ddot{\phi}_t - |\dbar \dot{\phi}_t|^2_{g_{\phi_t}}) = 0 
\end{equation}
under the normalization $\int_X \dot{\phi}_t e^{\theta_\xi (\phi_t)} \omega_{\phi_t}^n = 0$. 
As the equation does not change by adding a function depending only on $t$, we can find a geodesic $\phi_t$ by solving the equation
\[ \ddot{\varphi}_t - |\dbar \dot{\varphi}_t|^2_{g_{\varphi_t}} = 0 \]
and putting $\phi_t := \varphi_t - \int_0^t dt \int_X \dot{\varphi}_t e^{\theta_\xi (\varphi_t)} \omega^n_{\varphi_t}$. 
Note that the geodesic equation itself does not depend on $\xi$, however, the normalization of paths does depend on $\xi$. 

\subsubsection{$\mu$K-energy}

Define the \textit{$\mu$K-energy} $\M^\lambda_\xi$ on the space $\mathcal{H}_{\omega, \xi}$ of smooth K\"ahler potentials by 
\begin{equation}
\M_\xi^\lambda (\phi) := - \int_0^1 dt \int_X \hat{s}_\xi^\lambda (g_{\phi_t}) \dot{\phi}_t ~e^{\theta_\xi (\phi_t)} \omega_{\phi_t}^n, 
\end{equation}
where $\phi_t$ is a path connecting $0$ and $\phi$, i.e. $\phi_0 = 0$ and $\phi_1 = \phi$. 
It is independent of the choice of the smooth path $\phi_t$ connecting $0$ and $\phi$. 
Indeed, let $\phi_{t, 0}$ and $\phi_{t, 1}$ be two paths connecting $0$ and $\phi$ and take an interpolating path $\phi_{t, s}$ of paths, then we can calculate as 
\begin{align*}
\frac{d}{ds} \int_0^1 dt 
&\int_X \hat{s}^\lambda_\xi (g_{t, s}) \frac{d\phi_{t, s}}{dt} e^{\theta_\xi^{t, s}} \omega^n_{t, s}
\\
&= \int_0^1 dt \int_X \Big{(} (- \mathcal{D}^{\theta *}_{t, s} \mathcal{D}_{t, s} \frac{d\phi_{t, s}}{ds} + (\dbar^\sharp \bar{s}^\lambda_\xi (g_{t, s})) \frac{d\phi_{t, s}}{ds} ) \frac{d\phi_{t, s}}{dt}
\\
&\qquad + \hat{s}^\lambda_\xi (g_{t, s}) \frac{d^2 \phi_{t, s}}{ds dt} - \hat{s}^\lambda_\xi (g_{t, s}) \frac{d\phi_{t, s}}{dt} (\bar{\Box}_{g_{t, s}} -\xi^J) \frac{d\phi_{t, s}}{ds} \Big{)} e^{\theta_\xi^{t, s}} \omega^n_{t, s}
\\
&= \int_0^1 dt \int_X \Big{(} - (\mathcal{D}_{t, s} \frac{d\phi_{t, s}}{ds} , \mathcal{D}_{t, s} \frac{d\phi_{t, s}}{dt}) + \hat{s}^\lambda_\xi (g_{t, s}) (\frac{d^2 \phi_{t, s}}{dt ds} - (\dbar \frac{d\phi_{t, s}}{dt}, \dbar \frac{d\phi_{t, s}}{ds} )) \Big{)} e^{\theta_\xi^{t, s}} \omega^n_{t, s}
\\
&= \int_0^1 dt \frac{d}{dt} \int_X \hat{s}^\lambda_\xi (g_{t, s}) \frac{d\phi_{t, s}}{ds} e^{\theta_\xi^{t, s}} \omega_{t, s}^n 
\\
&= \int_X \hat{s}^\lambda_\xi (g_{1, s}) \frac{d \phi_{1, s}}{ds} e^{\theta_\xi^{1, s}} \omega_{1, s}^n - \int_X \hat{s}^\lambda_\xi (g_{0, s}) \frac{d\phi_{0, s}}{ds} e^{\theta_\xi^{0, s}} \omega_{0, s}^n
\\
&= 0. 
\end{align*}
Here the third equality follows by the symmetry of the second expression with respect to $s$ and $t$ and the last equality follows just by $(d/ds) \phi_{1, s} = (d/ds) \phi_{0, s} = 0$. 

The $\mu$K-energy $\M^\lambda_\xi$ descends to the space of K\"ahler metrics $\ddot{\mathcal{H}}_{\omega, \xi}$ and the critical points of $\M^\lambda_\xi$ precisely correspond to $\mu^\lambda_\xi$-cscK metrics. 

In the proof of the finite dimensional Kempf-Ness theorem for a moment map $\mu: X \to \mathfrak{k}^*$, we make use of the convexity of the Kempf-Ness functional $\mathfrak{k}/\mathfrak{k}_x \to \mathbb{R}$ to prove that $\mu^{-1} (0) \cap x. K^c = x. K$, which is analytically analogous to the uniqueness of ($\mu$-)cscK in a given K\"ahler class and geometrically corresponds to the injectivity of the map to the GIT quotient $\mu^{-1} (0)/K \to X^{ss} \sslash K^c$. 
In order to study the uniqueness of $\mu$-cscK in the same spirit of the Kempf-Ness theorem, we should have the following result. 

\begin{prop}[Convexity along smooth geodesics]
\label{convexity}
The $\mu$K-energy $\M^\lambda_\xi$ is convex along smooth geodesics. 
\end{prop}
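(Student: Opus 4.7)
The plan is to compute the second time derivative of $\M^\lambda_\xi(\phi_t)$ along a general smooth curve in $\mathcal{H}_{\omega,\xi}$, re-arrange it into a manifestly non-negative Dirichlet term plus a term proportional to $\ddot\phi_t - |\dbar\dot\phi_t|^2_{g_{\phi_t}}$, and then invoke the geodesic equation. This is the direct weighted analogue of the classical convexity argument for the Mabuchi energy. Working with a potential representative $\phi_t$ satisfying the geodesic equation $\ddot\phi_t = |\dbar\dot\phi_t|^2_{g_{\phi_t}}$ (legitimate because $\M^\lambda_\xi$ descends from $\mathcal{H}_{\omega,\xi}$ to $\ddot{\mathcal{H}}_{\omega,\xi}$), the path-independence computation carried out just above the statement gives the first-derivative formula
\[
\frac{d}{dt}\M^\lambda_\xi(\phi_t) = -\int_X \hat{s}^\lambda_\xi(\omega_{\phi_t})\,\dot\phi_t\, e^{\theta_\xi(\phi_t)}\omega_{\phi_t}^n.
\]

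Differentiating once more requires two formulas, both already available. The first is the variational identity
\[
\frac{d}{dt}\hat{s}^\lambda_\xi(\omega_{\phi_t}) = -\mathcal{D}_t^{\theta*}\mathcal{D}_t\dot\phi_t + (\dbar^\sharp \hat{s}^\lambda_\xi(\omega_{\phi_t}))(\dot\phi_t),
\]
extracted from the proof of Proposition \ref{Futaki} and valid pointwise in $t$ for any direction $\dot\phi_t$. The second is the Lie-derivative identity $\tfrac{d}{dt}(e^{\theta_\xi(\phi_t)}\omega_{\phi_t}^n) = -(\bar{\Box}_t - \xi^J)\dot\phi_t \cdot e^{\theta_\xi(\phi_t)}\omega_{\phi_t}^n$ for the weighted volume form. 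Inserting both and using the weighted-adjoint definition $\int \mathcal{D}^{\theta*}\mathcal{D}\dot\phi\cdot\dot\phi\, e^\theta\omega^n = \int |\mathcal{D}\dot\phi|^2 e^\theta\omega^n$ yields
\[
\frac{d^2}{dt^2}\M^\lambda_\xi(\phi_t) = \int_X |\mathcal{D}_t\dot\phi_t|^2 e^{\theta_\xi(\phi_t)}\omega_{\phi_t}^n - \int_X (\dbar^\sharp\hat{s}^\lambda_\xi)(\dot\phi_t)\,\dot\phi_t\, e^{\theta_\xi(\phi_t)}\omega_{\phi_t}^n - \int_X \hat{s}^\lambda_\xi \ddot\phi_t\, e^{\theta_\xi(\phi_t)}\omega_{\phi_t}^n + \int_X \hat{s}^\lambda_\xi\,\dot\phi_t(\bar{\Box}_t - \xi^J)\dot\phi_t\, e^{\theta_\xi(\phi_t)}\omega_{\phi_t}^n.
\]

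The decisive algebraic step is the pointwise identity $\dot\phi(\bar{\Box} - \xi^J)\dot\phi = (\bar{\Box} - \xi^J)(\dot\phi^2/2) + |\dbar\dot\phi|^2_g$, which follows from $\bar{\Box}(f^2/2) = f\bar{\Box}f - |\dbar f|^2$ and $\xi^J(f^2/2) = f\xi^J f$. Combined with the $e^\theta\omega^n$-self-adjointness of $\bar{\Box} - \xi^J$, it rewrites the last integral as $\int (\dbar^\sharp\hat{s}^\lambda_\xi)(\dot\phi_t)\dot\phi_t\, e^{\theta_\xi(\phi_t)}\omega_{\phi_t}^n + \int \hat{s}^\lambda_\xi|\dbar\dot\phi_t|^2 e^{\theta_\xi(\phi_t)}\omega_{\phi_t}^n$; the mixed $(\dbar^\sharp\hat{s}^\lambda_\xi)(\dot\phi_t)\dot\phi_t$ pieces cancel, leaving
\[
\frac{d^2}{dt^2}\M^\lambda_\xi(\phi_t) = \int_X |\mathcal{D}_t\dot\phi_t|^2 e^{\theta_\xi(\phi_t)}\omega_{\phi_t}^n + \int_X \hat{s}^\lambda_\xi(\omega_{\phi_t})\bigl(|\dbar\dot\phi_t|^2_{g_{\phi_t}} - \ddot\phi_t\bigr) e^{\theta_\xi(\phi_t)}\omega_{\phi_t}^n.
\]
The geodesic equation forces the second integral to vanish, giving $\frac{d^2}{dt^2}\M^\lambda_\xi(\phi_t) \ge 0$. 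I do not anticipate a serious obstacle: every analytic tool is already in hand (the variational formula for $\hat{s}^\lambda_\xi$, the self-adjointness of $\bar{\Box} - \xi^J$ under the weighted measure, and the definition of $\mathcal{D}^{\theta*}$), so the argument is essentially an organized integration-by-parts; the only care required is the bookkeeping of signs from the Lie derivative of the weighted volume form and reducing to real-valued expressions via the assumed $\xi$-invariance of $\phi_t$ and $\omega_{\phi_t}$.
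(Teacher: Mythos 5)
Your proposal is correct and follows essentially the same route as the paper: differentiate the first-variation formula, feed in the variational identity for $\hat{s}^\lambda_\xi$ from the proof of Proposition \ref{Futaki} together with the derivative of the weighted volume form, and integrate by parts to arrive at $\frac{d^2}{dt^2}\M^\lambda_\xi(\phi_t)=\int_X|\mathcal{D}_t\dot\phi_t|^2 e^{\theta_\xi(\phi_t)}\omega_{\phi_t}^n-\int_X\hat{s}^\lambda_\xi(\ddot\phi_t-|\dbar\dot\phi_t|^2)e^{\theta_\xi(\phi_t)}\omega_{\phi_t}^n$, which the geodesic equation reduces to the nonnegative Dirichlet term. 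The only (harmless) cosmetic differences are that you spell out the pointwise identity $\dot\phi(\bar\Box-\xi^J)\dot\phi=(\bar\Box-\xi^J)(\dot\phi^2/2)+|\dbar\dot\phi|^2$ that the paper absorbs into its last equality, and you normalize the geodesic so that $\ddot\phi_t=|\dbar\dot\phi_t|^2$ rather than using $\int_X\hat{s}^\lambda_\xi e^{\theta_\xi(\phi_t)}\omega_{\phi_t}^n=0$ to kill the defect term.
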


\begin{proof}
For a smooth path $\phi_t$ in $\ddot{\mathcal{H}}_{\omega, \xi}$, we compute 
\begin{align*}
\frac{d^2}{dt^2} \M_\xi^\lambda (\phi_t)
&= - \frac{d}{dt} \int_X \hat{s}_\xi^\lambda (g_{\phi_t}) \dot{\phi}_t e^{\theta_\xi (\phi_t)} \omega_{\phi_t}^n
\\
&= - \int_X \Big{(} - \mathcal{D}^{\theta*}_t \mathcal{D}_t \dot{\phi}_t + (\partial^\sharp \hat{s}^\lambda_\xi (g_{\phi_t})) (\dot{\phi}_t) \Big{)} \dot{\phi}_t e^{\theta_\xi (\phi_t)} \omega_{\phi_t}^n 
\\
& \qquad - \int_X \hat{s}^\lambda_\xi (g_{\phi_t}) \ddot{\phi}_t e^{\theta_\xi (\phi_t)} \omega_{\phi_t}^n + \int_X \hat{s}^\lambda_\xi (g_{\phi_t}) \dot{\phi}_t (\bar{\Box}_t - \xi') \dot{\phi}_t e^{\theta_\xi (\phi_t)} \omega_{\phi_t}^n
\\
&= \int_X |\mathcal{D}_t \dot{\phi}_t|^2_{g_t} e^{\theta_\xi (\phi_t)} \omega_{\phi_t}^n - \int_X \hat{s}^\lambda_\xi (g_{\phi_t}) (\ddot{\phi}_t - |\dbar \dot{\phi}_t|^2_{g_{\phi_t}}) e^{\theta_\xi (\phi_t)} \omega_{\phi_t}^n. 
\end{align*}
It follows that for a smooth geodesic $\phi_t$, we have
\[ \frac{d^2}{dt^2} \M_\xi^\lambda (\phi_t) = \int_X |\mathcal{D}_t \dot{\phi}_t|^2_{g_t} e^{\theta_\xi (\phi_t)} \omega_{\phi_t}^n \ge 0. \]
\end{proof}

\subsubsection{Extension to $C^{1,1}$-potentials}

We show that $\M^\lambda_\xi$ can be extended to the space 
\[ \overline{\mathcal{H}_{\omega, \xi}^{1,1}} := \{ \phi \in C_\xi^{1,1} (X) ~|~ \omega + \sqddbar \phi \ge 0 \} \]
of $C^{1,1}$-smooth sub-K\"ahler potentials, which generalizes the result of \cite{Chen2} known as Chen-Tian's formula. 

It is known by \cite{Chen} that for any two smooth K\"ahler metrics there always exists a unique connecting $C^{1,1}$-smooth geodesic in $\overline{\mathcal{H}^{1,1}_{\omega, \xi}}$, where one interprets the geodesic equation as a solution of a Monge-Amp\`ere equation on the complex manifold $X \times \{ a \le |z| \le b \}$ with boundary. 
Using the $C^{1,1}$-extension of the usual K-energy, Berman and Berndtsson \cite{BB} proves the uniqueness of cscK and extremal metrics. 

\begin{prop}
\label{expression}
The $\mu$K-energy $\M^\lambda_\xi$ can be expressed as follows. 
\begin{align}
\notag
\M_\xi^\lambda (\phi)
&= \int_X \log \frac{\omega_{\phi}^n}{\omega^n} e^{\theta_\xi (\phi)} \omega^n_{\phi} - n ! \int_0^1 dt \int_X \dot{\phi}_t (\Ric (\omega) + \bar{\Box}_g \theta_\xi) e^{\omega_{\phi_t} + \theta_\xi (\phi_t)} 
\\ \notag
&\qquad + \bar{s}_\xi \int_0^1 dt \int_X \dot{\phi}_t e^{\theta_\xi (\phi_t)} \omega^n_{\phi_t} + \lambda \int_0^1 dt \Big{(} \int_X \theta_\xi (\phi_t) \dot{\phi}_t e^{\theta_\xi (\phi_t)} \omega^n_{\phi_t} - \bar{\theta}_\xi \int_X \dot{\phi}_t e^{\theta_\xi (\phi_t)} \omega_{\phi_t}^n \Big{)}. 
\\ \label{Mabuchi extension}
&= \int_X \log \frac{e^{\theta_\xi (\phi)} \omega_{\phi}^n}{e^{\theta_\xi} \omega^n} e^{\theta_\xi (\phi)} \omega^n_{\phi} - n ! \int_0^1 dt \int_X \dot{\phi}_t \big{(} (\Ric (\omega) - \sqrt{-1} \partial \bar{\partial} \theta_\xi) + (\bar{\Box}_g \theta_\xi - \xi^J \theta_\xi ) \big{)} e^{\omega_{\phi_t} + \theta_\xi (\phi_t)} 
\\ \notag
&\qquad + \bar{s}_\xi \int_0^1 dt \int_X \dot{\phi}_t e^{\theta_\xi (\phi_t)} \omega^n_{\phi_t} + \lambda \int_0^1 dt \Big{(} \int_X \theta_\xi (\phi_t) \dot{\phi}_t e^{\theta_\xi (\phi_t)} \omega^n_{\phi_t} - \bar{\theta}_\xi \int_X \dot{\phi}_t e^{\theta_\xi (\phi_t)} \omega_{\phi_t}^n \Big{)},
\end{align}
where we put $\bar{\theta}_\xi := \int_X \theta_\xi e^{\theta_\xi} \omega^n /\int_X e^{\theta_\xi} \omega^n$ (independent of $\phi_t$). 

\end{prop}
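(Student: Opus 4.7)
The plan is to establish the first identity by verifying that both sides vanish at $\phi=0$ and have identical derivatives along any smooth path $\{\phi_t\}_{t\in[0,1]}$ in $\mathcal{H}_{\omega,\xi}$; the second identity will follow by algebraic rearrangement. By the definition of $\M^\lambda_\xi$, the $t$-derivative of the left-hand side is $-\int_X \hat{s}^\lambda_\xi(g_{\phi_t})\dot{\phi}_t\, e^{\theta_\xi(\phi_t)}\omega_{\phi_t}^n$, so it suffices to differentiate the proposed right-hand side and match each of the four summands in the decomposition
\[
\hat{s}^\lambda_\xi = \bigl(s(\omega_{\phi_t})+\bar{\Box}_t\theta_\xi(\phi_t)\bigr) + \bigl(\bar{\Box}_t\theta_\xi(\phi_t)-\xi^J\theta_\xi(\phi_t)\bigr) - \lambda\theta_\xi(\phi_t) - \bar{s}^\lambda_\xi
\]
with a corresponding term on the right.

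The three line-integral terms differentiate trivially and supply $\bar{s}^\lambda_\xi$, $\lambda\theta_\xi(\phi_t)$, and the reference-metric Ricci contribution. The substantive calculation is differentiating the entropy $\int_X \log(\omega_\phi^n/\omega^n)\, e^{\theta_\xi(\phi)}\omega_\phi^n$. I will use three pointwise identities: $\tfrac{d}{dt}\log\det g_{\phi_t} = -\bar{\Box}_t \dot{\phi}_t$, $\tfrac{d}{dt}\theta_\xi(\phi_t) = \xi^J\dot{\phi}_t$, and the parameter analogue of the introduction's Lie-derivative formula,
\[
\tfrac{d}{dt}\bigl(e^{\theta_\xi(\phi_t)}\omega_{\phi_t}^n\bigr) = -\bigl(\bar{\Box}_t - \xi^J\bigr)\dot{\phi}_t\cdot e^{\theta_\xi(\phi_t)}\omega_{\phi_t}^n.
\]
Integrating by parts against the weighted measure $e^{\theta_\xi(\phi_t)}\omega_{\phi_t}^n$ (using formal self-adjointness of $\bar{\Box}_t - \xi^J$ for this measure) converts the entropy derivative into $-\int_X (s(\omega_{\phi_t}) + 2\bar{\Box}_t\theta_\xi(\phi_t) - \xi^J\theta_\xi(\phi_t))\dot{\phi}_t\, e^{\theta_\xi(\phi_t)}\omega_{\phi_t}^n$ up to a residue built from the reference Ricci form. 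That residue is precisely cancelled by the line integral $-n!\int \dot{\phi}_t(\Ric(\omega)+\bar{\Box}_g\theta_\xi)e^{\omega_{\phi_t}+\theta_\xi(\phi_t)}$ via the pluricanonical identity $\Ric(\omega_{\phi_t}) = \Ric(\omega) - \sqddbar\log(\omega_{\phi_t}^n/\omega^n)$. Summing all contributions reproduces $-\int_X\hat{s}^\lambda_\xi\dot{\phi}_t\, e^{\theta_\xi(\phi_t)}\omega_{\phi_t}^n$, and both sides visibly vanish at $\phi=0$.

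The second expression follows from the substitution $\log(\omega_\phi^n/\omega^n) = \log\bigl(e^{\theta_\xi(\phi)}\omega_\phi^n/(e^{\theta_\xi}\omega^n)\bigr) - (\theta_\xi(\phi)-\theta_\xi)$, realizing the correction $-\int_X(\theta_\xi(\phi)-\theta_\xi)e^{\theta_\xi(\phi)}\omega_\phi^n$ as a line integral through $\tfrac{d}{dt}\theta_\xi(\phi_t) = \xi^J\dot{\phi}_t$; this merges with the first-form Ricci integral to produce the Bakry--Emery equivariant form $(\Ric(\omega)-\sqddbar\theta_\xi) + (\bar{\Box}_g\theta_\xi - \xi^J\theta_\xi)$ in place of $\Ric(\omega)+\bar{\Box}_g\theta_\xi$. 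The main technical obstacle is the weighted integration by parts for the entropy derivative: both the integrand $\log(\omega_{\phi_t}^n/\omega^n)$ and the measure $e^{\theta_\xi(\phi_t)}\omega_{\phi_t}^n$ depend on $t$, so the $\xi^J\dot{\phi}_t$ contribution must be tracked separately in the pointwise derivative and in the measure variation to avoid double-counting. Once the smooth-level identity is established, the extension of $\M^\lambda_\xi$ to $\overline{\mathcal{H}^{1,1}_{\omega,\xi}}$ is automatic: each summand on the right remains defined and continuous on $C^{1,1}$ potentials via Bedford--Taylor theory for $\omega_\phi^n$ and smoothness of the reference equivariant forms paired against the $C^{1,1}$ data in the line integrals.
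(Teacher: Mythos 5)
Your proposal is correct and is essentially the paper's own proof read in the opposite direction: the paper transforms the defining integral $-\int_0^1 dt\int_X \hat{s}^\lambda_\xi\,\dot{\phi}_t\, e^{\theta_\xi(\phi_t)}\omega^n_{\phi_t}$ by recognizing the entropy term as a total $t$-derivative, while you differentiate the closed-form expression and match integrands, using exactly the same ingredients (self-adjointness of $\bar{\Box}_t - \xi^J$ for the weighted measure, the variation formula for $e^{\theta_\xi(\phi_t)}\omega^n_{\phi_t}$, the Ricci identity against the reference metric, and the top-degree expansion of the equivariant forms). Your verified intermediate identities, including the cancellation of the residue against the equivariant Ricci line integral and the treatment of the correction term $\int_X \xi^J\phi\, e^{\theta_\xi(\phi)}\omega^n_\phi$ for the second expression, coincide with the steps in the paper's argument.
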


\begin{proof}
Recall the definition 
\[ \hat{s}_\xi^\lambda (g_{\phi_t}) = (s (g_{\phi_t}) + \bar{\Box}_{\phi_t} \theta_\xi (\phi_t)) + (\bar{\Box}_{\phi_t} \theta_\xi (\phi_t) - \xi^J \theta_\xi (\phi_t)) - \lambda \theta_\xi (\phi_t) - (\bar{s}_\xi - \lambda \bar{\theta}_\xi). \]
Firstly, we transform $s (g_{\phi_t})$ as follows: 
\begin{align*} 
s (g_{\phi_t}) 
&= \mathrm{tr}_{g_{\phi_t}} (\sqdbard \log \det \omega_{\phi_t}) 
\\ 
&= \bar{\Box}_{\phi_t} \log \frac{\omega_{\phi_t}^n}{\omega^n} + \mathrm{tr}_{g_{\phi_t}} (\sqdbard \log \det \omega) 
\\
&= (\bar{\Box}_{\phi_t} - \xi^J) \log \frac{\omega_{\phi_t}^n}{\omega^n} + \xi^J \log \frac{\omega_{\phi_t}^n}{\omega^n} + \mathrm{tr}_{g_{\phi_t}} (\Ric (\omega)). 
\end{align*}
For the second term, we have 
\[ \xi^J \log \frac{\omega_{\phi_t}^n}{\omega^n} = \frac{\omega^n}{\omega_{\phi_t}^n} \xi^J \Big{(} \frac{\omega_{\phi_t}^n}{\omega^n} \Big{)} = \frac{L_{\xi^J} \Big{(} \frac{\omega_{\phi_t}^n}{\omega^n} \cdot \omega^n \Big{)}}{\omega_{\phi_t}^n} - \frac{\omega_{\phi_t}^n}{\omega^n} \frac{L_{\xi^J} \omega^n}{\omega_{\phi_t}^n} = - \bar{\Box}_{g_{\phi_t}} \theta_\xi (\phi_t) + \bar{\Box}_g \theta_\xi. \]
The integration of the first term yields the following entropy term 
\begin{align*}
\int_X \Big{(} (\bar{\Box}_{\phi_t} - \xi^J) \log \frac{\omega_{\phi_t}^n}{\omega^n} \Big{)} \dot{\phi}_t ~ e^{\theta_\xi (\phi_t)} \omega^n_{\phi_t} 
&= \int_X \log \frac{\omega_{\phi_t}^n}{\omega^n} \Big{(} (\bar{\Box}_{\phi_t} - \xi^J) \dot{\phi}_t \Big{)} ~ e^{\theta_\xi (\phi_t)} \omega^n_{\phi_t} 
\\
&= - \frac{d}{dt} \Big{(} \int_X \log \frac{\omega_{\phi_t}^n}{\omega^n} e^{\theta_\xi (\phi_t)} \omega_{\phi_t}^n \Big{)} - \int_X \bar{\Box}_{\phi_t} \dot{\phi}_t e^{\theta_\xi (\phi_t)} \omega_{\phi_t}^n
\\ 
&= - \frac{d}{dt} \Big{(} \int_X \log \frac{\omega_{\phi_t}^n}{\omega^n} e^{\theta_\xi (\phi_t)} \omega_{\phi_t}^n \Big{)} - \int_X \xi^J \dot{\phi}_t e^{\theta_\xi (\phi_t)} \omega_{\phi_t}^n. 
\end{align*}
The second term of the last expression removes the following second term of the minus of the $\mu$K-energy 
\begin{align*} 
\int_0^1 dt \int_X 
& (\bar{\Box}_{\phi_t} \theta_\xi (\phi_t) - \xi^J \theta_\xi (\phi_t)) \dot{\phi}_t ~ e^{\theta_\xi (\phi_t)} \omega_{\phi_t}^n
\\ 
& = \int_0^1 dt \int_X  (\bar{\partial} \theta_\xi (\phi_t), \bar{\partial} \dot{\phi}_t)_{g_{\phi_t}} ~ e^{\theta_\xi (\phi_t)} \omega_{\phi_t}^n
\\
&= \int_0^1 dt \int_X  \xi^J \dot{\phi}_t ~ e^{\theta_\xi (\phi_t)} \omega_{\phi_t}^n. 
\end{align*}
Thus we have the following expression of the minus of the $\mu$K-energy: 
\begin{align*} 
- \M_\xi^\lambda (\phi) = - \int_X
& \log \frac{\omega_{\phi}^n}{\omega^n} e^{\theta_\xi (\phi)} \omega_{\phi}^n + \int_0^1 dt \int_X \Big{(} - \bar{\Box}_{g_{\phi_t}} \theta_\xi (\phi_t) + \bar{\Box}_g \theta_\xi + \mathrm{tr}_{g_{\phi_t}} (\Ric (\omega)) \Big{)} \dot{\phi}_t ~e^{\theta_\xi (\phi_t)} \omega_{\phi_t}^n 
\\
&+ \int_0^1 dt \int_X \bar{\Box}_{g_{\phi_t}} \theta_\xi (\phi_t) \dot{\phi}_t e^{\theta_\xi (\phi_t)} \omega^n_{\phi_t} - \bar{s}_\xi \int_0^1 dt \int_X \dot{\phi}_t ~e^{\theta_\xi (\phi_t)} \omega_{\phi_t}^n 
\\ 
&\quad - \lambda \int_0^1 dt \Big{(} \int_X \theta_\xi (\phi_t) \dot{\phi}_t e^{\theta_\xi (\phi_t)} \omega^n_{\phi_t} - \bar{\theta}_\xi \int_X \dot{\phi}_t e^{\theta_\xi (\phi_t)} \omega_{\phi_t}^n \Big{)} 
\end{align*}
and obtain the first expression of the $\mu$K-energy by $\mathrm{tr}_{g_{\phi_t}} (\Ric (\omega)) \omega_{\phi_t}^n = n \Ric (\omega) \wedge \omega_{\phi_t}^{n-1}$. 
The second expression follows by 
\begin{align*} 
\int_X \xi^J \phi e^{\theta_\xi (\phi)} \omega_{\phi}^n 
&= \int_0^1 dt \frac{d}{dt} \int_X \xi^J \phi_t e^{\theta_\xi (\phi_t)} \omega_{\phi_t}^n 
\\
&= \int_0^1 dt \int_X \xi^J \dot{\phi}_t e^{\theta_\xi (\phi_t)} \omega_{\phi_t}^n - \int_0^1 dt \int_X \xi^J \phi_t (\bar{\Box}_{\phi_t} - \xi^J) \dot{\phi}_t e^{\theta_\xi (\phi_t)} \omega_{\phi_t}^n
\\
&= \int_0^1 dt \int_X (\bar{\Box}_{\phi_t} - \xi^J) (\theta_\xi (\phi_t) - \xi^J \phi_t) \dot{\phi}_t e^{\theta_\xi (\phi_t)} \omega_{\phi_t}^n
\\
&= n! \int_0^1 dt \int_X (-\sqrt{-1} \partial \bar{\partial} \theta_\xi - \xi^J \theta_\xi) e^{\omega_{\phi_t} + \theta_\xi (\phi_t)}, 
\end{align*} 
where we applied 
\[ \int_X \xi^J \varphi e^{\theta_\xi (\phi_t)} \omega_{\phi_t}^n  = \int_X (\bar{\partial} \theta_\xi, \bar{\partial} \varphi) e^{\theta_\xi (\phi_t)} \omega_{\phi_t}^n = \int_X (\bar{\Box}_{\phi_t} \theta_\xi (\phi_t) - \xi^J \theta_\xi (\phi_t)) \varphi e^{\theta_\xi (\phi_t)} \omega_{\phi_t}^n \]
and $\bar{\Box}_{\phi_t} \theta_\xi \omega_{\phi_t}^n = \mathrm{tr}_{g_{\phi_t}} (-\sqrt{-1} \partial \bar{\partial} \theta_\xi) \omega_{\phi_t}^n = -n \sqrt{-1} \partial \bar{\partial} \theta_\xi \wedge \omega_{\phi_t}^{n-1}$. 

\end{proof}

The first term in the second expression (\ref{Mabuchi extension}) of $\M^\lambda_\xi$ is known as the entropy 
\[ H_\mu (\nu) = \int_X \frac{d \nu}{d \mu} \log \Big{(} \frac{d \nu}{d \mu} \Big{)} d\mu \]
for the probability measures $\nu = \frac{1}{V_\xi} e^{\theta_\xi (\phi)} \omega_\phi^n, \mu = \frac{1}{V_\xi} e^{\theta_\xi} \omega^n$. 
Here, for general probability measures, $d \nu/d\mu$ denotes the Radon-Nykodim derivative, which is a measurable function, and the value of the function $(d\nu/d\mu) \log (d\nu/d\mu)$ is defined to be zero on which $d\nu/d\mu$ is zero. 
The total mass $V_\xi = \int_X e^{\theta_\xi} \omega^n = \int_X e^{\theta_\xi (\phi)} \omega_\phi^n$ is independent of the choice of $\phi$ as the Duistermaat-Heckman measure is an invariant of $[\omega + \mu]$. 
Applying the Jensen's inequality with respect to the convex function $\phi (t) = t \log t$ on $[0, \infty)$, we get 
\[ \int_X \frac{d \nu}{d \mu} \log \Big{(} \frac{d \nu}{d \mu} \Big{)} d\mu \ge \phi \Big{(} \int_X \frac{d\nu}{d\mu} d\mu \Big{)} = \phi (1) = 0. \]

For any $C^{1,1}$-smooth path of $C^{1,1}$-smooth sub-K\"ahler potentials $\phi_t$, the current $\omega_{\phi_t}$ is just a differential form with $L^\infty$-coefficient and $\theta_\xi (\phi_t)$ and $\dot{\phi}_t$ are Lipschitz functions on $X$. 
As for $\Ric (\omega), \sqrt{-1} \partial \bar{\partial} \theta_\xi, \bar{\Box}_g \theta_\xi$ and $\xi^J \theta_\xi$, they are constructed from the initial smooth metric $\omega$, so are smooth. 
Thus we obtain the following corollary. 

\begin{cor}[Extension to the space of $C^{1,1}$-smooth sub-K\"ahler potentials]
\label{extension}
The $\mu$K-energy $\M^\lambda_\xi$ can be uniquely extended to the space $\overline{\mathcal{H}^{1,1}_{\omega, \xi}}$ of $C^{1,1}$-smooth sub-K\"ahler potentials so that $\M^\lambda_\xi - \mathcal{H}_\xi$ is continuous, where $\mathcal{H}_\xi$ is the lower-semi continuous function
\[ \mathcal{H}_\xi (\phi) = \int_X \log \frac{e^{\theta_\xi (\phi)} \omega^n_\phi}{e^{\theta_\xi} \omega^n} e^{\theta_\xi (\phi)} \omega^n_\phi \]
on $\overline{\mathcal{H}^{1,1}_{\omega, \xi}}$. 
\end{cor}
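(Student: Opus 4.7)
The plan is to read the extension directly off the second expression~(\ref{Mabuchi extension}) in Proposition~\ref{expression}. Write $\M^\lambda_\xi(\phi) = \mathcal{H}_\xi(\phi) + R^\lambda_\xi(\phi)$, where $R^\lambda_\xi$ gathers the three path-integral terms (paired respectively against $(\Ric(\omega) - \sqddbar\theta_\xi) + (\bar{\Box}_g\theta_\xi - \xi^J\theta_\xi)$, against $\bar{s}_\xi$, and the $\lambda$-term). The corollary then reduces to two statements: the remainder $R^\lambda_\xi$ extends continuously to $\overline{\mathcal{H}^{1,1}_{\omega,\xi}}$, and the entropy $\mathcal{H}_\xi$ is already a well-defined lower-semicontinuous functional on the same space.

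For $R^\lambda_\xi$, I first note that each path integral is a cocycle in the smooth setting: this is automatic because both $\M^\lambda_\xi$ and $\mathcal{H}_\xi$ are individually path-independent on $\mathcal{H}_{\omega,\xi}$. One may therefore evaluate $R^\lambda_\xi$ along the affine path $\phi_t = t\phi$, which collapses each path integral into a closed-form expression: a finite polynomial-in-$t$ sum of integrals of the shape $\int_X P(\phi, \xi^J\phi)\, \alpha_k \wedge (\sqddbar\phi)^k \wedge \omega^{n-k}$, where $\alpha_k$ is a fixed smooth form built from $\omega$, $\theta_\xi$, $\Ric(\omega)$ and $P$ is a universal polynomial in its arguments (the factor $e^{\theta_\xi(\phi_t)}$ being Lipschitz, one also gets a weight of the form $e^{\theta_\xi - s\xi^J\phi}$ inside a further $s$-integration). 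Every such summand makes sense verbatim for $\phi \in C^{1,1}$: $\sqddbar\phi$ has $L^\infty$-coefficients by definition of $C^{1,1}$, $\xi^J\phi$ is Lipschitz, and the remaining factors are smooth. Continuity along $C^{1,1}$-convergent sequences is then a direct dominated-convergence argument using uniform $L^\infty$-bounds on $\sqddbar\phi_j$ together with the Bedford--Taylor weak-$*$ convergence $(\sqddbar\phi_j)^k \to (\sqddbar\phi)^k$.

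The main obstacle is the entropy. For $\phi \in C^{1,1}$, the density $\omega_\phi^n/\omega^n$ lies in $L^\infty$ and the weight $e^{\theta_\xi(\phi)}$ is uniformly bounded, so $\mathcal{H}_\xi(\phi)$ is an honest real number---bounded below by $0$ via Jensen applied to $t\log t$ with the probability measure $V_\xi^{-1} e^{\theta_\xi(\phi)} \omega_\phi^n$, and bounded above because $(\omega_\phi^n/\omega^n)\log(\omega_\phi^n/\omega^n) \in L^\infty$. Lower-semicontinuity under $C^{1,1}$-convergence follows from weak-$*$ convergence $\omega_{\phi_j}^n \to \omega_\phi^n$ combined with the convexity of $t \mapsto t\log t$ and Fatou's lemma. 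Uniqueness of the extension is then forced by density of $\mathcal{H}_{\omega,\xi}$ in $\overline{\mathcal{H}^{1,1}_{\omega,\xi}}$ together with the prescribed continuity of $\M^\lambda_\xi - \mathcal{H}_\xi$.
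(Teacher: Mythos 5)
Your proposal is correct and follows essentially the same route as the paper: both read the extension directly off the second expression (\ref{Mabuchi extension}) of Proposition \ref{expression}, splitting $\M^\lambda_\xi$ into the entropy $\mathcal{H}_\xi$ plus path-integral terms that manifestly make sense for $C^{1,1}$ data ($L^\infty$-coefficients for $\omega_\phi$, Lipschitz $\theta_\xi(\phi)$ and $\dot{\phi}_t$, smooth fixed forms built from $\omega$). The details you supply beyond this --- evaluation along the affine path, Bedford--Taylor convergence for continuity, and the density argument for uniqueness --- are precisely what the paper leaves implicit.
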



\subsection{A prelude to $\mu$K-stability}
\label{section: K-stability}

In this section, we discuss on `$\mu$K-stability' which should fit into the existence problem on $\mu$-cscK metrics. 

For a geodesic ray $\phi: [0, \infty) \to \overline{\mathcal{H}^{1,1}_{\omega, \xi}}$, we put 
\[ M_\xi^{\lambda, \mathrm{NA}} (\phi) := \liminf_{t \to \infty} \frac{\M^\lambda_\xi (\phi_t)}{t}, \]
which might take the value $\infty$ for a general geodesic. 

For a vector $\zeta \in \mathfrak{h}_0 (X)$, the following ray $\phi_t$ gives a smooth geodesic ray: 
\[ \sqddbar \phi_t = f_t^* \omega - \omega, \quad \int_X \dot{\phi}_t e^{\theta_\xi (\phi_t)} (f_t^* \omega)^n = 0, \]
where $f_t$ is the one parameter subgroup generated by the vector field $J\zeta$. 
As for this geodesic ray $\phi$, we can easily see that $M^{\lambda, NA}_\xi (\phi)$ exists along this ray and is nothing but the $\mu$-Fuatki invariant $- \Fut^\lambda_\xi (\zeta)$. 

If the $\mu$K-energy is bounded from below, then we must have $M_\xi^{\lambda, NA} (\phi) \ge 0$. 
The most naive and pretty analytic formulation of $\mu$K-stability is that we call a quadruple $(X, [\omega], \xi, \lambda)$ \textit{$\mu$K-semistable} (with respect to geodesics) if we have $M_\xi^{\lambda, NA} (\phi) \ge 0$ for all geodesics $\phi$ and call it \textit{$\mu$K-polystable} (with respect to geodesics) if it is $\mu$K-semistable and we have $M_\xi^{\lambda, NA} (\phi) = 0$ iff $\phi$ is a geodesic given by a vector $\zeta \in \mathfrak{h}_0 (X)$. 
Then we conjecture there exists a $\mu^\lambda_\xi$-cscK metric in the K\"ahler class $[\omega]$ if and only if the quadruple $(X, [\omega], \xi, \lambda)$ is $\mu$K-polystable. (cf. \cite[Theorem 7]{Lah})

Of course, it is desirable that we can reformulate this quite naive $\mu$K-stability notion to fit into a more algebraic formalism. 
Namely, we should 
\begin{itemize}
\item express $M_\xi^{\lambda, NA} (\phi)$ for a geodesic $\phi$ associated to a test configuration by an equivariant intersection formula using the equivariant polarization $\mathcal{L}$ and the equivariant relative canonical sheaf $\omega_{\bar{\mathcal{X}}/\mathbb{P}^1}$ of the compactified test configuration.  (cf. \cite{Lah} and \cite{Ino2})

\item detect the candidate vector $\xi$ for the solution of the $\mu$-cscK equation uniquely in a torus action, in order to formulate a sensible notion of families of $\mu$K-semistable $T$-varieties enjoying the separation property. 
\end{itemize}

The detection of the candidate (called \textit{K-optimal} in \cite{Ino}) vector $\xi$ follows from, for instance, the uniqueness of local minimizers of $\mathrm{Vol}^\lambda$ when $\lambda \le 0$. 
If this is the case, we can formulate the $\mu^\lambda$K-stability for a $T$-equivariant polarized manifold $(X, [\omega])$ by using the local minimizer $\xi$ of $\mathrm{Vol}^\lambda$. 
It is interesting to ask if there is a wall-crossing phenomena, namely, if the $\mu^\lambda$K-stability of $(X, [\omega])$ with a torus action jumps at some $\lambda \le 0$. 
We will see in the next section the behavior of the existence of the $\mu^\lambda$-cscK metric when perturbing $\lambda$. 

\section{Perturbation and propagation}
\label{section: perturbation}

\subsection{Perturbation of K\"ahler class and $\lambda$}

\subsubsection{Regularity}

We firstly check an elliptic regularity for constant $\mu$-scalar curvature K\"ahler metric. 
Remember that the $\mu$-scalar curvature of a K\"ahler metric $\omega_\phi = \omega + \sqddbar \phi$ can be written as 
\[ s_\xi (\omega_\phi) = (\bar{\Box}_\phi - \xi^J) \Big{(} \log (e^{\theta_\xi (\phi)} \det (g_{k \bar{l}} + \phi_{k \bar{l}})) \Big{)} + \sum_{i=1}^n \partial_i \xi^i. \]
Using this, the equation of constant $\mu$-scalar curvature
\[ s_\xi (\omega + \sqddbar \phi) - \lambda \theta_\xi (\phi) = \bar{s}_\xi - \lambda \bar{\theta}_\xi \]
reduces to the following coupled equation
\begin{equation}
\label{couple}
\begin{cases}
F = \log \frac{e^{\theta_\xi (\phi)}\det (g_{k \bar{l}} + \phi_{k \bar{l}})}{e^{\theta_\xi} \det g_{k \bar{l}}}
\\[5pt]
(\bar{\Box}_\phi - \xi^J) F = \bar{s}_\xi - \lambda \bar{\theta}_\xi + \lambda \theta_\xi (\phi) - (\bar{\Box}_\phi - \xi^J) \log (e^{\theta_\xi} \det g) + \sum_{i=1}^n \partial_i \xi^i. 
\end{cases}
\end{equation}

Take a $C^\infty$-smooth initial K\"ahler metric $\omega$ and a $C^{2, \alpha}$-smooth function $\phi$ so that $\omega_\phi = \omega + \sqddbar \phi$ is a $C^{0, \alpha}$-smooth K\"ahler metric. 
Then $\theta_\xi (\phi) = \theta_\xi - \xi^J \phi$ is a $C^{1, \alpha}$-smooth function and the equation (\ref{couple}) makes sense for a $C^2$-smooth function $F$. 

Suppose $F \in C^2$ satisfies the equation (\ref{couple}). 
By differentiating the first equation in (\ref{couple}), we obtain a local equation 
\begin{equation} 
\bar{\Box}_\phi (\partial_i \phi) = \partial_i F - \partial_i (\xi^J \phi) - g^{k \bar{l}}_{\phi} (\partial_i g_{k \bar{l}}) + g^{k \bar{l}} (\partial_i g_{k \bar{l}}). 
\end{equation}
Since the right hand side of this equation is $C^{0, \alpha}$-smooth and the elliptic operator $\bar{\Box}_\phi$ has $C^{0, \alpha}$-coefficients, the elliptic regularity shows that $\partial_i \phi$ should be $C^{2, \alpha}$-smooth. 
By taking all the derivative $\partial_i$, we obtain the $C^{3,\alpha}$-smoothness of $\phi$. 
Then the right hand side of the second equation in (\ref{couple}) becomes $C^{1,\alpha}$-smooth and the elliptic operator $\bar{\Box}_\phi - \xi^J$ has $C^{1, \alpha}$-coefficients, so again the elliptic regularity shows that $F$ is actually $C^{3,\alpha}$-smooth. 
Now the bootstrapping argument shows that the function $\phi$ and $F$ must be $C^\infty$-smooth functions. 

\subsubsection{Perturbation}
\label{section: perturbation}

Let $\omega$ be a $\mu^\lambda_\xi$-cscK metric on a compact K\"ahler manifold $X$. 
By Corollary \ref{reductiveness}, the metric $\omega$ is preserved by some maximal closed torus $T \subset \mathrm{Aut}^0_\xi (X/\mathrm{Alb})$ containing the torus generated by $\xi$. 
The centralizer of $T$ in $\mathrm{Aut}^0_\xi (X/\mathrm{Alb})$ is the complexified algebraic torus $T^c$. 
We denote by $\mathcal{H}^{1,1} (X, \mathbb{R})$ the space of harmonic real $(1,1)$-form with respect to $\Delta_g = d^* d + d d^*$ associated to the metric $g = \omega J$, i.e. $\mathcal{H}^{1, 1} (X, \mathbb{R}) = \{ \alpha \in \Omega^{1,1} (X, \mathbb{R}) ~|~ \Delta_g \alpha = 0 \}$, which is isomorphic to $H^{1,1} (X, \mathbb{R})$ by the projection. 
The action of the maximal torus $T$ on $\mathcal{H}^{1,1} (X, \mathbb{R})$ is trivial as the action extends to the action on $H^2 (X, \mathbb{R})$, which is trivial as it preserves the integral lattice and $T$ is connected, so that each $\alpha \in \mathcal{H}^{1,1}$ is $T$-invariant. 

Let $\mathcal{U} \subset \mathcal{H}^{1,1} (X, \mathbb{R}) \times C^{k+4, \alpha} (X, \mathbb{R})^T$ be an open neighbourhood of the origin on which we have $\omega + \alpha + \sqddbar \phi > 0$. 
For $(\alpha, \phi) \in \mathcal{U}$, we denote by $g_{\alpha, \phi}$ the K\"ahler metric associated to the K\"ahler form $\omega_{\alpha, \phi} := \omega + \alpha + \sqddbar \phi$ and by $\theta_\eta^{\alpha, \phi}$ the real-valued function satisfying $\sqrt{-1} \dbar \theta_\eta^{\alpha, \phi} = i_{\eta^J} \omega_{\alpha, \phi}$ and $\int_X \theta^{\alpha, \phi}_\eta e^{\theta^{\alpha, \phi}_\xi} \omega_{\alpha, \phi}^n = 0$. 
(This normalization is well-defined since for any constant $c \in \mathbb{R}$ we have $\int_X \theta^{\alpha, \phi}_\eta e^{\theta^{\alpha, \phi}_\xi + c} \omega_{\alpha, \phi}^n = 0$ iff $\int_X \theta^{\alpha, \phi}_\eta e^{\theta^{\alpha, \phi}_\xi} \omega_{\alpha, \phi}^n = 0$. )
The function $\theta_\eta^{\alpha, \phi}$ linearly depends on $\eta$, so that $\theta^{\alpha, \phi}$ is a moment map with respect to $\omega_{\alpha, \phi}$. 
Now consider a smooth map $\mathscr{S}_\xi^\lambda: \mathbb{R} \times \mathfrak{t} \times \mathcal{U} \to C^{k, \alpha} (X, \mathbb{R})^T$ defined by 
\begin{align}
\mathscr{S}_\xi^\lambda (\delta, \zeta, \alpha, \phi) 
&= s^{\lambda+\delta}_{\xi + \zeta} (\omega + \alpha + \sqddbar \phi) 
\\ \notag
&= (s (g_{\alpha, \phi}) + \bar{\Box}_{g_{\alpha, \phi}} \theta_{\xi+ \zeta}^{\alpha, \phi}) + (\bar{\Box}_{g_{\alpha, \phi}} \theta_{\xi+ \zeta}^{\alpha, \phi} - (\xi+ \zeta)^J \theta_{\xi + \zeta}^{\alpha, \phi}) - (\lambda+\delta) \theta_{\xi + \zeta}^{\alpha, \phi}. 
\end{align} 

The linearization of this smooth map $\mathscr{S}^\lambda_\xi$ at $(0, 0, 0, 0) \in \mathfrak{t} \times \mathcal{U}$ is given by 
\begin{align}
(0, \zeta, 0, 0)
& \mapsto 2 (\bar{\Box} - \xi^J) \theta_\zeta - \lambda \theta_\zeta, 
\\ 
(0, 0, 0, \phi)
& \mapsto - \mathcal{D}_\omega^{\theta*} \mathcal{D}_\omega \phi + (\dbar_\omega^\sharp s^\lambda_\xi (\omega)) (\phi)
\end{align}
with respect to a general $T$-invariant initial metric $\omega$, which is not necessarily a $\mu^\lambda_\xi$-cscK metric. 
We do not need the derivative to the directions $(\delta, 0, 0, 0)$ and $(0, 0, \alpha, 0)$. 

Now we show the following Theorem E. 

\begin{thm}
\label{perturbation}
Let $\omega$ be a $\mu$-cscK metric on a compact K\"ahler manifold $X$ with respect to $\xi$ and $\lambda \in \mathbb{R}$. 
Suppose we have $\lambda < 2 \lambda_1$ for the first eigenvalue $\lambda_1$ of the weighted $\bar{\partial}$-Laplacian $\bar{\Box}_\omega - \xi^J$ restricted to the space $C^\infty (X)^T$, where $T$ is a maximal torus contained in ${^\nabla \mathrm{Isom}}_\xi^0 (X, \omega)$. 
Then there exists a neighbourhood $U$ of $[\omega]$ in the K\"ahler cone and a positive constant $\epsilon > 0$ such that for each $\tilde{\lambda} \in (\lambda - \epsilon, \lambda + \epsilon)$, every K\"ahler class $[\tilde{\omega}]$ in $U$ admits a $\mu$-cscK metric $\tilde{\omega}_{\tilde{\lambda}}$ with respect to some vector $\tilde{\xi}_{\tilde{\lambda}} \in \mathfrak{t}$ and the given $\tilde{\lambda}$. 
The vector $\tilde{\xi}_{\tilde{\lambda}}$ is in the center of a maximal compact of $\mathrm{Aut}^0 (X/\mathrm{Alb})$ when $\tilde{\lambda} \le 0$. 
\end{thm}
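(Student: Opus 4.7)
The plan is an implicit function theorem argument applied to the smooth map $\mathscr{S}_\xi^\lambda$ set up just before the theorem, treating $(\delta, \alpha)$ as external parameters and solving for $(\zeta, \phi)$. Fix a maximal torus $T \subset {^\nabla \mathrm{Isom}}^0_\xi(X, \omega)$; every $\alpha \in \mathcal{H}^{1,1}(X, \mathbb{R})$ is automatically $T$-invariant by the observation recalled at the start of section \ref{section: perturbation}. Normalising the moment maps by $\int_X \theta^{\alpha,\phi}_{\xi+\zeta} e^{\theta^{\alpha,\phi}_{\xi+\zeta}} \omega_{\alpha,\phi}^n = 0$ and subtracting the topologically determined average, I pass to the reduced map
\[ \mathscr{F}:\; (\delta, \alpha, \zeta, \phi) \;\longmapsto\; \hat{s}^{\lambda+\delta}_{\xi+\zeta}\big(\omega + \alpha + \sqddbar \phi\big), \]
from a neighbourhood of the origin in $\mathbb{R} \oplus \mathcal{H}^{1,1}(X,\mathbb{R}) \oplus \mathfrak{t} \oplus C^{k+4,\alpha}_{T,0}(X,\mathbb{R})$ into $C^{k,\alpha}_{T,0}(X,\mathbb{R})$, where the subscript $T,0$ denotes the closed subspace of $T$-invariant functions of weighted mean zero. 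The hypothesis that $\omega$ is a $\mu^\lambda_\xi$-cscK metric says precisely $\mathscr{F}(0,0,0,0) = 0$. To kill the obvious kernel produced by the $T^c$-action, I further restrict the $\phi$-argument to the $L^2$-orthogonal complement (with respect to $e^{\theta_\xi}\omega^n$) of the finite-dimensional space $\mathscr{K} := \{\theta_\eta - \bar{\theta}_\eta : \eta \in \mathfrak{t}\}$.

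The heart of the argument is the invertibility of the partial linearisation $\hat{L}:\, \mathfrak{t} \oplus (C^{k+4,\alpha}_{T,0} \cap \mathscr{K}^\perp) \to C^{k,\alpha}_{T,0}$, which by the two formulas displayed immediately before the theorem, combined with $\dbar^\sharp s^\lambda_\xi(\omega) = 0$ at the $\mu^\lambda_\xi$-cscK metric $\omega$, reads
\[ \hat{L}(\zeta, \phi) \;=\; \big(2(\bar{\Box} - \xi^J) - \lambda\big)\theta_\zeta \;-\; \mathcal{D}^{\theta *}\mathcal{D}\, \phi. \]
The operator $\mathcal{D}^{\theta *}\mathcal{D}$ is elliptic and formally self-adjoint with respect to $e^{\theta_\xi}\omega^n$; by Corollary \ref{reductiveness} together with the maximality of $T$ inside ${^\nabla\mathrm{Isom}}^0_\xi(X, \omega)$, its kernel on $T$-invariant mean-zero functions is exactly $\mathscr{K}$, so $\mathcal{D}^{\theta *}\mathcal{D}$ is an isomorphism between the $\mathscr{K}^\perp$-slices in the two H\"older spaces. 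Splitting the target as $\mathscr{K} \oplus \mathscr{K}^\perp$, surjectivity of $\hat{L}$ reduces to non-degeneracy of the finite-dimensional bilinear form on $\mathfrak{t}$ given by
\[ (\zeta, \eta) \;\longmapsto\; \big\langle (2(\bar{\Box}-\xi^J) - \lambda)\theta_\zeta,\; \theta_\eta \big\rangle_{e^{\theta_\xi}\omega^n} \;=\; 2Q(\zeta, \eta) - \lambda B(\zeta, \eta), \]
where $Q(\zeta, \eta) := \int_X (\dbar\theta_\zeta, \dbar\theta_\eta) e^{\theta_\xi}\omega^n$ and $B(\zeta, \eta) := \int_X \theta_\zeta \theta_\eta\, e^{\theta_\xi}\omega^n$ for the mean-zero representatives. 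The Poincar\'e inequality for $\bar{\Box} - \xi^J$ on $T$-invariant mean-zero functions gives $Q(\zeta, \zeta) \geq \lambda_1 B(\zeta, \zeta)$, so the hypothesis $\lambda < 2\lambda_1$ makes $2Q - \lambda B$ strictly positive definite on $\mathfrak{t}$. Combined with the Fredholm isomorphism on $\mathscr{K}^\perp$, this yields that $\hat{L}$ is an isomorphism.

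The implicit function theorem then produces, for $(\delta, \alpha)$ in a neighbourhood of $(0, 0)$, a unique smooth family $(\tilde\zeta_{\delta,\alpha}, \phi_{\delta,\alpha})$ with $\mathscr{F}(\delta, \alpha, \tilde\zeta_{\delta,\alpha}, \phi_{\delta,\alpha}) = 0$, so that $\tilde\omega_{\tilde\lambda} := \omega + \alpha + \sqddbar \phi_{\delta,\alpha}$ is a H\"older-regular $\mu^{\tilde\lambda}_{\tilde\xi}$-cscK metric in the perturbed K\"ahler class $[\omega + \alpha]$, with $\tilde\lambda = \lambda + \delta$ and $\tilde\xi = \xi + \tilde\zeta_{\delta,\alpha} \in \mathfrak{t}$. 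The elliptic bootstrap spelled out in the preceding regularity subsection upgrades $\phi_{\delta,\alpha}$ to $C^\infty$. For $\tilde\lambda \le 0$, applying Corollary \ref{K-optimal vectors are finite} with any maximal compact subgroup $K \subset \mathrm{Aut}^0(X/\mathrm{Alb})$ containing $T$ forces $\tilde\xi$ into the centre of $\mathfrak{k}$, as claimed. The main technical obstacle I anticipate is the careful bookkeeping around the various mean-zero normalisations (for $\theta_\zeta$, for $\phi$, and for the range of $\mathscr{F}$) so that the orthogonal splitting $\mathscr{K} \oplus \mathscr{K}^\perp$ is genuinely compatible with the derivative of $\mathscr{F}$ and the coercivity of $2Q - \lambda B$ follows cleanly from the single Poincar\'e inequality in the hypothesis; modulo this, the scheme is a direct adaptation of the extremal-metric perturbation theorem of \cite{LS}.
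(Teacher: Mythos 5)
Your proposal is correct and takes essentially the same route as the paper's own proof: an implicit function theorem applied to $\mathscr{S}^\lambda_\xi$, with the linearization in the $\phi$-direction reducing to $-\mathcal{D}^{\theta*}\mathcal{D}$ at the $\mu^\lambda_\xi$-cscK metric, its kernel/cokernel identified as $\mathbb{R}\oplus\mathfrak{t}$ via the maximality of $T$ and Corollary \ref{reductiveness}, and the $\mathfrak{t}$-directions covering that cokernel because $\int_X (2|\dbar\theta_\zeta|^2 - \lambda\theta_\zeta^2)\,e^{\theta_\xi}\omega^n > 0$ by the Poincar\'e inequality under $\lambda < 2\lambda_1$. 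The only difference is presentational bookkeeping: you work on mean-zero slices with the orthogonal splitting $\mathscr{K}\oplus\mathscr{K}^\perp$ to exhibit a genuine isomorphism, whereas the paper works in $C^{k,\alpha}(X,\mathbb{R})^T/\mathbb{R}$ and proves Fredholmness together with surjectivity of the restricted derivative; these are equivalent formulations of the same argument.
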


\begin{proof}
Let $\bar{\mathscr{S}}^\lambda_\xi: \mathbb{R} \times \mathfrak{t} \times \mathcal{U} \to C^{k, \alpha} (X, \mathbb{R})^T/\mathbb{R}$ be the projection of $\mathscr{S}^\lambda_\xi$. 
By the implicit function theorem, it suffices to show that the derivative operator $d_0 \bar{\mathscr{S}}^\lambda_\xi: \mathbb{R} \times \mathfrak{t} \times \mathcal{H}^{1,1} (X, \mathbb{R}) \times C^{k+4, \alpha} (X, \mathbb{R}) \to C^{k, \alpha} (X, \mathbb{R})/\mathbb{R}$ is Fredholm and surjective when restricted to $\{ 0 \} \times \mathfrak{t} \times \{ 0 \} \times C^{k+4, \alpha} (X, \mathbb{R})$. 

As $\omega$ is a $\mu^\lambda_\xi$-cscK metric, we have $d_0 \mathscr{S}^\lambda_\xi (0, 0, 0, \phi) = - \mathcal{D}^{\theta *} \mathcal{D} \phi$. 
Since $\mathcal{D}^{\theta *} \mathcal{D}$ is an elliptic operator and $\mathbb{R} \times \mathfrak{t} \times \mathcal{H}^{1,1} (X, \mathbb{R})$ is finite dimensional, both $d_0 \mathscr{S}^\lambda_\xi$ and $d_0 \bar{\mathscr{S}}^\lambda_\xi$ are Fredholm operators. 

The cokernel (the $L^2 (e^{\theta_\xi} \omega^n)$-orthogonal complement of the image) of the operator $- \mathcal{D}^{\theta *} \mathcal{D}$ is given by 
\begin{align*} 
\{ \psi \in C^{k, \alpha} (X, \mathbb{R})^T 
&~|~ \int_X (\mathcal{D}^{\theta*} \mathcal{D} \phi) \psi ~e^{\theta_\xi} \omega^n = 0 \text{ for all } \phi \in C^{k+4, \alpha} (X, \mathbb{R})^T \} 
\\
&= \{ \psi \in C^{k, \alpha} (X, \mathbb{R})^T ~|~ \mathcal{D} \psi = 0 \} = \mathbb{R} \oplus \mathfrak{t}, 
\end{align*}
where the last equality holds as $T$ is maximal. 
For each non-zero element $\theta_\zeta \in \mathfrak{t}$, which is normalized as $\int_X \theta_\zeta e^{\theta_\xi} \omega^n = 0$, we have 
\[ \int_X (d_0 \mathscr{S}^\lambda_\xi (0, \zeta, 0, 0)) \theta_\zeta e^{\theta_\xi} \omega^n = \int_X (2 |\dbar \theta_\zeta|^2 - \lambda \theta_\zeta^2) e^{\theta_\xi} \omega^n > 0 \]
by our assumption $\lambda < 2 \lambda_1$ and the Poincare inequality. 
Therefore the image $d_0 \mathscr{S}^\lambda_\xi (0, \zeta, 0, 0)$ is non-constant and the composition $D = p \circ d_0 \mathscr{S}^\lambda_\xi|_{\{ 0 \} \times \mathfrak{t} \times \{ 0 \} \times \{ 0 \}}: \mathfrak{t} \to \mathbb{R} \oplus \mathfrak{t}$ with the $L^2 (e^{\theta_\xi} \omega^n)$-orthogonal projection $p: C^{k, \alpha} (X, \mathbb{R})^T \to \mathbb{R} \oplus \mathfrak{t}$ is injective. 
It follows that $\mathbb{R} \oplus \mathrm{Im} D = \mathbb{R} \oplus \mathfrak{t}$ and so $d_0 \bar{\mathscr{S}}^\lambda_\xi$ is surjective when restricted to $\{ 0 \} \times \mathfrak{t} \times \{ 0 \} \times C^{k+4, \alpha} (X, \mathbb{R})$. 
\end{proof}

The perturbed vector $\tilde{\xi}$ is a local minimizer of $\mathrm{Vol}^{\tilde{\lambda}}_{[\tilde{\omega}]}$ by Corollary \ref{corollary of the second variational formula} in the above theorem. 

\begin{rem}
As a cscK metric $\omega$ is a $\mu^\lambda_0$-cscK metric for every $\lambda \in \mathbb{R}$, we in particular obtain a $\mu^\lambda_\xi$-cscK metric for every $\lambda \le 0$ and in every K\"ahler class $[\tilde{\omega}]$ in a neighbourhood $U_\lambda$ of $[\omega]$. 

It is proved in \cite{LS} that there is also a neighbourhood $U_{-\infty}$ of $[\omega]$ such that $[\tilde{\omega}]$ admits an extremal metric. 
Note that a $\mu^\lambda_\xi$-cscK metric (or an extremal metric) is not a cscK metric iff $[\tilde{\omega}]$ has non-trivial Futaki invariant $\Fut_{[\tilde{\omega}]} \neq 0$. 

In the next section, we show that we can take such a neighbourhood $U_{-\infty}$ so that $U_{-\infty} \subset U_\lambda$ for every $\lambda \le 0$. 
\end{rem}


\subsection{Propagation from infinity}

\subsubsection{$\mu$-volume functional and M\"obius bundles}

Consider a funcitonal $W (\xi, \lambda) = W^\lambda (\xi) = \log (\mathrm{Vol}^\lambda (\xi)/(\int \omega^n)^\lambda) - \bar{s}$ on $\mathfrak{k} \times \mathbb{R}$. 
When $\kappa \to 0$, we have the limit of $\kappa^{-1} W (\kappa \eta, \kappa^{-1})$ as follows: 
\begin{align*} 
\kappa^{-1} W (\kappa \eta, \kappa^{-1}) 
&= \kappa^{-1} \left( \int_X (s + \bar{\Box} \theta_{\kappa \eta}) e^{\theta_{\kappa \eta}} \omega^n \Big{/} \int_X e^{\theta_{\kappa \eta}} \omega^n - \underline{s} \right)
\\
&\qquad - \kappa^{-2} \left( \int_X \theta_{\kappa \eta} e^{\theta_{\kappa \eta}} \omega^n \Big{/} \int_X e^{\theta_{\kappa \eta}} \omega^n - \log \int_X e^{\theta_{\kappa \eta}} \omega^n \Big{/} \int_X \omega^n \right) 
\\
&= \kappa^{-1} \left( \int_X (s + \bar{\Box} \theta_{\kappa \eta}) e^{\theta_{\kappa \eta}} \omega^n \Big{/} \int_X e^{\theta_{\kappa \eta}} \omega^n - \underline{s} \right)
\\
&\qquad - \kappa^{-1} \left( \int_X \theta_{\eta} e^{\theta_{\kappa \eta}} \omega^n \Big{/} \int_X e^{\theta_{\kappa \eta}} \omega^n - \int_X \theta_\eta \omega^n \Big{/} \int_X \omega^n \right)
\\
&\qquad + \kappa^{-2} \left( \log \int_X e^{\theta_{\kappa \eta}} \omega^n \Big{/} \int_X \omega^n - \int_X \theta_{\kappa \eta} \omega^n \Big{/} \int_X \omega^n \right)
\\[3pt]
&\longrightarrow \frac{d}{d\kappa}\bigg{|}_{\kappa=0} \left( \int_X (s+ \bar{\Box} \theta_{\kappa \eta}) e^{\theta_{\kappa \eta}} \omega^n \Big{/} \int_X e^{\theta_{\kappa \eta}} \omega^n - \int_X \theta_\eta e^{\kappa \eta} \omega^n \Big{/} \int_X e^{\theta_{\kappa \eta}} \omega^n \right) 
\\
&\qquad \qquad + \lim_{\kappa \to 0} (2\kappa)^{-1} \left( \int_X \theta_\eta e^{\theta_{\kappa \eta}} \omega^n \Big{/} \int_X e^{\theta_{\kappa \eta}} \omega^n - \int_X \theta_\eta \omega^n \Big{/} \int_X \omega^n \right)
\\
&\qquad = \left( \int_X \omega^n \right)^{-2} \left( \int_X s \theta_\eta \omega^n \cdot \int_X \omega^n - \int_X s \omega^n \cdot \int_X \theta_\eta \omega^n \right)
\\
& \qquad \qquad - \frac{1}{2} \left( \int_X \omega^n \right)^{-2} \left( \int_X \theta_\eta^2 \omega^n \cdot \int_X \omega^n - \Big{(} \int_X \theta_\eta \omega^n \Big{)}^2 \right)
\\
&\qquad = - \frac{1}{2} \int_X \Big{(} (s - \underline{s}) - ( \theta_\eta - \underline{\theta}_\eta) \Big{)}^2 \omega^n \Big{/} \int_X \omega^n + \frac{1}{2} \int_X (s - \underline{s})^2 \omega^n \Big{/} \int_X \omega^n. 
\end{align*}
The limit functional is nothing but $-2 C (\eta)$ in subsection \ref{lambda as a function on t}. 
So we get a well-defined continuous map 
\[ \check{W}: \mathfrak{k} \times \mathbb{R} \to \mathbb{R}: (\eta, \kappa) \mapsto \check{W} (\eta, \kappa) = \check{W}^\kappa (\eta) := \kappa^{-1} W (\kappa \eta, \kappa^{-1}). \]
The limit functional $\check{W}^0 = -2 C$ is proper, concave and its unique critical point gives the extremal vector. 
By a similar calculus, we can easily see that this map is at least $C^2$-smooth.

\begin{prop}
\label{extremal background}
There exists a constant $\lambda_0 \in \mathbb{R}$ such that $\mathrm{Vol}^\lambda$ has a unique critical point for every $\lambda < \lambda_0$. 
\end{prop}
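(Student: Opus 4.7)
The plan is to exploit the reparametrization $\check{W}^\kappa(\eta) = \kappa^{-1} W(\kappa \eta, \kappa^{-1})$ introduced just before the statement. Under the change of variables $(\xi, \lambda) \leftrightarrow (\eta, \kappa) = (\lambda \xi, 1/\lambda)$, which is a diffeomorphism of $\mathfrak{k}$ for each $\lambda \neq 0$, critical points of $\mathrm{Vol}^\lambda$ on $\mathfrak{k}$ correspond bijectively to critical points of $\check{W}^\kappa$ on $\mathfrak{k}$ (since $\log \mathrm{Vol}^\lambda$ and $W(\cdot, \lambda)$ differ by a constant). Sending $\lambda \to -\infty$ amounts to sending $\kappa \to 0^-$, and the limit functional $\check{W}^0 = -2C$ is strictly concave and proper, with a unique critical point at the extremal vector $\xi_{\mathrm{ext}}$.

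The first step is to apply the implicit function theorem near $(\xi_{\mathrm{ext}}, 0)$. The preceding paragraph already records that $(\eta,\kappa) \mapsto \check{W}^\kappa(\eta)$ is $C^2$ across $\kappa = 0$; since the Hessian $D^2\check{W}^0(\xi_{\mathrm{ext}})$ is negative definite, the equation $d_\eta \check{W}^\kappa = 0$ has a unique solution $\eta(\kappa)$ in a fixed neighborhood $U$ of $\xi_{\mathrm{ext}}$ for $|\kappa| < \kappa_1$.

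The second step is to confine all critical points of $\check{W}^\kappa$, uniformly in small $|\kappa|$, to a fixed ball. Strict concavity of $\check{W}^0$ gives that $|d\check{W}^0(\eta)|$ is bounded below by some $\delta(r) > 0$ on $\{|\eta| \ge r\}$ for each $r > |\xi_{\mathrm{ext}}|$ and grows to infinity with $r$. Combined with the radial growth result of Proposition \ref{properness of mu-volume} --- whose positive growth rate $\lim_{t \to \infty} t^{-1}\log \mathrm{Vol}^\lambda(t\xi)$ was shown to be \emph{independent of} $\lambda$, and which after the reparametrization becomes a uniform radial growth of $\check{W}^\kappa$ --- one concludes that there exist $R > 0$ and $\kappa_0 \in (0, \kappa_1]$ such that $d\check{W}^\kappa$ has no zeros outside $\{|\eta| \le R\}$ for $\kappa \in (-\kappa_0, 0)$. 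Then the $C^2_{\mathrm{loc}}$-convergence $\check{W}^\kappa \to \check{W}^0$ forces $D^2 \check{W}^\kappa$ to be negative definite on the (compact) ball $\{|\eta| \le R\}$ for $|\kappa|$ below some $\kappa_2 \in (0, \kappa_0]$, so $\check{W}^\kappa$ is strictly concave there, hence has at most one critical point. Setting $\lambda_0 := -1/\kappa_2$ then yields uniqueness for every $\lambda < \lambda_0$.

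The main obstacle is the uniform confinement in the second step. The properness from Proposition \ref{properness of mu-volume} was established only for each fixed $\lambda$, and its leading-order asymptotic analysis (Morse--Bott concentration on $\theta_\xi^{-1}(0)$, giving a positive and $\lambda$-free leading coefficient) must be revisited to extract estimates uniform in $\kappa$ near $0$. Once that uniform lower bound on $|d\check{W}^\kappa|$ outside a fixed ball is in hand, the rest --- local IFT plus uniform Hessian control on the ball --- is essentially routine.
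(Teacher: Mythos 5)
You follow the paper's skeleton --- pass to $\check{W}^\kappa(\eta) = \kappa^{-1}W(\kappa\eta,\kappa^{-1})$, use that the limit $\check{W}^0 = -2C$ is strictly concave and proper, and perturb --- but your Step 2, the uniform confinement of the critical points of $\check{W}^\kappa$ to a fixed ball as $\kappa \to 0^-$, is exactly the crux of the proposition, and you leave it unproven (``must be revisited''). Moreover, the repair you propose targets the wrong regime: refining the Morse--Bott asymptotics of Proposition \ref{properness of mu-volume} controls $\mathrm{Vol}^\lambda(t\xi)$ only when $t|\xi|$ is large, whereas a critical point of $\check{W}^\kappa$ at moderate $|\eta|$ corresponds in the unrescaled variable to $\xi = \kappa\eta$ \emph{near the origin} of $\mathfrak{k}$; for $|\eta|$ in, say, $[R, |\kappa|^{-1}]$ the behavior of $\check{W}^\kappa$ is governed by the blow-up behavior of the invariants at $0 \in \mathfrak{k}$, about which the large-$t$ concentration analysis says nothing. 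So the gap is genuine, and uniform gradient bounds via that proposition alone will not deliver it.

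The paper closes this step with no new estimates, using a constraint your proposal never invokes: a critical point $\xi$ of $\mathrm{Vol}^\lambda$ satisfies in particular $\Fut^\lambda_\xi(\xi) = 0$, i.e. $\lambda = \lambda_\xi = \Fut^0_\xi(\xi)/\nu_\xi(\xi)$. For $\lambda \le 0$ this forces $\xi \in \{\lambda_\xi \le 0\}$, which is compact by Corollary \ref{compact} (because $\Fut^0_\xi(\xi) > 0$ sufficiently far from the origin), and then $|\eta| = |\lambda\xi| = |\xi|\,|\lambda_\xi|$ is bounded because $|\xi|\lambda_\xi$ extends continuously to the real blow-up $\hat{\mathfrak{t}}$ (subsection \ref{lambda as a function on t}). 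Hence every rescaled critical point with $\lambda \le 0$ lies in the fixed compact set $K = \{\lambda\xi ~|~ \Fut^\lambda_\xi \equiv 0, \ \lambda \le 0\}$; the $C^2$-convergence $\check{W}^\kappa \to \check{W}^0$ then makes $\check{W}^\kappa$ strictly concave on $K$ for $|\kappa|$ small, so there is at most one critical point in $K$ and none outside it, while existence for every $\lambda$ is already Corollary \ref{Futaki vanish}. If you replace your Step 2 by this argument your proof closes (and your IFT step becomes superfluous); as written, it does not.
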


\begin{proof}
The derivative of $\check{W}^\kappa$ at $\eta \in \mathfrak{k}$ is given by $\Fut^{\kappa^{-1}}_{\kappa \eta}$, so the critical points of $\check{W}^\kappa$ for $\kappa \neq 0$ are precisely $\kappa^{-1}$-times that of $\mathrm{Vol}^{\kappa^{-1}}$. 
It suffices to show that there exists some $\kappa_0 < 0$ such that $\check{W}^\kappa$ admits a unique critical point for each $\kappa \in (\kappa_0, 0)$. 
As we already see, the set $K := \{ \lambda \xi \in \mathfrak{k} ~|~ \Fut^\lambda_\xi \equiv 0, \lambda \le 0 \}$ is compact (moreover, $\{ \lambda \xi ~|~ \Fut^\lambda_\xi \equiv 0 \}$ converges to $\xi_{\mathrm{ext}}$ as $\lambda \to - \infty)$. 
Since $\check{W}^0 = -2 C$ is strictly concave, a small $C^2$-perturbation of it is again strictly concave on $K$, so that there exists $\kappa_0 < 0$ such that $\check{W}^\kappa$ has a unique critical point on $K$ for every $\kappa \in (\kappa_0, - \kappa_0)$. 
Thus for $\kappa \in (\kappa_0, 0)$, $\check{W}^\kappa$ has a unique critical point on $\mathfrak{k}$ as there is no critical points outside $K$. 
\end{proof}

\begin{rem}
We saw in the above proof that $\check{W}^\kappa$ is strictly concave around $\xi_{\mathrm{ext}}$. 
On the other hand, we have proven in Proposition \ref{properness of mu-volume} that the slope at infinity $\lim_{t \to \infty} t^{-1} \check{W}^\kappa (t \eta) = \lim_{t \to \infty} \mathrm{sign} \kappa \cdot (t |\kappa|)^{-1} \log \mathrm{Vol}^{\kappa^{-1}} (t |\kappa| (\mathrm{sign} \kappa \cdot \eta))$ exists and its sign is that of $\kappa$ for each $\kappa \neq 0$. 
This in particular implies that for a positive $\kappa$ close to $0$ ($\lambda = \kappa^{-1} \gg 0$), the functional $\check{W}^\kappa$ is a `mexican hat potential' on $\mathfrak{k}$, so that the critical points are not unique for these $\kappa > 0$ ($\lambda \gg 0$). 

\end{rem}

We can understand the relation of $W^\lambda$ and $\check{W}^\kappa$ as local indications of a map between M\"obius bundles. 
Let $V$ be a vector space over $\mathbb{R}$. 
We construct a circle $S^1$ by gluing two copies of $\mathbb{R}$, which we distinguish as $\mathbb{R}_{(0)} = \mathbb{R}$ and $\mathbb{R}_{(\infty)} = \mathbb{R}$, by the diffeomorphism $\mathbb{R}_{(0)} \setminus \{ 0 \} \xrightarrow{\sim} \mathbb{R}_{(\infty)} \setminus \{ 0 \}: \lambda \mapsto \lambda^{-1}$ and denote by $\infty \in S^1$ the point corresponding to $0 \in \mathbb{R}_{(\infty)}$. 
We construct a vector bundle $\text{M\"ob} (V)$ over $S^1$ by patching two copies of the trivial bundle $V \times \mathbb{R}_{(0/\infty)} \to \mathbb{R}_{(0/\infty)}$ over the charts by the isomorphism $V \times (\mathbb{R}_{(0)} \setminus \{ 0 \}) \xrightarrow{\sim} V \times (\mathbb{R}_{(\infty)} \setminus \{ 0 \}): (\xi, \lambda) \mapsto (\lambda \xi, \lambda^{-1})$ of vector bundles. 

We can construct a smooth map $\text{M\"ob} W: \text{M\"ob} (\mathfrak{k}) \to \text{M\"ob} (\mathbb{R})$ over $S^1$, which behaves non-linearly over the fibres, by patching the following two horizontal maps via the vertical gluing maps:  
\[ \begin{tikzcd}
\mathfrak{k} \times \mathbb{R}_{(0)} \ar{r} \ar[dashed]{d}{(\xi, \lambda) \mapsto (\lambda \xi, \lambda^{-1})}
& \mathbb{R} \times \mathbb{R}_{(0)} \ar[dashed]{d}{(\rho, \lambda) \mapsto (\lambda \rho, \lambda^{-1})}
& (\xi, \lambda) \ar[mapsto]{r}
& (W (\xi, \lambda), \lambda)
\\
\mathfrak{k} \times \mathbb{R}_{(\infty)} \ar{r}
& \mathbb{R} \times \mathbb{R}_{(\infty)}
& (\eta, \kappa) \ar[mapsto]{r}
& (\kappa^{-1} W (\kappa \eta, \kappa^{-1}), \kappa) 
\end{tikzcd} \]

The fibrewise derivative $D \text{M\"ob} W: \text{M\"ob} (\mathfrak{k}) \to \mathrm{Hom} (\text{M\"ob} (\mathfrak{k}), \text{M\"ob} (\mathbb{R})) = \mathfrak{k}^\vee \times S^1$ of this map is given by 
\[ \begin{tikzcd}
\mathfrak{k} \times \mathbb{R}_{(0)} \ar{r} \ar[dashed]{d}{(\xi, \lambda) \mapsto (\lambda \xi, \lambda^{-1})}
& \mathfrak{k}^\vee \times \mathbb{R}_{(0)} \ar[dashed]{d}{(\phi, \lambda) \mapsto (\phi, \lambda^{-1})}
& (\xi, \lambda) \ar[mapsto]{r}
& (\Fut^\lambda_\xi, \lambda)
\\
\mathfrak{k} \times \mathbb{R}_{(\infty)} \ar{r}
& \mathfrak{k}^\vee \times \mathbb{R}_{(\infty)}
& (\eta, \kappa) \ar[mapsto]{r}
& (\Fut^{\kappa^{-1}}_{\kappa \eta}, \kappa). 
\end{tikzcd} \]


\subsubsection{From extremal metric to $\mu$-cscK metrics}

Consider the following for $\eta \in \mathfrak{k}$ and $\kappa \in \mathbb{R}$:  
\begin{equation}
\check{s}^\kappa_\eta (\omega) := (s (\omega) + \bar{\Box} \theta_{\kappa \eta}) + (\bar{\Box} \theta_{\kappa \eta} - (\kappa \eta)^J \theta_{\kappa \eta}) - \theta_\eta. 
\end{equation}
When $\kappa = 0$, we have 
\[ \check{s}^0_\eta (\omega) = (s (\omega) - \theta_\eta), \]
so that $\check{s}^0_\eta (\omega)$ is constant if and only if $\omega$ is an extremal metric with respect to the vector field $\eta$. 
On the other hand, when $\kappa \neq 0$, we have $\check{s}^\kappa_\eta = s^{\kappa^{-1}}_{\kappa \eta}$, so that $\check{s}^\kappa_\eta (\omega)$ is constant if and only if $\omega$ is a $\mu^\lambda_\xi$-cscK metric with respect to $\lambda = \kappa^{-1}$ and $\xi = \kappa \eta$. 

Let $\omega$ be an extremal metric on $X$ and $T \subset {^\nabla \mathrm{Isom}}^0_\eta (X, \omega)$ be a maximal torus containing the extremal vector $\eta = \mathrm{Im} \partial^\sharp s (\omega)$. 
Take an open set $\mathcal{U} \subset \mathcal{H}^{1,1} (X, \mathbb{R}) \times C^{k+4, \alpha} (X, \mathbb{R})^T$ as in section \ref{section: perturbation}. 
We define a map $\check{\mathscr{S}}^0_\eta: \mathbb{R} \times \mathfrak{t} \times \mathcal{U} \to C^{k, \alpha} (X, \mathbb{R})^T$ by 
\begin{align*} 
\check{\mathscr{S}}^0_\eta (\kappa, \chi, \alpha, \phi) 
&:= \check{s}^\kappa_{\eta + \chi} (\omega + \alpha + \sqrt{-1} \partial \bar{\partial} \phi)
\\
&= (s (g_{\alpha, \phi}) + \bar{\Box}_{g_{\alpha, \phi}} \theta_{\kappa (\eta + \chi)}^{\alpha, \phi}) + (\bar{\Box}_{g_{\alpha, \phi}} \theta_{\kappa (\eta + \chi)}^{\alpha, \phi} - (\kappa (\eta + \chi))^J \theta_{\kappa (\eta + \chi)}^{\alpha, \phi}) - \theta^{\alpha, \phi}_{\eta +\chi}, 
\end{align*}
where $(\alpha, \phi) \in \mathcal{U} \subset \mathcal{H}^{1,1} (X, \mathbb{R}) \times C^{k+4, \alpha}_\eta (X, \mathbb{R})^T$. 

The linearization of this smooth map $\check{\mathscr{S}}^0_\eta$ is given by 
\begin{align}
(0, \chi, 0, 0)
&\mapsto - \theta_\chi
\\
(0, 0, 0, \phi) 
&\mapsto - \mathcal{D}^* \mathcal{D} \phi + (\bar{\partial}^\sharp \check{s}^0_\eta (\omega)) (\phi) 
\end{align}

By applying the implicit function theorem similarly to the proof of Theorem \ref{perturbation}, we get the following theorem. 

\begin{thm}
\label{extremal propagation}
Let $\omega$ be an extremal metric on a compact K\"ahler manifold $X$ with the extremal vector $\eta$. 
There exists a neighbourhood $U$ of $[\omega]$ in the K\"ahler cone and constants $\lambda_-, \lambda_+ \in \mathbb{R}$ such that for each $\lambda \in (- \infty, \lambda_-) \cup (\lambda_+, \infty)$, every K\"ahler class $[\omega]$ in $U$ admits a $\mu$-cscK metric $\omega_\lambda$ with respect to some vector $\xi_\lambda \in \mathfrak{t}$ and the given $\lambda$. 
The vector $\xi_\lambda$ is uniquely determined when $\lambda \ll 0$. 
\end{thm}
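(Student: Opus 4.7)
The plan is to imitate the proof of Theorem \ref{perturbation}, applying the implicit function theorem to the map $\check{\mathscr{S}}^0_\eta$ introduced just before the statement, with the extremal metric $(\omega, \eta)$ playing the role of the reference $\mu$-cscK datum. Let $T \subset {^\nabla\mathrm{Isom}}^0_\eta(X,\omega)$ be a maximal torus containing the extremal vector $\eta$, and consider the composition
\[
\bar{\check{\mathscr{S}}}{}^0_\eta : \mathbb{R} \times \mathfrak{t} \times \mathcal{U} \longrightarrow C^{k,\alpha}(X,\mathbb{R})^T/\mathbb{R}
\]
of $\check{\mathscr{S}}^0_\eta$ with the quotient projection, where $\mathcal{U} \subset \mathcal{H}^{1,1}(X,\mathbb{R}) \times C^{k+4,\alpha}(X,\mathbb{R})^T$ is a small neighbourhood of the origin. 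The goal is to solve $\bar{\check{\mathscr{S}}}{}^0_\eta(\kappa,\chi,\alpha,\phi)=0$ for $(\chi,\phi)$ as a smooth function of the parameters $(\kappa,\alpha)$ near $(0,0)$.

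The main point is the Fredholm/surjectivity analysis at $(\kappa,\chi,\alpha,\phi)=(0,0,0,0)$, restricted to variations in the $(\chi,\phi)$ directions. Because $\omega$ is extremal, $\check{s}^0_\eta(\omega)$ is constant, so the transport term $(\dbar^\sharp \check{s}^0_\eta(\omega))(\phi)$ vanishes and the $\phi$-linearisation reduces to $-\mathcal{D}^*\mathcal{D}\phi$, a fourth-order self-adjoint elliptic operator with respect to the unweighted $L^2(\omega^n)$ pairing (consistent with $\kappa=0$, where $\theta_{\kappa\eta}\equiv 0$). Its kernel on $C^\infty(X,\mathbb{R})^T$ coincides with real Killing potentials for $T$, which by the maximality of $T$ is exactly $\mathbb{R}\oplus\mathfrak{t}$. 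Hence the cokernel in $C^{k,\alpha}(X,\mathbb{R})^T/\mathbb{R}$ is $\mathfrak{t}$. The $\chi$-linearisation is $\chi\mapsto -\theta_\chi$, which is an isomorphism from $\mathfrak{t}$ onto $\mathfrak{t}$-valued Killing potentials modulo constants. Combining the two, the restricted derivative maps surjectively onto $C^{k,\alpha}(X,\mathbb{R})^T/\mathbb{R}$, and is Fredholm since $\mathfrak{t}$ and $\mathcal{H}^{1,1}(X,\mathbb{R})$ are finite-dimensional.

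The implicit function theorem then produces, on some open $V\times U' \subset \mathbb{R}\times \mathcal{H}^{1,1}(X,\mathbb{R})$ containing $(0,0)$, a smooth map $(\kappa,\alpha)\mapsto (\chi(\kappa,\alpha),\phi(\kappa,\alpha))$ with $\check{s}^\kappa_{\eta+\chi}(\omega+\alpha+\sqddbar\phi)\equiv\mathrm{const}$. For $\kappa\neq 0$ we set $\lambda:=\kappa^{-1}$, $\xi_\lambda:=\kappa(\eta+\chi(\kappa,\alpha))$, and the identity $\check{s}^\kappa_{\eta+\chi}=s^{\kappa^{-1}}_{\kappa(\eta+\chi)}$ observed before the statement produces a $\mu^\lambda_{\xi_\lambda}$-cscK metric in every K\"ahler class $[\omega+\alpha]\in U$ for every $\lambda$ lying outside some compact interval $[\lambda_-,\lambda_+]$, with $C^{k+4,\alpha}$-smoothness upgraded to $C^\infty$ by the elliptic bootstrap of section \ref{section: perturbation}. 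Uniqueness of $\xi_\lambda$ for $\lambda \ll 0$ follows from Theorem B (3) / Proposition \ref{extremal background}: for $\lambda$ sufficiently negative $\mathrm{Vol}^\lambda_{[\omega+\alpha]}$ has a unique critical point, and $\xi_\lambda$ must be such a critical point since $\Fut^\lambda_{\xi_\lambda}\equiv 0$.

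The step I expect to be most delicate is the surjectivity of the linearisation, specifically the identification of $\ker\mathcal{D}|_{C^\infty_T}=\mathbb{R}\oplus\mathfrak{t}$. This uses maximality of $T$ inside ${^\nabla\mathrm{Isom}}^0_\eta(X,\omega)$, which in turn relies on the reductiveness/maximality picture from Corollary \ref{reductiveness} and the fact that at an extremal metric the space of $T$-invariant Killing potentials is exactly $\mathfrak{t}$; without this matching between the dimension of the cokernel and the dimension of the $\chi$-parameter, the implicit function argument would not close. Once that linear-algebraic match is in place, everything else is a routine perturbation on the extremal side of the parameter $\kappa=0$.
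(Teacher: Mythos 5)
Your proposal is correct and follows essentially the same route as the paper: the paper's proof of Theorem \ref{extremal propagation} consists precisely of applying the implicit function theorem to $\check{\mathscr{S}}^0_\eta$ at $\kappa=0$, exactly as in Theorem \ref{perturbation}, using the computed linearisations $(0,\chi,0,0)\mapsto -\theta_\chi$ and $(0,0,0,\phi)\mapsto -\mathcal{D}^*\mathcal{D}\phi$ (the transport term dropping out since $\check{s}^0_\eta(\omega)$ is constant at an extremal metric), with the cokernel identification $\ker\mathcal{D}|_{C^{k,\alpha}(X,\mathbb{R})^T}=\mathbb{R}\oplus\mathfrak{t}$ via maximality of $T$, and with uniqueness of $\xi_\lambda$ for $\lambda\ll 0$ supplied by Proposition \ref{extremal background}. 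Your additional observation that at $\kappa=0$ no eigenvalue hypothesis is needed (because $\chi\mapsto-\theta_\chi$ is already injective modulo constants, unlike the weighted case of Theorem \ref{perturbation} where $\lambda<2\lambda_1$ was required) is exactly the point that makes the extremal-side perturbation unconditional, as the paper intends.
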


\section{Examples}

Here we observe explicit examples of K\"ahler classes admitting $\mu$-cscK metrics, using the method of Calabi ansatz. 
While we get some expected results for $\lambda \le 0$, we also find some strange phenomenon when $\lambda \gg 0$. 

\subsection{Phase transition of $\mu^\lambda$-cscK metrics on $\mathbb{C}P^1$}

\subsubsection{$\mu$-volume functional of $\mathbb{C}P^1$}
\label{mu-volume functional example}

We firstly compute the $\mu$-volume functional of $\mathbb{C}P^1$. 
Consider a $U (1)$-action on $\mathbb{C}P^1$ given by $(z: w). t = (zt: w)$. 
We denote by $\eta \in \mathfrak{u} (1)$ the positive generator of the $U (1)$-action. 

Let us consider the following variant of $\mu$-volume functional: 
\begin{equation} 
\bm{\mu}^\lambda (-2 \xi) := - \log \frac{\mathrm{Vol}^\lambda (\xi)}{(n! e^n)^\lambda}. 
\end{equation}
The critical points $\bm{\mu}^\lambda$ are precisely $(-2)$-times of the critical points of $\mathrm{Vol}^\lambda$. 
Then since $\theta_\xi = \mu_{-2 \xi}$, the functional $\bm{\mu}^\lambda$ can be expressed as the integration of $U(1)$-equivariant closed forms: 
\[ \bm{\mu}^\lambda = -\frac{\int_X (\Ric_\omega + \bar{\Box} \mu) e^{\omega + \mu}}{\int_X e^{\omega + \mu}} + \lambda \frac{\int_X (\omega + \mu) e^{\omega + \mu}}{\int_X e^{\omega + \mu}} - \lambda \log \int_X e^{\omega + \mu}. \]
When $\omega \in m \pi c_1 (X)$, we can normalize the moment map $\mu$ so that $[\omega + \mu] = c_1^{U (1)} (X) = \frac{m}{2} [\mathrm{Ric}_\omega + \bar{\Box} \mu]$ as equivariant cohomology classes. 
Under this normalization, we have 
\[ \int_X (\Ric_\omega + \bar{\Box} \mu) e^{\omega + \mu} = \frac{2}{m} \int_X (\omega + \mu) e^{\omega + \mu} \]
since the integration of equivariant closed form depends only on its equivariant cohomology class. 
Thus the functional $\bm{\mu}^\lambda$ for $[\omega] = m \pi c_1 (X)$ can be expressed as follows: 
\begin{align*} 
\bm{\mu}^\lambda 
&= (\lambda -\frac{2}{m}) \frac{\int_X (\omega + \mu) e^{\omega + \mu}}{\int_X e^{\omega + \mu}} - \lambda \log \int_X e^{\omega + \mu}
\\
&= (\lambda -\frac{2}{m}) \frac{\int_X (n + \mu) e^\mu \frac{\omega^n}{n!}}{\int_X e^\mu \frac{\omega^n}{n!}} - \lambda \log \int_X e^\mu \frac{\omega^n}{n!}. 
\end{align*}
We can compute these integrals using the Duistremaat--Heckman measure $\mathrm{DH} = \mu_* (\omega^n/n!)$ on $\mathfrak{u} (1)^\vee = \mathbb{R}. \eta^\vee$. 

When $X = \mathbb{C}P^1$, the Duistremaat--Heckman measure is nothing but the Lebesgue measure restricted on $[-m \pi, m \pi] \subset \mathbb{R}$. 
So we explicitly compute $\bm{\mu}^\lambda$ for $\mathbb{C}P^1$ as 
\begin{align*} 
\bm{\mu}^\lambda (x. \eta) 
&= (\lambda - \frac{2}{m}) \frac{\int_{-m\pi}^{m\pi} (1+ xt) e^{xt} dt}{\int_{-m\pi}^{m\pi} e^{xt} dt} -\lambda \log \int_{-m\pi}^{m\pi} e^{xt} dt 
\\
&= (\lambda - \frac{2}{m}) \frac{m\pi x}{\tanh (m\pi x)}- \lambda \log (\frac{\sinh (m\pi x)}{m\pi x}) - \lambda \log (2\pi m). 
\end{align*}
Put $\chi = m\pi x$. 
Then the derivative is given by 
\[ \frac{d}{d\chi} \bm{\mu}^\lambda (\frac{\chi}{m\pi}. \eta) = \frac{1}{\chi (\sinh \chi)^2} \Big{(} \frac{2}{m} (\chi^2 - \chi \sinh \chi \cosh \chi) - \lambda (\chi^2 - (\sinh \chi)^2)  \Big{)}. \]

As long as $\lambda \le 4/m$, $x = 0$ is the unique critical point of $\bm{\mu}^\lambda$. 
However, once $\lambda$ exceeds $4/m$, $\bm{\mu}^\lambda$ yields three distinct critical points. 
In this case, non-zero critical points of $\bm{\mu}^\lambda$ maximizes $\bm{\mu}^\lambda$ (minimizes $\mathrm{Vol}^\lambda$), while the critical point $\xi = 0$ turns into a `metastable/supercooled' state. 

\subsubsection{$\mu^\lambda$-cscK metrics on $\mathbb{C}P^1$ for $\lambda > \lambda_{\mathrm{freeze}}$ which are not cscK metrics}
\label{Calabi ruled}

Any $U (1)$-invariant K\"ahler metric on $\mathbb{C}P^1$ can be written as 
\[ \omega = \frac{1}{2} u'' (\rho) d\rho \wedge d\theta \]
on the open set $\mathbb{C}^* \subset \mathbb{C}P^1$ for some strictly positive smooth function $u$ on $\mathbb{R}$, using the coordinate $(\rho, \theta) \in \mathbb{R} \times S^1 \mapsto (e^{\rho + \sqrt{-1} \theta}: 1)$. 
For this metric and $\xi = x. \eta = 2\pi x \frac{\partial}{\partial \theta} \in \mathfrak{u} (1)$, we can compute the ingredients of $\mu$-scalar curvature as follows: 
\begin{gather*}
\Ric (\omega) = - \frac{1}{2} (\log u'')'' d\rho \wedge d\theta, 
\quad s (\omega) = - (u'')^{-1} (\log u'')'', 
\\
\theta_\xi = -(2\pi x) u' + \mathrm{const.}, 
\quad 
\bar{\Box} \theta_\xi = (u'')^{-1} (2\pi x) u''', 
\quad 
\xi' \theta_\xi = (2 \pi x)^2 u''. 
\end{gather*}
Thus we get 
\begin{equation} 
s^\lambda_\xi (\omega) = - (u'')^{-1} (\log u'')'' + 2 (u'')^{-1} (2\pi x) u''' - (2\pi x)^2 u'' + \lambda (2\pi x) u'. 
\end{equation}

We put $I := \mathrm{Im} (u')$ and $\chi = 2\pi x$. 
We denote by $\rho: I \to \mathbb{R}$ the inverse map of $\tau := u': \mathbb{R} \to I$ and put $\varphi (\tau) := u'' (\rho (\tau))$. 
Using 
\begin{gather*} 
\frac{d}{d\rho} = \frac{d\tau}{d\rho} \frac{d}{d\tau} = \varphi \frac{d}{d\tau}, 
\quad
\frac{d^2}{d\rho^2} = \varphi \varphi' \frac{d}{d\tau} + \varphi^2 \frac{d^2}{d\tau^2}, 
\end{gather*}
we can reduce $s^\lambda_\xi (\omega)$ as follows: 
\[ s^\lambda_\xi (\omega) = - (\frac{d}{d\tau} - \chi)^2 \varphi + \lambda \chi \tau. \]

We can recover $u$ (modulo linear function) from $\varphi$ since they are related by the Legendre transform $U: I \to \mathbb{R}$ of $u$: if we put 
\[ U (\tau) := \rho (\tau) \tau - u (\rho (\tau)), \]
then we have $\varphi (\tau) = 1/U'' (\tau)$. 
Thus solving the equation of $\mu^\lambda_\xi$-cscK metric on $\mathbb{C}^*$ reduces to finding a positive function $\varphi$ on $I$ which solves the equation 
\[ - (\frac{d}{d\tau} - \chi)^2 \varphi + \lambda \chi \tau = c \]
for a constant $c$. 
When $\chi \neq 0$, the equation is 
\[ (\frac{d}{d\tau} - \chi)^2 (\varphi - \frac{\lambda}{\chi} \tau - \frac{2 \lambda - c}{\chi^2})  \]
we can see the solution is given by 
\begin{equation}
\label{moment profile} 
\varphi^\lambda_\chi (\tau) = a e^{\chi \tau} + b \tau e^{\chi \tau} + \frac{\lambda}{\chi} \tau + \frac{2\lambda - c}{\chi^2} 
\end{equation}
for some $a, b, c \in \mathbb{R}$. 

Now we impose boundary conditions on $\varphi$ to get a metric on $\mathbb{C}P^1$. 
We may assume $I = (0, 2m)$ for some $m$ by adding linear function to $u$. 
Since $\pi u'$ gives a moment map, we have $\int_{\mathbb{C}P^1} \omega = \pi \int_I d\tau = 2 \pi m$. 
To get a solution with $\omega \in 2 \pi c_1 (\mathcal{O} (1))$, we assume $m=1$. 
As usual Calabi ansatz (cf. \cite[Section 4.4]{Sze}), we can see that the following boundary conditions on $\varphi$ asserts that the metric $\frac{1}{2} u'' d\rho \wedge d\theta$ on $\mathbb{C}^*$ extends to $\mathbb{C}P^1$: 
\begin{gather*}
\varphi (0) = 0, \quad \varphi (2) = 0, 
\\
\varphi' (0) = -2, \quad \varphi (2) = 2. 
\end{gather*}
If we have a solution $\varphi$ satisfying this boundary condition, then $\varphi$ is automatically positive on $I$ since $\varphi$ has at most one inflection point: 
\[ \varphi'' (\tau) = (b\chi^2 \tau + a\chi^2 + b\chi) e^{\chi\tau}. \]
By the first three boundary conditions, we must have 
\begin{gather*}
a = \frac{2 \lambda \sinh \chi - 2\chi e^\chi}{\chi (\sinh \chi - \chi e^\chi)}, 
\quad 
b = \frac{(2-2\lambda) \sinh \chi}{\sinh \chi -\chi e^\chi} -\frac{\lambda}{\chi}
\\
c = \frac{2\lambda \chi \sinh \chi - 2\chi^2 e^\chi}{\sinh \chi - \chi e^\chi} + 2\lambda
\end{gather*}
We can reduce the last boundary condition $\varphi' (1) =2$ to the following equality on $\chi$: 
\begin{equation}
\label{lambda example} 
\lambda (\chi^2 - (\sinh \chi)^2) - 2(\chi^2 - \chi \sinh \chi \cosh \chi) = 0, 
\end{equation}
which is equivalent to $\frac{d}{d\chi} \bm{\mu}^\lambda (\chi. \eta) = 0$. 
From the observation in \ref{mu-volume functional example}, it has a solution $x \neq 0$ when $\lambda > 4$. 
Thus we get $\mu^\lambda_\xi$-cscK metrics in the K\"ahler class $c_1 (X)$ for a non-zero $\xi$ when $\lambda > 4 \pi$. 

To see the limiting behavior of these $\mu^\lambda$-cscK metrics as $\lambda \to + \infty$, we see $\lambda$ as a function on $\chi$ and observe the limit $\varphi^{\lambda (\chi)}_\chi$ as $|\chi| \to \infty$. 
Using $\lambda (\chi) = 2\chi + O (\chi^2 e^{-2 \chi})$ from (\ref{lambda example}), we get $a e^{\chi \tau}, b e^{\chi \tau}, (2\lambda + c)/\chi^2 \to 0$ and $\lambda/\chi \to 2$ for each $\tau \in [0,2)$. 
Thus we see $\varphi^{\lambda (\chi)}_\chi \to 2 \tau$ on $[0,2)$ as $\chi \to \infty$. 
The metric tensor $g$ corresponding to $\varphi$ is expressed as 
\[ g = \frac{1}{2} \varphi (\tau)^{-1} d\tau \otimes d\tau + \frac{1}{2} \varphi (\tau) d\theta \otimes d\theta \]
on $(0, 2) \times S^1$, which we identify a metric on $\mathbb{R} \times S^1 \cong \mathbb{C}^*$ via the diffeomorphism $U' (\tau) = \int_1^\tau \varphi (\tau)^{-1} d\tau: (0, 2) \to \mathbb{R}$. 
Thus the limit metric is expressed as  
\[ g = \frac{1}{4\tau} d\tau \otimes d\tau + \tau d\theta \otimes d\theta = dr \otimes dr + r^2 d\theta \otimes d\theta \]
on $(r, \theta) = (\sqrt{\tau}, \theta) \in (0, \sqrt{2}) \times S^1$, which is the flat disk of radius $\sqrt{2}$. 
Since $\varphi^{\lambda (\chi)}_\chi$ converges locally uniformly on $[0, 2)$, the diffeomorphisms $U'_\chi: [0, 2) \to [-\infty, \infty)$ converges to a smooth map $U'_\infty (\tau) = \frac{1}{2} \log \tau$, which is not a diffeomorphism onto $[-\infty, \infty)$. 

\subsection{$\mu$-cscK metrics on $\mathbb{P}_\Sigma (L \oplus \mathcal{O})$}

\subsubsection{The case $\lambda \ge 0$}

Let $L$ be an ample line bundle on a curve $\Sigma$ of degree $k \ge 1$. 
Let $F$ be a fibre of the ruled surface $X = \mathbb{P}_\Sigma (L \oplus \mathcal{O}) \to \Sigma$ and $B$ denote the section at infinity: $B := \{ (x, (0:1)) ~|~ x \in \Sigma \}$. 
The second cohomology $H^2 (X, \mathbb{R})$ is spanned by these divisors, whose intersections are given by 
\[ F \cdot F = 0, \quad F \cdot B = 1, \quad B \cdot B = k. \]
The K\"ahler cone is given by 
\[ \{ a F + b B ~|~ b > 0, ~ \frac{a}{b} > - \frac{k}{2} \}. \]
Now we show the following. 

\begin{prop}
Every K\"ahler class in the cone $\{ a F + b B ~|~ a, b > 0 \}$ admits a $\mu^\lambda$-cscK metric for every $\lambda \ge 0$. 
\end{prop}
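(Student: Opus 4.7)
The plan is a Calabi ansatz reduction analogous to section 6.1.2. First, I fix on $\Sigma$ a constant scalar curvature K\"ahler metric $\omega_\Sigma$, which exists by uniformization, with K\"ahler scalar curvature $2 s_\Sigma$, and equip $L$ with a hermitian metric whose curvature is a constant multiple of $\omega_\Sigma$. A $U(1)$-invariant K\"ahler metric in the class $aF + bB$ is then parametrized by a positive momentum profile $\varphi$ on the interval $I = [0, b]$ subject to Calabi's boundary conditions $\varphi(0) = \varphi(b) = 0$, $\varphi'(0) = 2$, $\varphi'(b) = -2$, together with the fibre-volume function $p(\tau) = a + k\tau$, which is strictly positive on $I$ precisely because $a, b > 0$. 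I take $\xi = \chi \cdot \eta$ where $\eta$ generates the fibrewise $U(1)$-action, so that $\theta_\xi = \chi \tau$ modulo a constant.

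Computing the $\mu^\lambda_\xi$-scalar curvature using the standard Calabi ansatz formulas for $s (\omega)$, $\bar{\Box} \theta_\xi$ and $\xi^J \theta_\xi$, the equation $s^\lambda_\xi (\omega) = \bar{s}^\lambda_\xi$ reduces to the second-order linear ODE
\[ -\Big( \tfrac{d}{d\tau} - \chi \Big)^2 F \,+\, 2 s_\Sigma \;=\; p (\tau) \cdot (c + \lambda \chi \tau), \qquad F := p \varphi, \]
for a constant $c$. For $\chi \neq 0$ the general solution is $F = (A_1 + A_2 \tau) e^{\chi \tau} + P (\tau)$, where $P$ is a quadratic polynomial in $\tau$ whose coefficients are explicit in $(a, k, s_\Sigma, \chi, c, \lambda)$. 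The four Calabi boundary conditions on $F$ then form a system in $(A_1, A_2, c, \chi)$: three determine $A_1, A_2, c$ as functions of $\chi$, and the fourth becomes a single transcendental equation $\Phi^\lambda_{a, b, k} (\chi) = 0$.

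I would then identify this matching equation with the vanishing of the $\mu^\lambda$-Futaki invariant $\Fut^\lambda_{\chi \eta} (\eta)$ in the fibre direction, i.e.\ with the critical point equation of $\mathrm{Vol}^\lambda$ on the rank-one Lie algebra $\mathbb{R} \eta$; the identification follows formally from integrating the ODE against constant test functions after absorbing boundary terms with the Calabi conditions. Since $\mathrm{Vol}^\lambda$ is proper on this rank-one algebra by Proposition \ref{properness of mu-volume}, it attains a minimum for every $\lambda$, producing at least one solution $\chi = \chi (a, b, \lambda)$ of the matching equation and hence a candidate profile $\varphi = F/p$.

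The main obstacle is establishing strict positivity of $\varphi$ on the interior $(0, b)$, which the Calabi ansatz genuinely requires beyond the formal ODE solution; this is exactly what obstructs the analogous existence of cscK or extremal metrics in certain narrow K\"ahler classes on ruled surfaces. The plan here is to use the explicit form of $F$ together with $\lambda \ge 0$ and $p > 0$ on $I$: the boundary data $F (0) = F (b) = 0$, $F' (0) > 0$, $F' (b) < 0$ combined with the very restricted shape of $F$ as a sum of a quasi-exponential $(A_1 + A_2 \tau) e^{\chi \tau}$ and a polynomial of degree at most $2$ tightly constrains the possible zero set of $F$ on $[0, b]$. Any interior zero would then contradict either a Sturmian comparison for the ODE or the sign of the inhomogeneity $p (c + \lambda \chi \tau) - 2 s_\Sigma$, which stays controllable thanks to $\lambda \ge 0$ and the freedom to select, when necessary, a non-minimum critical point of $\mathrm{Vol}^\lambda$ (such branches exist for large $\lambda$, cf.\ the phenomenon observed in section \ref{mu-volume functional example}). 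Carrying out this positivity analysis cleanly, case by case in the sign of $s_\Sigma$ and in the asymptotic regime $|\chi| \to \infty$, is the technical heart of the proof and is where the hypothesis $\lambda \ge 0$ is genuinely used.
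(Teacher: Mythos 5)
Your Calabi-ansatz reduction, the explicit form of the ODE solution, and the identification of the fourth (matching) boundary condition with the vanishing of $\Fut^\lambda_{\chi\eta}(\eta)$ --- equivalently with the critical point equation of $\mathrm{Vol}^\lambda$ restricted to $\mathbb{R}\eta$ --- are all correct, and the last identification is consistent with what the paper records in the $\mathbb{C}P^1$ case; invoking properness of $\mathrm{Vol}^\lambda$ (Proposition \ref{properness of mu-volume}) is a legitimate way to produce \emph{some} root $\chi$ of the matching equation. The genuine gap is in the positivity of $\varphi$, which you leave as a plan (Sturm comparison, sign of the inhomogeneity, case analysis in $s_\Sigma$) rather than an argument, and the plan as stated cannot be completed, because positivity is not a consequence of $\lambda \ge 0$ alone: it also requires a definite sign of $\chi$, which your existence mechanism does not control. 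Concretely, in the paper's normalization ($p = 1-k\tau$ on $[-m,0]$) one has $\psi'' = (b\chi^2\tau + a\chi^2 + 2b\chi)e^{\chi\tau} - 2\lambda k/\chi$ for $\psi = p\varphi$. When $\lambda \ge 0$ \emph{and} $\chi<0$, the constant $-2\lambda k/\chi$ is nonnegative, and an elementary study of $(\mathrm{affine})\cdot e^{\chi\tau}$ shows the sign pattern of $\psi''$ can only be $(-,+)$ or $(+,-,+)$ (or constant sign); combined with $\psi(-m)=\psi(0)=0$, $\psi'(-m)>0$, $\psi'(0)<0$, this forces $\psi'$ to change sign exactly once, hence $\psi>0$ in the interior. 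For $\chi$ of the opposite sign the constant term flips sign, the pattern $(-,+,-)$ becomes available, and a two-bump profile with an interior zero is no longer excluded --- the argument collapses.

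So you must produce a root of the matching equation with the correct sign, and properness of $\mathrm{Vol}^\lambda$ by itself does not do this: the global minimizer could a priori sit on the bad side, and your ``freedom to select a non-minimum critical point'' is not backed by any argument that a critical point of the required sign exists. What supplies the sign in the paper is the explicit computation $\lim_{\chi\to 0}\varphi'_\chi(-m) = \frac{1}{2}\,\frac{k l_g m^2 + 4km + 6}{k^2m^2+4km+3} < 1$ (equivalent to the statement that the classical Futaki invariant pairs with the fibre vector $\eta$ with the favorable sign), together with the asymptotics $\varphi'_\chi(-m)\,\chi e^{m\chi} \to -1/m$, so $\varphi'_\chi(-m)\to+\infty$ as $\chi\to-\infty$; the intermediate value theorem then yields a root in $(-\infty,0)$, exactly where the inflection-point count applies. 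Within your framework the same input would be needed: positivity of $\Fut(\eta)$ shows $\mathrm{Vol}^\lambda$ decreases from $\chi=0$ into the good half-line, so by properness the restricted functional has an interior critical point there. You neither perform this computation nor flag that it is needed, so as written the proposal does not prove the proposition.
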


Since the existence of $\mu^\lambda$-cscK metric depends only on the ray of K\"ahler class, we may assume the K\"ahler class is represented by $2 \pi (F + m B)$ for some $m \in (0, \infty)$. 

As in \cite[Section 4.4]{Sze}, we consider metrics of the form 
\[ p^* \omega_\Sigma + \sqddbar u \circ s \]
for a function $u: \mathbb{R} \to \mathbb{R}$, where $\omega_\Sigma$ is the K\"ahler--Einstein metric on $\Sigma$ with $\int_\Sigma \omega_\Sigma = 1$ and $s$ is the function $s: L \setminus \Sigma \to \mathbb{R}: z \mapsto \log |z|_h^2$ defined by a metric $h$ on $L$ with curvature $k \omega_\Sigma$. 
Taking a local trivialization $w$ of $L$ around $z_0 \in \Sigma$ so that $(\partial h/\partial z) (z_0) = 0$, we have 
\[ \omega = (1-k u') \omega_\Sigma + u'' \sqrt{-1} \frac{d w \wedge d\bar{w}}{|w|^2} \]
on the fibre $L_{z_0}$. 
Thus the metric is positive iff $1- k u' > 0$ and $u'' > 0$. 
Since $\int_F \omega = 2 \pi (u' (\infty) - u' (-\infty))$ and $\int_B \omega = 2 \pi (1- k u' (-\infty))$, we have 
\[ [\omega] = 2\pi ((1- ku' (\infty))F + (u' (\infty) - u' (-\infty)) B). \]
When $u' (\infty) = 0$ and $u' (-\infty) = -m$, we have $[\omega] = 2\pi (F + m B)$. 

We put $\tau := u': \mathbb{R} \to (-m, 0)$ and denote by $s: (-m, 0) \to \mathbb{R}$ its inverse map. 
Using the function $\varphi (\tau) := u'' (s (\tau))$ and $l_g := 2-2g$, we can express 
\begin{align*}
s (\omega) 
&= - \frac{1}{1-k\tau} ((1-k\tau) \varphi)'' + \frac{l_g}{1-k \tau}, 
\\
\theta_\xi 
&= -4 \pi x \tau, 
\\
\bar{\Box} \theta_\xi 
&= - 4\pi x \frac{k \varphi - (1-k\tau)\varphi'}{1-k\tau}, 
\\
\xi' \theta_\xi 
&= (-4 \pi x)^2 \varphi
\end{align*}
for $\xi = 2 \pi x \partial/\partial \theta$. 
Thus we have 
\[ s^\lambda_\xi (\omega) = - \frac{1}{1- k\tau} (\frac{d}{d\tau} - 4\pi x)^2 ((1-k\tau) \varphi) + 4\pi x \lambda \tau + \frac{l_g}{1-k \tau}. \]

Putting $\chi = 4\pi x$, the problem of the existence of $\mu^\lambda$-cscK metric under the Calabi ansatz reduces to solving the following equation 
\begin{equation}
\label{Calabi equation}
(\frac{d}{d\tau} - \chi)^2 ((1-k\tau) \varphi) = - \chi \lambda k \tau^2 + (\chi \lambda + kc) \tau + (l_g -c) 
\end{equation}
on $[-m, 0]$ together with the following boundary conditions (cf. \cite[Section 4.4]{Sze}): 
\begin{gather}
\label{Calabi boundary}
\varphi (0) = 0, 
\quad 
\varphi (-m) = 0, 
\\ \notag
\varphi' (0) = -1, 
\quad 
\varphi' (-m) = 1. 
\end{gather}
The solutions of (\ref{Calabi equation}) is given by 
\[ \varphi (\tau) = \frac{1}{1- k \tau} \Big{(} a e^{\chi \tau} + b \tau e^{\chi \tau} - \frac{\lambda k}{\chi} \tau^2 + \frac{\lambda \chi + kc - 4 \lambda k}{\chi^2} \tau + \frac{(2 \lambda + l_g - c)\chi + 2kc - 6\lambda k}{\chi^3} \Big{)} \]

Suppose we have a solution $\varphi$ with $\lambda \ge 0$, $x < 0$. 
Then since $((1- k\tau) \varphi (\tau))'' = (b\chi^2 \tau + a\chi^2 + 2b\chi) e^{\chi \tau} - \frac{2 \lambda k}{\chi}$, $\varphi$ satisfies one of the following: 
\begin{enumerate}
\item If $b \ge 0$, then $\varphi$ has at most inflection point. 

\item If $b < 0$, then $\varphi$ may have two inflection points $a, b \in (-m, 0)$. However, $\varphi$ is convex on the intervals $(-m, a)$, $(b, 0)$ and is concave on $(a, b)$. 
\end{enumerate}
In both cases, $\psi (\tau) = (1-k \tau) \varphi$ must be positive from the boundary conditions $\psi (0) = 0, \psi (-m) = 0$ and $\psi' (0) =-1, \psi( -m) = 1$. 

From the first three boundary conditions, we get 
\begin{gather*}
a = c \frac{\chi -2k}{\chi^3} + \frac{(-2\lambda - l_g) \chi + 6 \lambda k}{\chi^3}, 
\quad 
b= c \frac{-\chi + k}{\chi^2} + \frac{-\chi^2 + (\lambda + l_g) \chi - 2 \lambda k}{\chi^2}
\end{gather*}
and 
\begin{align*} 
c
&= \frac{(-m \chi^3 + m (\lambda + l_g) \chi^2 + (2 \lambda + l_g - 2\lambda k m)\chi - 6\lambda k)e^{-m\chi}}{(m\chi^2 + (1-mk) \chi - 2 k)e^{-m\chi} - (1+mk) \chi + 2k} 
\\
&\qquad \quad + \frac{(m^2 \lambda + m \lambda) \chi^2 - (4m \lambda k + 2\lambda + l_g) \chi + 6 \lambda k}{(m\chi^2 + (1-mk) \chi - 2 k)e^{-m\chi} - (1+mk) \chi + 2k}. 
\end{align*}

Regarding $a, b, c$ as a function on $x$, we can see that 
\begin{gather*}
\lim_{\chi \to 0} c (\chi) = \frac{6+3m l_g}{3m + m^2 k}, 
\\
\chi^3 a (\chi) = \Big{(} -2k \frac{6+3m l_g}{3m + m^2 k} + 6\lambda k \Big{)} + \Big{(} - 2 \lambda -l_g + \frac{6+3m l_g}{3m + m^2 k} \Big{)} \chi + O (\chi^2), 
\\
\chi^2 b (\chi) = \Big{(} k \frac{6+3m l_g}{3m + m^2 k} - 2\lambda k \Big{)} + \Big{(} \lambda + l_g - \frac{6+3m l_g}{3m + m^2 k} \Big{)} \chi + O (\chi^2) 
\end{gather*}
around $\chi = 0$. 
Using this, we can see 
\begin{align*} 
\lim_{\chi \to 0} \varphi'_\chi (-m) 
&= \lim_{\chi \to 0} \frac{1}{1+km} \Big{(} a \chi e^{-m\chi} + b e^{-m\chi} - m b\chi e^{-m\chi} + \frac{2 \lambda k}{\chi} m - a\chi - b -1 \Big{)}
\\
&= \lim_{\chi \to 0} \frac{1}{1+km} \Big{(} \frac{a\chi^3 + 2b \chi^2 - 2 \lambda k}{\chi} \frac{e^{-m \chi} - 1}{\chi} - b\chi^2 \frac{e^{-m\chi} + m \chi e^{-m\chi} -1}{\chi^2}  
\\
&\qquad \qquad \qquad \qquad + 2 \lambda k \frac{e^{-m\chi} + m\chi -1}{\chi^2} -1 \Big{)}
\\
&= \frac{1}{1+km} \Big{(} (l_g - \frac{6+3m l_g}{3m + m^2 k}) (-m) - (k \frac{6+3m l_g}{3m + m^2 k} - 2 \lambda k) (- \frac{m^2}{2}) 
\\
&\qquad \qquad \qquad \qquad + 2 \lambda k (\frac{m^2}{2}) -1 \Big{)}
\\
&= \frac{1}{2} \frac{k l_g m^2 + 4km + 6}{k^2 m^2 + 4km +3}. 
\end{align*}
In particular, $\varphi'_\chi (-m)$ extends continuously across $\chi=0$. 

From the above explicit calculus, we can see that $\lim_{\chi \to 0} \varphi'_\chi (-m) < 1$, which is equivalent to $mk ((2k -l_g)m + 4) > 0$, as we have $m > 0, k \ge 1, l_g \le 2$. 
Thus if we have $\lim_{\chi \to -\infty} \varphi'_\chi (-m) = + \infty$, then there must be some $\chi \in (-\infty, 0)$ satisfying $\varphi'_\chi (-m) = 1$, which solves $\varphi$ satisfying all the boundary conditions. 

As $\chi$ tends to $-\infty$, we can see the following: 
\begin{align*}
c(\chi)
&= -\chi + \frac{1- mk + m\lambda + ml_g}{m} 
\\
&\qquad + \frac{1}{m} \Big{(} - \frac{1- mk + m \lambda + ml_g}{m} (1-mk) + 2 \lambda + l_g - 2 \lambda k m - 2k \Big{)} \chi^{-1} + O (\chi^{-2}) 
\end{align*}
and $\chi a(\chi) \to -1, \chi b(\chi) \to -\frac{1}{m}$ $ (\chi a(\chi) - m\chi b(\chi)) \chi \to 0$.
Using this, we obtain 
\[ \varphi_\chi' (-m) \chi e^{m\chi} \to - \frac{1}{m}. \]
Thus $\varphi_\chi' (-m)$ tends to $+ \infty$ as $\chi$ goes to $-\infty$. 
Similarly, we can also see that $\varphi_\chi' (-m) \to 0$ as $\chi \to -\infty$, but we do not use this fact in this paper as we already have $\lim_{\chi \to 0} \varphi_\chi' (-m) < 1$. 

From the above observation, we get a positive solution $\varphi$ of (\ref{Calabi equation}) satisfying all the boundary conditions (\ref{Calabi boundary}), which shows the existence of a $\mu^\lambda$-cscK metric in the K\"ahler class $2 \pi (F + m B)$ for every $\lambda \ge 0$. 

\subsubsection{Connecting K\"ahler--Ricci soliton and extremal metric via $\mu$-cscK metrics}

Consider the case $X = \mathbb{P}_{\mathbb{C}P^1} (\mathcal{O} (1) \oplus \mathcal{O}) = \mathbb{C}P^2 \# \overline{\mathbb{C}P^2}$. 
We have $K_X = \mathcal{O}_{X/\Sigma} (-2) \otimes \pi^* (K_\Sigma \otimes \det (\mathcal{O} (1) \oplus \mathcal{O})^\vee)= 2 (kF -B) - l_g F - kF = -F - 2B$. 
It is known that there are both K\"ahler--Ricci soliton and extremal metric in the K\"ahler class $2 \pi (F + 2 B)$. 
Now we show that there exists $\mu^\lambda$-cscK metrics also for $(-\infty, 0)$ with $x < 0$, which converges to the extremal metric as $\lambda \to -\infty$ and to the $\mu^0$-cscK metric we constructed in section \ref{Calabi ruled} as $\lambda \to 0$. 
Thus we get a continuity path of $\mu$-cscK metrics which connects the K\"ahler--Ricci soliton ($\mu^1$-cscK) and the extremal metric. 

In this case, since $-\lambda /\chi <0$, $(1- \tau) \varphi (\tau)$ might have two inflection points $a, b \in (-2, 0)$ such that $(1- \tau) \varphi (\tau)$ is concave on $(-2, a)$, $(a, 0)$ and is convex on $(a, b)$, which might make $\varphi$ negative at some point in $(-2, 0)$. 
If this happens, we must have $((1- \tau) \varphi)''' (\tau_0) = 0$ at some point $\tau_0 \in (-2, 0)$. 
Since $((1- \tau) \varphi)''' = (b \chi^3 \tau + a\chi^3 + 3 b\chi^2)e^{\chi \tau}$, we have $\tau_0 = - \frac{a (\chi)}{b (\chi)} - \frac{3}{\chi}$ for $\chi < 0$ solving $\varphi'_\chi (-2) = 1$. 
To see the positivity of $\varphi$ on $(-2, 0)$, it suffices to show $\tau_0 > 0$. 
To achieve this, we explicitly compute the following: 
\begin{enumerate}
\item For $\chi < 0$ solving $\varphi'_\chi (-2) = 1$ with $\lambda \in (-\infty, 0)$, we have $\chi \in (-1, 0)$. Here the lower bound $-1$ is not effective. We actually have $\chi \in (-0.265..., 0)$. 
 
\item The function $\tau_0 (\chi) = - \frac{a (\chi)}{b (\chi)} - \frac{3}{\chi}$ is positive on $\chi \in (-1, 0)$ for every $\lambda < 0$. 
\end{enumerate}

For the second item, we explicitly write down as follows: 
\begin{align*} 
\frac{a (\chi)}{b (\chi)} + \frac{3}{\chi} 
&= \frac{\alpha (\chi) + \lambda \beta (\chi)}{\chi (\gamma (\chi) + \lambda \delta (\chi))}, 
\end{align*}
where 
\begin{align*} 
\alpha (\chi) 
&= (-2\chi^4 + \chi^3 + 2\chi^2 -6\chi) e^{-2\chi} + 9 \chi^3 - 14 \chi^2 + 6\chi, 
\\
\beta (\chi)
&= (4\chi^2 + 6\chi-6) e^{-2\chi} + 6\chi^3 + 5\chi^2 + 28 \chi -6, 
\\
\gamma (\chi) 
&= (-\chi^3 + 2\chi^2 -2\chi) e^{-2\chi} + 3\chi^3 - 6\chi^2 +2\chi, 
\\
\delta (\chi) 
&= (-\chi^2 + 4\chi -2) e^{-2\chi} + 7\chi^2 + 8\chi -2. 
\end{align*}
We can see that $\alpha (\chi), \gamma (\chi) > 0$ and $\beta (\chi), \delta (\chi) < 0$ for $\chi \in (-1, 0)$. 
So we have $\tau_0 (\chi) > 0$ for $\chi \in (-1, 0)$. 
Since $\alpha (\chi)$ is not necessarily positive for more smaller $\chi < 0$, we must bound $\chi$. 
In fact, we have $\alpha (-1.5) < 0$. 

To see that $\chi \in (-1, 0)$ for $\lambda < 0$, we observe $\lambda$ as a function on $\chi$. 
We can reduce $\varphi'_\chi (-m) = 1$ to the following equality: 
\[ \lambda = \chi \frac{(9\chi^2 -6\chi-2) e^{2\chi} + (-\chi^2 +2\chi -2) e^{-2\chi} + (-12\chi^3 + 16\chi^2 + 4\chi + 4)}{(9\chi^2 - 12\chi+2) e^{2\chi} + (\chi^2 -4\chi+2) e^{-2\chi} + (-12 \chi^4 + 16\chi^3 -2\chi^2 +16\chi -4)}. \] 
We can see that $\lambda$ is monotonically decreasing for $\chi \in (-\infty, 0)$, $\lambda (-1) > 0$ and $\lim_{\chi \to -0} \lambda (\chi) = -\infty$, thus we conclude that $\lambda (\chi) \le 0$ implies $\chi \in (-1, 0)$.

\end{document}